\newcommand{\tss}[1]{\textsuperscript{#1}}
\newcommand{\dhor}{\mathsf{D}_{\mathsf{hor}}}
\newcommand{\dver}{\mathsf{D}_{\mathsf{ver}}}
\newcommand{\on}[1]{
	\operatorname{#1}
}
\newcommand{\tn}[1]{ % norm on the unit circle
	|#1|_{\mathbb{T}}
}
\newcommand{\btn}[1]{ % norm on the unit circle (Big)
	\Big|#1\Big|_{\mathbb{T}}
}
\newcommand\ssk[1]{
	\substack{#1}
}
\newcommand\ex{\mathop{\mathbb{E}}}
\newcommand{\exx}{
  \mathop{
    \mathchoice{\vcenter{\hbox{\larger[4]$\mathbb{E}$}}}
               {\kern0pt\mathbb{E}}
               {\kern0pt\mathbb{E}}
               {\kern0pt\mathbb{E}}
  }\displaylimits
}
\newcommand*\bcdot{\mathpalette\bigcdot@{0.5}}
\newcommand*\bigcdot@[2]{\mathbin{\vcenter{\hbox{\scalebox{#2}{$\m@th#1\bullet$}}}}}
\newcommand\fco{\lbrack}
\newcommand\fcc{\rbrack^\wedge}
\newcommand\id{\mathbbm{1}}	
\newtheorem{theorem}{Theorem}
\newtheorem{corollary}[theorem]{Corollary}
\newtheorem{lemma}[theorem]{Lemma}
\newtheorem{proposition}[theorem]{Proposition}
\newtheorem{claim}[theorem]{Claim}
\theoremstyle{definition}
\newtheorem{rem}[theorem]{Remark}
\begin{document}

\begin{frontmatter}[classification=text]
%% EDITOR: this will force the keywords to appear right after the Abstract.
%%   If the abstract is too long and would force the keywords off the
%%   front page, please comment out % [classification=text] above
%%   This way the keywords will be floated on the bottom of the first page
%%   even though the Abstract spills over to the next page.

%%% AUTHOR: Title goes here.  This line is optional.  You must use it
%%   if title has footnote attached or requires nontrivial typesetting,
%%   e.g., inclusion of linebreaks to force nice layout.
\title{A Bilinear Bogolyubov Argument in Abelian Groups} %% please capitalize all significant words

%%% AUTHOR:
%%% List all authors. If you wish, place grant acknowledgements in \thanks.
%%% In brackets include a short tag for each author.
\author[luka]{Luka Mili\'cevi\'c\thanks{This work was supported by the Serbian Ministry of Education, Science and Technological Development through Mathematical Institute of the Serbian Academy of Sciences and Arts.}}

%%% AUTHOR: Abstract goes here
\begin{abstract}
The bilinear Bogolyubov argument for $\mathbb{F}_p^n$ states that if we start with a dense set $A \subseteq \mathbb{F}_p^n \times \mathbb{F}_p^n$ and carry out sufficiently many steps where we replace every row or every column of $A$ by the set difference of it with itself, then inside the resulting set we obtain a bilinear variety of codimension bounded in terms of density of $A$. In this paper, we generalize the bilinear Bogolyubov argument to arbitrary finite abelian groups. Namely, if $G$ and $H$ are finite abelian groups and $A \subseteq G \times H$ is a subset of density $\delta$, then the procedure above applied to $A$ results in a set that contains a bilinear analogue of a Bohr set, with the appropriately defined codimension bounded above by $\log^{O(1)} (O(\delta^{-1}))$.
\end{abstract}
\end{frontmatter}

%%% AUTHOR: body of paper starts here
\section{Introduction}

Let us begin by recalling the following classical fact due to Bogolyubov. Whenever $A \subset \mathbb{Z}/N\mathbb{Z}$ is a subset of density $\alpha$, then the set $2A - 2A$ contains a Bohr set of bounded codimension and large radius. (We are being deliberately informal at the moment, we shall define Bohr sets properly slightly later in the introduction.) This fundamental fact was generalized to the bilinear setting in the case of vector spaces over finite fields by Bienvenu and L\^{e}~\cite{BienLe} and, independently, by Gowers and the author~\cite{bogPaper}. Hosseini and Lovett~\cite{HosseiniLovett} improved the bounds to the nearly optimal ones. Before stating their result, which we refer to as the bilinear Bogolyubov argument, we need to set up some notation. Let $A \subseteq \mathbb{F}_p^n \times \mathbb{F}_p^n$ be a subset. We write $\dhor A$ for the \emph{horizontal difference set} of $A$ which is defined as $\dhor A = \{(x_1 - x_2, y) \colon (x_1, y), (x_2, y) \in A\}$. Similarly, we write $\dver A$ for the \emph{vertical difference set} of $A$ which is defined as $\dver A = \{(x, y_1 - y_2) \colon (x, y_1), (x, y_2) \in A\}$. In other words, the horizontal difference set is obtained by taking the difference set inside each row and the vertical difference set is obtained by taking the difference set inside each column. Iterated directional difference sets are obtained by the obvious compositions, as $\dhor$ and $\dver$ are simply maps from the power-set $\mathcal{P}(\mathbb{F}_p^n \times \mathbb{F}_p^n)$ to itself.

\begin{theorem}[Bilinear Bogolyubov argument in $\mathbb{F}_p^n$]\label{bilinearBogRuzsaVS}Let $A \subseteq \mathbb{F}_p^n \times \mathbb{F}_p^n$ be a set of density $\delta > 0$. Then there exist subspaces $U, V \leq \mathbb{F}_p^n$ of codimension at most $O(\log^{64 + o(1)} \delta^{-1})$, a positive integer $r \leq O(\log^{64 + o(1)} \delta^{-1})$ and a bilinear map $\beta \colon U \times V \to \mathbb{F}_p^r$ such that 
\[\{(x,y) \in U \times V \colon \beta(x,y) = 0\} \subseteq \dhor\dver\dver\dhor \dver\dver \dver \dhor \dhor A.\]
\end{theorem}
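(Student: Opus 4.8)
The plan is to run a local-to-global argument. Locally — inside each row and each column — we apply the classical Bogolyubov lemma together with Chang's theorem; this is what yields subspaces of \emph{polylogarithmic} codimension rather than the polynomial codimension a bare Parseval bound would give, and it is ultimately responsible for the $\log^{80}\delta^{-1}$ in the conclusion. Concretely, for the horizontal phase: by averaging a $\delta/2$ fraction of rows $y$ have $|A_y|\ge(\delta/2)p^n$, where $A_y=\{x:(x,y)\in A\}$; for such a row $(\dhor\dhor A)_y=(A_y-A_y)-(A_y-A_y)\supseteq 2A_y-2A_y$ contains, by Bogolyubov's lemma, the subspace $U_y:=\operatorname{span}(\Gamma_y)^{\perp}$, where $\Gamma_y$ is a suitably thresholded large-spectrum of $1_{A_y}$, and Chang's theorem gives $\dim\operatorname{span}(\Gamma_y)=O(\log\delta^{-1})$, hence $\operatorname{codim}U_y=O(\log\delta^{-1})$ uniformly over these rows. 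Thus $B:=\dhor\dhor A$ contains $U_y\times\{y\}$ for every $y$ in a set $Y_0$ of density $\ge\delta/2$. The four horizontal and five vertical difference operations in $\dhor\dver\dver\dhor\dver\dver\dver\dhor\dhor$ are then arranged precisely so as to carry out the harder, global part: assembling the wildly varying family $\{U_y\}$ into a single bilinear map.

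The content of the globalization is to control the family $\{\Gamma_y\}_{y\in Y_0}$, a priori a union of $\delta^{-O(1)}$-many low-dimensional sets, as $y$ varies. The key observation is that membership $s\in\Gamma_y$ is governed by the size of the Fourier coefficient $\widehat{1_{A_y}}(s)$, which for fixed $s$ is itself a function of $y$; the large values of the \emph{two}-dimensional Fourier transform $\widehat{1_A}(s,t)$ therefore control which pairs $(s,y)$ can occur, and a bilinear analogue of Chang's theorem should show that this set of large $2$D coefficients is confined to a bilinear variety of bounded complexity — a bounded-codimension ``box'' $W_1\times W_2$ together with boundedly many cosets aligned along the graph of a low-rank bilinear relation, all of polylogarithmic size. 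Transferring this back to the rows: after restricting $y$ to a subspace $V$ of codimension $O(\operatorname{polylog}\delta^{-1})$, the part of $\Gamma_y$ not already lying in a fixed bounded-codimension subspace $U\le\mathbb{F}_p^n$ is contained in $\{\psi_1(y),\dots,\psi_r(y)\}$ for fixed linear maps $\psi_1,\dots,\psi_r$; setting $\beta(x,y)=(\langle\psi_1(y),x\rangle,\dots,\langle\psi_r(y),x\rangle)$, one obtains that for $y\in V$ the row $\{x\in U:\beta(x,y)=0\}$ lies inside $B_y$. The remaining vertical operations run Bogolyubov arguments in the columns, converting ``for a dense set of $y$, up to a bounded-codimension ambiguity'' into ``on an honest subspace $V$, exactly'' and upgrading a bi-affine description into a genuinely bilinear one, while the interleaved single $\dhor$'s re-establish the subspace structure of the rows after it is perturbed by the column differencing. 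Collecting $U$, $V$, $r$ and $\beta$ places $\{(x,y)\in U\times V:\beta(x,y)=0\}$ inside $\dhor\dver\dver\dhor\dver\dver\dver\dhor\dhor A$.

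The main obstacle is exactly the bilinear Chang-type structure theorem for the large $2$D Fourier coefficients: applying one-dimensional Chang separately in $s$ and in $t$ does not by itself expose the bilinear (graph-of-a-linear-map) alignment, so that alignment must be extracted through a rank/dimension-increment argument on an emerging bilinear form, with every increment kept polylogarithmic so that the final codimension is $O(\log^{80}\delta^{-1})$ rather than a tower. The second delicate point is the passage from ``positive density, up to bounded ambiguity'' to ``exact, on a subspace'': this is what dictates the precise number and order of the $\dhor$ and $\dver$ operations, and one has to verify that cleaning up in one direction does not undo the structure already gained in the other. Once these are in place, the codimension bookkeeping through the nine operations is routine.
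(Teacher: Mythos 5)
There is a genuine gap — in fact two. First, a quantitative one in your Step 1: Bogolyubov's lemma combined with Chang's theorem does \emph{not} give rows of polylogarithmic codimension. In the standard Bogolyubov argument for $2A_y-2A_y$ the relevant spectrum is taken at threshold roughly $\delta^{3/2}$, and Chang's theorem then places it in the span of $O(\delta^{-1}\log\delta^{-1})$ characters — polynomial, not polylogarithmic, in $\delta^{-1}$. The $\log^{O(1)}\delta^{-1}$ codimension that the whole theorem is about requires Sanders' Bogolyubov--Ruzsa lemma (Theorem~\ref{bogRuzsa1}; in this paper also its robust Schoen--Sisask form, Theorem~\ref{robustBogRuzsa}), whose proof is far deeper than Chang. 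So already the local step, as you set it up, cannot deliver the claimed $O(\log^{80}\delta^{-1})$ bound.

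Second, and more seriously, the heart of the argument — passing from the varying family of row annihilators $\Gamma_y$ to a bounded number of globally defined linear maps — is left resting on a conjectural ``bilinear Chang-type structure theorem'' for the large two-dimensional Fourier coefficients, which you yourself flag as the main obstacle and do not prove; largeness of $\widehat{1_{A_y}}(s)$ for many $y$ does not, by any known argument, confine the pairs $(s,y)$ to a graph-of-a-linear-map structure without essentially proving the theorem one is after. Neither Hosseini--Lovett nor this paper (nor the earlier proofs) proceed that way: the actual mechanism is additive-combinatorial. After vertical differencing, spanning sets of the row annihilators satisfy many additive relations in $y$; one makes a random selection of elements, shows the selection function respects many additive quadruples, and then applies Balog--Szemer\'edi--Gowers together with Freiman/Bogolyubov--Ruzsa (Theorem~\ref{approxFreimanHom}) to replace it by genuinely linear (Freiman-linear) maps on a large structured set, with the covering lemma (Lemma~\ref{linearcoverlemma}, Corollary~\ref{linearCoverCor}) and the Hosseini--Lovett averaging trick keeping the number of maps relevant to each row polylogarithmic. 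Likewise your endgame (``exactly on a subspace $V$'') is asserted rather than argued: this is where Hosseini--Lovett need a rank analysis of linear combinations of the $L_i$, and where this paper instead uses quasirandomness of bilinear Bohr varieties via the algebraic regularity lemma (Theorem~\ref{algreglemma}) together with the robust Bogolyubov--Ruzsa lemma and a quasirandom neighbourhood count; the precise sequence of $\dhor$, $\dver$ operations is dictated by these lemmas, not by generic ``cleaning up.'' (Note also that the paper does not prove Theorem~\ref{bilinearBogRuzsaVS} directly: it is quoted from Hosseini--Lovett and recovered, with weaker constants, as a special case of Theorem~\ref{bogruzsabilinearintro}.) As it stands, your proposal reproduces the standard local step (modulo the Chang issue) and the overall shape of the strategy, but the two steps carrying the real content of the theorem are missing.
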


\begin{rem}
    Using a recent breakthrough~\cite{Marton1,Marton2} of Gowers, Green, Manners and Tao, in which they proved a conjecture of Marton, also known as the polynomial Freiman--Ruzsa conjecture, the exponents of the log can be improved to some value significantly smaller than $64 + o(1)$.
\end{rem}

Sets of the form in the conclusion of the theorem above are called \emph{bilinear varieties}. Let us note that the bounds in~\cite{BienLe} and~\cite{bogPaper} were towers of two or three exponentials in $\log^{O(1)} \delta^{-1}$, but had fewer applications of the directional difference sets.\\

Theorem~\ref{bilinearBogRuzsaVS} was used by Bienvenu and L\^{e}~\cite{BienLe2} to show that the M\"{o}bius function does not have significant correlation with quadratic phases. In~\cite{U4paper}, Gowers and the author used a more analytic version of the theorem above as an important ingredient in a proof of a quantitative inverse theorem for the $\|\cdot\|_{\mathsf{U}^4}$ uniformity norm in finite dimensional vector spaces over $\mathbb{F}_p$ for $ p\geq 5$. Finally, Theorem~\ref{bilinearBogRuzsaVS} was generalized to the multilinear setting by the same authors in~\cite{genPaper}.\\

The goal of this paper is to prove a version of bilinear Bogolyubov argument in arbitrary finite abelian groups with nearly optimal bounds. In order to state it, we need a couple of (mostly standard) definitions. For a finite abelian group $G$, its \emph{dual group} is defined as the additive group of \emph{characters} on $G$, where a character is a homomorphism from $G$ to the unit circle $\mathbb{T} = \mathbb{R}/\mathbb{Z}$. We also make use of a function $\tn{\cdot} \colon \mathbb{T} \to [0,1/2]$ that maps elements of the unit circle to their distance to $0 + \mathbb{Z}$. For a set of characters $\Gamma \subseteq \hat{G}$ and a non-negative real $\rho$, we define the Bohr set $B(\Gamma; \rho)$ to be the set of all elements $x \in G$ such that $\tn{\gamma(x)} \leq \rho$ for all $\gamma \in \Gamma$. The set $\Gamma$ is called the \emph{frequency set} and $\rho$ is the \emph{radius} of the Bohr set. We also say that the \emph{codimension} of $B(\Gamma; \rho)$ is at most $|\Gamma|$. Finally, given a map $\phi \colon A \to H$ from a subset $A$ of an abelian group $G$ to another abelian group $H$, we say that it is \emph{Freiman-linear} if $\phi(a - b) = \phi(a) - \phi(b)$ whenever $a, b, a-b \in A$. Freiman-linearity is simply a linearized version of the property of being a Freiman homomorphism. We are now ready to state the main result of this paper. 

\begin{theorem}[Bilinear Bogolyubov argument in finite abelian groups]\label{bogruzsabilinearintro}Let $G$ and $H$ be finite abelian groups and let $A \subseteq G \times H$ be a set of density $\delta$. Then there exist a positive quantity $\rho \geq \exp\Big(-\log^{O(1)} (10 \delta^{-1})\Big)$, sets $\Gamma \subseteq \hat{G},  \Psi \subseteq \hat{H}$ of size at most $\log^{O(1)} (10\delta^{-1})$ and Freiman-linear maps $L_1, \dots,$ $L_r \colon B(\Psi; \rho) \to \hat{G}$ for some positive integer $r \leq \log^{O(1)} (10 \delta^{-1})$ such that the bilinear Bohr variety
\[\Big\{(x,y) \in B(\Gamma; \rho) \times B(\Psi; \rho) \colon x \in B(L_1(y), \dots, L_r(y); \rho) \Big\}\]
is contained inside $\dhor\dver \dver \dhor \dver \dhor \dhor A$.\end{theorem}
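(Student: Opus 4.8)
The plan is to transplant the architecture of the proof of Theorem~\ref{bilinearBogRuzsaVS} from the vector-space setting to an arbitrary finite abelian group, systematically replacing ``subspace of bounded codimension'' by ``Bohr set of bounded rank and not-too-small radius'' and keeping careful track of how the rank and the radius degrade under each of the seven directional difference operations. (A direct reduction to Theorem~\ref{bilinearBogRuzsaVS} by Freiman modelling seems hopeless: general abelian groups, and Bohr sets inside them, do not admit good vector-space models that respect a bilinear structure.) The engine throughout is a quantitative form of the ordinary linear Bogolyubov lemma valid in any finite abelian group $K$, in the quantitatively sharp form due to Sanders: if $S \subseteq K$ has density $\sigma$, then $2S - 2S$ contains a Bohr set $B(\Gamma;\rho)$ with $|\Gamma| \le \log^{O(1)} \sigma^{-1}$, $\rho \ge \log^{-O(1)} \sigma^{-1}$, and $\Gamma$ contained in the large spectrum of $1_S$. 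We will also use Bourgain-type regularity of Bohr sets and the fact that the density of a Bohr set is bounded below in terms of its rank and radius.

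The first move is a local linearisation in the rows. Restrict to the popular rows $Y_0 = \{y \in H : |A_y| \ge \tfrac{\delta}{2}|G|\}$, a set of density at least $\delta/2$; the first two operations $\dhor\dhor$ replace each row $A_y$ with $2A_y - 2A_y$, so linear Bogolyubov produces a Bohr set $B(\Gamma_y;\rho_1) \subseteq (\dhor\dhor A)_y$ with $\Gamma_y \subseteq \hat G$, $|\Gamma_y| \le \log^{O(1)}\delta^{-1}$, $\rho_1 \ge \log^{-O(1)}\delta^{-1}$, and with $\Gamma_y$ drawn from the large Fourier coefficients of $1_{A_y}$. Hence $\dhor\dhor A$ contains the fibred set $\{(x,y) : y \in Y_0,\ x \in B(\Gamma_y;\rho_1)\}$, whose $x$-fibre over each popular $y$ is a Bohr set whose frequency set depends on $y$ in an as-yet uncontrolled fashion.

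The heart of the argument, absorbing the remaining five operations, is to \emph{align} these fibres, i.e.\ to make the assignment $y \mapsto \Gamma_y$ depend Freiman-linearly on $y$. Applying $\dver$ intersects the row-Bohr-sets over pairs $y = y_1 - y_2$ with $y_1, y_2 \in Y_0$; since $0$ lies in every $B(\Gamma_y;\rho_1)$, the column over $0$ already shows the $y$-support now contains $Y_0 - Y_0$, and over a general $x$ one gets that the $x$-column contains the difference set of $\{y \in Y_0 : x \in B(\Gamma_y;\rho_1)\}$. Pushing this through the further $\dhor, \dver, \dver$ and the final $\dhor$, and mimicking the vector-space proof, one restricts $y$ to a large Bohr set $B(\Psi;\rho_2) \subseteq H$ (obtained by one more application of linear Bogolyubov, now to $Y_0 \subseteq H$), pigeonholes over the boundedly many ``types'' of the finite spectral data $\Gamma_y$, and thereby arranges that the spectral data attached to $y$ depends \emph{approximately} additively on $y$; an approximate-to-exact correction then yields genuine Freiman-linear maps $L_1, \dots, L_r : B(\Psi;\rho) \to \hat G$, $r \le \log^{O(1)}\delta^{-1}$, with $\{L_1(y),\dots,L_r(y)\}$ refining $\Gamma_y$ for $y$ in a dense subset. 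Collecting the $y$-independent part of the row Bohr sets into a single $\Gamma \subseteq \hat G$ and taking $\rho$ to be the least of all the radii produced, one concludes that $\dhor\dver\dver\dhor\dver\dhor\dhor A$ contains $\{(x,y) : y \in B(\Psi;\rho),\ x \in B(\Gamma;\rho) \cap B(L_1(y),\dots,L_r(y);\rho)\}$, which is exactly the desired bilinear Bohr variety; the compounding of the radius losses over the $O(1)$ rounds and the $\log^{O(1)}\delta^{-1}$ individual frequencies is what degrades $\rho$ down to $\exp(-\log^{O(1)}\delta^{-1})$.

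I expect the alignment step to be the main obstacle. In the finite-field proof the class of codimension-$d$ subspaces is rigid and scale-free — intersections stay within codimension $2d$, annihilators are canonical, there is no radius, and one subspace either contains another or it does not — and the linearisation of $y \mapsto V_y$ is an exact linear-algebra fact. Over a general group none of this survives: intersecting two rank-$k$ Bohr sets gives rank $2k$ and a radius that must be restored by a fresh Bogolyubov step, ``contains'' must be replaced by quantitative nesting, and the alignment of the $\Gamma_y$ becomes only an \emph{approximate} additivity which one has to upgrade to honest Freiman-linearity while simultaneously keeping the number of frequencies poly-logarithmic and the radius no smaller than exponentially small. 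Orchestrating the pigeonhole over spectral types, the repeated re-application of linear Bogolyubov, the Bohr-set regularisations, and the approximate-to-exact correction so that all parameters stay under control at once is where essentially all the new work lies.
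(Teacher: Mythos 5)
Your outline reproduces the paper's Steps 1--2 (popular rows, $\dhor\dhor$ plus Sanders--Bogolyubov in each row, then a $\dver$ intersecting the row Bohr sets), but from that point on it is a plan rather than a proof, and the plan is missing exactly the ingredients that make the general-group case go through. First, your ``pigeonhole over the boundedly many types of the spectral data $\Gamma_y$'' is not available: the frequency sets $\Gamma_y$ range over roughly $|\hat G|^{\log^{O(1)}\delta^{-1}}$ possibilities, so pigeonholing on them directly destroys all density. The paper instead passes through the $\dhor$ step using a Bohr-set analogue of $(U\cap V)^\perp=U^\perp+V^\perp$ (Theorem~\ref{dualsidentityintro}/\ref{bohrSum}), covers the resulting spectra by a family of at most $\exp(\log^{O(1)}\delta^{-1})$ Freiman homomorphisms via the covering lemma (Lemma~\ref{linearcoverlemma}, Corollary~\ref{linearCoverCor}) built on the inverse theorem for approximate Freiman homomorphisms (Theorem~\ref{approxFreimanHom}), cuts each $y$'s spectrum down to $\log^{O(1)}\delta^{-1}$ of these maps by the quantitative lattice theorem (Theorem~\ref{quantLattice}), and only then pigeonholes over the small index sets, with the Hosseini--Lovett averaging idea ensuring the exponential count of maps costs only density and not the number $r$ of maps in the conclusion. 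Your ``approximate-to-exact correction'' names the right goal but none of this machinery.

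The more serious gap is the endgame, which your proposal defers entirely (``where essentially all the new work lies''). After the Freiman-linear maps $L_1,\dots,L_r$ are in hand, one still must show that the remaining $\dver\dver$ and the final $\dhor$ fill out the whole bilinear Bohr variety; in the vector-space arguments this is done either by rank considerations for linear combinations of the $L_i$ (Hosseini--Lovett) or by Fourier/regularity arguments, and neither transfers to general abelian groups --- this is precisely why the paper proves the algebraic regularity lemma for bilinear Bohr varieties (Theorem~\ref{algreglemmaintro}/\ref{algreglemma}), which together with the robust Bogolyubov--Ruzsa lemma (Corollary~\ref{robustBogRuzsaCP}, giving a coset progression $D$ all of whose elements have many representations $y_1+y_2-y_3-y_4$ in the structured set of rows) and bipartite quasirandomness yields rows that are $(1-\eta'')$-dense in the prescribed Bohr sets, after which the last $\dhor$ and Lemma~\ref{almostfullBohr} finish. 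Simply ``restricting $y$ to a Bohr set $B(\Psi;\rho_2)\subseteq 2Y_0-2Y_0$'' does not provide this: membership of $y$ in a Bohr set gives no mechanism forcing the fibre over \emph{every} such $y$ to contain the required set, and without a quasirandomness (or rank) input the final two difference operations cannot be analysed. So while your high-level storyboard is consistent with the paper's, the proposal leaves the central new content of the proof unsupplied.
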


\vspace{\baselineskip}
\noindent\textbf{Remark on logarithms.} In this paper, logarithms appear exclusively in upper bounds (even in $\exp\big(-\log^{O(1)} (10 \delta^{-1})\big)$ there is an upper bound in the argument of $\exp$). To ease the notational burden in the bounds that appear quite frequently, from now on $\log x$ for $x \geq 1$ stands for $\log x + 2$. The main reason for doing so is that it guarantees that $\log$ is sufficiently larger than 1.\\

\noindent\textbf{Remark on bounds.} In the vector space case, Theorem~\ref{bilinearBogRuzsaVS} gives a bound of the form $\log_p (p^n / |U|) + \log_p (p^n /|V|) + r \leq O_p(\log^{O(1)} \delta^{-1})$. Since the size of the Bohr set $B(\Gamma; \rho)$ is comparable to $\rho^{|\Gamma|}|G|$, the corresponding bound in the general case would be $(|\Gamma| + |\Psi|) \log \rho^{-1} + r  \leq O(\log^{O(1)} \delta^{-1})$, which is indeed the case in Theorem~\ref{bogruzsabilinearintro}.\\ 
\indent In contrast to Theorem~\ref{bilinearBogRuzsaVS}, we opted not to give explicit exponents of $\log (10\delta^{-1})$. In comparison with the case of finite vector spaces, there are additional sources of poly-logarithmic factors in general abelian groups. For example, Theorem~\ref{dualsidentityintro}, which generalizes the vector space identity $(U \cap V)^\perp = U^\perp + V^\perp$, Theorem~\ref{latticesIntro}, which is a quantitative version of the fundamental theorem of lattices, and Proposition~\ref{cosettobohrset}, which locates Bohr sets inside coset-progressions, all introduce such losses in bounds, while also reducing to trivial linear-algebraic facts in the finite vector spaces case. In particular, the exponent in Theorem~\ref{bogruzsabilinearintro} would be significantly higher than 64.\\

Let us compare the conclusions of Theorems~\ref{bilinearBogRuzsaVS} and~\ref{bogruzsabilinearintro}. As usual, we think of Bohr sets as replacement for subgroups in general finite abelian groups, so having $B(\Gamma; \rho) \times B(\Psi; \rho)$ in Theorem~\ref{bogruzsabilinearintro} instead of $U \times V$ in Theorem~\ref{bilinearBogRuzsaVS} is expected. When it comes to the bilinear map $\beta \colon U \times V \to \mathbb{F}_p^r$ in Theorem~\ref{bilinearBogRuzsaVS}, if we fix a dot product $\cdot$ on $\mathbb{F}_p^n$, then linear forms on $\mathbb{F}_p^n$ are exactly the maps $v \mapsto a \cdot v$ for some fixed $a \in \mathbb{F}_p^n$. In particular, for each $i \in [r]$, we can find a linear map $L_i \colon \mathbb{F}_p^n \to \mathbb{F}_p^n$ such that $\beta_i(x,y) = x \cdot L_i(y)$. Let us interpret expressions $\frac{\lambda}{p}$ for $\lambda \in \mathbb{F}_p$ as elements of $\mathbb{T}$, namely as $\frac{k}{p} + \mathbb{Z}$ where $k \in \{0,1,\dots, p-1\} \subseteq \mathbb{Z}$ is the integer in the given interval corresponding to $\lambda$. With these conventions we see that
\[\{(x,y) \in U \times V \colon \beta(x,y) = 0\} = \Big\{(x,y) \in U \times V \colon \btn{\frac{x \cdot L_i(y)}{p}} \leq 0\Big\}.\]

Finally, noting that $v \mapsto \frac{a \cdot v}{p}$ is a character on $\mathbb{F}_p^n$ we see that Theorem~\ref{bogruzsabilinearintro} implies Theorem~\ref{bilinearBogRuzsaVS} (though with somewhat weaker bounds as the implicit constant in $\log^{O(1)} \delta^{-1}$ is probably higher than 80, but we are mainly interested in the same form of bounds).\\
\indent As a final comment on the relationship between the two results, Freiman-linear maps on Bohr sets seem to be the right generalization of linear maps in this context.\footnote{See Theorem~\ref{approxFreimanHom}. We use coset progressions as the domain, but coset progressions and Bohr sets are very closely related, see also Proposition~\ref{cosettobohrset}.}\\

\noindent\textbf{A comparison with earlier works and an outline.} Let us briefly compare the previous works on the bilinear Bogolyubov argument~\cite{BienLe},~\cite{bogPaper} and~\cite{HosseiniLovett}. Let us fix a dense set $A \subseteq \mathbb{F}_p^n \times \mathbb{F}_p^n$. Very roughly speaking, all three proofs have the following structure.
\begin{itemize}
\item[\textbf{Step 1.}] Apply the Bogolyubov argument in all dense rows, to find a dense subset $Y \subseteq \mathbb{F}_p^n$ such that for each $y \in Y$, after taking double difference set in the row indexed by $y$, we obtain a bounded codimension vector space $V_y$.
\item[\textbf{Step 2.}] Take difference sets in columns and rows sufficient number of times to find a further dense subset $Y' \subseteq \mathbb{F}_p^n$ and a bounded number of linear maps $L_1, \dots, L_r \colon \mathbb{F}_p^n \to \mathbb{F}_p^n$ such that for each $y \in Y'$ we now obtain the subspace $\langle L_1(y), \dots, L_r(y)\rangle^\perp$ in the row indexed by $y$.
\item[\textbf{Step 3.}] Apply the Bogolyubov argument to $Y'$ to obtain a subspace $U \subseteq 2Y' - 2Y'$ and use this subspace to induce the desired structure in columns of the iterated difference set of $A$.
\end{itemize}

The way this strategy is carried out is different in each of the three papers. However, due to its simplicity, \textbf{Step 1} is essentially the same in all three works. Furthermore, in \textbf{Step 2} all three papers use the fact that the maps that respect many additive quadruples coincide on a large set with affine maps (see Theorem~\ref{approxFreimanHom} for a precise statement).\\

As for differences, which are most obvious in \textbf{Step 3}, the work of Bienvenu and L\^{e}~\cite{BienLe} uses the arithmetic regularity lemma, Gowers and the author use a direct Fourier analytic argument to conclude the proof, while Hosseini and Lovett perform a careful algebraic manipulation of the resulting subspaces in the rows of the iterated difference set and use a significant amount of linear algebra. There is also an important difference in how Hosseini and Lovett find the linear maps in \textbf{Step 2} without a loss in efficiency; we shall discuss their idea when we reach the corresponding step in the proof in this paper. Overall, the papers~\cite{bogPaper} and~\cite{HosseiniLovett} are more similar in their organization and this paper is built upon ideas in those two, so we focus on them in the rest of the outline.\\

Our argument will follow that of Hosseini and Lovett in the first two steps, but we shall need to diverge in order to complete \textbf{Step 3}. Their argument involves a consideration of the rank of linear combinations of linear maps $L_1, \dots, L_r$ produced in \textbf{Step 2}, which is not readily available in our more general setting. However, the high-rank condition on the system of linear maps is essentially equivalent to the bilinear variety
\[\Big\{(x,y) \in \mathbb{F}_p^n \times \mathbb{F}_p^n \colon (\forall i \in [r]) x \cdot L_i(y) = 0\Big\}\]
being quasirandom (see Corollary 5.2 and Lemma 5.4 in~\cite{U4paper}). With this in mind, we prove an algebraic regularity lemma for bilinear Bohr varieties.\\
\indent Recall that a \emph{coset progression} in an abelian group $G$, introduced by Green and Ruzsa in their generalization of Freiman's theorem to arbitrary abelian groups~\cite{greenRuzsaFreiman}, is a set $C$ of the form $L_1 + \dots + L_r + H$, where $L_i$ are arithmetic progressions and $H \leq G$ is a subgroup. We refer to the number $r$ as the \emph{rank} of the coset progression. We say that a coset progression $C$ is \emph{proper} if $|C| = |L_1| \cdots |L_r||H|$, (thus all sums of $(r + 1)$-tuples are distinct), and we say that $C$ is \emph{symmetric} if all $L_i$ are symmetric, i.e. $L_i = -L_i$. We use coset progressions instead of Bohr sets as domains of Freiman-linear maps in the statement as the coset progressions have nicer partitions than Bohr sets, but this is of minor importance.

\begin{theorem}[Algebraic regularity lemma for bilinear Bohr varieties]\label{algreglemmaintro}Let $G$ and $H$ be finite abelian groups. Let $C$ be a symmetric proper coset progression of rank $d$ inside the group $H$, let $\Gamma \subseteq \hat{G}$ and let $L_1, \dots, L_r \colon C \to \hat{G}$ be Freiman-linear maps. Let $\rho > 0$ and $\eta > 0$ be given. We may partition $C$ into further proper coset progressions $C_1, \dots, C_m$ of rank at most $d$, where
\[m \leq \exp\Big(d^{O(1)} r^{O(1)} |\Gamma|^{O(1)}\log^{O(1)} (\eta^{-1}) \log^{O(1)} (\rho^{-1})\Big),\]
such that for each $i \in [m]$ there exist positive reals $\delta_i$ and $\rho_i \in [\rho/2, \rho]$ such that the following two quasirandomness properties hold.
\begin{itemize}
\item[\textbf{(i)}] For at least a $1 - \eta$ proportion of all elements $y \in C_i$ we have 
\[\Big||B(\Gamma \cup \{L_1(y), \dots, L_r(y)\}; \rho_i)| - \delta_i |B(\Gamma; \rho_i)|\Big| \leq \eta |G|.\]
\item[\textbf{(ii)}] For at least a $1 - \eta$ proportion of all pairs $(y, y') \in C_i \times C_i$ we have
\[\Big||B(\Gamma \cup \{L_1(y), \dots, L_r(y), L_1(y'), \dots, L_r(y')\}; \rho_i)| - \delta^2_i |B(\Gamma; \rho_i)|\Big| \leq \eta |G|.\]
\end{itemize}
\end{theorem}

Using this result, we may finish \textbf{Step 3} using the well-known quasirandomness properties of bipartite graphs.\\

Let us note that Theorem~\ref{algreglemmaintro} is another important ingredient of the proof~\cite{U4paper} of the inverse theorem for the $\|\cdot\|_{\mathsf{U}^4}$ uniformity norm in $\mathbb{F}_p^n$ that is generalized in this paper. With this in mind, Theorems~\ref{bogruzsabilinearintro} and~\ref{algreglemmaintro}, along with other auxiliary results proved here, open up the possibility of extending the mentioned inverse theorem to arbitrary finite abelian groups. In fact, it is likely that the other arguments from the proof in~\cite{U4paper}, which are of a more combinatorial nature, could be combined with the results of this paper to understand Freiman bihomomorphisms\footnote{For finite abelian groups $G, H$ and $K$, and a subset $A \subseteq G \times H$, a \emph{Freiman bihomomorphism} is a map $\phi \colon A \to K$ with the property that the map $y \mapsto \phi(x, y)$ is a Freiman homomorphism for each $x \in G$ (with domain $\{y \in H \colon (x,y) \in A\}$) and the map $x \mapsto \phi(x, y)$ is a Freiman homomorphism for each $y \in H$ (again with an appropriate domain).} in general finite abelian groups. In the case of the $\mathbb{F}_p^n$, it is possible to deduce the inverse theorem for the $\|\cdot\|_{\mathsf{U}^4}$ norm from an inverse theorem for Freiman bihomomorphisms (see~\cite{U4paper} for $p \geq 5$ and a recent work of Tidor~\cite{Tidor} for the cases $p \in \{2,3\}$).\\

\noindent\textbf{`Linear algebra' in finite abelian groups.} Since we are no longer working with vector spaces, we need to be able to generalize several linear-algebraic facts to setting of finite abelian groups. Let us mention a few examples.\\ 

First, the vector space identity
\begin{equation}(U \cap V)^\perp = U^\perp + V^\perp\label{perpIdIntro}\end{equation}
plays a crucial role in the proof of the bilinear Bogolyubov argument. In the setting of general finite abelian groups, where $U$ and $V$ are no longer subspaces but are Bohr sets instead, we prove the following result.\\
\indent For a set $\Gamma = \{\gamma_1, \dots, \gamma_k\}$ of elements of an abelian group and a nonnegative integer $R$, we write $\langle \Gamma \rangle_R$ for the set of all linear combinations $\lambda_1\gamma_1 + \dots + \lambda_k \gamma_k$, where $\lambda_1, \dots, \lambda_k \in [-R, R] \subseteq \mathbb{Z}$. 

\begin{theorem}\label{dualsidentityintro}Let $G$ be a finite abelian group. Let $\Gamma_1, \Gamma_2 \subset \hat{G}$ and $\rho_1, \rho_2 \in (0,1)$. Then there is a positive integer $R \leq (2\rho_1^{-1})^{O(|\Gamma_1|)} + (2\rho_2^{-1})^{O(|\Gamma_2|)}$ such that
\[B(\langle \Gamma_1 \rangle_R \cap \langle \Gamma_2 \rangle_R, 1/4) \subseteq B(\Gamma_1, \rho_1) + B(\Gamma_2, \rho_2).\]
\end{theorem}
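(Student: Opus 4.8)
The plan is to argue directly and exhibit the integer $R$. Put $k := |\Gamma_1|$ and $l := |\Gamma_2|$, set
\[S_1 := \big\lceil 2^{k+l}\,\rho_1^{-(k+1)}\rho_2^{-l}\big\rceil, \qquad S_2 := \big\lceil 2^{k+l}\,\rho_2^{-(l+1)}\rho_1^{-k}\big\rceil,\]
and take $R := \max(S_1,S_2)$; since a product $\rho_1^{-p}\rho_2^{-q}$ with $p = O(k)$ and $q = O(l)$ is at most $\rho_1^{-2p}+\rho_2^{-2q}$ by the inequality $ab\le a^2+b^2$, this gives $R \le (2\rho_1^{-1})^{O(|\Gamma_1|)} + (2\rho_2^{-1})^{O(|\Gamma_2|)}$. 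I would first discard the case $\rho_1\ge 1/2$ or $\rho_2\ge 1/2$, where the corresponding Bohr set is all of $G$ and the inclusion is immediate, and so assume $\rho_1,\rho_2<1/2$. Write $\Gamma_1 = \{\gamma_1,\dots,\gamma_k\}$, $\Gamma_2 = \{\psi_1,\dots,\psi_l\}$, and let $\Xi\colon G\to\mathbb{T}^k$ and $\Theta\colon G\to\mathbb{T}^l$ be the homomorphisms whose coordinates are the $\gamma_i$ and the $\psi_j$ respectively. Fixing $x\in B(\langle\Gamma_1\rangle_R\cap\langle\Gamma_2\rangle_R, 1/4)$, the goal is to write $x = a + (x-a)$ with $a\in B(\Gamma_1,\rho_1)$ and $x-a\in B(\Gamma_2,\rho_2)$; equivalently, to show that the set
\[\mathcal{G} := \big\{a\in G : |\gamma_i(a)|_{\mathbb{T}}\le\rho_1 \text{ for all } i,\ \ |\psi_j(x)-\psi_j(a)|_{\mathbb{T}}\le\rho_2 \text{ for all } j\big\}\]
is nonempty.

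To detect $\mathcal{G}$ I would test a nonnegative kernel with controlled Fourier support against the relation that defines it. On the circle, let $\kappa_{S,\rho}$ be the convolution of the Fej\'er kernel $F_S$ of degree $S$ with the triangular ``tent'' function that is supported on $\{|t|_{\mathbb{T}}\le\rho/2\}$ and has mass $1$; then $\widehat{\kappa_{S,\rho}}\ge 0$ is supported on $\{|j|\le S\}$, one has $\widehat{\kappa_{S,\rho}}(0)=1$ and $0\le\kappa_{S,\rho}\le 2/\rho$, and — using $F_S(t)\le \tfrac{1}{4(S+1)|t|_{\mathbb{T}}^{2}}$ — one gets $\kappa_{S,\rho}(t)\le\tfrac{1}{(S+1)\rho^2}$ whenever $|t|_{\mathbb{T}}\ge\rho$. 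On $\mathbb{T}^{k+l}$ I would take
\[f(u',u'') := \prod_{i=1}^{k}\kappa_{S_1,\rho_1}(u_i')\ \prod_{j=1}^{l}\kappa_{S_2,\rho_2}(u_j''),\]
so that $f\ge 0$, while $\widehat f\ge 0$ is supported on $\{\|\lambda\|_\infty\le S_1,\ \|\mu\|_\infty\le S_2\}$ with $\widehat f(0)=1$. The values of $S_1$ and $S_2$ are chosen exactly so that at any point lying outside the box $\{|u_i'|_{\mathbb{T}}\le\rho_1\}\times\{|u_j''|_{\mathbb{T}}\le\rho_2\}$ some coordinate factor is at most $\tfrac{1}{(S_\bullet+1)\rho_\bullet^2}$ while each of the remaining $k+l-1$ factors is at most $2/\rho_1$ or $2/\rho_2$, which multiplies out to $f\le 1/2$ at that point.

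The main step would then be to evaluate $Z := \sum_{a\in G} f\big(\Xi(a),\,\Theta(a)-\Theta(x)\big)$, a real number, in two ways. Expanding $f$ in its finite Fourier series and using orthogonality of the characters of $G$, only the pairs $(\lambda,\mu)$ with $\sum_i\lambda_i\gamma_i + \sum_j\mu_j\psi_j = 0$ contribute, and $Z = |G|\sum_{(\lambda,\mu)}\widehat f(\lambda,\mu)\,e(\theta_{\lambda,\mu}(x))$ over those pairs, where $e(s):=e^{2\pi i s}$ and $\theta_{\lambda,\mu}:=\sum_i\lambda_i\gamma_i = -\sum_j\mu_j\psi_j$; for such a pair with $\widehat f(\lambda,\mu)\ne 0$ the character $\theta_{\lambda,\mu}$ lies in $\langle\Gamma_1\rangle_{S_1}\cap\langle\Gamma_2\rangle_{S_2}\subseteq\langle\Gamma_1\rangle_R\cap\langle\Gamma_2\rangle_R$. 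Since $\widehat f\ge 0$ and the hypothesis on $x$ forces $\operatorname{Re}e(\theta_{\lambda,\mu}(x)) = \cos\big(2\pi\,|\theta_{\lambda,\mu}(x)|_{\mathbb{T}}\big)\ge\cos(\pi/2)=0$ for each such $\theta_{\lambda,\mu}$, passing to real parts makes every summand nonnegative, while the $(\lambda,\mu)=(0,0)$ term equals $1$; hence $Z\ge|G|$. On the other hand, were $\mathcal{G}$ empty, then $\big(\Xi(a),\Theta(a)-\Theta(x)\big)$ would lie outside the box for every $a\in G$, giving $f\le 1/2$ at each such point, hence $Z\le|G|/2$ — a contradiction. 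So $\mathcal{G}\ne\emptyset$, which completes the argument.

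The point requiring genuine care is the calibration in the second paragraph: $S_1$ and $S_2$ must be taken large enough to force $f\le 1/2$ off the box, yet chosen so that in the final estimate for $R$ the exponent of $\rho_1^{-1}$ depends only on $|\Gamma_1|$ and that of $\rho_2^{-1}$ only on $|\Gamma_2|$. This is a genuine constraint — a crude choice would leave a forbidden cross term like $(\rho_1^{-1})^{O(|\Gamma_2|)}$ — and it is the product structure of the kernel together with the splitting $ab\le a^2+b^2$ that keeps the bound ``pure''. Everything else (the existence of a circle kernel with nonnegative, compactly supported Fourier transform and the stated decay, orthogonality of characters, and the trivial reduction to $\rho_1,\rho_2<1/2$) is routine.
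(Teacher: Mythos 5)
Your argument is correct, and it is genuinely different from the one in the paper. The paper proves this statement (Theorem~\ref{bohrSum}) by a Bogolyubov-type argument on the physical side: it sets $f = B(\Gamma_1,\sigma_1) \ast B(\Gamma_2,\sigma_2)$ for radii $\sigma_i \in [\rho_i/8,\rho_i/4]$ chosen to make the Bohr sets weakly regular, observes that $\operatorname{supp} f\ast f\ast f\ast f \subseteq B(\Gamma_1,\rho_1)+B(\Gamma_2,\rho_2)$, shows by positivity that $B(\Gamma;1/4)$ lies in this support where $\Gamma$ is the large spectrum of $f$, and then invokes Proposition~\ref{bohrsizeLargeFC} (large spectra of weakly regular Bohr sets consist of bounded integer combinations of the defining characters) to place $\Gamma$ inside $\langle\Gamma_1\rangle_R\cap\langle\Gamma_2\rangle_R$. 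You instead work entirely on the Fourier side: your tensor product of Fej\'er-times-tent kernels has, by construction, nonnegative Fourier coefficients supported in the box $[-S_1,S_1]^k\times[-S_2,S_2]^l$ (it is a genuine trigonometric polynomial, so the interchange of sums in computing $Z$ is exact), so the fact that every contributing frequency $\theta_{\lambda,\mu}$ lies in $\langle\Gamma_1\rangle_R\cap\langle\Gamma_2\rangle_R$ is automatic, and the double counting of $Z$ together with the pointwise bound $f\le 1/2$ off the box replaces the support/positivity step. I checked the key estimates: $F_S(t)\le \tfrac{1}{4(S+1)|t|_{\mathbb{T}}^2}$ and the resulting $\kappa_{S,\rho}(t)\le \tfrac{1}{(S+1)\rho^2}$ for $|t|_{\mathbb{T}}\ge\rho$ are right, the calibration of $S_1,S_2$ does give $f\le 1/2$ off the box, and the prefactor $2^{k+l}$ causes no ``impure'' cross term since $2\le 2\rho_i^{-1}$ lets you absorb it (one more application of $ab\le a^2+b^2$ makes this explicit, and is worth a line in a final write-up). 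What each approach buys: yours is self-contained and avoids both the weak-regularity bookkeeping and the large-spectrum proposition, arguably giving a cleaner proof of this single statement with bounds of the same shape; the paper's route is economical in context because Proposition~\ref{bohrsizeLargeFC} and the weak-regularity device are already needed elsewhere (notably in the regularity lemma of Section~\ref{secQRBohr}), so the theorem there comes almost for free from established machinery.
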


One way to think about the Bohr set $B(\Gamma; \rho)$ is to view it as an analogue of $\Gamma^\perp$ in finite abelian groups; by this analogy Theorem~\ref{dualsidentityintro} is a meaningful generalization of the linear-algebraic identity~\eqref{perpIdIntro}.\\

Another important notion is the linear independence of elements in a vector space. We may still talk about independent elements in finite abelian groups, but in our case we need a quantitative version of that notion. To that end, we prove the following theorem.

\begin{theorem}\label{latticesIntro}Let $G$ be a finite abelian group, let $a_1, \dots, a_k \in G$ and let $R$ be a positive integer. Suppose that $B \subset \langle a_1, \dots, a_k \rangle_R$ is a non-empty set. Then, there exist positive integers $\ell \leq O(k^2 (\log k + \log R))$, $S \leq O((2Rk)^{k+ 3})$ and elements $b_1, \dots, b_\ell \in B$ such that $B \subseteq \langle b_1, \dots, b_\ell\rangle_{S}$.\end{theorem}

To relate this to the linear-algebraic setting, we think of $ \langle a_1, \dots, a_k \rangle_R$ as a subspace $V$ of dimension $k$, and $B$ a subspace inside $V$. The usual conclusion then is that $B$ has a spanning set of size at most $k$. In the general setting we have to pay a modest price and allow a slightly larger spanning set, but we still have control over the coefficients of the required linear combinations.\\

As a final example, in order to prove Theorem~\ref{algreglemmaintro}, we need strong control over sizes of Bohr sets. To that end, we prove an approximate formula for the size of Bohr sets that are somewhat well-behaved.

\begin{proposition}[Size of Bohr sets]Let $k \in \mathbb{N}$ and let $\rho, \eta, \varepsilon > 0$. Then there exist a positive integer $K \leq O(k \varepsilon^{-1} \eta^{-1})$ and quantities $c_i \in \mathbb{D}$ for $i \in [-K, K]$ for which the following holds.\\
\indent Let $G$ be a finite abelian group. Suppose that $\gamma_1, \dots, \gamma_k \in \hat{G}$ and that 
\begin{equation*}|B(\gamma_1, \dots, \gamma_k; \rho + \eta) \setminus B(\gamma_1, \dots, \gamma_k; \rho)| \leq \varepsilon |G|.\end{equation*}
Then 
\[\Big||B(\gamma_1, \dots, \gamma_k; \rho)| - \sum_{a_1, \dots, a_k \in [-K, K]} \id(a_1 \gamma_1 + \dots + a_k \gamma_k = 0) c_{a_1} \dots c_{a_k} |G|\Big| \leq 2\varepsilon |G|.\]
\end{proposition}

For this proposition, the linear-algebraic analogue is the formula
\[|\langle u_1, \dots, u_k \rangle^\perp| \,= \Big|\Big\{(a_1, \dots, a_k) \in \mathbb{F}_p^k \colon a_1 u_1 + \dots + a_k u_k = 0\Big\}\Big|\, p^{-k}\,|\mathbb{F}_p^n| \,= |\mathbb{F}_p^n| / |\langle u_1, \dots, u_k \rangle|\]
which holds for any $u_1, \dots, u_k \in \mathbb{F}_p^n$, or, put more concisely, $|U^\perp| = p^n / |U|$ for all subspaces $U \leq \mathbb{F}_p^n$. Note also that we typically use the notion of regular Bohr sets when we need nice behaviour; in our case the weaker assumption above is sufficient. Finally, let us also remark that the constants $c_i$ do not depend on the ambient group $G$.\\

\noindent\textbf{Organization of the paper.} The first four sections are somewhat preliminary in the sense that we recall some basic properties and well-known results regarding the topic of each section, and then proceed to derive some new ones that are required in order to prove Theorem~\ref{bogruzsabilinearintro}. Section~\ref{secCP} is devoted mainly to coset progressions and Freiman homomorphisms,  Section~\ref{secFreiman} to Freiman's theorem, Section~\ref{secBohr} to Bohr sets and Section~\ref{secLattice} to lattices. Following those four sections, in Section~\ref{secQRBohr} we study quasirandomness properties of bilinear Bohr varieties and prove Theorem~\ref{algreglemmaintro}. Finally, in Section~\ref{secBilBA} we combine all these ingredients to complete the proof of Theorem~\ref{bogruzsabilinearintro}. There are also two appendices, the first one on the robust version of the Bogolyubov-Rusza lemma due to Schoen and Sisask~\cite{SchSisRob}, and the second one where we briefly recall the theory of quasirandom bipartite graphs.\\

\noindent\textbf{Acknowledgements.} This work was supported by the Serbian Ministry of Education, Science and Technological Development through Mathematical Institute of the Serbian Academy of Sciences and Arts. 

\section{Coset progressions and Freiman homomorphisms}\label{secCP}

\noindent \textbf{Coset progressions.} We begin the preliminary section by repeating the definition of coset progressions for completeness. A \emph{coset progression} in an abelian group $G$ is a set $C$ of the form $L_1 + \dots + L_r + H$, where $L_i$ are arithmetic progressions and $H \leq G$ is a subgroup. We say that $L_1 + \dots + L_r + H$ is a \emph{canonical form} of $C$. The number $r$ is known as the \emph{rank} of the coset progression. We say that a coset progression $C$ is \emph{proper} if $|C| = |L_1| \cdots |L_r||H|$, (thus all sums of $(r + 1)$-tuples are distinct), and we say that $C$ is \emph{symmetric} if all $L_i$ are symmetric, i.e. $L_i = -L_i$. Coset progressions were introduced by Green and Ruzsa in their generalization of Freiman's theorem to general abelian groups~\cite{greenRuzsaFreiman}, and, as their theorem confirms, these objects are a correct generalization of (cosets of) subgroups from an additive-combinatorial perspective.\\

We say that $A \subseteq B$ is a \emph{Freiman-subgroup} of $B$ if whenever $a, b \in A$ and $a - b \in B$ then $a - b \in A$. The next lemma show that Freiman-sbugroups of a coset progression $C$ are closely related to $C$.

\begin{lemma}\label{progsbgp}Let $G$ be a finite abelian group. Let $C = [-N_1, N_1] \cdot v_1 + \dots + [-N_d, N_d] \cdot v_d + H$ be a proper symmetric coset progression of rank $d$ in its canonical form. Let $A \subseteq C$ be a Freiman-subgroup of size $|A| \geq \alpha |C|$ in $G$. Then $A$ contains $[-M_1, M_1] \cdot \ell_1 v_1 + \dots + [-M_d, M_d] \cdot \ell_d v_d + H'$ for some positive integers $\ell_1, \dots, \ell_d \leq 20\alpha^{-1}$, $M_i = \lfloor N_i/\ell_i\rfloor$ and a subgroup $H' \leq H$ of size $|H'| \geq \alpha |H|$.\end{lemma}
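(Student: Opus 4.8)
The plan is to reduce the problem to a purely structural statement about the progression part and a Bogolyubov-type statement about the subgroup part, treating the two sources of density loss separately. First I would write $C = P + H$ where $P = [-N_1, N_1]\cdot v_1 + \dots + [-N_d, N_d]\cdot v_d$, so that $C$ is the union of $|P|$ cosets of $H$ (properness guarantees these cosets are distinct and that the natural map $\mathbb{Z}^d \to G$, $(a_1,\dots,a_d) \mapsto \sum a_i v_i$, is injective on $[-N_1,N_1]\times\dots\times[-N_d,N_d]$, so we may pull back and argue in $\mathbb{Z}^d \times H$). For $x \in P$, let $A_x = \{h \in H : x + h \in A\}$ be the fibre; then $\sum_{x \in P}|A_x| = |A| \geq \alpha|C| = \alpha|P||H|$, so by averaging the set $P' = \{x \in P : |A_x| \geq (\alpha/2)|H|\}$ has size $|P'| \geq (\alpha/2)|P|$.

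The Freiman-subgroup hypothesis on $A$ translates into two facts. On the fibre level, fixing $x \in P$ and taking $a = x + h$, $b = x + h'$ with $a - b = h - h' \in H \subseteq C$, we get $h - h' \in A_x$; hence each $A_x$ is a genuine subgroup of $H$ (it is symmetric and closed under subtraction, and nonempty when $x \in P'$). On the base level, for $x, x' \in P$ with $x - x' \in P$ (so $x - x' \in C$), picking any $h \in A_x$, $h' \in A_{x'}$ gives $(x + h) - (x' + h') \in A$ with difference lying in $C$, whence $x - x' \in P$ has a nonempty fibre and moreover $A_x - A_{x'} \subseteq A_{x - x'}$; in particular $A_x$ and $A_{x'}$ are subgroups with $A_x - A_{x'} = A_x + A_{x'}$ contained in the subgroup $A_{x-x'}$. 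Running this with $x' = x$ shows $A_x \subseteq A_0$, and by symmetry $A_0 \subseteq A_x$, so in fact all nonempty fibres equal a single subgroup $H' = A_0 \leq H$ with $|H'| \geq (\alpha/2)|H|$ — actually I should be a little careful: $0$ need not lie in $P'$, but $0 \in P$ and $0 - 0 = 0 \in P$ forces $A_0$ nonempty via any $x \in P'$ and $h \in A_x$, giving $h - h \in A_0$, and then $A_0 \supseteq A_x$ for the same reason, so $A_0$ is the common (largest) fibre and has size $\geq (\alpha/2)|H|$; call it $H'$. Thus $A \supseteq (P' \cup \{0\}) + H'$ with every fibre over $P'$ exactly $H'$, and $\on{supp}A := \{x \in P : A_x \neq \emptyset\}$ is a symmetric subset of $P$ of density $\geq \alpha/2$ that is closed under taking differences lying in $P$, i.e. a Freiman-subgroup of $P$.

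It remains to find a sub-progression of the prescribed form inside a Freiman-subgroup $Q$ of $P = [-N_1,N_1]\cdot v_1 + \dots$ of density $\geq \alpha/2$. Pulling back to the box $B = [-N_1,N_1]\times\dots\times[-N_d,N_d] \subseteq \mathbb{Z}^d$ (using injectivity), $Q$ corresponds to a symmetric set $\tilde Q \subseteq B$ with $|\tilde Q| \geq (\alpha/2)|B|$ that is closed under differences that stay in $B$. I claim $\tilde Q$ contains $[-M_1,M_1]\cdot\ell_1 e_1 + \dots + [-M_d,M_d]\cdot\ell_d e_d$ for suitable $\ell_i \leq 20\alpha^{-1}$ and $M_i = \lfloor N_i/\ell_i\rfloor$. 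The key step — and the one I expect to be the main obstacle — is a one-dimensional pigeonhole argument in each coordinate: considering the "column" through a typical point, the set of coordinates in direction $e_i$ that appear has density $\gtrsim \alpha$ in $[-N_i,N_i]$, so two such values are within $O(\alpha^{-1})$ of each other, their difference $\ell_i \leq O(\alpha^{-1})$ lies in $\tilde Q$ (being a difference staying in $B$), and then closure under differences gives all integer multiples $j\ell_i$ with $|j| \leq M_i$; carefully iterating this across the $d$ coordinates, using that differences of already-constructed elements remain in $B$ because we only move in a sub-box of half the side length, yields the full sub-box. The constant $20$ (rather than, say, $2$) is the slack needed to make the averaging in each successive coordinate go through after the previous coordinates have been fixed. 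Translating back through $v_i \mapsto \ell_i v_i$ and adding $H'$ completes the proof.
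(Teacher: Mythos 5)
The fibre analysis at the heart of your reduction is incorrect, and the error is fatal to the final step. From $a=x+h$, $b=x+h'\in A$ with $a-b=h-h'\in H\subseteq C$ you may conclude only that $h-h'\in A$, i.e.\ $h-h'\in A_0=A\cap H$; it does not follow that $h-h'\in A_x$. What is actually true is that $A_0$ is a subgroup and every nonempty fibre $A_x$ is a \emph{coset} of $A_0$, and these cosets can be nontrivial, so your claims ``each $A_x$ is a subgroup'', ``$A_x\subseteq A_0$ and $A_0\subseteq A_x$'', and hence ``$A\supseteq (P'\cup\{0\})+H'$'' all fail. Concretely, take $G=\mathbb{Z}/M\mathbb{Z}\times\mathbb{Z}/2\mathbb{Z}$ with $M$ large, $C=[-N,N]\cdot(1,0)+\big(\{0\}\times\mathbb{Z}/2\mathbb{Z}\big)$, and $A=\{(j,\,j\bmod 2)\colon j\in[-N,N]\}$. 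This $A$ is a Freiman-subgroup of $C$ of density $\alpha=1/2$, every fibre is nonempty, but $A_j=\{1\}\neq A_0=\{0\}$ for odd $j$; your conclusion would force $[-N,N]\cdot(1,0)\subseteq A$, which is false. The same example shows why the last paragraph cannot be salvaged as written: the support $\tilde Q$ of the fibres is indeed a Freiman-subgroup of $P$ (here it is all of $P$), but a point $y$ of a sub-progression found inside $\tilde Q$ only comes with \emph{some} $h_y$ such that $y+h_y\in A$, and ``adding $H'$'' is unjustified. (The lemma itself still holds in the example, but only with $\ell_1=2$ — detecting these offsets is exactly the content of the lemma, and your argument never sees them.) A smaller issue: you only get $|H'|\geq(\alpha/2)|H|$, short of the stated $\alpha|H|$.

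The repair is to run your one-dimensional pigeonhole inside $A$ itself rather than inside the support, which is what the paper does: for each $i$, average over the other coordinates \emph{and} the $H$-component to find a line $[-N_i,N_i]\cdot v_i + x_{i+1}v_{i+1}+\dots+h$ on which $A$ has density at least $\alpha$, and take two elements of $A$ on that line within $2\lceil\alpha^{-1}\rceil$ of each other; their difference is a pure multiple $\ell_i v_i$, it lies in $C$ (assuming $N_i\geq 10\alpha^{-1}$; otherwise one sets $M_i=0$), hence it lies in $A$. Since $0\in A$, the Freiman-subgroup property and properness then give all multiples $j\ell_i v_i$ with $|j|\leq M_i$, and combining these with $H'=A\cap H$ (obtained from the densest coset $z+H$ by subtracting a fixed element of $A\cap(z+H)$, which yields $|H'|\geq\alpha|H|$) produces the full sub-progression inside $A$. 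The point is that differencing within a single line cancels the coset offsets $h_x$ that your support-level argument loses.
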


\begin{proof}We claim that for each $i \in [d]$ the Freiman-subgroup $A$ contains an element of the form $\ell_i v_i$ for some $\ell_i \in [2\lceil \alpha^{-1} \rceil]$, unless $N_i \leq 10 \alpha^{-1}$. Without loss of generality $i = 1$. Suppose that $N_1 \geq 10 \alpha^{-1}$. Average over $x_2 \in [-N_2, N_2], \dots, x_d \in [-N_d, N_d]$ and $h \in H$ to find a choice of $x_2,\dots, x_k, h$ such that $|A \cap ([-N_1, N_1] \cdot v_1 + x_2 v_2 + \dots + x_d v_d + h)| \geq \alpha (2N_1 + 1)$. Let $A' = \{x_1 \in [-N_1, N_1] \colon x_1 v_1 + x_2 v_2 + \dots + x_d v_d + h \in A\}$ which has size at least $\alpha(2N_1 + 1)$.\\
\indent Let $L = 2\lceil \alpha^{-1} \rceil$. We split the interval $[-N_1, N_1]$ into disjoint intervals of the form $[-N_1 + j L, -N_1 + (j+1)L)$, except possibly the last one which looks like $[-N_1 + j L, N_1]$ but still has length at most $L$. The number of intervals is $\Big\lceil \frac{2N_1 + 1}{L}\Big\rceil$ which is smaller than the size of $A'$, so there are two indices $v_1 < v'_1 < v_1 + L$ in $A'$. Since $A$ is a Freiman-subgroup of $C$, we obtain the desired element.\\
\indent Take $\ell_i$ we obtained above for each $N_i \geq 10 \alpha^{-1}$. When $N_i < 10 \alpha^{-1}$, we simply put $\ell_i = \lfloor 20 \alpha^{-1} \rfloor$, which makes $M_i = 0$. To finish the proof we need to find an appropriate subgroup $H' \leq H$. Take $z \in [-N_1, N_1] \cdot v_1 + \dots + [-N_d, N_d] \cdot v_d$ such that $|A \cap z + H| \geq \alpha |z + H|$ and fix arbitrary $h_0 \in H \cap (A - z)$. For each $z + h \in A \cap z + H$ we have that $(z + h) - (z + h_0) = h - h_0 \in H \subseteq C$ and thus $h - h_0 \in A$. Thus $|A \cap H| \geq \alpha |H|$ and $A \cap H$ is a Freiman-subgroup $H$. Since $H$ is a group in the usual sense, so is $A \cap H$, so we set $H' = A \cap H$.\end{proof}

Later in this section, we shall prove an inverse theorem for approximate Freiman homomorphisms (Theorem~\ref{approxFreimanHom}). Similarly to the case when $G$ and $H$ are vector spaces over some field (for example, see~\cite{bogPaper}), this theorem will follow from Freiman's theorem and the Balog-Szemer\'edi-Gowers theorem. However, in the more general setting we need an additional algebraic ingredient, which is the following result.\\
\indent Recall that a module $P$ over a ring is \emph{projective} if for every surjective module homomorphism $f \colon N \to M$ and every module homomorphism $g \colon P \to M$, there exists a module homomorphism $h \colon P \to N$ such that $f \circ h = g$. In the case of the category of finite abelian groups, there are no projective objects. However, if we allow Freiman homomorphism on coset progressions, we may recover projectivity to some extent.

\begin{theorem}[Partial projectivity in abelian groups]\label{partialProjThm}Let $G$ and $H$ be finite abelian groups. Let $K \leq H$ be a subgroup. Suppose that $\phi \colon G \to H/K$ is a homomorphism. Let $s \geq 2$. Then there exist a proper coset progression $C \subseteq G$ of rank at most $\log_2 |K|$ and size $|C| \geq s^{- \log_2 |K|}|G|$ and a Freiman $s$-homomorphism $\psi \colon C \to H$ such that for all $x \in C$ we have $\phi(x) = \psi(x) + K$.\end{theorem}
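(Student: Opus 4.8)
The plan is to produce $\psi$ as the ``$H$-coordinate'' of a partial section of a pullback extension, and to build that section from a cyclic decomposition of a subgroup adapted to it. First, form the pullback $P=\{(g,h)\in G\times H:\phi(g)=h+K\}$, a subgroup of $G\times H$. Since $H\to H/K$ is onto, the first-coordinate projection $\pi\colon P\to G$ is onto, with kernel $\bar K=\{0\}\times K\cong K$. Thus it suffices to find a proper coset progression $C\subseteq G$ of rank at most $\log_2|K|$ with $|C|\geq s^{-\log_2|K|}|G|$, together with a map $\sigma\colon C\to P$ satisfying $\pi\circ\sigma=\operatorname{id}_C$ that is a Freiman $s$-homomorphism: then $\psi:=\pi_2\circ\sigma\colon C\to H$ is a Freiman $s$-homomorphism (composition of one with an honest homomorphism), and membership $\sigma(x)=(x,\psi(x))\in P$ is exactly the assertion $\phi(x)=\psi(x)+K$.

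Next I would invoke the structure theorem for a subgroup of a finite abelian group in adapted form: there is a decomposition $P=\langle x_1\rangle\oplus\cdots\oplus\langle x_u\rangle$ into cyclic groups, with $e_i=\operatorname{ord}(x_i)$, such that $\bar K=\langle m_1x_1\rangle\oplus\cdots\oplus\langle m_ux_u\rangle$ for suitable divisors $m_i\mid e_i$ (equivalently $\bar K\cap\langle x_i\rangle=\langle m_ix_i\rangle$). This is standard; one reduces to $p$-groups via the primary decomposition, and there it follows from the fact that in a finite abelian $p$-group any height-$0$ element generates a direct summand, applied to $x_1=p^{-c}y_1$ where $y_1\in\bar K$ has maximal order and $c$ is its height, followed by induction on $|P|$. (I would either cite this or include the short argument as a preliminary lemma.) Call $i$ \emph{active} if $m_i<e_i$ and \emph{inactive} otherwise. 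Since $|K|=|\bar K|=\prod_i e_i/m_i=\prod_{\text{active }i}e_i/m_i$ with each active factor an integer $\geq 2$, the number of active indices is at most $\log_2|K|$. Passing to the quotient, $G=P/\bar K=\bigoplus_i\langle\bar x_i\rangle$ with $\bar x_i:=x_i+\bar K$ of order $m_i$; put $G_{\mathrm{in}}=\bigoplus_{\text{inactive}}\langle\bar x_i\rangle$ and $G_{\mathrm{act}}=\bigoplus_{\text{active}}\langle\bar x_i\rangle$, so $G=G_{\mathrm{in}}\oplus G_{\mathrm{act}}$.

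Over $G_{\mathrm{in}}$ the map $\pi$ restricts to an isomorphism $\bigoplus_{\text{inactive}}\langle x_i\rangle\to G_{\mathrm{in}}$ (as $\bar K\cap\langle x_i\rangle=\langle e_ix_i\rangle=0$ for inactive $i$); let $\iota\colon G_{\mathrm{in}}\to P$ be its inverse, an honest homomorphic section. Over $G_{\mathrm{act}}$ I would use a \emph{digit section} cut off short enough to avoid wraparound: set $C_{\mathrm{act}}=\sum_{\text{active }i}\{0,1,\dots,\lceil m_i/s\rceil-1\}\cdot\bar x_i$, and for $c=\sum_{\text{active }i}a_i\bar x_i\in C_{\mathrm{act}}$ with $a_i$ the unique digits in those ranges, put $\sigma_0(c)=\sum_{\text{active }i}a_ix_i\in P$. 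Finally take $C=G_{\mathrm{in}}+C_{\mathrm{act}}$ and $\sigma(g_0+c)=\iota(g_0)+\sigma_0(c)$ (a valid definition, and $C$ a genuine coset progression, because $G=G_{\mathrm{in}}\oplus G_{\mathrm{act}}$ and $C_{\mathrm{act}}\subseteq G_{\mathrm{act}}$).

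It then remains to verify the three claims. That $C$ is a proper coset progression of rank equal to the number of active $i$ with $m_i\geq 2$ (hence at most $\log_2|K|$) and $|C|=|G_{\mathrm{in}}|\prod_{\text{active}}\lceil m_i/s\rceil\geq s^{-(\#\text{active})}|G|\geq s^{-\log_2|K|}|G|$ is immediate from the direct-sum structure of $G$ and $\lceil m_i/s\rceil\geq m_i/s$. That $\pi\circ\sigma=\operatorname{id}_C$ is clear since $\pi(x_i)=\bar x_i$. The essential point is that $\sigma$ is a Freiman $s$-homomorphism: given $u_1+\cdots+u_s=v_1+\cdots+v_s$ in $G$ with all $u_l,v_l\in C$, split each term along $G=G_{\mathrm{in}}\oplus G_{\mathrm{act}}$; the $G_{\mathrm{in}}$-parts contribute equal sums under $\iota$ because $\iota$ is a genuine homomorphism, and for the $G_{\mathrm{act}}$-parts one notes that each digit-sum $\sum_l a_i^{(l)}$ lies in $\{0,\dots,s(\lceil m_i/s\rceil-1)\}\subseteq\{0,\dots,m_i-1\}$, so agreement modulo $m_i=\operatorname{ord}(\bar x_i)$ forces equality of digit-sums as integers, hence $\sum_l\sigma_0(c_l)=\sum_l\sigma_0(c_l')$ in $P$. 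Composing with $\pi_2$ yields $\psi$. The only non-routine ingredient is the adapted cyclic decomposition of $\bar K\leq P$; the rest is bookkeeping around the no-wraparound phenomenon, and I expect that decomposition (or its short proof) to be the main thing to pin down.
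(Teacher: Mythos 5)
Your reduction to finding a Freiman $s$-homomorphic section of the pullback $P\to G$, and the digit/no-wraparound verification at the end, are fine (the latter is essentially the same computation the paper does). But the structural lemma the whole construction rests on is false. It is \emph{not} true that for every subgroup $\bar K$ of a finite abelian group $P$ there is a cyclic decomposition $P=\langle x_1\rangle\oplus\cdots\oplus\langle x_u\rangle$ with $\bar K=\langle m_1x_1\rangle\oplus\cdots\oplus\langle m_ux_u\rangle$, $m_i\mid e_i$. Take $P=\mathbb{Z}/p^3\oplus\mathbb{Z}/p$ and $\bar K=\langle(p,1)\rangle\cong\mathbb{Z}/p^2$. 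By uniqueness of elementary divisors, any cyclic decomposition of $P$ has one nontrivial factor of order $p^3$ and one of order $p$; since $\bar K$ is cyclic of order $p^2$, hence indecomposable, an adapted decomposition would force $\bar K\subseteq\langle x_1\rangle$ for some cyclic $\langle x_1\rangle$ of order $p^3$. But no cyclic subgroup of order $p^3$ contains $(p,1)$: if $k(a,b)=(p,1)$ with $a$ a unit mod $p$, the second coordinate gives $kb\equiv 1\pmod p$, so $p\nmid k$, while the first gives $ka\equiv p\pmod{p^3}$, so $p\mid k$. Moreover this situation genuinely arises from your pullback: take $H=P$, $K=\bar K$, $G=P/\bar K\cong\mathbb{Z}/p^2$ and $\phi$ the identity, so no special structure of $P$ saves the lemma. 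The auxiliary claim in your sketch fails for the same example: $(p,1)$ has height $0$ but $\langle(p,1)\rangle$ is not a direct summand (a complement would give $P\cong\mathbb{Z}/p^2\oplus C$ with $|C|=p^2$, contradicting the elementary divisors $(p^3,p)$). What is true is that elements of maximal order, or height-$0$ elements of order exactly $p$, generate summands; neither covers your $x_1=p^{-c}y_1$.

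For contrast, the paper never asks for a decomposition adapted to $K$ (or to a subgroup of an extension). It fixes an invariant-factor basis $x_1,\dots,x_r$ of $G$, chooses lifts $h_i\in H$ of $\phi(x_i)$ (so $n_ih_i\in K$), and corrects the pairs by a downward induction: if $n_ih_i$ lies in the subgroup of $K$ generated by the previously recorded $n_jk_j$, it replaces $x_i,h_i$ by combinations $y_i,k_i$ (Lemma~\ref{chbasisabgp}) so that $n_ik_i=0$; otherwise that subgroup of $K$ at least doubles, which can happen at most $\log_2|K|$ times, and only those coordinates are truncated to digit ranges $[0,\lceil n_i/s\rceil-1]$. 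In other words, the unavailable adapted decomposition is replaced by a controlled change of basis in $G$ plus a doubling argument inside $K$; after that, your digit argument goes through verbatim. As written, the step you dismiss as ``standard'' is exactly where your proof breaks, and it needs to be replaced by something of this kind.
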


We say that a sequence of elements $x_1, \dots, x_r$ in a finite abelian group $G$ is a \emph{basis} if the equality $\lambda_1 x_1 + \dots + \lambda_r x_r = 0$ for integers $\lambda_1, \dots, \lambda_r$ implies that $n_i | \lambda_i$ for each $i \in [r]$, where $n_i$ stands for the order of the element $x_i$, and we also have $n_1 \cdots n_r = |G|$, so elements $x_1, \dots, x_r$ span $G$. As in linear algebra, we may perform some linear manipulations to the sequence above without affecting the property of being a basis.

\begin{lemma}[Change of basis in abelian groups]\label{chbasisabgp}Suppose that $n_1 | n_2| \dots | n_r$. 
\begin{itemize}
\item[\textbf{(i)}] Let $i < j$. Then replacing $x_i$ by $x_i - \lambda \frac{n_j}{n_i} x_j$ preserves the property of being a basis, with orders of elements in the sequence unchanged.
\item[\textbf{(ii)}] Let $i > j$. Then replacing $x_i$ by $x_i - \lambda x_j$ preserves the property of being a basis, with orders of elements in the sequence unchanged.
\item[\textbf{(iii)}] Let $\lambda$ be an integer coprime to $n_i$. Replacing $x_i$ by $\lambda x_i$ preserves the property of being a basis, with orders of elements in the sequence unchanged.
\end{itemize}
\end{lemma}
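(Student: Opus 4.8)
The plan is to observe that, by the very definition given in the excerpt, a sequence $y_1, \dots, y_r$ in $G$ with $\operatorname{ord}(y_k) = n_k$ for every $k$ is a basis precisely when the homomorphism $\psi \colon \bigoplus_{k=1}^r \mathbb{Z}/n_k\mathbb{Z} \to G$ determined by $e_k \mapsto y_k$ is an isomorphism: $\psi$ is well defined exactly because $n_k y_k = 0$, the triviality of $\ker \psi$ is a verbatim restatement of the implication ``$\sum_k \lambda_k y_k = 0 \implies n_k \mid \lambda_k$'', and $\big|\bigoplus_k \mathbb{Z}/n_k\mathbb{Z}\big| = n_1 \cdots n_r$, so injectivity of $\psi$ together with this cardinality count is the condition $n_1 \cdots n_r = |G|$. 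The key point is that in each of the three cases exactly one element of the original basis is modified, the other $r-1$ elements retain their orders $n_k$, and $n_1 \cdots n_r = |G|$ is already known from the original basis. Hence, to prove the modified sequence $x_1, \dots, x_i', \dots, x_r$ is a basis it suffices to check just two things: that $n_i x_i' = 0$ (so that $\psi$ is still well defined, the other coordinates being automatic), and that $x_1, \dots, x_i', \dots, x_r$ generate $G$ (so that $\psi$ is onto). A surjection between finite abelian groups of equal cardinality is an isomorphism, which simultaneously yields the basis property and forces $\operatorname{ord}(x_i') = n_i$, i.e. the orders are unchanged.

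With this reduction, each case is a one-line verification. In \textbf{(i)}, since $i < j$ we have $n_i \mid n_j$, so $\tfrac{n_j}{n_i} \in \mathbb{Z}$ and $x_i' := x_i - \lambda \tfrac{n_j}{n_i} x_j$ is meaningful; then $n_i x_i' = n_i x_i - \lambda n_j x_j = 0$, and since $x_i = x_i' + \lambda \tfrac{n_j}{n_i} x_j$ lies in the subgroup generated by the new sequence (as $x_j$, $j \neq i$, is still present), that subgroup contains every $x_k$ and so equals $G$. In \textbf{(ii)}, since $i > j$ we have $n_j \mid n_i$, hence $n_i x_j = 0$; thus $x_i' := x_i - \lambda x_j$ satisfies $n_i x_i' = n_i x_i - \lambda n_i x_j = 0$, and $x_i = x_i' + \lambda x_j$ again shows the new sequence generates $G$. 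In \textbf{(iii)}, $n_i(\lambda x_i) = \lambda(n_i x_i) = 0$ is immediate, and choosing $\mu$ with $\mu \lambda \equiv 1 \pmod{n_i}$ (possible since $\gcd(\lambda, n_i) = 1$) gives $\mu(\lambda x_i) = x_i$, so the new sequence generates $G$. In all three cases the counting principle from the first paragraph closes the argument and confirms the orders are preserved.

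I do not anticipate any real obstacle: the content is genuinely linear-algebra-over-$\mathbb{Z}$ bookkeeping. The only places that reward a moment's care are making sure the divisibility chain $n_1 \mid \dots \mid n_r$ is applied in the correct direction in \textbf{(i)} and \textbf{(ii)} (so that $\tfrac{n_j}{n_i}$ is an integer, respectively so that $n_i$ annihilates $x_j$), and invoking the ``surjection of equal-size finite abelian groups is an isomorphism'' shortcut so that one never has to argue about $\ker \psi$ directly.
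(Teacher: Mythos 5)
Your argument is correct, and it reaches the conclusion by a genuinely different mechanism than the paper. You first observe that, for a sequence whose $k$\tss{th} term has order $n_k$, being a basis is equivalent to the map $\psi \colon \bigoplus_{k} \mathbb{Z}/n_k\mathbb{Z} \to G$, $e_k \mapsto x_k$, being an isomorphism; since each of the three operations alters only one term and the count $n_1 \cdots n_r = |G|$ is inherited from the original basis, you reduce every case to two checks, namely $n_i x_i' = 0$ and the fact that the new sequence still generates $G$, and then the counting shortcut (a surjection between finite groups of equal order is injective) hands you the divisibility condition and the equality $\operatorname{ord}(x_i') = n_i$ simultaneously. The paper instead works directly with the definition: it pins down the order of the modified element by combining $n_i x_i' = 0$ with the independence of the \emph{old} basis applied to the relation $m x_i - \lambda m \tfrac{n_j}{n_i} x_j = 0$, and then verifies the divisibility condition for the new sequence by expanding an arbitrary vanishing combination back in terms of the old basis and chasing the coefficient of $x_j$. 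What your route buys is uniformity and brevity — generation is a one-line check in all three cases and you never manipulate a general relation — at the cost of invoking the structural reformulation via $\psi$; the paper's route is more hands-on and keeps the coefficient bookkeeping explicit, never needing a surjectivity-plus-cardinality argument. The delicate points you flag are exactly the right ones (integrality of $n_j/n_i$ in \textbf{(i)} from $i<j$, the identity $n_i x_j = 0$ in \textbf{(ii)} from $n_j \mid n_i$, and invertibility of $\lambda$ modulo $n_i$ in \textbf{(iii)}), and they are handled correctly, so I see no gap.
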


\begin{proof}\textbf{Proof of (i).} Let $m$ be the order of $x_i - \lambda \frac{n_j}{n_i} x_j$. Then $mx_i - \lambda  m\frac{n_j}{n_i} x_j = 0$. Since $x_1, \dots, x_r$ is a basis, we have that $n_i | m$. On the other hand, $n_i\Big(x_i - \lambda \frac{n_j}{n_i} x_j\Big) = n_i x_i - \lambda n_j x_j = 0$, so $m = n_i$.\\
Since the orders of elements in the new basis are the same as in the old one, we just need to show independence. Let $\lambda_1, \dots, \lambda_r$ be integers such that 
\[\lambda_i \Big(x_i - \lambda \frac{n_j}{n_i} x_j\Big) + \sum_{\ell \in [r] \setminus \{i\}} \lambda_\ell x_\ell = 0.\]
The coefficients of elements $x_\ell$ for $\ell \not= j$ are $\lambda_\ell$, so $n_\ell | \lambda_\ell$. On the other hand, the element $x_j$ has $\lambda_j - \lambda_i \lambda \frac{n_j}{n_i}$ as its coefficient, so $n_j | \lambda_j - \lambda_i \lambda \frac{n_j}{n_i}$. But $n_i | \lambda_i$, so it follows that $n_j  | \lambda_j$, as desired.\\
\indent \textbf{Proof of (ii).} As in the proof of the property \textbf{(i)}, one easily sees that the order of $x_i - \lambda x_j$ is divisible by $n_i$. On the other hand, $n_i \lambda x_j = 0$ since $n_j | n_i$, so $n_i (x_i - \lambda x_j) = 0$.\\
Again, we just check independence. Let $\lambda_1, \dots, \lambda_r$ be integers such that 
\[\lambda_i \Big(x_i - \lambda x_j\Big) + \sum_{\ell \in [r] \setminus \{i\}} \lambda_\ell x_\ell = 0.\]
As before, $n_\ell | \lambda_\ell$ holds for all $\ell \not= j$. The element $x_j$ has $\lambda_j - \lambda_i \lambda$ as its coefficient, which has to be divisible by $n_j$. We know that $n_j | n_i | \lambda_i$ so it follows that $n_j | \lambda_j$, as desired.\\
\indent \textbf{Proof of (iii).} An easy check.\end{proof}

\begin{proof}[Proof of Theorem~\ref{partialProjThm}]Using the invariant factors decomposition, we know that $G$ is a direct sum $\mathbb{Z}/{n_1 \mathbb{Z}} \oplus \dots \oplus \mathbb{Z}/{n_r \mathbb{Z}}$ for some positive integers $n_1 | \dots | n_r$. In our terminology, this means that $G$ has a basis $x_1, \dots, x_r$ where $x_i$ has order $n_i$. For each $x_i$, pick an element $h_i \in H$ such that $\phi(x_i) = h_i + K$. Since $\phi$ is a homomorphism, we have that $K = \phi(n_ix_i) = n_i \phi(x_i) = n_ih_i + K$, so $n_i h_i \in K$. Furthermore, for any integers $\lambda_1, \dots, \lambda_r$ we have
\[\phi(\lambda_1 x_1 + \dots + \lambda_r x_r) = \lambda_1 h_1 + \dots + \lambda_r h_r + K.\]
Let us now perform the following procedure. We shall go through the sequence $x_1, \dots, x_r$ in the reverse order and define a new basis $y_1, \dots, y_r$ of $G$ with orders $n_1, \dots, n_r$, and new elements $k_1, \dots, k_r \in H$ so that $n_i k_i = 0$ instead of merely $n_ik_i \in K$ for all but at most $\log_2 |K|$ of indices $i \in [r]$ (for the rest of them, we only guarantee that $n_i k_i \in K$).\\
At each step, assuming that $y_r, \dots, y_{i + 1}$ and $k_r, \dots, k_{i+1}$ have been defined, we check whether the element $n_i h_i$ belongs to the subgroup $S$ of $K$ generated by elements $n_r k_r, \dots, n_{i + 1} k_{i+1}$. If $n_i h_i \notin S$, then we simply put $y_i = x_i$ and $k_i = h_i$. On the other hand, if $n_i h_i \in S$, that means that we can find integers $\lambda_{i+1}, \dots, \lambda_r$ such that
\[n_i h_i = \lambda_{i+1} n_{i+1} k_{i+1} + \dots + \lambda_r n_r k_r.\]
Let us put 
\begin{equation}y_i = x_i - \sum_{j \in [i+1, r]} \lambda_j \frac{n_j}{n_i} y_j\label{newbasisdefn1}\end{equation}
and 
\begin{equation}k_i = h_i - \sum_{j \in [i+1, r]} \lambda_j \frac{n_j}{n_i} k_j.\label{newbasisdefn2}\end{equation}

\begin{claim}\label{fromHtoKclaim}At the end of each step, the sequence $y_r, \dots, y_i, x_{i-1}, \dots, x_1$ is a basis with orders $n_r, \dots, n_1$. Moreover, for any choice of integers $\mu_1, \dots, \mu_r$ we have
\begin{equation}\phi\Big(\mu_1 x_1 + \dots + \mu_{i-1} x_{i-1} + \mu_i y_i + \dots + \mu_r y_r\Big) = \mu_1 h_1 + \dots + \mu_{i-1} h_{i-1} + \mu_i k_i + \dots + \mu_r k_r + K.\label{quotHommEq}\end{equation}
\end{claim}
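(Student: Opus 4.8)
The plan is to prove both assertions of the claim simultaneously by \emph{downward} induction on $i$, running from the pre-processing state down to $i=1$. The base case is the situation before step $r$, where the relevant data is just the sequence $x_r,\dots,x_1$ together with $h_r,\dots,h_1$: here there is nothing to do, since $x_r,\dots,x_1$ is a basis with orders $n_r,\dots,n_1$ by the invariant factors decomposition that introduced $x_1,\dots,x_r$, and $\phi(\mu_1 x_1+\dots+\mu_r x_r)=\mu_1 h_1+\dots+\mu_r h_r+K$ for all integers $\mu_j$ simply because $\phi$ is a homomorphism with $\phi(x_j)=h_j+K$.

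For the inductive step, assume that at the end of step $i+1$ the sequence $y_r,\dots,y_{i+1},x_i,\dots,x_1$ is a basis with orders $n_r,\dots,n_1$ and that
\[\phi\Big(\sum_{j\leq i}\mu_j x_j+\sum_{j>i}\mu_j y_j\Big)=\sum_{j\leq i}\mu_j h_j+\sum_{j>i}\mu_j k_j+K\]
holds for all integers $\mu_1,\dots,\mu_r$; call this identity $(\star)$. Now process step $i$. If $n_i h_i\notin S$, then by construction $y_i=x_i$ and $k_i=h_i$, so the two assertions for index $i$ are literally $(\star)$ together with the inductive hypothesis, with $x_i$ relabelled $y_i$. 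So assume $n_i h_i\in S$ and fix integers $\lambda_{i+1},\dots,\lambda_r$ with $n_i h_i=\sum_{j>i}\lambda_j n_j k_j$ as in the procedure, noting that $\tfrac{n_j}{n_i}\in\mathbb{Z}$ for $j>i$ because $n_i\mid n_j$, so $(\ref{newbasisdefn1})$ and $(\ref{newbasisdefn2})$ are well defined.

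For the basis assertion, list the current basis in order of increasing element order as $x_1,\dots,x_{i-1},x_i,y_{i+1},\dots,y_r$; by $(\ref{newbasisdefn1})$, $y_i$ is obtained from $x_i$ by successively subtracting $\lambda_j\tfrac{n_j}{n_i}y_j$ for $j=i+1,\dots,r$ in turn, and each such subtraction is an instance of part \textbf{(i)} of Lemma~\ref{chbasisabgp} (the modified element has order $n_i$, which divides the order $n_j$ of $y_j$, and it precedes $y_j$ in the ordering), hence preserves the basis property and all orders; after performing all of them we arrive at $x_1,\dots,x_{i-1},y_i,y_{i+1},\dots,y_r$, which is the sequence of the claim (read in reverse). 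For $(\ref{quotHommEq})$ at index $i$, substitute $(\ref{newbasisdefn1})$ to rewrite $\sum_{j<i}\mu_j x_j+\mu_i y_i+\sum_{j>i}\mu_j y_j$ as $\sum_{j\leq i}\mu_j x_j+\sum_{j>i}\big(\mu_j-\mu_i\lambda_j\tfrac{n_j}{n_i}\big)y_j$, apply $(\star)$ with the $y_j$-coefficients shifted accordingly, and regroup the resulting right-hand side as $\sum_{j<i}\mu_j h_j+\mu_i\big(h_i-\sum_{j>i}\lambda_j\tfrac{n_j}{n_i}k_j\big)+\sum_{j>i}\mu_j k_j+K$; by $(\ref{newbasisdefn2})$ the bracketed term equals $k_i$, which is exactly the claimed identity.

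The inductive step is essentially bookkeeping, and the one point that makes it go through is that the \emph{same} integer coefficients $\lambda_j\tfrac{n_j}{n_i}$ are used in $(\ref{newbasisdefn1})$ (applied to the $y_j$) and in $(\ref{newbasisdefn2})$ (applied to the $k_j$); this is precisely what forces $\phi(y_i)=k_i+K$ and lets the $\mathbb{Z}$-linearity in $(\ref{quotHommEq})$ descend one more index. As a byproduct one also gets $n_i k_i=n_i h_i-\sum_{j>i}\lambda_j n_j k_j=0$ in this case, by the defining relation for the $\lambda_j$, which is the gain that the remainder of the proof of Theorem~\ref{partialProjThm} exploits, though it is not part of the claim itself. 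The only genuine care needed is in matching the decreasing-index display of the claim with the increasing-order convention of Lemma~\ref{chbasisabgp} and in checking the integrality of the factors $\tfrac{n_j}{n_i}$; I do not foresee any real obstacle.
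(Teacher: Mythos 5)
Your proposal is correct and follows essentially the same route as the paper: a downward induction whose basis statement is handled by Lemma~\ref{chbasisabgp} and whose identity~\eqref{quotHommEq} follows from the defining relations~\eqref{newbasisdefn1} and~\eqref{newbasisdefn2} together with the linearity of $\phi$. The only cosmetic difference is that the paper reduces~\eqref{quotHommEq} to the single statement $\phi(y_i)=k_i+K$ and verifies that inductively, whereas you substitute~\eqref{newbasisdefn1} and regroup coefficients directly; the underlying computation is the same.
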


\begin{proof}[Proof of the claim]We prove the claim by downwards induction on $i$. The base case is $i = r + 1$, where we consider the basis $x_r, x_{r-1}, \dots, x_1$, for which the claim holds. Suppose that the claim holds for some $i \geq 2$. By Lemma~\ref{chbasisabgp} we see that the order of $y_{i-1}$ is the same as the order of $x_{i-1}$ and that $y_r, \dots, y_{i-1}, x_{i-2}, \dots, x_1$ is still a basis.\\
\indent To prove the equality~\eqref{quotHommEq}, since $\phi \colon G \to H / K$ is a homomorphism, we just need to show that $\phi(y_i) = k_i + K$ for each $i \in [r]$, which we again do by downwards induction on $i$. But, we either have that $y_i = x_i$ and $k_i = h_i$, so we are done, or $y_i$ and $h_i$ are given by~\eqref{newbasisdefn1} and~\eqref{newbasisdefn2}, so 
\[\phi(y_i) = \phi\Big(x_i - \sum_{j \in [i+1, r]} \lambda_j \frac{n_j}{n_i} y_j\Big) = \phi(x_i) - \sum_{j \in [i+1, r]} \lambda_j \frac{n_j}{n_i} \phi(y_j) = h_i - \sum_{j \in [i+1, r]} \lambda_j \frac{n_j}{n_i} k_j + K = k_i + K,\] 
as desired.\end{proof}

Observe that whenever $y_i$ and $k_i$ are given by~\eqref{newbasisdefn1} and~\eqref{newbasisdefn2}, we have that $n_i k_i = 0$; namely
\[n_i k_i = n_i h_i - \sum_{j \in [i+1, r]} \lambda_j n_j k_j = 0,\]
by the choice of coefficients $\lambda_j$. On the other hand, any time we have $n_i k_i \not= 0$, that means that subgroup $\langle n_r k_r, \dots, n_i k_i\rangle \leq K$ is larger than before, and since it a group, it is at least twice as large as its predecessor. Thus, this can happen at most $\log_2 |K|$ times.\\

Let $I \subset [r]$ be the set of indices $i$ such that $n_i k_i \not= 0$. For $i \in I$ let $n'_i = \lceil n_i/s\rceil$. Consider the coset progression $C = \bigoplus_{i \in I} [0, n'_i - 1] \cdot y_i + G'$, where $G' = \bigoplus_{i \in [r]\setminus I} [0, n_i-1] \cdot y_i$, with $G'$ being a subgroup of $G$. The rank of $C$ is $|I| \leq \log_2 |K|$ and the size is $|C| = \Big(\prod_{i \in I} n'_i\Big)\Big(\prod_{i \in [r] \setminus I} n_i\Big) \geq s^{- \log_2|K|} |G|$, as desired. We now show that $C$ has the claimed property.\\

Let us define a map $\psi \colon C \to H$ by
\[\psi\Big(\sum_{i \in [r]} \lambda_i y_i\Big) = \sum_{i \in [r]} \lambda_i k_i\]
for $\lambda_i \in \mathbb{Z}$, with the restriction that $\lambda_i \in [0, n'_i-1]$ for $i \in I$. We claim that $\psi$ is a Freiman $s$-homomorphism.\\
\indent To that end, for each $i \in [r]$, let $\mu^1_i, \dots, \mu^s_i, \nu^1_i, \dots, \nu^s_i$ be integers, with the requirement that they belong to the interval $[0, n'_i-1]$ when $i \in I$. Suppose that for each $i$ we have $n_i | \sum_{j \in [s]} \mu^j_i - \sum_{j \in [s]} \nu^j_i$. However, when $i \in I$, using the fact that $n'_i = \lceil n_i/s\rceil$, we have the bound
\[\Big|\sum_{j \in [s]} \mu^j_i - \sum_{j \in [s]} \nu^j_i\Big| < n_i.\]
Thus, we have the exact equality $\sum_{j \in [s]} \mu^j_i = \sum_{j \in [s]} \nu^j_i$. Proceeding further, we have
\[\sum_{j \in [s]}\psi\Big(\sum_{i \in [r]} \mu^j_i y_i\Big) - \sum_{j \in [s]}\psi\Big(\sum_{i \in [r]} \nu^j_i y_i\Big) = \sum_{j \in [s]} \sum_{i \in [r]} \mu^j_i k_i - \sum_{j \in [s]} \sum_{i \in [r]} \nu^j_i k_i =  \sum_{i \in [r]} \Big(\sum_{j \in [s]} \mu^j_i - \sum_{j \in [s]} \nu^j_i\Big) k_i.\]
When $i \in I$, the equality $\sum_{j \in [s]} \mu^j_i = \sum_{j \in [s]} \nu^j_i$ implies that the contribution to the sum above from $i$\tss{th} term is 0. When $i \notin I$, we have $n_i | \sum_{j \in [s]} \mu^j_i - \sum_{j \in [s]} \nu^j_i$, but $n_i k_i = 0$, so the contribution is $0$ as well. Hence, $\psi$ is a Freiman $s$-homomorphism.\\

Finally, by Claim~\ref{fromHtoKclaim}, we see that for each $x \in C$ we have $\phi(x) = \psi(x) + K$, as required.\end{proof}

As a corollary, we deduce that somewhat injective Freiman homomorphisms on coset progressions have a large injective piece.

\begin{corollary}\label{almostinjectivitycor}Let $G$ and $H$ be finite abelian groups and let $C \subseteq G$ be a proper coset progression with canonical form $C = P + K$ where $P = v_0 + [0, N_1 - 1] \cdot v_1 + \dots + [0, N_r - 1] \cdot v_r$ and $K \leq G$. Let $\phi \colon C \to H$ be a Freiman homomorphism such that $|\phi(C)| \geq \alpha |C|$. Then there exist a further proper coset progression $D \subseteq K$ of rank $\log_2 \alpha^{-1}$ and size $\alpha^2 |K|$, a positive integer $m \leq (8r \alpha^{-1})^r$ and a partition $P = P_1 \cup \dots \cup P_m$ into proper progressions such that $\phi$ is injective on each proper coset progression of the form $k + P_i + D$ for $i \in [m]$ and $k \in K$.\end{corollary}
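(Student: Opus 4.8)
The plan is to combine Theorem~\ref{partialProjThm} (partial projectivity) with a kernel analysis of the Freiman homomorphism $\phi$ on the coset progression $C = P + K$. The key observation is that the restriction of $\phi$ to the subgroup $K$ is essentially a genuine homomorphism (since $K$ is closed under addition and subtraction, Freiman quadruples inside $K$ are just ordinary relations), so we may study its kernel using honest group theory, and then bootstrap the result to all of $C$ by partitioning the progression part $P$ into short pieces so that within each piece $\phi$ cannot collapse two points.

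First I would analyze $\phi$ on $K$. Since $\phi$ restricted to $K$ respects all additive relations, the set $N = \{k \in K \colon \phi(k) = \phi(0)\}$ (after translating so $0 \in C$, or more carefully working with the Freiman structure) behaves like the kernel of a homomorphism $K \to H$; in particular $K/N$ embeds Freiman-ly into $H$. From $|\phi(C)| \geq \alpha|C|$ one deduces a lower bound on the image of $K$ under a suitable translate of $\phi$, hence an upper bound $|N| \leq \alpha^{-1}|K|$ on the kernel-like set, hm — more precisely one gets that $\phi$ identifies points of $C$ in fibres of bounded size, so $|K : \text{(stabilizer)}| \geq$ something. Applying Theorem~\ref{partialProjThm} to the quotient map $K \to K/N$ (with $s$ a fixed small constant, say $s = 2$ after rescaling, or directly with the Freiman parameter we need) produces a proper coset progression $D \subseteq K$ of rank at most $\log_2|N| \leq \log_2 \alpha^{-1}$ and size at least (some power of $s$)${}^{-\log_2 \alpha^{-1}}|K|$, on which $\phi$ lifts to an honest Freiman homomorphism into $H$ that is, crucially, \emph{injective}: two distinct elements of $D$ cannot have the same $\phi$-value because $D$ avoids the kernel translates by properness and the rank bound. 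Squaring $\alpha$ in the size bound $\alpha^2|K|$ comes from combining the $\alpha^{-1}$ kernel bound with the $s^{-\log_2\alpha^{-1}} = \alpha^{-O(1)}$ loss from partial projectivity and choosing $s$ appropriately.

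Next I would handle the progression directions. Write $P = v_0 + [0,N_1-1]\cdot v_1 + \dots + [0,N_r-1]\cdot v_r$. For each direction $i$, I partition $[0,N_i-1]$ into arithmetic progressions $[0, M_i - 1]\cdot 1$ of common difference $1$ and length $M_i \approx N_i \cdot (\text{small factor})$, so that taking a product over all $r$ directions partitions $P$ into $m \leq \prod_i (N_i / M_i) \leq (8r\alpha^{-1})^r$ sub-progressions $P_1, \dots, P_m$, each proper. The choice of $M_i$ must be small enough that inside any fibre of $\phi$ restricted to a single translate $k + P_i + D$, the telescoping Freiman relations force equality: if $\phi$ identified two distinct points in such a set, subtracting would produce a nonzero element of $K$ of ``small progression norm'' lying in the kernel-like set $N$, and choosing the $M_i$ (and the rank-structure of $D$) to be below the injectivity scale of $\phi$ on $K$ rules this out — this is exactly the bilinear-Bogolyubov-style ``diameter of a fibre'' argument and mirrors Lemma~\ref{progsbgp}.

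**The main obstacle** I expect is the bookkeeping needed to make ``$\phi$ injective on $k + P_i + D$'' precise: $\phi$ is only a Freiman homomorphism, not a genuine one, so one cannot literally talk about its kernel, and one must everywhere work with $4$-tuple (or $s$-tuple) Freiman relations and be careful that the relevant sums stay inside $C$ so the relations are valid. Concretely, ensuring that two points $p + k + d$ and $p' + k' + d'$ of $k+P_i+D$ with the same image yield a \emph{valid} Freiman relation witnessing that their difference is a ``trivial'' element requires that the intermediate sums lie in $C$, which forces $M_i$ to be a constant fraction of $N_i$ and forces the rank/size trade-offs above; getting the exponents to land exactly at rank $\log_2\alpha^{-1}$, size $\alpha^2|K|$, and $m \leq (8r\alpha^{-1})^r$ is the delicate part. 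The partial projectivity theorem is doing the real work, so once its output is in hand the remaining steps are the kind of routine progression-partitioning we have already seen in the proof of Lemma~\ref{progsbgp}.
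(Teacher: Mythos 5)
Your overall plan (pass to the genuine homomorphism $\psi(k)=\phi(x+k)-\phi(x)$ on $K$, invoke Theorem~\ref{partialProjThm}, then cut $P$ into pieces of side roughly $\alpha N_i/r$) is the paper's plan, but the two steps that carry the actual content do not go through as you state them. The first gap is the construction of $D$. You apply Theorem~\ref{partialProjThm} to the quotient map $K\to K/N$, i.e.\ with domain $K$, and assert that the resulting coset progression is one on which $\phi$ is injective ``because $D$ avoids the kernel translates by properness and the rank bound''. Nothing in the conclusion of Theorem~\ref{partialProjThm} prevents the progression it produces inside $K$ from containing two elements of the same coset of $N$; properness and the rank bound are intrinsic properties of the progression and say nothing about its position relative to $\ker\psi$. (Finding a coset progression on which a projection is injective is essentially the statement being proved; compare the corollary immediately following this one in the paper.) The paper's move is in the \emph{opposite} direction: from $|\psi(K)|\ge\alpha|K|$ one gets the absolute bound $|S|\le\alpha^{-1}$ for $S=\ker\psi$ (not the bound $\alpha^{-1}|K|$ you wrote --- the rank bound $\log_2\alpha^{-1}$ needs the absolute one), then applies Theorem~\ref{partialProjThm} to the isomorphism $\nu\colon\psi(K)\to K/S$, obtaining a Freiman lift $\theta\colon\tilde D\to K$ with $\nu(z)=\theta(z)+S$. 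Since $\nu$ is injective, so is $\theta$, and setting $D=\theta(\tilde D)$ one has $\psi\circ\theta=\mathrm{id}$ on $\tilde D$, which is exactly why $\psi$ is injective on $D$. This reversal is the idea your proposal is missing.

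The second gap is the injectivity on the pieces $k+P_i+D$. If $\phi$ identifies two points of such a piece whose progression coordinates differ, their difference $w$ is a short vector in the progression directions plus an element of $D-D\subseteq K$; it is in general \emph{not} an element of $K$, let alone of the kernel-like set $N$, so there is no ``injectivity scale of $\phi$ on $K$'' to invoke and no analogue of the pigeonhole in Lemma~\ref{progsbgp} that applies. What is actually needed (and what the paper does) is to observe that Freiman-homomorphy makes $\phi$ invariant under translation by $w$ whenever both $x$ and $x+w$ lie in $C$, and then, because each coordinate of $w$ has absolute value at most $N_i'-1$ with $N_i'=\lceil\alpha N_i/(4r)\rceil$, every point of $C$ can be moved by a multiple of $w$ into a boundary slab of total measure less than $\alpha|C|$; this forces $|\phi(C)|<\alpha|C|$, contradicting the hypothesis. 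So the hypothesis $|\phi(C)|\ge\alpha|C|$ is used a second time, in a counting argument, and without it (or with your version of the step) the injectivity claim on $k+P_i+D$ is unsupported.
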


\begin{proof}Let us begin by defining an auxiliary map $\psi \colon K \to H$ by setting $\psi(k) = \phi(x + k) - \phi(x)$ for any $x \in C$. Note that $x \in C$ implies $x + k \in C$ and that $\phi$ being a Freiman homomorphism implies that $\psi$ is a homomorphism (in the usual group-theoretic sense). Furthermore, for each $x \in C$ we also have that $|\phi(x + K)| = |\psi(K)|$ so $|\psi(K)| \geq \alpha |K|$. In particular, if $S = \on{ker} \psi$, we have $|S| \leq \alpha^{-1}$.\\
\indent Now define a homomorphism $\nu \colon \psi(K) \to K/S$ by sending every $y \in \psi(K)$ to $x +S$ for $x \in K$ such that $\psi(x) = y$. Since $S$ is the kernel of $\psi$, we see that $\nu$ is well-defined. The fact that $\nu$ is a homomorphism follows from $\psi$ being a homomorphism. Apply Theorem~\ref{partialProjThm} to find a proper coset progression $\tilde{D} \subseteq \psi(K)$ of rank at most $\log_2 \alpha^{-1}$ and size at least $|\tilde{D}| \geq \alpha |\psi(K)|$ and a Freiman homomorphism $\theta \colon \tilde{D} \to K$ such that
\begin{equation}(\forall z \in \tilde{D})\,\, \nu(z) = \theta(z) + S.\label{nuthetayprop}\end{equation}
We claim that $\theta$ is injective. Suppose that there exist elements $z_1, z_2 \in \tilde{D}$ such that $\theta(z_1) = \theta(z_2)$. Since $z_1, z_2 \in \psi(K)$, there $x_1, x_2 \in K$ such that $\psi(x_1) = z_1$ and $\psi(x_2) = z_2$. By definition of $\nu$ we have $\nu(z_1) = x_1 + S$ and $\nu(z_2) = x_2 + S$. But,
\[x_1 + S = \nu(z_1) = \theta(z_1) + S = \theta(z_2) + S = \nu(z_2) = x_2 + S\]
so $x_1 - x_2 \in S$, giving $z_1 = \psi(x_1) = \psi(x_2) = z_2$, proving the injectivity of $\theta$.\\
\indent Write $D = \theta(\tilde{D})$. It follows that $D$ is a proper coset progression in $K$ and that $\theta^{-1} \colon D \to H$ is also a Freiman homomorphism. Furthermore, for each $x \in D$ we have $\theta_y^{-1}(x) \in \tilde{D}$ so by property~\eqref{nuthetayprop} we get $\nu(\theta^{-1}(x)) = \theta(\theta^{-1}(x)) + S = x + S$. The definition of $\nu$ gives $\psi(x) = \theta^{-1}(x)$. Since $\theta^{-1}$ is injective, it follows that $\psi$ is injective on $D$.\\
\indent For $i \in [r]$ set $N'_i = \lceil \alpha N_i/(4r)\rceil$. We claim that $\phi$ is injective on each coset progression
\begin{equation}\label{cosetprogressioninjectivity} v_0 + [a_1, a_1 + N'_1 - 1] \cdot v_1 + \dots + [a_r, a_r + N'_r - 1] \cdot v_r + D,\end{equation}
for any choice of $a_i \in [0, N_i - N'_i]$. Suppose on the contrary that $\phi$ was not injective on the set above. Then we obtain $a_1,b_1 \in [a_1, a_1 + N'_1 - 1], \dots, a_r, b_r \in [a_r, a_r + N'_r - 1], k, l \in D$ such that 
\[\phi\Big(v_0 + a_1 v_1 + \dots + a_r v_r + k\Big) = \phi\Big(v_0 + b_1 v_1 + \dots + b_r v_r + l\Big),\]
with arguments of $\phi$ being different elements. If we had $a_1 v_1 + \dots + a_r v_r = b_1 v_1 + \dots + b_r v_r$, that would have implied
\begin{align*}\psi(k) = &\phi\Big(v_0 + a_1 v_1 + \dots + a_r v_r + k\Big) - \phi\Big(v_0 + a_1 v_1 + \dots + a_r v_r\Big) \\
= &\phi\Big(v_0 + b_1 v_1 + \dots + b_r v_r + l\Big) - \phi\Big(v_0 + b_1 v_1 + \dots + b_r v_r\Big) = \psi(l)\end{align*}
which would be in contradiction with the fact that $\psi$ is injective on $D$. Thus $a_1 v_1 + \dots + a_r v_r \not= b_1 v_1 + \dots + b_r v_r$.\\ 
\indent Write $w = (a_1 - b_1) v_1 + \dots + (a_r - b_r) v_r + (k - l) \in G$. Since $\phi$ is a Freiman homomorphism, we have that $\phi(x) = \phi(x + w)$ whenever $x, x + w \in C$. Note that $|a_i - b_i| \leq N'_i - 1$. Observe that for every element $x \in C$, there is some $\lambda \in \mathbb{Z}$ such that $x + \lambda w$ is of the form $v_0 + \mu_1 v_1 + \dots + \mu_r v_r + k \in P + K$ where at least one $\mu_i$ satisfies $\mu_i \leq N'_i - 2$ or $\mu_i \geq N_1 - N'_i + 1$. In particular, $\phi(x + \lambda w) = \phi(x)$, so the total number of values attained by $\phi$ is at most
\[\sum_{i \in [r]} 2(N_i' - 1) \Big(\prod_{j \in [r] \setminus \{i\}} N_j\Big) |K| \leq 2r (\alpha / 4r) |C| < \alpha |C|\]
which is a contradiction.\\
\indent To finish the proof, partition $P + D$ into proper coset progressions of the form~\eqref{cosetprogressioninjectivity} by taking $a_i$ to be multiples of $N'_i$. The total number of coset progressions is at most $(8r \alpha^{-1})^r$, as claimed.\end{proof}

Another way to phrase the additional algebraic obstacle arising in the setting of general finite abelian groups that was not present in vector space setting is that whenever $U$ is a subspace of a vector space $V$ we may find a further subspace $W \leq V$ such that $U \oplus W = V$. Such decompositions are not available this time. Still, we may obtain a partial decomposition using a coset progression when the given subgroup is small.

\begin{corollary}Let $G$ be a finite abelian group with a subgroup $H$. Then there exists a coset progression $C$ of rank at most $\log_2 |H|$ and size at least $|H|^{-2}|G|$ such that $|C + H| = |C| |H|$.\end{corollary}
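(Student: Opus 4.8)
The plan is to deduce this from the partial projectivity theorem (Theorem~\ref{partialProjThm}), applied to the identity homomorphism of the quotient $G/H$, and then to transport the resulting coset progression back into $G$ along the section that this produces.

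Concretely, I would apply Theorem~\ref{partialProjThm} with $s = 2$ as follows: take the group called ``$G$'' there to be $G/H$, the group called ``$H$'' there to be our $G$, the subgroup ``$K$'' to be $H \leq G$, and ``$\phi$'' to be the identity map $G/H \to G/H$. This yields a proper coset progression $\tilde C \subseteq G/H$ of rank at most $\log_2|H|$ with $|\tilde C| \geq 2^{-\log_2|H|}\,|G/H| = |H|^{-1}\cdot|G|/|H| = |H|^{-2}|G|$, together with a Freiman $2$-homomorphism $\psi \colon \tilde C \to G$ such that $x = \psi(x) + H$ for every $x \in \tilde C$; equivalently, writing $q\colon G \to G/H$ for the quotient map, $q\circ\psi = \mathrm{id}_{\tilde C}$. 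In particular $\psi$ is injective.

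I would then set $C := \psi(\tilde C)$ and check two things. First, $C$ is again a coset progression of rank at most $\log_2|H|$: fixing a canonical form $\tilde C = v_0 + [0,N_1-1]\cdot v_1 + \dots + [0,N_d-1]\cdot v_d + L$ with $d \leq \log_2|H|$ and $L \leq G/H$, one uses only Freiman $2$-relations to see that $w_i := \psi(v_0 + v_i) - \psi(v_0)$ satisfies $\psi(x + v_i) - \psi(x) = w_i$ whenever $x, x + v_i \in \tilde C$, and that $l \mapsto \psi(v_0 + l) - \psi(v_0)$ is a genuine homomorphism $\bar\psi\colon L \to G$; telescoping coordinate by coordinate then gives $C = \psi(v_0) + [0,N_1-1]\cdot w_1 + \dots + [0,N_d-1]\cdot w_d + \bar\psi(L)$, a coset progression of rank at most $d$, with $|C| = |\tilde C| \geq |H|^{-2}|G|$ since $\psi$ is injective. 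Second, the sum $C + H$ is direct: from $q\circ\psi = \mathrm{id}_{\tilde C}$ it follows that $q$ is injective on $C$, so if $c_1 + h_1 = c_2 + h_2$ with $c_i \in C$ and $h_i \in H$, applying $q$ forces $c_1 = c_2$ and hence $h_1 = h_2$; therefore $|C + H| = |C||H|$.

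The only mildly technical point is the middle step — verifying that a Freiman $2$-homomorphism pushes a coset progression forward to a coset progression of the same rank, i.e. the coordinate-by-coordinate telescoping through the canonical form — but this is entirely routine. Note also that the choice $s = 2$ is exactly what makes the size bound land on $|H|^{-2}|G|$ rather than on something weaker.
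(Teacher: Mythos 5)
Your argument is correct, but it follows a genuinely different route from the paper's. The paper proves this corollary in one line: it applies Corollary~\ref{almostinjectivitycor} to the natural projection $\pi \colon G \to G/H$ (viewing $G$ itself as a coset progression and using $|\pi(G)| = |H|^{-1}|G|$, so $\alpha = |H|^{-1}$), which directly produces a coset progression $C \subseteq G$ of rank $\log_2|H|$ and size $|H|^{-2}|G|$ on which $\pi$ is injective, whence $|C+H| = |C||H|$. You instead go straight to Theorem~\ref{partialProjThm}, applied to the identity on $G/H$ with $K = H$, to get a section $\psi$ of the quotient map defined on a proper coset progression $\tilde C \subseteq G/H$ of rank at most $\log_2|H|$ and size at least $|H|^{-2}|G|$, and then push $\tilde C$ forward into $G$ along $\psi$. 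The extra work your route requires --- checking that a Freiman $2$-homomorphism carries a proper coset progression in canonical form to a coset progression of the same rank, via the telescoping of the increments $w_i = \psi(v_0+v_i)-\psi(v_0)$ and the induced homomorphism $\bar\psi$ on the subgroup part --- is correctly identified and correctly sketched; note that essentially the same pushforward observation is used inside the paper's proof of Corollary~\ref{almostinjectivitycor} itself (where $D = \theta(\tilde D)$ is asserted to be a coset progression), so neither route avoids it, the paper merely has it packaged one level down. What the paper's approach buys is brevity at the point of use, since the injectivity statement is already in the form needed; what yours buys is directness, bypassing the machinery of Corollary~\ref{almostinjectivitycor} (the partition into pieces $k+P_i+D$, which is vacuous here anyway since the progression part of $G$ is trivial) and making explicit why the exponent $2$ in $|H|^{-2}|G|$ appears, namely the choice $s=2$ in Theorem~\ref{partialProjThm}. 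Both give exactly the claimed rank and size bounds.
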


\begin{proof}Let $\pi \colon G \to G/ H$ be the natural projection sending $x$ to $x + H$. Since $|\pi(G)| = |H|^{-1}|G|$, Corollary~\ref{almostinjectivitycor} gives a coset progression $C$ of rank $\log_2 |H|$ and size at least $|H|^{-2}|G|$ on which $\pi$ is injective and so $|C + H| = |C| |H|$.\end{proof}

To see the need for coset progressions, look at $G = \mathbb{Z}/2^n\mathbb{Z}$ and $H = \{0, 2^{n-1}\}$. There is no decomposition using subgroups only as every non-trivial subgroup of $G$ contains $H$.\\

\section{Variants of Freiman's theorem}\label{secFreiman}

In this section we gather various results related to Freiman's theorem. Let us first state the Bogolyubov-Rusza lemma of Sanders~\cite{Sanders}, which plays a very important role in this work, as is the case with previous works on the bilinear Bogolyubov argument.

\begin{theorem}[Bogolyubov-Rusza lemma, Theorem 1.1~\cite{Sanders}]\label{bogRuzsa1}Suppose that $G$ is a finite abelian group and that $A, B \subseteq G$ are subsets such that $|A + B| \leq K \min \{|A|, |B|\}$. Then there exists a symmetric proper coset progression $C \subseteq A - A + B - B$ of rank at most $\log^{O(1)} (K)$ and size $|C| \geq \exp\Big(-\log^{O(1)}(K)\Big)|A+B|$.\end{theorem}

Furthermore, we need a robust version of the Bogolyubov-Rusza lemma due to Schoen and Sisask~\cite{SchSisRob}. The following formulation follows rather straightforwardly from their results and a quick deduction is presented in Appendix~\ref{robbogruzapp}. 

\begin{theorem}\label{robustBogRuzsa}Let $G$ be a finite abelian group. Let $A \subset G$ be a set such that $|A + A| \leq K |A|$. Then there exists a symmetric proper coset progression $C$ of rank at most $\log^{O(1)} (K)$ and size at least $|C| \geq \exp(-\log^{O(1)} (K))|A|$ such that for each $x \in C$ there are at least $\frac{1}{64K}|A|^3$ quadruples $(a_1, a_2, a_3, a_4) \in A^4$ such that $x = a_1 + a_2 - a_3 - a_4$.\end{theorem}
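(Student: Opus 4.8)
The plan is to read this off from the robust version of the Bogolyubov--Ruzsa lemma of Schoen and Sisask~\cite{SchSisRob}; the content of the appendix is essentially to match their formulation to the one stated here. The object to track is the convolution $f := 1_A \ast 1_A \ast 1_{-A} \ast 1_{-A}$: for any $x$, the quantity $f(x)$ is exactly the number of quadruples $(a_1,a_2,a_3,a_4) \in A^4$ with $a_1 + a_2 - a_3 - a_4 = x$, and since $\widehat{1_{-A}} = \overline{\widehat{1_A}}$ we have $\widehat f = |\widehat{1_A}|^4 \geq 0$, so $f$ behaves like a non-negative spectral measure and the \emph{robust} form of the Bogolyubov machinery outputs a symmetric proper coset progression $C$, of rank $\log^{O(1)}K$ and size $\exp(-\log^{O(1)}K)|A|$, on which $f$ is bounded below by a fixed constant multiple of $f(0)$.

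The value at the origin is then estimated by Cauchy--Schwarz on the support $A+A$ of $1_A \ast 1_A$, using $f(0) = \sum_x \big(1_A \ast 1_A(x)\big)^2$:
\[f(0) \;=\; \sum_{x} \big(1_A \ast 1_A(x)\big)^2 \;\geq\; \frac{1}{|A+A|}\Big(\sum_x 1_A \ast 1_A(x)\Big)^2 \;=\; \frac{|A|^4}{|A+A|} \;\geq\; \frac{|A|^3}{K},\]
so a bound of the form $f \geq c\,f(0)$ on $C$ upgrades to $f \geq \tfrac{c}{K}|A|^3$ on $C$; the constant $64$ is deliberately generous, leaving room to absorb $c$ and any further bounded losses. (If the coset progression produced in~\cite{SchSisRob} is not already symmetric and proper, making it so — by centring the arithmetic progressions in its canonical form and passing to a proper subprogression — costs only an $\exp(\log^{O(1)}K)$ factor in size, leaves the rank at most $\log^{O(1)}K$, and does not affect the conclusion since $f$ is even.)

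The one delicate point — and the reason a short appendix is warranted rather than a one-line citation — is checking that the hypotheses and conclusion of~\cite{SchSisRob} are quoted in a compatible form. Depending on the exact statement, this may involve passing from $|A+A| \leq K|A|$ to $|A-A| \leq K^2|A|$ and $|2A-2A| \leq K^4|A|$ by Pl\"unnecke--Ruzsa, possibly restricting first to the popular difference set $S = \{x : 1_A \ast 1_{-A}(x) \geq |A|/(2K^2)\}$ — which has $|S| \geq |A|/2$ by a first-moment count and hence doubling $|S+S|/|S| \leq 2K^4$ — and then verifying that whatever pointwise control~\cite{SchSisRob} supplies descends, with the loss already budgeted into the factor $64$, to a genuine count of quadruples in $A^4$. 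I expect this reconciliation, rather than any substantive difficulty, to be the bulk of the appendix.
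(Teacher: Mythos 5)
The plan has a genuine gap at its central step: the claim that ``the robust form of the Bogolyubov machinery outputs a coset progression $C$ on which $f=\id_A\ast\id_A\ast\id_{-A}\ast\id_{-A}$ is bounded below by a fixed constant multiple of $f(0)$'' is not something you can cite, and it is precisely the difficulty the appendix has to overcome. What Schoen--Sisask (Theorem~\ref{schoenSisaskResult}) provides is $L^\infty$ \emph{almost-periodicity} with an \emph{additive} error measured against the trivial scale of the convolution: shifts by $t\in 2T-2T$ change the convolution by at most $\varepsilon$ times its maximal possible order of magnitude (for the four-fold convolution, $\varepsilon|A|^3$ in unnormalized terms). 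Your base value $f(0)=E(A)$ is only guaranteed to be $\geq |A|^3/K$, i.e.\ it may sit a full factor $K$ below that scale, so to conclude $f\geq c\,f(0)$ anywhere you would need $\varepsilon\asymp 1/K$; the Schoen--Sisask size bound $\exp(-O(\varepsilon^{-2}\log(2K)\log(2\eta^{-1})))|A|$ then degrades to $\exp(-O(K^2\log^2 K))|A|$, which is exponentially worse than the claimed $\exp(-\log^{O(1)}K)|A|$. Your correct Cauchy--Schwarz bound $E(A)\geq |A|^3/K$ cannot repair this, because the loss enters through $\varepsilon$, not through the final count, and the factor $64$ cannot absorb it.

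The paper's proof is structured exactly to dodge this scale mismatch, and none of its devices appear in your outline. One first passes to the popular sumset $S=\{s: s=a+b \text{ in at least } \tfrac{1}{2K}|A| \text{ ways}\}$ and a random half $A'\subseteq A$ (the halving is needed so that $|A'|\leq|S|$, matching the hypothesis $\eta\leq 1$ of Theorem~\ref{schoenSisaskResult}, while keeping $\id_{A'}\ast\id_{A'}\ast\id_{-S}(0)\gtrsim |G|^{-2}|A|^2$). Because this \emph{triple} convolution is at a constant proportion of its trivial scale at $0$, the almost-periodicity can be invoked with the fixed constant $\varepsilon=1/32$, yielding $T\subseteq A'$ with $\id_{A'}\ast\id_{A'}\ast\id_{-S}\geq\tfrac{1}{32}|G|^{-2}|A|^2$ on all of $2T-2T$; the symmetric proper coset progression is then obtained by applying Sanders' Theorem~\ref{bogRuzsa1} to $T$ inside $2T-2T$ (it is not produced directly by the almost-periodicity result). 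Only at the very end is each popular sum $s\in S$ expanded into its $\geq\tfrac{1}{2K}|A|$ representations $s=a_3+a_4$, which is where the harmless factor $1/K$ in the final count $\tfrac{1}{64K}|A|^3$ of quadruples comes from. Your final paragraph gestures at a popular set, but only to control doubling constants; the essential role of popularity here is to lift the convolution's base value to the trivial scale so that a constant $\varepsilon$ suffices, and without that restructuring the argument as proposed does not give the stated bounds.
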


In particular, when $A$ is a dense subset of a coset progression $C$ of bounded rank then $A$ has small doubling so the theorem above applies and gives the following corollary. It is important that the actual size of $C$ does not play a role.

\begin{corollary}\label{robustBogRuzsaCP}Let $G$ be a finite abelian group and let $C$ be a coset progression of rank $d$. Suppose that $A \subseteq C$ has size $|A| \geq \alpha |C|$. Then there exists a symmetric proper coset progression $C'$ of rank at most $(d\log (\alpha^{-1})^{O(1)}$ and size at least $|C'| \geq  \exp\Big(-(d\log (\alpha^{-1}))^{O(1)}\Big) |C|$ such that for each $x \in C'$ there are at least $\frac{\alpha}{2^{d + 6}}$ quadruples $(a_1, a_2, a_3, a_4) \in A^4$ such that $x = a_1 + a_2 - a_3 - a_4$.\end{corollary}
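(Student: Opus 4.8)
The plan is to reduce Corollary~\ref{robustBogRuzsaCP} to Theorem~\ref{robustBogRuzsa} by first replacing the (possibly enormous) coset progression $C$ with a bounded-size object on which $A$ becomes genuinely dense, and then checking that the doubling constant of (the image of) $A$ is controlled purely in terms of $d$ and $\alpha$. First I would record the standard fact that a coset progression of rank $d$ has small doubling: if $C = P_1 + \dots + P_d + H$ with $P_i$ arithmetic progressions and $H \leq G$, then $C - C \subseteq 3C$-type bounds give $|C + C| \leq 2^d \cdot \text{(correction)} \cdot |C|$, so that $A \subseteq C$ with $|A| \geq \alpha|C|$ satisfies $|A + A| \leq |C + C| \leq 2^{O(d)}\alpha^{-1}|A|$. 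Hence $K := 2^{O(d)}\alpha^{-1}$ is a valid doubling parameter, and $\log^{O(1)}(K) = (d\log(\alpha^{-1}))^{O(1)}$, which is exactly the rank bound claimed.

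Next I would apply Theorem~\ref{robustBogRuzsa} directly to $A$ with this value of $K$. It produces a symmetric proper coset progression $C'$ of rank at most $\log^{O(1)}(K) = (d\log(\alpha^{-1}))^{O(1)}$ and of size $|C'| \geq \exp(-\log^{O(1)}(K))|A| \geq \exp(-(d\log(\alpha^{-1}))^{O(1)})\cdot\alpha|C| \geq \exp(-(d\log(\alpha^{-1}))^{O(1)})|C|$ (absorbing the $\alpha$ into the exponential, since $\alpha = \exp(-\log\alpha^{-1})$ is dominated by $\exp(-(\log\alpha^{-1})^{O(1)})$), such that every $x \in C'$ has at least $\tfrac{1}{64K}|A|^3$ representations as $a_1 + a_2 - a_3 - a_4$ with $a_i \in A$. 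Substituting $K = 2^{O(d)}\alpha^{-1}$ gives $\tfrac{1}{64K} = \tfrac{\alpha}{2^{O(d)}}$ representations (as a fraction of $|A|^3$); matching this to the stated bound $\tfrac{\alpha}{2^{d+6}}$ is just a matter of tracking the constant hidden in the $O(d)$ in the doubling estimate, and one should state the final bound with an $O(\cdot)$ or adjust the doubling computation so the exponent is literally $d + 6$.

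The only genuine point requiring care — and the step I expect to be the mild obstacle — is the doubling estimate $|C + C| \leq 2^{O(d)}|C|$ for a rank-$d$ coset progression, since $C$ need not be proper and the arithmetic progressions $P_i$ can have wildly different lengths. The clean way is: $C + C \subseteq (P_1 + P_1) + \dots + (P_d + P_d) + H$, each $P_i + P_i$ is covered by at most $2$ (or a small constant number of) translates of $P_i$, so $C + C$ is covered by $2^d$ translates of $C$, hence $|C+C| \leq 2^d|C|$ once one also notes $|C + x| = |C|$. Combined with $|A| \geq \alpha|C| \geq \alpha|C+C|/2^d$, i.e. $|C+C| \leq 2^d\alpha^{-1}|A|$, we may take $K = 2^d\alpha^{-1} \leq 2^{d+6}\alpha^{-1}$, which feeds cleanly into Theorem~\ref{robustBogRuzsa} and yields the representation count $\tfrac{1}{64K}|A|^3 \geq \tfrac{\alpha}{2^{d+6}\cdot 64}|A|^3$; if the intended statement wants exactly $\tfrac{\alpha}{2^{d+6}}$ without the extra $64$, one simply absorbs it by passing from $|A|^3$ to a slightly weaker but still polynomial-in-$|A|$ count, or notes the $64$ was meant to be inside. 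Everything else is a direct quotation of Theorem~\ref{robustBogRuzsa} together with the bookkeeping $\alpha|C| \geq \exp(-(d\log\alpha^{-1})^{O(1)})|C|$ and $\log^{O(1)}(2^d\alpha^{-1}) = (d\log\alpha^{-1})^{O(1)}$.
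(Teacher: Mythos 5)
Your proposal is correct and is essentially the paper's proof, which simply notes that $|A+A| \le |C+C| \le 2^d|C| \le 2^d\alpha^{-1}|A|$ and applies Theorem~\ref{robustBogRuzsa} with $K = 2^d\alpha^{-1}$. The only slip is your worry about the factor $64$: since $64 = 2^6$, taking $K = 2^d\alpha^{-1}$ gives $\tfrac{1}{64K} = \tfrac{\alpha}{2^{d+6}}$ exactly (the corollary's statement is just missing the factor $|A|^3$, as its later use in Step 4 confirms).
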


\begin{proof}The doubling of $A$ is at most $2^d \alpha^{-1}$. Apply Theorem~\ref{robustBogRuzsa}.\end{proof}

Next, we need an inverse theorem for approximate Freiman homomorphisms. Recall that a quadruple $(a,b,c,d)$ of elements of an abelian group is said to be \emph{additive} if $a + b = c + d$ holds, and that it is \emph{respected} by a map $\phi$ if the codomain of $\phi$ is an abelian group, the elements $a,b,c,d$ belong to the domain of $\phi$ and $\phi(a) + \phi(b) = \phi(c) + \phi(d)$ holds.

\begin{theorem}\label{approxFreimanHom}Let $G$ and $H$ be finite abelian groups, let $A \subset G$ be a subset and let $\phi \colon A \to H$ be a function. Suppose that the number of additive quadruples respected by $\phi$ is at least $\delta |G|^3$. Then there exist a proper coset progression $Q \subseteq G$ of rank at most $\log^{O(1)} (\delta^{-1})$ and a Freiman homomorphism $\Phi \colon Q \to H$ such that $\Phi(x) = \phi(x)$ holds for at least $\exp(-\log^{O(1)} (\delta^{-1})) |G|$ elements $x \in A \cap Q$.\end{theorem}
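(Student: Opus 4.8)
The plan is to follow the standard route via the Balog--Szemer\'edi--Gowers theorem, Freiman's theorem, and the partial projectivity result (Theorem~\ref{partialProjThm}), adapting the vector-space argument of~\cite{bogPaper} to the general abelian setting. First I would pass to the graph of $\phi$: set $\Gamma_\phi = \{(a, \phi(a)) : a \in A\} \subseteq G \times H$. The hypothesis that $\phi$ respects at least $\delta|G|^3$ additive quadruples says precisely that $\Gamma_\phi$ has at least $\delta |G|^3$ additive quadruples in $G \times H$, which, since $|\Gamma_\phi| \leq |G|$, means $\Gamma_\phi$ has additive energy at least $\delta |\Gamma_\phi|^3$. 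By the Balog--Szemer\'edi--Gowers theorem there is a subset $\Gamma' \subseteq \Gamma_\phi$ with $|\Gamma'| \geq c\, \delta^{O(1)} |\Gamma_\phi|$ and $|\Gamma' - \Gamma'| \leq \delta^{-O(1)} |\Gamma'|$. Writing $A' = \pi_G(\Gamma')$ for the projection to $G$ (which is injective on $\Gamma'$ since $\Gamma'$ is a graph), we get a set $A' \subseteq A$ of size $\geq \delta^{O(1)}|G|$ with small doubling, on which $\phi$ restricted to $A'$ is a genuine Freiman homomorphism into $H$ (being a restriction of the coordinate projection from a set with small doubling in $G\times H$ — more precisely $\phi|_{A'}$ respects all additive quadruples of $A'$ because $\Gamma'$ does).

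Next I would apply Freiman's theorem (in the form available for general abelian groups, via the Green--Ruzsa coset-progression version, which is what Theorem~\ref{bogRuzsa1} and the surrounding machinery provide) to $A'$: there is a proper coset progression $Q_0 \subseteq G$ of rank $\log^{O(1)}(\delta^{-1})$ with $A' \subseteq Q_0$ and $|Q_0| \leq \exp(\log^{O(1)}(\delta^{-1}))|A'|$, so $A'$ is dense in $Q_0$. The problem is now to extend the Freiman homomorphism $\phi|_{A'}$ from the dense subset $A'$ to all of a (possibly smaller) coset progression. Working modulo the subgroup $K \leq H$ generated by the relevant differences, $\phi|_{A'}$ induces an honest partial homomorphism to $H/K$ on the dense subset of $Q_0$; using that a dense subset of a bounded-rank proper coset progression is Freiman-isomorphic to (a dense subset of) a genuine group, one promotes this to a bona fide homomorphism $\bar\Phi \colon G_0 \to H/K$ defined on a subgroup $G_0$ (or a bounded-index coset-progression-approximation thereof) agreeing with $\phi$ on a dense piece of $A'$. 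Then Theorem~\ref{partialProjThm} lifts $\bar\Phi$ to a Freiman homomorphism $\Phi$ on a further proper coset progression $Q \subseteq G_0$ of rank increased by at most $\log_2|K| \leq \log^{O(1)}(\delta^{-1})$ and density at least $s^{-\log_2|K|}$ (taking $s$ a suitable bounded constant, say $s=2$), such that $\Phi$ reduces to $\bar\Phi$ mod $K$; a final averaging/pigeonhole argument matches the $K$-coset representative of $\Phi$ with $\phi$ on a dense subset, so $\Phi = \phi$ on $\geq \exp(-\log^{O(1)}(\delta^{-1}))|G|$ points of $A \cap Q$.

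The main obstacle is the step that in the vector-space case is trivial: extending a Freiman homomorphism from a dense subset of a coset progression to the whole coset progression (or a comparably large one) without losing more than a $\log^{O(1)}(\delta^{-1})$ amount in the rank and an $\exp(-\log^{O(1)}(\delta^{-1}))$ factor in the density. Over $\mathbb{F}_p^n$ one simply uses that a Freiman homomorphism on a dense subset of a subspace extends (after passing to a bounded-codimension subspace) by linear-algebraic means; here there is no linear structure, and one must instead (i) reduce modulo the subgroup $K$ spanned by the obstructions to linearity to land in a quotient where $\phi$ genuinely homomorphically extends, and (ii) use the partial projectivity theorem to lift back. Controlling $|K|$ — which requires that $\Gamma'$ genuinely has small doubling so that the "period group" of $A'$ is of bounded size — and the bookkeeping needed to combine the coset progression from Freiman's theorem, the subgroup $K$, and the progression from Theorem~\ref{partialProjThm} into a single proper coset progression of rank $\log^{O(1)}(\delta^{-1})$, is where the bulk of the technical work lies. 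The density loss is multiplicative across these three steps, but each step only costs a $\delta^{-O(1)}$ or $\exp(\log^{O(1)}(\delta^{-1}))$ factor, so the final bound is of the claimed form.
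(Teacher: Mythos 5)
There is a genuine gap, and it occurs at the very first structural step. After Balog--Szemer\'edi--Gowers you claim that, writing $A' = \pi_G(\Gamma')$, the restriction $\phi|_{A'}$ is a genuine Freiman homomorphism ``because $\Gamma'$ does'' respect all additive quadruples. This is false: BSG only gives that the graph $\Gamma' \subseteq G \times H$ has a small sumset, and small doubling of the graph says nothing about additive quadruples of the \emph{base} set $A'$. An additive quadruple $a+b=c+d$ with $a,b,c,d \in A'$ need not lift to an additive quadruple of graph points, since the second coordinates $\phi(a)+\phi(b)$ and $\phi(c)+\phi(d)$ may disagree (e.g.\ if $H$ is small, any $\phi$ whatsoever has a graph of bounded doubling over a set of bounded doubling). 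Since the rest of your argument (Freiman/Green--Ruzsa applied to $A'$ inside $G$, then ``extending'' $\phi|_{A'}$) takes this Freiman-homomorphism property as its starting point, it does not get off the ground. Moreover, the extension step itself is essentially the content of the theorem being proved: a Freiman homomorphism defined only on a dense subset of a coset progression does not automatically induce a homomorphism into a quotient $H/K$, the subgroup $K$ ``spanned by the obstructions'' is not well defined from what you write, and nothing in your argument bounds $\log_2|K|$ by $\log^{O(1)}(\delta^{-1})$, which is what an application of Theorem~\ref{partialProjThm} would require.

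The way the paper avoids both problems is to never leave the product group: one applies Pl\"unnecke--Ruzsa and then the \emph{robust} Bogolyubov--Ruzsa lemma (Theorem~\ref{robustBogRuzsa}) to $\Gamma'$ inside $G \times H$, producing (after averaging over translates) a proper coset progression $C \subseteq G\times H$ of rank $\log^{O(1)}(\delta^{-1})$ whose intersection with the graph $\Gamma$ has size $\exp(-\log^{O(1)}(\delta^{-1}))|G|$. Since the graph meets $C$ in a dense set, the projection $\pi\colon G\times H \to G$ is somewhat injective on $C$, so Corollary~\ref{almostinjectivitycor} (which is where Theorem~\ref{partialProjThm} enters, with the kernel controlled by the density of $\pi(C)$) yields a sub-coset-progression $\tilde C \subseteq C$ on which $\pi$ is injective and which still captures many points of $\Gamma$. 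Then $\tilde C$ is itself the graph of a Freiman homomorphism $\Phi$ on $Q=\pi(\tilde C)$, written down explicitly from a canonical form of $\tilde C$, and the agreement $\Phi(x)=\phi(x)$ on many $x \in A\cap Q$ is exactly the statement that $\tilde C$ contains many points of $\Gamma$. If you want to salvage your outline, this is the missing idea: the coset-progression structure must be found for the graph in $G\times H$ (with the robust version so that it is anchored on many graph points), rather than for $A'$ in $G$ followed by an extension argument.
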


Such a result for cyclic groups is due to Gowers~\cite{Tim4ap}, while a similar statement for general finite abelian groups (with weaker bounds than the ones above) can be extracted from the work of Green and Tao on the inverse question for $\|\cdot\|_{\mathsf{U}^3}$ norm~\cite{StrongU3}. However, since we need nearly optimal bounds and must use Sanders's Bogolyubov-Ruzsa lemma rather than Freiman's theorem, we need to give a proof.

\begin{proof}We consider the graph $\Gamma = \{(a, \phi(a)) \colon a \in A\}$ of the mapping $\phi$ which is a subset of the abelian group $G \times H$. Every additive quadruple $(a,b,c,d) \in A^4$ which is respected by $\phi$ produces an additive quadruple $\Big((a, \phi(a)), (b, \phi(b)), (c, \phi(c)), (d, \phi(d))\Big)$ in $\Gamma$. Thus, there are at least $\delta |\Gamma|^3$ additive quadruples in $\Gamma$. Applying Balog-Szemer\'edi-Gowers theorem (\cite{BalogSzemeredi},~\cite{Tim4ap}; we use the version stated as Theorem 2.29 in~\cite{TaoVuBook}), we may find subsets $\Gamma', \Gamma'' \subseteq \Gamma$ of size $|\Gamma'|, |\Gamma''| \geq \Omega(\delta^{O(1)}|\Gamma|)$ with $|\Gamma' + \Gamma''| \leq O(\delta^{-O(1)} |\Gamma|)$. By Pl\"{u}nnecke-Ruzsa inequality~(Theorem 6.29 in~\cite{TaoVuBook}, see also~\cite{RuzsaBook} and an alternative proof due to Petridis~\cite{Petridis}), we see hat $|\Gamma' + \Gamma'| \leq O(\delta^{-O(1)} |\Gamma'|)$. By Theorem~\ref{robustBogRuzsa} we find a proper coset progression $C' \subseteq 2\Gamma' - 2\Gamma'$ of size $|C'| \geq \exp(-\log^{O(1)} (\delta^{-1}))|\Gamma|$ and rank at most $\log^{O(1)} (\delta^{-1})$ such that for each $x \in C'$ there exist at least $\Omega(\delta^{O(1)})|\Gamma|^3$ quadruples $(a,b,c,d) \in {\Gamma'}^4$ such that $a + b - c - d = x$. Averaging over $b,c,d \in \Gamma'$ we obtain an element $t \in G \times H$ such that $|\Gamma' \cap C' - t| \geq \exp(-\log^{O(1)} (\delta^{-1})) |\Gamma|$. Note also that $|C'| \leq |2\Gamma' - 2\Gamma'| \leq O(\delta^{-O(1)} |\Gamma|)$, by another application of the Pl\"{u}nnecke-Ruzsa inequality. Returning to $\Gamma$ which contains the set $\Gamma'$, we conclude that there is proper coset progression $C = C' - t$ of rank $\log^{O(1)} (\delta^{-1})$  and $|\Gamma \cap C| = \alpha (|\Gamma| + |C|)$ for some $\alpha \geq \exp(-\log^{O(1)} (\delta^{-1}))$.\\
\indent Let $\pi \colon G \times H \to G$ be the projection onto the first coordinate. Using Corollary~\ref{almostinjectivitycor}, we may pass to a proper coset progression $\tilde{C} \subseteq C$ on which $\pi$ is injective and which satisfies $|\Gamma \cap \tilde{C}| \geq \exp(-\log^{O(1)} (\delta^{-1})) (|\Gamma| + |\tilde{C}|)$. Let $Q = \pi(\tilde{C})$, which is a proper coset progession in $G$. Define the map $\Phi \colon Q \to H$ by setting $\Phi(x)$ to be the unique element of $H$ such that $(x, \Phi(x)) \in \tilde{C}$. Explicitly, we know that $\tilde{C} = (x_0, y_0) + [M_1] \cdot (x_1, y_1) + \dots + [M_r] \cdot (x_r, y_r) + K'$ for a subgroup $K' \leq G \times H$. If $K'$ is isomorphic to $\mathbb{Z}/L_1\mathbb{Z} \oplus \dots \oplus \mathbb{Z}/L_s\mathbb{Z}$ for suitable $L_1, \dots, L_s$, we take arbitrary generator $(g_i, h_i)$ of the summand $\mathbb{Z}/L_i\mathbb{Z}$. Thus
\[\tilde{C} = (x_0, y_0) + [M_1] \cdot (x_1, y_1) + \dots + [M_r] \cdot (x_r, y_r) + [L_1] \cdot (g_1, h_1) + \dots + [L_s] \cdot (g_s, h_s),\]
which we use to define
\[\Phi(x_ 0 + \lambda_1 x_1 + \dots + \lambda_r x_r + \mu_1 g_1 + \dots + \mu_s g_s) = y_0 + \lambda_1 y_1 + \dots + \lambda_r y_r + \mu_1 h_1 + \dots + \mu_s h_s,\]
for $\lambda_i \in [M_i]$ and $\mu_i \in [L_i]$. This is a Freiman homomorphism on $Q$ and $\Phi(x) = \phi(x)$ holds for at least $\exp(-\log^{O(1)} (\delta^{-1})) |G|$ elements $x \in A \cap Q$, as desired.\end{proof}

Next, we use Freiman's theorem to prove that intersections of coset progressions are essentially still coset progressions. It is likely that this fact can be proved without using Freiman's theorem, but given the negligible loss in efficiency we opted to rely on it in order to simplify the proof. 

\begin{proposition}\label{cprogintersection}Let $C_1, \dots, C_r \subseteq G$ be proper coset progressions of rank at most $d$. Suppose that we are given a set $X \subseteq C_1 \cap \dots \cap C_r$ of size $|X| \geq \delta |G|$. Then there is a proper coset progression $C \subseteq C_1 \cap \dots \cap C_r$ of rank at most $(2dr \log \delta^{-1})^{O(1)}$ such that $|C \cap X| \geq \exp(-(2dr \log \delta^{-1})^{O(1)})|G|$.\end{proposition}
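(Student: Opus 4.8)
The plan is to realize the intersection $C_1 \cap \dots \cap C_r$ as (essentially) the image of a single large coset progression under a Freiman homomorphism, and then extract the required sub-coset-progression from $X$. First I would consider the set $X$ directly: since $X \subseteq C_i$ for each $i$, for any fixed $i$ the set $X - X$ lies inside $C_i - C_i$, which has size at most $2^d |C_i|$ by properness. To get a single ambient object, I would pick the smallest $C_i$, say $C_1$, and observe that $X \subseteq C_1$ gives $|X + X| \leq |C_1 - C_1| \leq 2^d |C_1| \leq 2^d \delta^{-1} |X|$, so $X$ has doubling at most $2^d \delta^{-1}$. Now apply the Bogolyubov-Ruzsa lemma (Theorem~\ref{bogRuzsa1}, or rather the robust version in Corollary~\ref{robustBogRuzsaCP}) to produce a symmetric proper coset progression $C' \subseteq 2X - 2X = X - X + X - X$ of rank at most $(d \log \delta^{-1})^{O(1)}$ and size $\exp(-(d\log\delta^{-1})^{O(1)})|G|$, with the additional feature that each element of $C'$ has many representations as $a_1 + a_2 - a_3 - a_4$ with $a_j \in X$.

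The key point is that $C' \subseteq 2X - 2X \subseteq C_i - C_i + C_i - C_i = 2C_i - 2C_i$ for every $i$, but this is not yet $C_i$ itself; however, a translate of $C'$ does land inside each $C_i$. Concretely, since each $x \in C'$ has a representation $a_1 + a_2 - a_3 - a_4$ with all $a_j \in X \subseteq C_i$, and picking any fixed $a \in X$, the translate $C' + a$ has the property that $x + a = a_1 + a_2 - a_3 - a_4 + a$; this is not obviously in $C_i$ either. The cleaner route, which I would actually follow, is: by the representation property, averaging shows that for a random $x \in C'$ and a random triple $(a_2,a_3,a_4) \in X^3$, the element $x - a_2 + a_3 + a_4$ lies in $X$ with positive probability, hence there is a translate $t$ with $|(C' + t) \cap X| \geq \exp(-(d\log\delta^{-1})^{O(1)})|C'|$ — but since $X \subseteq \bigcap_i C_i$ and $C' + t$ need not lie in $\bigcap_i C_i$, I instead intersect: set $X' = (C' + t) \cap X$. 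Then $X' \subseteq \bigcap_i C_i$ is still large, $|X'| \geq \exp(-(\dots)^{O(1)})|G|$, and $X'$ has doubling bounded in terms of the rank of $C'$ and $\delta$ since $X' \subseteq C' + t$ and $C'$ is proper of bounded rank. Now I would iterate once more, or rather invoke Corollary~\ref{robustBogRuzsaCP} applied with $X'$ inside the coset progression $C' + t$, producing a proper symmetric coset progression $C'' \subseteq X' - X' + X' - X' \subseteq 2\big(\bigcap_i C_i\big) - 2\big(\bigcap_i C_i\big)$. This still has the translation-into-$\bigcap C_i$ issue, so the honest fix is the following: since $X' \subseteq C_i$ for all $i$ and each $C_i$ is a proper coset progression of rank $\leq d$, the set $C_i - X'$ does not meet $C_i$ in general, but $x - y \in C_i - C_i$; what we actually want is that some translate of $C''$ sits inside $\bigcap C_i$. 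Using that $C''$ lies in $X' - X' + X' - X'$ and each summand pair $x' - y'$ with $x', y' \in X'$ satisfies $x' - y' \in C_i - C_i$, together with properness of $C_i$ (so $C_i - C_i$ is a coset progression of rank $\leq d$ covered by boundedly many translates of $C_i$), pigeonhole gives a fixed translate $s_i$ for each $i$ with a dense portion of $C'' + s_i$ inside $C_i$; but the translates $s_i$ may differ across $i$. The resolution is to note $X' \neq \emptyset$: fix $x_0 \in X' \subseteq \bigcap_i C_i$, and observe that since $C''$ is symmetric and lies in $2X' - 2X'$, for $x_0$ we have $x_0 + C'' \subseteq x_0 + 2X' - 2X' \subseteq 2C_i - 2C_i + \{x_0 - x_0\}$... this still doesn't close. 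So the real tool I expect to need is Lemma~\ref{progsbgp} (or Corollary~\ref{almostinjectivitycor}): pass to the Freiman-subgroup structure.

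The cleanest approach — which I now think is the intended one — is to build $C$ inside $X$ using Lemma~\ref{progsbgp}. Since $X \subseteq C_1$ and $|X| \geq \delta|G| \geq \delta |C_1|^{-1}|C_1| \cdot |C_1|/|G| $... more precisely $|X| \geq (\delta |G| / |C_1|)|C_1|$, but $|C_1|$ could be large, so the density of $X$ in $C_1$ might be tiny. Instead I would first replace $X$ by a large piece of small doubling: apply Corollary~\ref{robustBogRuzsaCP} to $X$ as a subset of $C_1$ to get a coset progression $C^*$ of rank $(d\log\delta^{-1})^{O(1)}$ with each point of $C^*$ having $\geq \delta 2^{-d-6}$ representations over $X$, then translate so that $X^* := (C^* + t)\cap X$ is dense in $C^* + t$, namely $|X^*| \geq \exp(-(d\log\delta^{-1})^{O(1)})|C^*+t|$. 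Now $X^* \subseteq \bigcap_i C_i$ and $X^*$ is dense in the bounded-rank proper coset progression $C^*+t$. Apply Lemma~\ref{progsbgp}? — but $X^*$ need not be a Freiman-subgroup of $C^*+t$. So instead apply Corollary~\ref{robustBogRuzsaCP} to $X^*$ inside $C^*+t$: we get a symmetric proper coset progression $C^{**} \subseteq X^* - X^* + X^* - X^* \subseteq \bigcap_i (C_i - C_i + C_i - C_i)$ of rank $(dr\log\delta^{-1})^{O(1)}$ and size $\exp(-(dr\log\delta^{-1})^{O(1)})|G|$, still with the $2$-representation property over $X^*$. Finally — and this is the step that genuinely requires care — I would use the representation property of $C^{**}$ over $X^*$: for each $z \in C^{**}$ there are many $(a_1,a_2,a_3,a_4) \in (X^*)^4$ with $z = a_1 + a_2 - a_3 - a_4$. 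Fix $a_3 = a_4 = a_2 =: a \in X^*$ in a representation (possible for a positive-density subset $C^{\circ} \subseteq C^{**}$, by averaging: the map $z \mapsto$ (number of reps with $a_2=a_3$) has positive average, hence $z + a \in X^* \subseteq \bigcap C_i$ for many pairs $(z,a)$). Thus there is a translate $w$ and a subset $C^\circ \subseteq C^{**}$ of density $\exp(-(dr\log\delta^{-1})^{O(1)})$ with $C^\circ + w \subseteq X^* \subseteq \bigcap_i C_i$. Then, since $C^\circ$ is a dense subset of the proper bounded-rank coset progression $C^{**}$, one final application of Corollary~\ref{robustBogRuzsaCP} to $C^\circ$ inside $C^{**}$, combined with the observation that a symmetric coset progression contained in $2C^\circ - 2C^\circ$ can be centered (since it contains $0$ and $C^\circ - C^\circ \subseteq (X^* - w) - (X^* - w) = X^* - X^*$), yields a symmetric proper coset progression $C \subseteq X^* - X^* \subseteq \bigcap_i C_i$ (using that $X^*$ lies in each $C_i$ and each $C_i$ is a proper coset progression, so $C_i + (C_i - C_i)$ overlaps controllably — actually for the final inclusion we just need $C + w' \subseteq C_i$ for a common $w'$, which we take to be any fixed point $x_0 \in X^*$; then $C + x_0 \subseteq (X^* - X^*) + x_0$, and since $X^* \subseteq C_i$ with $C_i$ proper of rank $\leq d$, $X^* - X^* + x_0 \subseteq C_i - C_i + x_0$ which is not $C_i$... ). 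I concede the final inclusion $C \subseteq \bigcap_i C_i$ (rather than $\subseteq \bigcap_i (2C_i - 2C_i)$) is the crux; the honest way to secure it is to demand throughout that the coset progressions produced by Bogolyubov-Ruzsa applied to $X^*$ be taken inside $X^* - X^* + t$ for a suitable translate $t$ making this land in $\bigcap C_i$, which is exactly the content of the averaging/translation step above. I expect that step — ensuring a single common translate works for all $r$ of the $C_i$ simultaneously — to be the main obstacle, resolved by the fact that $X^*$ itself already lies in $\bigcap_i C_i$, so translating $C^{**}$ by any fixed element of $X^*$ after first translating it into $X^*$ (via its representation property with three of the four points equal) does the job, with only an $r$-dependent loss absorbed into the $(dr\log\delta^{-1})^{O(1)}$ bounds.
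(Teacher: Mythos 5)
There is a genuine gap, and it is exactly the one you flag yourself: the final containment $C \subseteq C_1 \cap \dots \cap C_r$ is never established, and the machinery you invoke cannot establish it. Every coset progression you produce comes out of Theorem~\ref{robustBogRuzsa}/Corollary~\ref{robustBogRuzsaCP} and therefore lives in $2X-2X$ (or $2X^*-2X^*$, $2C^\circ-2C^\circ$, \dots), i.e.\ in $2C_i-2C_i$ up to translation, and a translate of $C_i-C_i$ simply does not fit inside $C_i$: it is roughly $2^d$ times larger. Your averaging-over-triples step (the version with a fixed triple $(a_2,a_3,a_4)$, not the ``$a_2=a_3=a_4$'' variant, which is not guaranteed by the representation property) does correctly give a translate $w$ and a \emph{dense subset} $C^\circ$ of the progression with $C^\circ+w \subseteq X^* \subseteq \bigcap_i C_i$; but the proposition needs an entire progression inside the intersection, and your only way to upgrade a dense subset to a progression is another application of Bogolyubov--Ruzsa, which re-introduces a fourfold difference set and escapes the $C_i$ again. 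Iterating does not converge: the obstruction is structural, not a matter of bookkeeping, because ``a progression contained in $\bigcap_i C_i$'' is precisely the statement being proved, so it cannot be recovered from containment in difference sets alone.

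The missing idea, which is how the paper proceeds, is a preliminary \emph{cell decomposition with a buffer}: write each $C_i$ in canonical form and partition it into sub-progressions whose side lengths are shrunk by a factor of order $\delta/(dr)$, keeping only cells whose index in each generator direction stays at least $4$ cells away from the boundary. Pigeonholing over the $r$ choices of cell yields $Y = X \cap C'_1 \cap \dots \cap C'_r$ of density $\geq \tfrac12 \delta^{dr+1}(200rd)^{-rd}$ in $G$, and the whole point of the small interior cells is the purely geometric containment $y_0 + 4Y - 4Y \subseteq C_i$ for every $i$ \emph{simultaneously} and any $y_0 \in Y$: the eightfold difference of a single cell has coefficients bounded by four cell-widths in each direction, and $y_0$ sits at least four cell-widths inside $C_i$. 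After that, a single application of Theorem~\ref{robustBogRuzsa} to $Y$ plus your own translation-by-averaging step finishes, since $C - y_2 + y_3 + y_4 \subseteq 4Y - 3Y \subseteq y_0 + 4Y - 4Y \subseteq \bigcap_i C_i$. This is also where the $r$-dependence in the exponent $(2dr\log\delta^{-1})^{O(1)}$ comes from (the $\delta^{dr}$-type density loss in the cell pigeonhole); your outline, which has no such loss, is a sign that the containment was never going to come for free.
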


\begin{proof} Let $C_i = a_i + [0, N^{(i)}_1 - 1] \cdot v^{(i)}_1 + \dots + [0, N^{(i)}_d - 1] \cdot v^{(i)}_d + H_i$ be $i$\tss{th} proper coset progression its canonical form for each $i \in [r]$. For $i \in [r]$, $j \in [d]$, let $M^{(i)}_j = \Big\lceil \frac{\delta N^{(i)}_j}{100 d r}\Big\rceil$ and let $T^{(i)}_j = \lceil N^{(i)}_j/M^{(i)}_j\rceil$. Consider the following sub-coset progressions for each $i \in [r]$. For each $\ell \in [d]$, let $j_\ell$ range over $[4, T^{(i)}_\ell - 4]$ when $T^{(i)}_\ell \geq 50 \delta^{-1} dr$. On the other hand, when $T^{(i)}_\ell < 50 \delta^{-1} dr$, we redefine $M^{(i)}_\ell$ to be 1, and also set $T^{(i)}_\ell$ to $N^{(i)}_\ell$ instead, which satisfies $N^{(i)}_\ell \leq 100\delta^{-1} dr$ in this case, and we let $j_\ell$ range over $[0, T^{(i)}_\ell]$. Let $\mathcal{J}_i$ be the set of all $d$-tuples of indices $(j_1, \dots, j_d)$ that arise in this way. Note also that in either case $T^{(i)}_\ell \leq 200 \delta^{-1}dr$.\\
\indent Given $(j_1, \dots, j_d) \in \mathcal{J}_i$, we define the coset progression 
\begin{equation}\label{intercprogeq}C_{i, j_1, \dots, j_d} = a_i + j_1 M^{(i)}_1 v^{(i)}_1 + \dots + j_d M^{(i)}_d v^{(i)}_d + [0, M^{(i)}_1 - 1] \cdot v^{(i)}_1 + \dots + [0, M^{(i)}_d - 1] \cdot v^{(i)}_d + H_i.\end{equation}
For fixed $i$, the coset progressions $C_{i, j_1, \dots, j_d}$ with different indices $(j_1, \dots, j_d)$ are disjoint and they almost cover $C_i$ in the sense that
\[\Big|C_i \setminus \Big(\bigcup_{(j_1, \dots, j_d) \in \mathcal{J}_i} C_{i, j_1, \dots, j_d}\Big)\Big| \leq \frac{\delta}{2r} |G|.\]

Thus, we obtain
\[\Big|X \cap \bigcap_{i \in [r]}\Big(\bigcup_{(j_1, \dots, j_r)\in \mathcal{J}_i} C_{i, j_1, \dots, j_r}\Big)\Big| \geq \frac{\delta}{2}|G|.\]

By averaging, we may find a coset progression $C'_i$ of the form $C_{i, j_1, \dots, j_r}$ for each $i \in [r]$ such that 
\[|X \cap C'_1 \cap \dots \cap C'_r| \geq \frac{1}{2}\,\frac{\delta^{dr + 1}}{(200rd)^{rd}}|G|.\]

Write $Y = X \cap C'_1 \cap \dots \cap C'_r$ and $K = 2\delta^{-(dr + 1)}(200rd)^{rd}$, and let $y_0 \in Y$ be an arbitrary element. We claim that $y_0 + 4Y - 4Y \subset C_i$ for each $i \in [r]$ and therefore $y_0 + 4Y - 4Y \subset C_1 \cap \dots \cap C_r$. To that end, fix an index $i \in [r]$. We have that $Y \subseteq C'_i$ and $C'_i$ is of the form~\eqref{intercprogeq} and in particular
\[4Y - 4Y \subseteq 4C'_i - 4C'_i \subseteq [-4(M^{(i)}_1 - 1), 4(M^{(i)}_1 - 1)] \cdot v^{(i)}_1 + \dots + [-4(M^{(i)}_d - 1), 4(M^{(i)}_d - 1)] \cdot v^{(i)}_d + H_i.\]
By the way we defined the indices set $\mathcal{J}_i$ we conclude that $y_0 + 4Y - 4Y$ is indeed a subset of $C_i$, as claimed. Hence, given any $y_1, \dots, y_8 \in Y$ we have that $y_0 + y_1 + y_2 + y_3 + y_4 - y_5 - y_6 - y_7 - y_8 \in C_i$.\\

Apply Theorem~\ref{robustBogRuzsa} to find a proper coset progression $C$ of rank at most $(2\log K)^{O(1)}$ and size $|C| \geq \exp\Big(-(2\log K)^{O(1)}\Big)|G|$ such that for each $x \in C$ there are at least $\frac{1}{64K}|G|^3$ quadruples $(y_1, y_2, y_3, y_4) \in Y^4$ such that $x = y_1 + y_2 - y_3 - y_4$. By averaging, we may find $y_2, y_3, y_4 \in Y$ such that 
\[|C - y_2 + y_3 + y_4 \cap Y| \geq \exp\Big(-(2\log K)^{O(1)}\Big)|G|.\]
Finally, we have
\[C - y_2 + y_3 + y_4 \subseteq 2Y - 2Y - Y + 2Y = 4Y - 3Y \subseteq y_0 + 4Y - 4Y \subseteq C_1 \cap \dots \cap C_r,\]
so we can take $C - y_2 + y_3 + y_4$ to be the desired coset progression.\end{proof}

\noindent\textbf{Linear maps covering lemma.} The vector space version of the following fact plays an important role in the previous proofs of the bilinear Bogolyubov argument. It essentially says that if we are given sets of bounded size that have many linear relationships between them, then we can hit significant proportion of the sets by images of a single linear function. The formulation below is a generalization of Lemma 2.1 in~\cite{HosseiniLovett}.

\begin{lemma}\label{linearcoverlemma}Let $G$ and $H$ be finite abelian groups and let $A \subseteq G$. For each $a \in A$, let $U_a \subseteq H$ be a set of size at most $K$. Let $L_1 \colon C_1 \to H, \dots, L_m \colon C_m \to H$ be Freiman homomorphisms with domains $C_1, \dots, C_m \subseteq G$ which are proper coset progressions. Assume that one of these maps is the zero map $g \mapsto 0$ defined for all $g \in G$. Suppose that for each $a \in A$ we have chosen a subset of these maps denoted by $\mathcal{L}_a \subseteq \{L_1, \dots, L_m\}$ so that the zero map belongs to all subsets, i.e\ $0 \in \mathcal{L}_a$. Write $\mathcal{L}_a(a) = \{L(a) \colon L \in \mathcal{L}_a\}$. Suppose that the properties $z,w, y+z, y + w \in A$ and
\[(U_{y + z} - U_z) \cap (U_{y + w} - U_w) \not\subseteq (\mathcal{L}_{y + z}(y + z) - \mathcal{L}_z(z)) + (\mathcal{L}_{y + w}(y + w) - \mathcal{L}_w(w))\]
hold for at least $\varepsilon |G|^3$ choices of $y,z,w \in G$. Then, provided $|G| \geq 100 K^4 \varepsilon^{-1}$, there exists a Freiman homomorphism $L \colon C \to H$ defined on a proper coset progression $C$ of rank at most $\log^{O(1)}(\varepsilon^{-1} K)$ such that $L(y) \in U_y \setminus \mathcal{L}_y(y)$ holds for at least $\exp\Big(-\log^{O(1)}(\varepsilon^{-1} K)\Big)|G|$ elements $y \in C \cap A$.
\end{lemma}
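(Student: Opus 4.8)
The strategy is to follow the vector-space argument of Hosseini and Lovett (Lemma 2.1 in~\cite{HosseiniLovett}), replacing linear-algebraic manipulations with the Freiman-theoretic machinery already developed. First I would unpack the hypothesis combinatorially. For each fixed $y$, the assumption produces many pairs $(z,w)$ with $z, w, y+z, y+w \in A$ and an element lying in $(U_{y+z} - U_z) \cap (U_{y+w} - U_w)$ but outside the prescribed sumset of $\mathcal{L}$-images. By an averaging argument over $y$ (using that $|G| \geq 100K^4\varepsilon^{-1}$ so that the trivial error terms are negligible), I would pass to a positive-density set $Y_0 \subseteq G$ of ``good'' values of $y$, each coming with at least $\Omega(\varepsilon |G|^2)$ witnessing pairs $(z,w)$. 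Since each $U_a$ has size at most $K$, for each good $y$ there are boundedly many possible values of the witness, of the differences $U_{y+z}-U_z$ and of the excluded sumset; so after refining by choosing, for each $y$, a most-popular value $u(y)$ of the witness together with the specific elements of $U_z, U_{y+z}, \mathcal{L}_z(z)$ etc.\ realizing it, we still retain density $\Omega(\varepsilon K^{-O(1)})$ in both $y$ and the pairs $(z,w)$. This encodes, for density-$\Omega(\varepsilon K^{-O(1)})$ many quadruples, a genuine additive relation $u(y) + (\text{element of } U_z) = (\text{element of } U_{y+z})$ and likewise for $w$, together with the failure of the corresponding relation to be explained by the $\mathcal{L}$'s.

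Next I would set up a single auxiliary map whose graph has many additive quadruples, so that Theorem~\ref{approxFreimanHom} applies. The natural candidate is the partial map $\Phi$ sending a good $y$ to the chosen witness value $u(y) \in H$; the many witnessing pairs $(z,w)$ give, via the relation $u(y) = (\text{elt of }U_{y+z}) - (\text{elt of }U_z)$ and the fact that each $U_a$ has at most $K$ elements, enough coincidences among the values of $\Phi$ on additive configurations $\{y, z', y+z', \dots\}$ to guarantee $\gtrsim (\varepsilon K^{-O(1)})|G|^3$ respected additive quadruples. (This is exactly the place where boundedness of the $U_a$ is used: without it one could not convert ``many pairs'' into ``many additive quadruples among the values of a single function''.) Applying Theorem~\ref{approxFreimanHom} yields a proper coset progression $Q \subseteq G$ of rank $\log^{O(1)}(\varepsilon^{-1}K)$ and a Freiman homomorphism $L \colon Q \to H$ agreeing with $\Phi$ on $\exp(-\log^{O(1)}(\varepsilon^{-1}K))|G|$ elements of the good set.

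Finally I would check that $L$ does what is required: on the set of $y$ where $L(y) = \Phi(y) = u(y)$, we have by construction $u(y) \in U_{y+z} - U_z$ for the chosen witnessing data, and one must verify $u(y) \in U_y$ and $u(y) \notin \mathcal{L}_y(y)$. Here a slight adjustment is needed — the hypothesis gives membership in $(U_{y+z}-U_z)\cap(U_{y+w}-U_w)$, not directly in $U_y$ — so I would actually apply the whole argument to the shifted configuration, i.e.\ absorb the ``$y$'' into a single index by re-indexing (replace $A$-membership of $z, y+z$ by membership of two points whose difference is $y$, and note $U_{\bullet}$ is indexed by elements of $A$); the excluded sumset, containing $\mathcal{L}_y(y) - \mathcal{L}_0(0) = \mathcal{L}_y(y)$ (using that the zero map lies in every $\mathcal{L}_a$, so $0 \in \mathcal{L}_0(0)$), then forces $u(y) \notin \mathcal{L}_y(y)$, and the role of $U_y$ is played by one of the difference sets, which after the choice of data is literally a singleton subset of some fixed translate. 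Intersecting the resulting large set of good $y$ with $Q$ and with $A$, and using that all the refinements cost only $K^{O(1)}$ and $\log^{O(1)}$ factors, produces the claimed $L \colon C \to H$ with $C$ a proper coset progression of rank $\log^{O(1)}(\varepsilon^{-1}K)$ on which $L(y) \in U_y \setminus \mathcal{L}_y(y)$ for $\exp(-\log^{O(1)}(\varepsilon^{-1}K))|G|$ elements $y \in C \cap A$. The main obstacle is the bookkeeping in the second paragraph: arranging the refinements so that the number of respected additive quadruples among the values of a \emph{single} partial function is still polynomial in $\varepsilon$ and $K^{-1}$, rather than losing a factor exponential in $K$, which is what makes the final rank bound $\log^{O(1)}(\varepsilon^{-1}K)$ rather than something worse.
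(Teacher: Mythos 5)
There is a genuine gap, and it sits exactly at the point you flag as a ``slight adjustment.'' Your candidate function is $\Phi(y)=u(y)$, where $u(y)$ is a popular witness element of $(U_{y+z}-U_z)\cap(U_{y+w}-U_w)$. Even granting that the graph of this $\Phi$ has many additive quadruples and that Theorem~\ref{approxFreimanHom} applies, the resulting Freiman homomorphism agrees with $u(y)$, which is a \emph{difference} of elements of $U_{y+z}$ and $U_z$; it is not an element of $U_y$, and $y$ need not even lie in $A$, so the required conclusion $L(y)\in U_y\setminus\mathcal{L}_y(y)$ simply does not follow. The re-indexing you propose to repair this does not work as described: the excluded sumset in the hypothesis is $(\mathcal{L}_{y+z}(y+z)-\mathcal{L}_z(z))+(\mathcal{L}_{y+w}(y+w)-\mathcal{L}_w(w))$, which contains $\mathcal{L}_y(y)$ only in degenerate configurations such as $z=0$ (a negligible proportion of triples, and $0$ need not belong to $A$ at all, so $\mathcal{L}_0(0)$ may be undefined); thus the claimed exclusion $u(y)\notin\mathcal{L}_y(y)$ is not justified, and ``the role of $U_y$ is played by one of the difference sets'' is not a substitute for membership in $U_y$ itself.

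The paper avoids this by never introducing a function of the difference variable $y$. Instead, for each triple $(y,z,w)$ satisfying the hypothesis it fixes witnesses $a\in U_{y+z}$, $b\in U_z$, $c\in U_{y+w}$, $d\in U_w$ with $a-b=c-d$ outside the excluded sumset, and uses the assumption $0\in\mathcal{L}_t(t)$ to pigeonhole into four cases, so that (after relabelling) $b\in U_z\setminus\mathcal{L}_z(z)$ and $d\in U_w\setminus\mathcal{L}_w(w)$ for a quarter of the triples. It then chooses a random $f(x)\in U_x$ for each $x\in A$; each good triple has probability at least $K^{-4}$ that $f$ hits all four witnesses simultaneously, so some fixed $f$ works for $\Omega(\varepsilon K^{-4})|G|^3$ triples. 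Setting $A'=\{x\in A:\ f(x)\in U_x\setminus\mathcal{L}_x(x)\}$, the coincidences $f(y+z)-f(z)=f(y+w)-f(w)$ give, after Cauchy--Schwarz, $\Omega(\varepsilon^2K^{-8})|G|^3$ additive quadruples respected by $f|_{A'}$, and Theorem~\ref{approxFreimanHom} applied to this restriction yields $L$ with $L(x)=f(x)\in U_x\setminus\mathcal{L}_x(x)$ on the required set. The essential idea your write-up is missing is this random selection of values $f(x)\in U_x$ indexed by $x\in A$ (which is what makes the conclusion $L(y)\in U_y\setminus\mathcal{L}_y(y)$ automatic), together with the four-case pigeonholing that is the actual place where the hypothesis $0\in\mathcal{L}_a$ is used.
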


We do not use the facts that $C_1, \dots, C_m$ are coset progressions and that $L_1, \dots, L_m$ are Freiman homomorphisms in the proof of the lemma, but we opted for this formulation as the relevant maps will have this structure when the lemma is applied.

\begin{proof}Consider a choice of $y,z,w \in G$ such that $z,w, y+z, y + w \in A$ and
\[(U_{y + z} - U_z) \cap (U_{y + w} - U_w) \not\subseteq (\mathcal{L}_{y + z}(y + z) - \mathcal{L}_z(z)) + (\mathcal{L}_{y + w}(y + w) - \mathcal{L}_w(w)).\]
Thus, there are elements $a \in U_{y + z}, b \in U_z, c \in U_{y + w}, d \in U_w$ such that $a - b = c - d \notin (\mathcal{L}_{y + z}(y + z) - \mathcal{L}_z(z)) + (\mathcal{L}_{y + w}(y + w) - \mathcal{L}_w(w))$. Since $0 \in \mathcal{L}_t(t)$ for all $t$, we see that we cannot have $a \in \mathcal{L}_{y + z}(y + z)$ and $b \in \mathcal{L}_z(z)$ simultaneously. Similarly, we cannot have $c \in \mathcal{L}_{y + w}(y + w)$ and $d \in \mathcal{L}_w(w)$ simultaneously. Thus
\[\Big(a \notin \mathcal{L}_{y + z}(y + z)\,\text{OR}\,b \notin \mathcal{L}_z(z)\Big)\,\text{AND}\,\Big(c \notin \mathcal{L}_{y + w}(y + w)\,\text{OR}\,d \notin \mathcal{L}_w(w)\Big).\] 
This implies one of the four possibilities:
\begin{itemize}
\item $a \notin \mathcal{L}_{y + z}(y + z)$\,\text{AND}\,$c \notin \mathcal{L}_{y + w}(y + w)$,
\item $a \notin \mathcal{L}_{y + z}(y + z)$\,\text{AND}\,$d \notin \mathcal{L}_w(w)$,
\item $b \notin \mathcal{L}_z(z)$\,\text{AND}\,$c \notin \mathcal{L}_{y + w}(y + w)$,
\item $b \notin \mathcal{L}_z(z)$\,\text{AND}\,$d \notin \mathcal{L}_w(w)$.
\end{itemize}
Without loss of generality, the last possibility occurs most frequently. Therefore, we obtain a set $T \subseteq G^3$ of size $|T| \geq \frac{\varepsilon}{4}|G|^3$ consisting of triples $(y,z,w) \in G^3$ such that $z,w, y+z, y + w \in A$,
\[(U_{y + z} - U_z) \cap (U_{y + w} - U_w) \not\subseteq (\mathcal{L}_{y + z}(y + z) - \mathcal{L}_z(z)) + (\mathcal{L}_{y + w}(y + w) - \mathcal{L}_w(w)),\]
and there are elements $a = a(y,z,w) \in U_{y + z}, b = b(y,z,w) \in U_z \setminus \mathcal{L}_z(z), c = c(y,z,w) \in U_{y + w}, d = d(y,z,w) \in U_w \setminus \mathcal{L}_w(w)$ that satisfy $a - b = c - d$.\\

For each $y \in A$, pick a random value $f(y) \in U_y$ uniformly and independently. For each $(y,z,w) \in T$ such that the elements $z,w,y + z, y + w$ are distinct we have
\[\mathbb{P}\Big(f(y + z) = a(y,z,w), f(z) = b(y,z,w), f(y + w) = c(y,z,w), f(w) = d(y,z,w)\Big) \geq K^{-4}.\]
By linearity of expectation, there is a choice of $f$ such that the equalities $f(y + z) = a(y,z,w)$, $f(z) = b(y,z,w)$, $f(y + w) = c(y,z,w)$ and $f(w) = d(y,z,w)$ hold for at least $\frac{\varepsilon}{4K^4}|G|^3 - 6|G|^2$ choices of $(y,z,w) \in T$. Let $T'$ be the subset of such triples. Let $A' = \{x \in A \colon f(x) \in U_x \setminus \mathcal{L}_x(x)\}$. Recalling that $b(y,z,w) \in U_z \setminus \mathcal{L}_z(z)$ and that $d(y,z,w) \in U_w \setminus \mathcal{L}_w(w)$ hold for all triples $(y,z,w) \in T$, we conclude that $(y,z,w) \in T'$ implies $z,w \in A'$.\\
\indent Recalling also that $a(y,z,w) - b(y,z,w) = c(y,z,w) - d(y,z,w)$ holds for all $(y,z,w) \in T$, it follows that there are at least $\frac{\varepsilon}{4K^4}|G|^3 - 6|G|^2 \geq \frac{\varepsilon}{8K^4}|G|^3$ quadruples $(a', b', c', d') \in A \times A' \times A \times A'$ such that $a' - b' = c' - d'$ and $f(a') - f(b') = f(c') - f(d')$, which can be rewritten as $a' - c' = b' - d'$ and $f(a') - f(c') = f(b') - f(d')$. (We used the assumption $|G| \geq 100 K^4 \varepsilon^{-1}$ to simplify the inequality above.) By Cauchy-Scwarz inequality, it follows that the restriction $f|_{A'}$ respects at least $\Omega\Big(\varepsilon^2K^{-8}|G|^3\Big)$ additive quadruples.\\
\indent Apply Theorem~\ref{approxFreimanHom} to obtain a proper coset progression $C$ of rank $\log^{O(1)}(\varepsilon^{-1} K)$ and a Freiman homomorphism $L \colon C \to H$ such that $L(x) = f(x)$ holds for at least $\exp\Big(-\log^{O(1)}(\varepsilon^{-1} K)\Big)|G|$ of $x \in C \cap A'$. In particular, such an element $x$ also satisfies $L(x) \in U_x \setminus \mathcal{L}_x(x)$, as desired.\end{proof}

Applying the lemma iteratively gives the following corollary.

\begin{corollary}\label{linearCoverCor}Let $G$ and $H$ be finite abelian groups. Let $A \subset G$ be a subset of density $\alpha$. For each $a \in A$, let $U_a \subseteq H$ be a set of size at most $K$ containing 0. Assume $|G| \geq 2000 K^4 \alpha^{-4}$. Then there exist a positive integer $m \leq \exp\Big(\log^{O(1)} (\alpha^{-1}K)\Big)$, proper coset progressions $C_1, \dots, C_m$ of rank at most $\log^{O(1)} (\alpha^{-1}K)$ and Freiman homomorphisms $L_1 \colon C_1 \to H, \dots, L_m \colon C_m \to H$ such that, writing $U'_a = \{L_i(a) \colon i \in [m], a \in C_i\} \cap U_a$ for each $a \in A$, we have $y + z, z, y + w, w \in A$ and 
\[(U_{y + z} - U_z) \cap (U_{y + w} - U_w) \subseteq (U'_{y + z} - U'_z) + (U'_{y + w} - U'_w)\]
for at least $\frac{\alpha^4}{2}|G|^3$ of $y,z,w \in G$.
\end{corollary}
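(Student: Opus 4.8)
The plan is to iterate Lemma~\ref{linearcoverlemma}, building the list $L_1,\dots,L_m$ one map at a time, and to bound the number of iterations by a monovariant counting how many values these maps realise inside the sets $U_a$. As a preliminary, I would first record that there are at least $\alpha^4|G|^3$ triples $(y,z,w)\in G^3$ with $z,w,y+z,y+w\in A$: writing $r(y)=|A\cap(A-y)|$, the number of such triples equals $\sum_{y\in G}r(y)^2$, and since $\sum_{y\in G}r(y)=|A|^2$, Cauchy--Schwarz gives $\sum_{y\in G}r(y)^2\ge|A|^4/|G|=\alpha^4|G|^3$. This is the pool of triples within which we aim to force the desired containment.

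\emph{Set-up and key bookkeeping.} Start with the single map $L_1\colon G\to H$, $g\mapsto 0$, a Freiman homomorphism on the rank-$0$ proper coset progression $G$. Given maps $L_1,\dots,L_m$ on proper coset progressions $C_1,\dots,C_m$, for each $a\in A$ I would \emph{not} take $\mathcal L_a$ to consist of all maps defined at $a$; instead I set $\mathcal L_a:=\{L_i: a\in C_i,\ L_i(a)\in U_a\}$. Because $0\in U_a$, the map $L_1$ lies in every $\mathcal L_a$, so Lemma~\ref{linearcoverlemma} applies with this choice, and, crucially, $\mathcal L_a(a)=\{L_i(a): i\in[m],\ a\in C_i\}\cap U_a=U'_a$. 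Thus the failure condition in the hypothesis of Lemma~\ref{linearcoverlemma} for these $\mathcal L_a$ is \emph{exactly} the negation of the containment in the conclusion of the corollary.

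\emph{Monovariant and termination.} Let $M:=\sum_{a\in A}|U'_a|$, so that $\alpha|G|\le M\le K|A|\le K|G|$ (initially $U'_a=\{0\}$). While the number of triples $(y,z,w)$ with $z,w,y+z,y+w\in A$ for which $(U_{y+z}-U_z)\cap(U_{y+w}-U_w)\not\subseteq(U'_{y+z}-U'_z)+(U'_{y+w}-U'_w)$ is at least $\frac{\alpha^4}{2}|G|^3$, I apply Lemma~\ref{linearcoverlemma} with $\varepsilon=\alpha^4/2$; this is legitimate since $|G|\ge 2000K^4\alpha^{-4}\ge 100K^4\varepsilon^{-1}$. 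The lemma produces a Freiman homomorphism $L\colon C\to H$ on a proper coset progression $C$ of rank $\log^{O(1)}(\varepsilon^{-1}K)=\log^{O(1)}(\alpha^{-1}K)$ (using $\log(2\alpha^{-4}K)=O(\log(\alpha^{-1}K))$) with $L(y)\in U_y\setminus U'_y$ for at least $\exp(-\log^{O(1)}(\alpha^{-1}K))|G|$ elements $y\in C\cap A$; I append it to the list. For each such $y$ the value $L(y)$ enters $U'_y$ for the first time, and appending a map never decreases any $|U'_a|$, so $M$ grows by at least $\exp(-\log^{O(1)}(\alpha^{-1}K))|G|$. Hence the loop terminates after at most $K|G|/(\exp(-\log^{O(1)}(\alpha^{-1}K))|G|)\le\exp(\log^{O(1)}(\alpha^{-1}K))$ steps, bounding $m$ (the rank bound being uniform over steps). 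When the loop stops, the bad triples number fewer than $\frac{\alpha^4}{2}|G|^3$; subtracting this from the $\ge\alpha^4|G|^3$ valid triples above leaves at least $\frac{\alpha^4}{2}|G|^3$ triples $(y,z,w)$ with $z,w,y+z,y+w\in A$ and the required containment, which is the assertion.

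\emph{Main obstacle.} Everything above is routine once the definition of $\mathcal L_a$ is chosen correctly: the single delicate point is to filter the Freiman homomorphisms at $a$ by whether their value actually lands in $U_a$, so that $\mathcal L_a(a)$ coincides with the set $U'_a$ appearing in the statement. A naive choice (all maps defined at $a$) would make the hypothesis of Lemma~\ref{linearcoverlemma} refer to a set strictly larger than $U'_{y+z}-U'_z$, since the image of a Freiman homomorphism need not lie in $U_a$, and the deduction would fail — as one sees already on small examples. The Cauchy--Schwarz count, the monovariant argument, and the arithmetic absorbing $\log(\varepsilon^{-1}K)$ into $\log(\alpha^{-1}K)$ are all mechanical.
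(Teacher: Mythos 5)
Your proposal is correct and follows essentially the same route as the paper's proof: iterate Lemma~\ref{linearcoverlemma} starting from the zero map, with $\mathcal{L}_a$ consisting of the maps $L_i$ with $a \in C_i$ and $L_i(a) \in U_a$ so that $\mathcal{L}_a(a) = U'_a$, and terminate by counting covered pairs $(y,h)$ with $h \in U_y$. The only differences are cosmetic: you spell out the Cauchy--Schwarz count of additive quadruples and the monovariant, which the paper treats as immediate.
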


\begin{proof}We begin with defining $L_1 \colon G \to H$ to be the zero map $x \mapsto 0$. We iteratively find further maps $L_2, L_3, \dots$ as long as the conclusion is not satisfied.\\
\indent Suppose therefore that so far we have found Freiman homomorphisms $L_1 \colon C_1 \to H, \dots, L_s \colon C_s \to H$ such that for at least $\frac{\alpha^4}{2}|G|^3$ triples $(y,z,w) \in G^3$ we have $y + z, z, y + w, w \in A$ but
\[(U_{y + z} - U_z) \cap (U_{y + w} - U_w) \not\subseteq (U'_{y + z} - U'_z) + (U'_{y + w} - U'_w),\]
where we used the basic fact that the number of additive quadruples in $A$ is at least $\alpha^4|G|^3$. We may use Lemma~\ref{linearcoverlemma}, where we set $\mathcal{L}_a$ to consist of those maps $L_i$ such that $a \in C_i$ and $L_i(a) \in U_a$. Note that since $0 \in U_a$ we have $L_1 \in \mathcal{L}_a$ as required by the lemma and that $|G| \geq 2000 K^4 \alpha^{-4}$ ensure the requirement on the size of $G$ is met. The lemma produces a proper coset progression $C_{s+1}$ of rank at most $\log^{O(1)}(\alpha^{-1} K)$ and a Freiman homomorphism $L_{s+1} \colon C_{s+1} \to H$ such that $L_{s+1}(y) \in U_y \setminus U'_y$ for at least $\exp\Big(-\log^{O(1)}(\alpha^{-1} K)\Big)|G|$ of $y \in C_{s+1} \cap A$. The procedure necessarily terminates after at most $\exp\Big(\log^{O(1)}(\alpha^{-1} K)\Big)$ steps as each time we cover at least $\exp\Big(-\log^{O(1)}(\alpha^{-1} K)\Big)|G|$ pairs of the form $(y,h)$ for some $y \in A$ and $h \in  U_y \setminus U'_y$ by pairs of the form $(y, L_{s+1}(y))$ and the total number of pairs that we start with is at most $\sum_{y \in A} |U_y| \leq K|G|$.\end{proof}

\section{Behaviour of Bohr sets}\label{secBohr}

In this section we recall the definition of Bohr sets, study their properties motivated by linear algebra and in particular derive an approximate formula that controls their size.\\

\noindent \textbf{Bohr sets.} Let $G$ be a finite abelian group. By the fundamental theorem for finitely generated abelian groups, we know that $G$ can without loss of generality be taken to be of the form $G = \mathbb{Z}/q_1 \mathbb{Z}\, \oplus\, \mathbb{Z}/q_2 \mathbb{Z} \,\oplus\, \dots \,\oplus\,\mathbb{Z}/q_d \mathbb{Z}$ for some natural numbers $q_1, \dots, q_d$ such that $q_1 | q_2 | \dots | q_k$. The dual group of $G$, denoted by $\hat{G}$, consists of characters of $G$, which we view as homomorphisms between abelian groups $G$ and $\mathbb{T} = \mathbb{R}/\mathbb{Z}$. We may put $\hat{G}$ in the following, explicit form. The dual group $\hat{G}$ has the structure $\mathbb{Z}/q_1 \mathbb{Z}\, \oplus\, \mathbb{Z}/q_2 \mathbb{Z} \,\oplus\, \dots \,\oplus\,\mathbb{Z}/q_d \mathbb{Z}$ as well. To see this, notice that for each $\chi \in \hat{G}$, there exist unique elements $\chi_i \in \mathbb{Z}/q_i\mathbb{Z}$ for each $i \in [d]$ such that 
\[\chi(x) = \sum_{i \in [d]} \frac{|\chi_i x_i|_{q_i}}{q_i} + \mathbb{Z},\]
where $|\cdot|_q \colon \mathbb{Z}/ q\mathbb{Z} \to \{0, 1, \dots, q-1\} \subseteq \mathbb{Z}$ is a map which maps each residue to the unique integer among $\{0,1, \dots, q-1\}$ that projects to it inside $\mathbb{Z}/ q\mathbb{Z}$.\\
\indent Write $\on{e}(t) = \exp(2 \pi i t)$ for $t \in \mathbb{R}$. For $x \in \mathbb{R}/\mathbb{Z}$ let $\tn{x}$ be the element $d \in [0, 1/2]$ such that $x \in \{-d,d\} + \mathbb{Z}$, i.e.\ the distance from 0. Given a set of characters $\Gamma \subseteq \hat{G}$ and a positive real $\rho$ we define the \emph{Bohr set} with \emph{frequency set} $\Gamma$ and \emph{width} $\rho$ as $B(\Gamma; \rho) = \{x \in G \colon \max_{\chi \in \Gamma}\tn{\chi(x)} \leq \rho\}$.\\  

The most basic fact about Bohr sets is that they are necessarily dense and of small doubling.

\begin{lemma}[Lemma 4.20 in~\cite{TaoVuBook}]\label{basicbohrsizel}For $\Gamma \subseteq \hat{G}$ and $\rho > 0$ we have
\[|B(\Gamma; \rho)| \geq \rho^{|\Gamma|}|G|\]
and
\[|B(\Gamma; 2\rho)| \leq 4^{|\Gamma|}|B(\Gamma; \rho)|.\]
\end{lemma}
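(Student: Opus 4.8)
The plan is to analyze $B(\Gamma;\rho)$ through the homomorphism $\phi\colon G\to\mathbb{T}^{\Gamma}$ given by $\phi(x)=(\gamma(x))_{\gamma\in\Gamma}$, under which $B(\Gamma;\rho)=\{x\in G\colon \tn{\gamma(x)}\le\rho\text{ for all }\gamma\in\Gamma\}$; write $k=|\Gamma|$. Several boundary regimes are trivial: if $\rho\ge 1/2$ then every element of $\mathbb{T}$ is within distance $\rho$ of $0$, so $B(\Gamma;\rho)=G$ and the first bound is immediate (as $\rho^k\le 1$); and for the first bound we may also assume $\rho^k|G|\ge 1$, since otherwise it holds because $0\in B(\Gamma;\rho)$. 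So for the lower bound I will assume $\rho<1$ and $\rho^k|G|\ge1$.

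For the lower bound I would run a translation–averaging argument. For $t\in\mathbb{T}^{\Gamma}$ set $S_t=\{x\in G\colon \tn{\gamma(x)-t_\gamma}\le\rho/2\text{ for all }\gamma\in\Gamma\}$. Fixing $x\in G$ and letting $t$ be uniform on $\mathbb{T}^{\Gamma}$, each coordinate event $\tn{\gamma(x)-t_\gamma}\le\rho/2$ has probability equal to the Haar measure of a radius-$\rho/2$ ball in $\mathbb{T}$, which is $\rho$ since $\rho<1$; the coordinates being independent, $\mathbb{P}_t(x\in S_t)=\rho^{k}$, and by linearity of expectation $\mathbb{E}_t|S_t|=\rho^{k}|G|$. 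Hence some $S_{t_0}$ has at least $\rho^{k}|G|\ge1$ elements; fix $x_0\in S_{t_0}$. The triangle inequality for $\tn{\cdot}$ gives, for every $x\in S_{t_0}$ and $\gamma\in\Gamma$, that $\tn{\gamma(x-x_0)}\le\tn{\gamma(x)-t_{0,\gamma}}+\tn{t_{0,\gamma}-\gamma(x_0)}\le\rho$, so $x\mapsto x-x_0$ is an injection $S_{t_0}\hookrightarrow B(\Gamma;\rho)$, yielding $|B(\Gamma;\rho)|\ge|S_{t_0}|\ge\rho^{k}|G|$.

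For the doubling bound I would use a covering argument in $\mathbb{T}^{\Gamma}$. If $\rho\ge1/4$ then $2\rho\ge1/2$, so $B(\Gamma;2\rho)=G$, and the lower bound just proved gives $|B(\Gamma;\rho)|\ge\rho^{k}|G|\ge4^{-k}|G|$, which is the desired inequality. So assume $\rho<1/4$. Then $\{u\in\mathbb{T}\colon\tn{u}\le 2\rho\}$ is an arc of length $4\rho<1$ and can be covered by four sub-arcs $I_1,\dots,I_4$ each of length $\rho$, so that any two points of a single $I_j$ lie within $\mathbb{T}$-distance $\rho$. Taking products, $\{v\in\mathbb{T}^{\Gamma}\colon\tn{v_\gamma}\le 2\rho\ \forall\gamma\}$ is covered by the $4^{k}$ boxes $\prod_{\gamma}I_{j(\gamma)}$ over maps $j\colon\Gamma\to[4]$. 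For a single box $\mathcal B$, if $x,x'\in G$ satisfy $\phi(x),\phi(x')\in\mathcal B$ then $\tn{\gamma(x-x')}\le\rho$ for every $\gamma$, i.e. $x-x'\in B(\Gamma;\rho)$; fixing one such $x'$ (if the preimage is nonempty) shows $\{x\in G\colon\phi(x)\in\mathcal B\}$ has size at most $|B(\Gamma;\rho)|$. Since $B(\Gamma;2\rho)$ is contained in the union of the preimages of these $4^{k}$ boxes, $|B(\Gamma;2\rho)|\le 4^{k}|B(\Gamma;\rho)|$.

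The only delicate points I anticipate are the circle bookkeeping: verifying that a radius-$\rho/2$ ball in $\mathbb{T}$ has measure exactly $\rho$ (which requires $\rho<1$, handled by the trivial reduction), that ``$\tn{\cdot}\le\rho$'' genuinely corresponds to lying in one of the short arcs $I_j$ (which uses $\rho<1/2$), and the separate treatment of $\rho\ge1/4$ in the doubling bound via the lower bound. None of these is serious; this is the standard argument (cf.\ Lemma 4.20 in~\cite{TaoVuBook}).
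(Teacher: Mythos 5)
Your proof is correct, and it is the standard argument: the paper itself gives no proof of this lemma, simply citing Lemma 4.20 of Tao--Vu, whose proof is exactly your combination of a random-translate averaging argument for the lower bound and a $4^{|\Gamma|}$-box covering of the arc $\{\tn{u}\le 2\rho\}^{\Gamma}$ for the doubling bound, with the same difference trick $x\mapsto x-x_0$ in both halves. The only caveat is the implicit convention $\rho\le 1$ in your parenthetical ``$\rho^{k}\le 1$'' (for $\rho>1$ the stated lower bound is vacuously intended, since $\tn{\cdot}\le 1/2$ makes only $\rho\le 1/2$ relevant), which is harmless.
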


The following elementary lemma shows that very dense subsets of Bohr sets contain large Bohr sets in their difference set.

\begin{lemma}\label{almostfullBohr} Let $A \subset B(\Gamma; \rho)$ be a set of size at least $(1 - 4^{-k-1})|B(\Gamma; \rho)|$ where $k = |\Gamma|$. Then $A - A \supseteq B(\Gamma; \rho/2)$.\end{lemma}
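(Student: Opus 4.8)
The plan is to use a standard pigeonhole/counting argument exploiting the density of $A$ inside $B(\Gamma;\rho)$, combined with the fact that a translate of the smaller Bohr set $B(\Gamma;\rho/2)$ fits inside $B(\Gamma;\rho)$. First I would fix an arbitrary element $b \in B(\Gamma;\rho/2)$; I want to show $b \in A - A$, i.e.\ that there exists $x$ with $x \in A$ and $x + b \in A$. Observe that for any $x \in B(\Gamma;\rho/2)$ we have $x + b \in B(\Gamma;\rho)$ by the triangle inequality for $\tn{\cdot}$, since $\tn{\chi(x+b)} \le \tn{\chi(x)} + \tn{\chi(b)} \le \rho/2 + \rho/2 = \rho$ for every $\chi \in \Gamma$. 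So the map $x \mapsto x+b$ sends $B(\Gamma;\rho/2)$ into $B(\Gamma;\rho)$, and it is injective.

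The counting then goes as follows. Let $k = |\Gamma|$ and write $C = B(\Gamma;\rho/2)$, $B = B(\Gamma;\rho)$. The set of ``bad'' $x \in C$ for which $x \notin A$ has size at most $|B \setminus A| \le 4^{-k-1}|B|$, and similarly the set of $x \in C$ for which $x + b \notin A$ has size at most $|B \setminus A| \le 4^{-k-1}|B|$ (using injectivity of $x \mapsto x + b$ and that its image lies in $B$). Hence the number of $x \in C$ with both $x \in A$ and $x + b \in A$ is at least $|C| - 2 \cdot 4^{-k-1}|B|$. By Lemma~\ref{basicbohrsizel}, $|B| = |B(\Gamma;\rho)| \le 4^{k}|B(\Gamma;\rho/2)| = 4^k |C|$, so $2 \cdot 4^{-k-1}|B| \le 2 \cdot 4^{-k-1} \cdot 4^k |C| = \tfrac12 |C|$. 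Therefore the count is at least $|C| - \tfrac12|C| = \tfrac12|C| > 0$ (note $|C| \ge (\rho/2)^{k}|G| > 0$), so such an $x$ exists and $b = (x+b) - x \in A - A$.

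Since $b \in B(\Gamma;\rho/2)$ was arbitrary, this gives $B(\Gamma;\rho/2) \subseteq A - A$, as desired. There is no real obstacle here; the only thing to be slightly careful about is the direction of the doubling inequality from Lemma~\ref{basicbohrsizel} (we need the upper bound $|B(\Gamma;2\sigma)| \le 4^{k}|B(\Gamma;\sigma)|$ applied with $\sigma = \rho/2$) and the use of the triangle inequality for $\tn{\cdot}$, which holds because each $\chi \in \Gamma$ is a homomorphism and $\tn{\cdot}$ is the quotient metric on $\mathbb{T}$. The constant $4^{-k-1}$ in the hypothesis is exactly what is needed to make the two error terms together consume at most half of $|B(\Gamma;\rho/2)|$.
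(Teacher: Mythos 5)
Your proof is correct and is essentially the same argument as the paper's: both fix $b \in B(\Gamma;\rho/2)$, note that $b + B(\Gamma;\rho/2) \subseteq B(\Gamma;\rho)$, and use the density hypothesis together with the doubling bound $|B(\Gamma;\rho)| \leq 4^{|\Gamma|}|B(\Gamma;\rho/2)|$ from Lemma~\ref{basicbohrsizel} to find $x$ with $x, x+b \in A$. The only difference is cosmetic (you count the two bad sets directly, while the paper shows $|A \cap (x + B(\Gamma;\rho/2))| \geq \tfrac34|B(\Gamma;\rho/2)|$ and specializes to $x=0$).
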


\begin{proof}Let $x \in B(\Gamma; \rho/2)$ be arbitrary. Then $x + B(\Gamma; \rho/2) \subseteq B(\Gamma; \rho)$ so we have
\[|A \cap x + B(\Gamma; \rho/2)| \geq |B(\Gamma; \rho/2)| - |B(\Gamma; \rho) \setminus A| \geq |B(\Gamma; \rho/2)| - 4^{-k-1}|B(\Gamma; \rho)| \geq \frac{3}{4} |B(\Gamma; \rho/2)|,\]
where we used Lemma~\ref{basicbohrsizel} in the last inequality.\\
\indent Specializing to $x = 0$, we see that $|A \cap B(\Gamma; \rho/2)| \geq \frac{3}{4} |B(\Gamma; \rho/2)|$. Thus, there is $a \in B(\Gamma; \rho/2) \cap A \cap (A - x)$, so we obtain $a, a+ x \in A$, as required.\end{proof}

\noindent \textbf{Fourier analysis on Bohr sets.} The following proposition determines the behaviour of size of Bohr sets. 

\begin{proposition}[Size of Bohr sets]\label{bohrsizeformula} Let $k \in \mathbb{N}$ and let $\rho, \eta, \varepsilon > 0$. Then, there exist a positive integer $K \leq O(k \varepsilon^{-1} \eta^{-1})$ and quantities $c_i \in \mathbb{D}$ for $i \in [-K, K]$ for which the following holds.\\
\indent Let $G$ be a finite abelian group. Suppose that $\gamma_1, \dots, \gamma_k \in \hat{G}$ and that 
\begin{equation}\label{wregcondition}|B(\gamma_1, \dots, \gamma_k; \rho + \eta) \setminus B(\gamma_1, \dots, \gamma_k; \rho)| \leq \varepsilon |G|.\end{equation}
Then 
\[\Big||B(\gamma_1, \dots, \gamma_k; \rho)| - \sum_{a_1, \dots, a_k \in [-K, K]} \id(a_1 \gamma_1 + \dots + a_k \gamma_k = 0) c_{a_1} \dots c_{a_k} |G|\Big| \leq 2\varepsilon |G|.\]
\end{proposition}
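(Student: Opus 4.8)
The plan is to approximate the indicator function $\id(\tn{t} \le \rho)$ on $\mathbb{T}$ by a short trigonometric polynomial and then apply it to each character $\gamma_j(x)$. Concretely, the idea is to find a real trigonometric polynomial $f(t) = \sum_{a \in [-K,K]} c_a \on{e}(at)$ with coefficients $c_a \in \mathbb{D}$ (rationals with denominator a power of $2$, after rounding) such that $0 \le f(t) \le 1$ outside the ``bad annulus'' $\{t : \rho < \tn{t} \le \rho + \eta\}$, with $f(t) = 1$ when $\tn{t} \le \rho$ and $f(t) = 0$ when $\tn{t} > \rho + \eta$, and such that $f$ is uniformly bounded (say $|f| \le 1$) everywhere. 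A standard Fejér-kernel / Beurling--Selberg-type construction gives such an $f$ with $K = O(\eta^{-1})$; the exact form is not important, only that $K = O(k\varepsilon^{-1}\eta^{-1})$ suffices (the extra factors of $k$ and $\varepsilon^{-1}$ give us room, so one can even be wasteful). One then defines $c_{a_1}\cdots c_{a_k}$ to be the coefficients of the $k$-fold product $F(t_1,\dots,t_k) = \prod_{j=1}^k f(t_j)$; note these depend only on $k, \rho, \eta, \varepsilon$ and not on $G$.

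Next I would expand
\[
\sum_{x \in G} F(\gamma_1(x), \dots, \gamma_k(x)) = \sum_{a_1, \dots, a_k \in [-K,K]} c_{a_1}\cdots c_{a_k} \sum_{x \in G} \on{e}\big((a_1\gamma_1 + \dots + a_k\gamma_k)(x)\big),
\]
and use orthogonality of characters: $\sum_{x \in G} \on{e}(\chi(x)) = |G|\,\id(\chi = 0)$ for $\chi \in \hat G$. Hence the right-hand side equals exactly the sum appearing in the statement. So it remains to compare $\sum_{x \in G} F(\gamma_1(x),\dots,\gamma_k(x))$ with $|B(\gamma_1,\dots,\gamma_k;\rho)|$. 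We split the sum over $x$ according to whether $x \in B(\Gamma;\rho)$, $x \in B(\Gamma;\rho+\eta) \setminus B(\Gamma;\rho)$, or $x \notin B(\Gamma;\rho+\eta)$. On the first set $F \equiv 1$, contributing $|B(\Gamma;\rho)|$ exactly. On the third set, at least one $\gamma_j(x)$ has $\tn{\gamma_j(x)} > \rho+\eta$, so $f(\gamma_j(x)) = 0$ and hence $F = 0$. On the middle set, $|F| \le 1$, so its total contribution is at most $|B(\Gamma;\rho+\eta)\setminus B(\Gamma;\rho)| \le \varepsilon|G|$ in absolute value. This gives $|\sum_x F - |B(\Gamma;\rho)|| \le \varepsilon|G|$.

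The only remaining gap is the rounding of the coefficients into $\mathbb{D}$. The unrounded construction produces real coefficients; replacing each by the nearest dyadic rational within $2^{-m}$ for suitable $m$ perturbs each coefficient of the product $F$ by $O(k \cdot 2^{-m} K^{k-1})$... which would be disastrous. The correct fix is to round the coefficients of the single-variable $f$ so that the $\ell^1$-error $\sum_a |c_a - c_a^{\mathrm{round}}|$ is at most $\varepsilon/k$; since $f$ has $O(K)$ coefficients each of size $O(1)$, choosing denominator $2^m$ with $2^{-m} = O(\varepsilon/(kK))$ works, and this is where the factor $k\varepsilon^{-1}$ in the bound on $K$ is really being spent (we may need to enlarge $K$ to $O(k\varepsilon^{-1}\eta^{-1})$ to absorb this, or simply note the stated bound already allows it). Then a telescoping estimate bounds the perturbation of $\prod_j f(\gamma_j(x))$ in sup-norm by $\sum_j \|f - f^{\mathrm{round}}\|_\infty \le k \cdot (\varepsilon/k) = \varepsilon$ uniformly in $x$ (using $\|f\|_\infty, \|f^{\mathrm{round}}\|_\infty \le 1$), so $|\sum_x F^{\mathrm{round}} - \sum_x F| \le \varepsilon|G|$. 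Combining with the previous paragraph yields the claimed bound $2\varepsilon|G|$. The main obstacle is precisely this bookkeeping: arranging the approximating polynomial to have the three desired regimes (exactly $1$, exactly $0$, bounded in the annulus) while keeping its coefficients dyadic and its degree $O(k\varepsilon^{-1}\eta^{-1})$ — everything else is orthogonality of characters and a routine splitting of the sum.
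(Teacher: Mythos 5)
The central object you posit does not exist: a trigonometric polynomial $f(t)=\sum_{a\in[-K,K]}c_a\on{e}(at)$ cannot be identically $1$ on $\{t\in\mathbb{T}\colon \tn{t}\le\rho\}$ and identically $0$ on $\{t\colon \tn{t}>\rho+\eta\}$. If a trigonometric polynomial agrees with a constant on a set of positive measure, it equals that constant everywhere; Fej\'er or Beurling--Selberg constructions give majorants, minorants or $L^1/L^\infty$ approximants, never exact $0/1$ values on intervals. Your error analysis relies on this exactness in two of the three regimes (the contribution ``exactly $|B(\Gamma;\rho)|$'' on the Bohr set, and the vanishing of the product outside $B(\Gamma;\rho+\eta)$), so the argument as written collapses. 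The repair is precisely the paper's route: take $f$ to be a genuine (non-polynomial) cutoff, namely the trapezoid $f=\eta^{-1}\,\id_{[-(\rho+\eta/2),\rho+\eta/2]}\ast\id_{[-\eta/2,\eta/2]}$, which does have the three regimes you want; use the weak-regularity hypothesis to compare $\sum_{x\in G}\prod_j f(\gamma_j(x))$ with $|B(\gamma_1,\dots,\gamma_k;\rho)|$ up to $\varepsilon|G|$; and only then truncate the Fourier series of $f$ at height $K$. The decay $|\hat f(\xi)|\le \eta^{-1}\xi^{-2}$ makes the truncation error $O(\eta^{-1}/K)$ in $L^\infty$, and a telescoping estimate over the $k$ factors (using $\|f\|_{L^\infty}\le 1$ and $\|s\|_{L^\infty}\le 1+1/k$ for the truncation $s$) costs another $\varepsilon|G|$ provided $K=O(k\eta^{-1}\varepsilon^{-1})$. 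This truncation step is the missing ingredient in your write-up, and it is where the factor $k\varepsilon^{-1}$ in the bound on $K$ is actually spent.

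A secondary point: the dyadic rounding is based on a misreading of $\mathbb{D}$, which in the statement denotes the closed unit disc, i.e.\ the only requirement on the coefficients is $|c_a|\le 1$. For $c_a=\hat f(a)$ this is automatic from $\|f\|_{L^\infty}\le 1$, so no rounding is needed and no error budget should be allotted to it. Once the truncation error above is accounted for, the $2\varepsilon|G|$ in the conclusion is exactly consumed by the sandwich step and the truncation step.
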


\noindent\textbf{Remark.} The proof gives explicit coefficients
\[c_a = \eta^{-1}\frac{\Big(\on{e}(a (\rho + \eta/2)) - \on{e}(-a (\rho + \eta/2))\Big)\Big(\on{e}(a \eta/2) - \on{e}(-a \eta/2)\Big)}{4 \pi^2 a^2},\]
which do not depend on the ambient abelian group $G$, but we shall not need this fact.\\

We refer to property~\eqref{wregcondition} as \emph{weak-regularity}. We opted for this notion instead of that of a regular Bohr set, which is usually used, since the weak-regularity condition is precisely what we need and it is easy to obtain an abundance of parameters $\eta$ and $\rho$ that satisfy~\eqref{wregcondition}.\\

\begin{proof}Let $I, J \subseteq \mathbb{T}$ be the intervals $I = [-(\rho + \eta/2),  (\rho + \eta/2)]$ and $J = [-\eta N / 2, \eta N / 2]$. The function $f(x) = \eta^{-1}I * J(x)$ satisfies $f(x) = 1$ when $x \in [-\rho,\rho]$, $f(x) = \frac{\rho + \eta - |x|_{\mathbb{T}}}{\eta} \in [0,1]$ when $|x|_{\mathbb{T}} \in [\rho, \rho + \eta]$ and $f(x) = 0$ when $x \notin [-(\rho + \eta), \rho + \eta]$. It follows that
\[|G|^{-1} |B(\gamma_1, \dots, \gamma_k; \rho)| \leq \exx_{x \in G} f(\gamma_1(x)) \dots f(\gamma_k(x)) \leq |G|^{-1} |B(\gamma_1, \dots, \gamma_k; \rho + \eta)|.\]

By assumption on the sizes of Bohr sets, we obtain
\[\Big||B(\gamma_1, \dots, \gamma_k; \rho)| - |G|\exx_{x \in G} f(\gamma_1(x)) \dots f(\gamma_k(x))\Big| \leq \varepsilon |G|.\]

We use Fourier analysis on $\mathbb{T}$ to estimate the expectation $\ex_{x \in |G|} f(\gamma_1x) \dots f(\gamma_kx)$. To simplify the notation, we use integers directly instead of continuous characters on $\mathbb{T}$, but we still use small greek letters to denote frequencies. The key property of $f$ is that it can be well-approximated in the $L^\infty$ sense by a finite sum coming from the truncation of its inverse Fourier transform. An important and standard fact is that the large spectrum of an interval consists of small values.

\begin{claim}\label{fourierboundsintervalconv}Let $\xi \in \mathbb{Z} \setminus \{0\}$. Then $|\hat{f}(\xi)| \leq \eta^{-1}/\xi^2$.\end{claim}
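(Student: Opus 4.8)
The plan is to compute the Fourier coefficient $\hat f(\xi)$ directly from the explicit convolution formula $f = \eta^{-1}\,I * J$, exploiting the multiplicativity of the Fourier transform under convolution, namely $\hat f(\xi) = \eta^{-1}\,\hat I(\xi)\,\hat J(\xi)$. First I would record the elementary computation of the Fourier transform of an interval: for an interval $[-a,a]\subseteq\mathbb{T}$ (thought of, after rescaling to integer frequencies, as a subinterval of $[-1/2,1/2]$), one has $\widehat{[-a,a]}(\xi) = \int_{-a}^{a}\on{e}(-\xi t)\,dt = \frac{\on{e}(a\xi) - \on{e}(-a\xi)}{2\pi i \xi} = \frac{\sin(2\pi a\xi)}{\pi\xi}$, so that $|\widehat{[-a,a]}(\xi)| \le \min\{2a,\ 1/(\pi|\xi|)\} \le 1/(\pi|\xi|)$ for $\xi\neq 0$ (and also the trivial bound $\le 2a$). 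Applying this to both $I$, whose half-length is $\rho+\eta/2 \le 1/2$, and to $J$, whose half-length is $\eta/2$, gives on the one hand $|\hat I(\xi)| \le 1/(\pi|\xi|)$ and on the other hand $|\hat J(\xi)| \le \eta$ (the trivial length bound, since $|J| = \eta$). Multiplying, $|\hat f(\xi)| = \eta^{-1}|\hat I(\xi)||\hat J(\xi)| \le \eta^{-1}\cdot \frac{1}{\pi|\xi|}\cdot\eta = \frac{1}{\pi|\xi|}$, which is already a bit stronger in one variable than claimed; to get the stated $\eta^{-1}/\xi^2$ I would instead use $|\hat I(\xi)|\le 1/(\pi|\xi|)$ together with $|\hat J(\xi)|\le 1/(\pi|\xi|)$, yielding $|\hat f(\xi)| \le \eta^{-1}\cdot\frac{1}{\pi^2\xi^2} \le \eta^{-1}/\xi^2$ since $\pi^2 > 1$.

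Concretely, the steps are: (1) State and verify the closed form $\widehat{[-a,a]}(\xi) = \frac{\on{e}(a\xi)-\on{e}(-a\xi)}{2\pi i\xi}$ by direct integration, and deduce $|\widehat{[-a,a]}(\xi)|\le \frac{1}{\pi|\xi|}$ from $|\on{e}(\theta)-\on{e}(-\theta)| = 2|\sin(2\pi\theta\cdot\tfrac12)| \le 2$; also note the alternative bound by the length $2a$. (2) Use $\hat f = \eta^{-1}\hat I\hat J$ (convolution theorem on $\mathbb{T}$, or on $\mathbb{Z}/N\mathbb{Z}$ after the discretization mentioned in the proof) and the normalization conventions already fixed in the surrounding text. (3) Combine the two interval bounds to obtain $|\hat f(\xi)|\le \eta^{-1}\pi^{-2}\xi^{-2} \le \eta^{-1}\xi^{-2}$. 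One should double-check that the chosen normalization of the Fourier transform matches the one implicit in the preceding display (where $f = \eta^{-1}I*J$ is evaluated pointwise and $f\equiv 1$ on $[-\rho,\rho]$), since a different normalization would shuffle factors of $N$ or $2\pi$ around; with the convolution normalized so that $\widehat{g*h} = \hat g\hat h$ and $\hat I(0) = |I|$ etc., the pointwise value $f(0) = \eta^{-1}|I\cap J|$-type identity forces exactly the claimed scaling, consistent with the explicit $c_a$ formula stated in the remark (which is precisely $\hat f(a)$).

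The only genuinely delicate point — and it is minor — is bookkeeping the passage from characters on $\mathbb{T}$ to "integers directly", i.e. making sure that the rescaling to a cyclic group $\mathbb{Z}/N\mathbb{Z}$ (with $N$ a common period of $q_1,\dots,q_d$, say) turns the interval $[-(\rho+\eta/2),\rho+\eta/2]$ of $\mathbb{T}$ into an honest discrete interval and that $J = [-\eta N/2,\eta N/2]$ is interpreted as a set of $\approx \eta N$ consecutive residues, so that its discrete Fourier transform is the Dirichlet-type kernel $\sum_{|t|\le \eta N/2}\on{e}(-\xi t/N)$, whose modulus is bounded both by its length $\eta N$ and by $1/(2|\sin(\pi\xi/N)|) \le N/(2|\xi|)$ for $1\le|\xi|\le N/2$; after dividing by $N$ in the normalization these reproduce the continuous bounds $\eta$ and $1/(\pi|\xi|)$ respectively. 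I expect no real obstacle here — the claim is a standard "spectrum of an interval decays quadratically after one convolution" estimate — so the proof is essentially the three-line computation above once the normalization is pinned down.
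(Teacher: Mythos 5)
Your proposal is correct and follows essentially the same route as the paper: compute $\widehat{[-t,t]}(\xi)=\frac{\on{e}(\xi t)-\on{e}(-\xi t)}{2\pi i\xi}$ by direct integration, bound each factor by $\frac{1}{\pi|\xi|}$, and use $\hat f(\xi)=\eta^{-1}\hat I(\xi)\hat J(\xi)$ to conclude $|\hat f(\xi)|\le \eta^{-1}\pi^{-2}\xi^{-2}\le \eta^{-1}/\xi^2$. The extra remarks on normalization and discretization are harmless but not needed, since the paper works with $I$ and $J$ as intervals in $\mathbb{T}$ throughout.
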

\begin{proof}[Proof of the claim]The Fourier coefficient at $\xi$ of an arbitrary interval $[-t,t]$ is given by
\[\int_{-t}^t \on{e}(-\xi x) dx = \frac{\on{e}(\xi t) - \on{e}(-\xi t)}{2 \pi i \xi},\]
which is at most $\frac{1}{\pi |\xi|}$ in absolute value. Since $\hat{f}(\xi) = \eta^{-1}\hat{I}(\xi) \hat{J}(\xi)$, the claim follows.\end{proof}

This rapid decay of Fourier coefficients allows us to deduce the desired approximation property for $f$.

\begin{claim}\label{intervalFCdecay}Let $S \subseteq \mathbb{Z}$ be a finite set that contains the interval $[-L, L]$ for some $L > 0$ (and possibly some other elements). Then
\[\Big|f(x) - \sum_{\xi \in S} \hat{f}(\xi) \on{e}(\xi x)\Big| \leq 2\eta^{-1}/L\]
for all $x \in \mathbb{T}$.\end{claim}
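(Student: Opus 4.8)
The strategy is to bound the tail of the Fourier series of $f$ using the decay estimate from Claim~\ref{fourierboundsintervalconv}. Since $f \in L^2(\mathbb{T})$ (indeed $f$ is continuous), its Fourier series converges, and we may write $f(x) - \sum_{\xi \in S} \hat f(\xi) \on{e}(\xi x) = \sum_{\xi \notin S} \hat f(\xi) \on{e}(\xi x)$. The term $\xi = 0$ is in $S$ since $[-L,L] \subseteq S$ and $L > 0$, so every $\xi$ in the remaining sum is nonzero, and moreover $|\xi| \geq L + 1 > L$ because $S \supseteq [-L, L] \cap \mathbb Z$. Hence by Claim~\ref{fourierboundsintervalconv},
\[
\Big|f(x) - \sum_{\xi \in S} \hat f(\xi) \on{e}(\xi x)\Big| \leq \sum_{\xi \notin S} |\hat f(\xi)| \leq \sum_{|\xi| > L} \frac{\eta^{-1}}{\xi^2} = 2\eta^{-1} \sum_{\xi > L} \frac{1}{\xi^2}.
\]
It then remains to bound $\sum_{\xi > L} \xi^{-2}$ by $1/L$, which follows from comparison with the integral: $\sum_{\xi \geq \lceil L \rceil + 1} \xi^{-2} \leq \int_{L}^{\infty} t^{-2}\,dt = 1/L$ (or one can use the telescoping bound $\xi^{-2} \leq \frac{1}{\xi - 1} - \frac{1}{\xi}$ for $\xi \geq 2$). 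This gives the claimed bound $2\eta^{-1}/L$, uniformly in $x \in \mathbb{T}$.

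One should be slightly careful about two points. First, pointwise convergence of the Fourier series of $f$ at every $x$: this is not an issue here because $f$, being a normalized convolution of two intervals, is piecewise linear and Lipschitz, hence its Fourier series converges absolutely (indeed $\sum_\xi |\hat f(\xi)| < \infty$ by the $\xi^{-2}$ bound together with finiteness of $\hat f(0)$), so the manipulation $f - \sum_{\xi\in S} = \sum_{\xi \notin S}$ is fully justified and the partial sums converge to $f$ pointwise. Second, the normalization/convention for $\hat f$ and $\on{e}$ must be consistent with Claim~\ref{fourierboundsintervalconv}; assuming the standard convention $\hat f(\xi) = \int_{\mathbb T} f(x)\on{e}(-\xi x)\,dx$ used there, no adjustment is needed.

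I do not expect any genuine obstacle: this is a routine tail-of-Fourier-series estimate, and the only mild subtlety is making sure the absolute convergence (hence pointwise representation) is invoked cleanly rather than hand-waved. The main content has already been done in Claim~\ref{fourierboundsintervalconv}; this claim is just its packaging into a uniform $L^\infty$ approximation statement.
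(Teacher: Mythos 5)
Your proposal is correct and is essentially the paper's own argument: both write the error as the tail $\sum_{\xi \notin S} \hat{f}(\xi)\on{e}(\xi x)$, bound it by Claim~\ref{fourierboundsintervalconv} and the comparison $\sum_{\xi > L}\xi^{-2} \leq 1/L$. The only cosmetic difference is that the paper justifies pointwise convergence of the Fourier series via Dirichlet's theorem for piecewise smooth functions, while you use absolute convergence; either is fine.
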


\begin{proof}[Proof of the claim]Using the inverse Fourier transform (which converges to $f(x)$ by Dirichlet's theorem, as $f$ is piecewise smooth) and Claim~\ref{fourierboundsintervalconv}, we obtain
\begin{align*}\Big|f(x) - \sum_{\xi \in S} \hat{f}(\xi) \on{e}(\xi x)\Big| = \,&\Big|\sum_{\xi \notin S} \hat{f}(\xi) \on{e}(\xi x)\Big| \leq \sum_{\gamma \notin S} |\hat{f}(\xi)| \leq\, 2\eta^{-1}\sum_{\xi = L + 1}^\infty \frac{1}{\xi^2} \leq \frac{2\eta^{-1}}{L}.\qedhere\end{align*}
\end{proof}

Let $K = 2 ek \eta^{-1} \varepsilon^{-1}$. Write $s(x) = \sum_{\xi \in [-K, K]} \hat{f}(\xi) \on{e}(\xi x)$. Thus, $\|f-s\|_{L^\infty} \leq \frac{\varepsilon}{ ek}$. In particular, $\|s\|_{L^\infty} \leq 1 + \frac{1}{k}$. Using this, we obtain
\begin{align}&\Big|\exx_{x \in G} f(\gamma_1(x)) \dots f(\gamma_k(x)) - \exx_{x \in G} s(\gamma_1(x)) \dots s(\gamma_k(x))\Big|\nonumber\\
&\hspace{2cm} \leq \sum_{i \in [k]} \Big|\exx_{x \in G} f(\gamma_1(x)) \dots f(\gamma_{i-1}(x))\, (f(\gamma_i(x)) - s(\gamma_i(x)))\, s(\gamma_{i+1}(x)) \dots s(\gamma_k (x))\Big|\nonumber\\
&\hspace{2cm} \leq k \Big(1 + \frac{1}{k}\Big)^k \|f - s\|_{L^\infty} \leq \varepsilon.\label{fromftofcineq}\end{align}

By our work so far, we have
\[\bigg||G|^{-1}|B(\gamma_1, \dots, \gamma_k; \rho)| - \exx_{x \in G} \Big(\sum_{a_1 \in [-K, K]} \hat{f}(a_1) \on{e}(a_1 \gamma_1(x))\Big) \cdots \Big(\sum_{a_k \in [-K, K]} \hat{f}(a_k) \on{e}(a_k\gamma_k(x))\Big)\bigg| \leq 2\varepsilon.\]

Finally,
\begin{align*} &\exx_{x \in G} \Big(\sum_{a_1 \in [-K, K]} \hat{f}(a_1) \on{e}\Big(a_1\gamma_1(x)\Big) \Big) \cdots \Big(\sum_{a_k \in [-K, K]} \hat{f}(a_k) \on{e}\Big(a_k \gamma_k (x)\Big) \Big) \\
&\hspace{2cm} = \sum_{a_1, \dots, a_k \in [-K, K]} \hat{f}(a_1)  \cdots \hat{f}(a_k) \exx_{x \in G} \on{e}\Big((a_1\gamma_1 + \dots + a_k\gamma_k)(x)\Big)\\
&\hspace{2cm} = \sum_{a_1, \dots, a_k \in [-K, K]} \id(a_1\gamma_1 + \dots + a_k \gamma_k = 0) \hat{f}(a_1)  \cdots \hat{f}(a_k).\end{align*}
The Fourier coefficients $\hat{f}(a)$ are the desired quantities $c_a$, thus
\begin{align*}c_a = \int_0^1 f(x) \on{e}(-a x) dx = &\eta^{-1}\int_{y = -\rho - \eta/2}^{\rho + \eta/2}\int_{z = -\eta/2}^{\eta/2} \on{e}(-a (y + z)) dy dz\\
 = &\eta^{-1}\frac{\Big(\on{e}(a (\rho + \eta/2)) - \on{e}(-a (\rho + \eta/2))\Big)\Big(\on{e}(a \eta/2) - \on{e}(-a \eta/2)\Big)}{4 \pi^2 a^2}.\end{align*}
Since $\|f\|_{L^\infty} \leq 1$ we also have $c_a \in \mathbb{D}$.\end{proof}

For a sequence $s_1, \dots, s_k$ of elements of an abelian group, we define its \emph{annihilator lattice} $\Lambda_\perp(s_1, \dots,$ $s_k)\subset \mathbb{Z}^k$ as $\Lambda_\perp(s_1, \dots, s_k) = \{(\lambda_1, \dots, \lambda_k) \in \mathbb{Z}^k \colon \sum_{i \in [k]} \lambda_i s_i = 0\}$. Previous proposition tells us that the size of a Bohr set $B(\gamma_1, \dots, \gamma_k; \rho)$ is essentially determined by the intersection of the annihilator lattice $\Lambda_\perp(\gamma_1, \dots, \gamma_k)$ with a bounded box in $\mathbb{Z}^k$.\\

We may reuse most of the proof above to prove that the large Fourier coefficients of a weakly-regular Bohr set come from linear combinations of the defining characters.

\begin{proposition}[Large Spectrum of Bohr sets]\label{bohrsizeLargeFC} Let $k \in \mathbb{N}$ and let $\rho, \eta, \varepsilon > 0$. Then, there exist a positive integer $K \leq O(k \varepsilon^{-1} \eta^{-1})$ for which the following holds.\\
\indent Suppose that $\gamma_1, \dots, \gamma_k \in \hat{G}$ and that 
\[|B(\gamma_1, \dots, \gamma_k; \rho + \eta) \setminus B(\gamma_1, \dots, \gamma_k; \rho)| \leq \varepsilon |G|/2.\]
Let $\chi \in \hat{G}$ be such that $\Big|\fco B(\gamma_1, \dots, \gamma_k; \rho) \fcc(\chi)\Big| \geq \varepsilon$. Then $\chi = a_1 \gamma_1 + \dots + a_k \gamma_k$ for some integers $a_1, \dots, a_k$ such that $|a_i| \leq K$.\end{proposition}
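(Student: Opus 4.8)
The plan is to reuse the Fourier-analytic approximation developed in the proof of Proposition~\ref{bohrsizeformula}, replacing the trivial character in the final computation by the given character $\chi$. Recall that from the weak-regularity hypothesis and the sandwiching of $B(\gamma_1,\dots,\gamma_k;\rho)$ between $\exx_{x\in G} f(\gamma_1(x))\cdots f(\gamma_k(x))$ (from below) and the slightly larger Bohr set (from above), with $f = \eta^{-1} I * J$ the same bump function as before, we get
\[
\Big|\,|G|^{-1}\fco B(\gamma_1,\dots,\gamma_k;\rho)\fcc(\chi) - \exx_{x\in G} f(\gamma_1(x))\cdots f(\gamma_k(x))\,\overline{\chi(x)}\,\Big| \leq \varepsilon/2,
\]
since replacing the indicator of the Bohr set by $f$ composed with the characters changes the value by at most $|G|^{-1}$ times the size of the symmetric difference, which is at most $\varepsilon/2$. (Here I am using that $|\overline{\chi(x)}| = 1$ pointwise, so the error estimate is unaffected by the extra twist by $\chi$.)

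Next I would truncate $f$ to $s(x) = \sum_{\xi\in[-K,K]}\hat f(\xi)\on{e}(\xi x)$ exactly as in Claim~\ref{intervalFCdecay}, with $K = O(k\varepsilon^{-1}\eta^{-1})$ chosen so that $\|f-s\|_{L^\infty}$ is small (on the order of $\varepsilon/(4ek)$, say, to leave room for the factor-of-two improvements needed here). The telescoping estimate~\eqref{fromftofcineq}, with the harmless extra bounded factor $\overline{\chi(x)}$ inserted, gives
\[
\Big|\exx_{x\in G} f(\gamma_1(x))\cdots f(\gamma_k(x))\overline{\chi(x)} - \exx_{x\in G} s(\gamma_1(x))\cdots s(\gamma_k(x))\overline{\chi(x)}\Big| \leq \varepsilon/4.
\]
Expanding the product of the $s$'s and using orthogonality of characters,
\[
\exx_{x\in G} s(\gamma_1(x))\cdots s(\gamma_k(x))\overline{\chi(x)} = \sum_{a_1,\dots,a_k\in[-K,K]} \hat f(a_1)\cdots\hat f(a_k)\,\id(a_1\gamma_1+\dots+a_k\gamma_k = \chi).
\]
Combining the three displays, if this last sum is zero — that is, if $\chi$ is not of the form $a_1\gamma_1+\dots+a_k\gamma_k$ with each $|a_i|\leq K$ — then $\big|\fco B(\gamma_1,\dots,\gamma_k;\rho)\fcc(\chi)\big| \leq \varepsilon/2 + \varepsilon/4 + 0 < \varepsilon$, contradicting the hypothesis. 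Hence $\chi$ must admit such a representation, which is the claim.

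I do not anticipate a serious obstacle here: the entire argument is a variation on the already-established proof of Proposition~\ref{bohrsizeformula}, and the only new point is that inserting the unimodular factor $\overline{\chi(x)}$ into every expectation leaves all $L^\infty$ and triangle-inequality bounds intact while converting the indicator $\id(a_1\gamma_1+\dots+a_k\gamma_k = 0)$ into $\id(a_1\gamma_1+\dots+a_k\gamma_k = \chi)$. The only mild care needed is bookkeeping of constants: the hypothesis here has $\varepsilon/2$ in the weak-regularity bound and the threshold $\varepsilon$ for the Fourier coefficient, so I would fix the truncation parameter $K$ (and thus the implied constant) so that the two approximation errors together stay below $3\varepsilon/4$, leaving a strict gap below $\varepsilon$.
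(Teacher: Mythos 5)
Your proposal is correct and is essentially identical to the paper's own proof: it reuses the bump function $f$ and the truncation/telescoping estimates from Proposition~\ref{bohrsizeformula}, inserts the unimodular twist $\on{e}(-\chi(x))$, and concludes via orthogonality that the coefficient would be below $\varepsilon$ unless $\chi = a_1\gamma_1 + \dots + a_k\gamma_k$ with $|a_i| \leq K$. The only cosmetic difference is your extra $|G|^{-1}$ normalization of $\fco B \fcc(\chi)$ and the phrasing by contradiction, neither of which changes the argument.
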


\begin{proof}Let $f$ be as in the previous proof. Then
\[\Big|\fco B(\gamma_1, \dots, \gamma_k; \rho) \fcc(\chi) - \exx_{x \in G} f(\gamma_1(x)) \dots f(\gamma_k(x)) \on{e}(- \chi x)\Big| \leq \varepsilon/2\]
so we have
\[\Big|\exx_{x \in G} f(\gamma_1 (x)) \dots f(\gamma_k (x)) \on{e}(- \chi x)\Big| \geq \varepsilon/2.\]
Using Claims~\ref{fourierboundsintervalconv} and~\ref{intervalFCdecay} as in the previous proof in inequality~\eqref{fromftofcineq} we see that for $K = 8ek\eta^{-1}\varepsilon^{-1}$ (the only difference is that $K$ is slightly larger this time) one has 
\[\Big|\exx_{x \in G} \Big(\sum_{a_1 \in [-K, K]} \hat{f}(a_1) \on{e}(a_1 \gamma_1(x))\Big) \dots \Big(\sum_{a_k \in [-K, K]} \hat{f}(a_k) \on{e}(a_k \gamma_k(x))\Big)  \on{e}(- \chi(x))\Big| \geq \varepsilon / 4.\]
In particular
\begin{align*}0 \not= &\sum_{a_1 \in [-K, K], \dots, a_k  \in [-K, K]} \hat{f}(a_1)\dots \hat{f}(a_k) \exx_{x \in G} \on{e}\Big((a_1 \gamma_1 + \dots a_k \gamma_k - \chi)(x)\Big)\\
&\hspace{2cm}= \sum_{a_1 \in [-K, K], \dots, a_k  \in [-K, K]} \hat{f}(a_1)\dots \hat{f}(a_k) \id\Big(a_1 \gamma_1 + \dots a_k \gamma_k = \chi\Big).\end{align*}
Thus there are $a_1, \dots, a_k \in [-K, K]$ such that $a_1 \gamma_1 + \dots a_k \gamma_k = \chi$.\end{proof}

Our next result about Bohr sets is a generalization of the fact that the dual of the sum of two subspaces of a vector space is given by the intersection of their duals. For a set $\Gamma = \{\gamma_1, \dots, \gamma_k\}$ of elements of an abelian group and a nonnegative integer $R$, we write $\langle \Gamma \rangle_R$ for the set of all linear combinations $\lambda_1\gamma_1 + \dots + \lambda_k \gamma_k$, where $\lambda_1, \dots, \lambda_k \in [-R, R] \subseteq \mathbb{Z}$. 

\begin{theorem}\label{bohrSum}Let $\Gamma_1, \Gamma_2 \subset \hat{G}$ and $\rho_1, \rho_2 \in (0,1)$. Then there is a positive integer $R \leq (2\rho_1^{-1})^{O(|\Gamma_1|)} + (2\rho_2^{-1})^{O(|\Gamma_2|)}$ such that
\[B(\langle \Gamma_1 \rangle_R \cap \langle \Gamma_2 \rangle_R, 1/4) \subseteq B(\Gamma_1, \rho_1) + B(\Gamma_2, \rho_2).\]
\end{theorem}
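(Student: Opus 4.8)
\emph{Proof plan.} The plan is to run a Bogolyubov-type positivity argument with carefully chosen smooth majorants of the two Bohr sets; the one delicate point is to arrange that every Fourier coefficient that occurs is nonnegative, so that the whole computation becomes a matter of positivity. First I would dispose of the trivial case: if $\rho_1\ge 1/2$ or $\rho_2\ge 1/2$ then one of $B(\Gamma_1;\rho_1),B(\Gamma_2;\rho_2)$ equals $G$ and the asserted inclusion holds for any $R$, so from now on $\rho_1,\rho_2<1/2$.

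For $i=1,2$ I would take the triangular bump $g_i=\rho_i^{-1}\big(\id_{[-\rho_i/2,\rho_i/2]}\ast\id_{[-\rho_i/2,\rho_i/2]}\big)$ on $\mathbb{T}$, so $g_i(t)=\rho_i^{-1}\max(0,\rho_i-\tn{t})$, hence $g_i(0)=1$, $\on{supp}g_i\subseteq[-\rho_i,\rho_i]$, $g_i\ge 1/2$ on $[-\rho_i/2,\rho_i/2]$, and — crucially — $\hat g_i(c)=\rho_i^{-1}\big|\widehat{\id_{[-\rho_i/2,\rho_i/2]}}(c)\big|^2\ge 0$, with $\hat g_i(c)\le\rho_i^{-1}\pi^{-2}c^{-2}$ for $c\ne 0$ and $\sum_{c\in\mathbb{Z}}\hat g_i(c)=g_i(0)=1$. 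Then I set $f_i\colon G\to[0,1]$ by $f_i(a)=\prod_{\gamma\in\Gamma_i}g_i(\gamma(a))$ and expand each factor in its Fourier series on $\mathbb{T}$, obtaining for every $\chi\in\hat G$
\[\hat f_i(\chi)=\sum_{\substack{(c_\gamma)_{\gamma\in\Gamma_i}\in\mathbb{Z}^{\Gamma_i}\\ \sum_\gamma c_\gamma\gamma=\chi}}\ \prod_{\gamma\in\Gamma_i}\hat g_i(c_\gamma),\]
an absolutely convergent sum of nonnegative terms. This gives at once: $\hat f_i(\chi)\ge 0$ for all $\chi$; $\hat f_i(\chi)=0$ unless $\chi\in\langle\Gamma_i\rangle$; $\hat f_i(\chi)\le\hat f_i(0)=\ex_{a\in G}f_i(a)$ for all $\chi$; and, since $f_i\ge 2^{-|\Gamma_i|}$ on $B(\Gamma_i;\rho_i/2)$, Lemma~\ref{basicbohrsizel} yields $\hat f_i(0)\ge(\rho_i/4)^{|\Gamma_i|}$. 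Also $\on{supp}f_i\subseteq B(\Gamma_i;\rho_i)$. Next I would fix a positive integer $R$ with $R>|\Gamma_1|4^{|\Gamma_1|}\rho_1^{-|\Gamma_1|-1}+|\Gamma_2|4^{|\Gamma_2|}\rho_2^{-|\Gamma_2|-1}$ — for instance $R=\lfloor(2\rho_1^{-1})^{3|\Gamma_1|}+(2\rho_2^{-1})^{3|\Gamma_2|}\rfloor+1$, which is of the required size — and record the key tail estimate: since $\hat f_i(\chi)$ vanishes unless $\chi$ is a $\mathbb{Z}$-combination of $\Gamma_i$ and $\chi\notin\langle\Gamma_i\rangle_R$ forces every such combination to have a coefficient of absolute value $>R$,
\[\sum_{\chi\notin\langle\Gamma_i\rangle_R}\hat f_i(\chi)\ \le\ \sum_{\substack{(c_\gamma)\in\mathbb{Z}^{\Gamma_i}\\ \max_\gamma|c_\gamma|>R}}\prod_\gamma\hat g_i(c_\gamma)\ =\ 1-\Big(\sum_{|c|\le R}\hat g_i(c)\Big)^{|\Gamma_i|}\ \le\ 1-\Big(1-\tfrac{1}{\rho_i R}\Big)^{|\Gamma_i|}\ \le\ \frac{|\Gamma_i|}{\rho_i R},\]
using $\sum_{|c|>R}\hat g_i(c)\le(\rho_i R)^{-1}$, $R>\rho_i^{-1}$ and Bernoulli's inequality.

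Finally, given $x\in B\big(\langle\Gamma_1\rangle_R\cap\langle\Gamma_2\rangle_R;1/4\big)$, I would show $f_1\ast f_2(x)>0$, which finishes the proof since $\on{supp}(f_1\ast f_2)\subseteq\on{supp}f_1+\on{supp}f_2\subseteq B(\Gamma_1;\rho_1)+B(\Gamma_2;\rho_2)$. Writing $f_1\ast f_2(x)=\sum_{\chi\in\hat G}\hat f_1(\chi)\hat f_2(\chi)\,\operatorname{Re}\on{e}(\chi(x))$ (valid because $f_1\ast f_2$ and all $\hat f_i$ are real), I split the frequencies into those with $\tn{\chi(x)}\le 1/4$, where $\operatorname{Re}\on{e}(\chi(x))\ge 0$, and the rest, where it is $\ge -1$; every product $\hat f_1(\chi)\hat f_2(\chi)$ is $\ge 0$. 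The hypothesis on $x$ says exactly that every $\chi\in\langle\Gamma_1\rangle_R\cap\langle\Gamma_2\rangle_R$ falls in the first group, so any $\chi$ in the second group lies outside $\langle\Gamma_1\rangle_R$ or outside $\langle\Gamma_2\rangle_R$. Keeping the $\chi=0$ term, and bounding the remaining contribution of the second group by $\hat f_i(\chi)\le\hat f_i(0)$ and the tail estimate, I obtain
\[f_1\ast f_2(x)\ \ge\ \hat f_1(0)\hat f_2(0)-\hat f_2(0)\sum_{\chi\notin\langle\Gamma_1\rangle_R}\hat f_1(\chi)-\hat f_1(0)\sum_{\chi\notin\langle\Gamma_2\rangle_R}\hat f_2(\chi)\ \ge\ \hat f_1(0)\hat f_2(0)\Big(1-\tfrac{|\Gamma_1|}{\rho_1 R\,\hat f_1(0)}-\tfrac{|\Gamma_2|}{\rho_2 R\,\hat f_2(0)}\Big),\]
and since $\hat f_i(0)\ge(\rho_i/4)^{|\Gamma_i|}$ the bracket is positive by the choice of $R$. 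The main obstacle — and the reason for using an autoconvolution bump rather than a trapezoidal one as in the proof of Proposition~\ref{bohrsizeformula} — is precisely securing $\hat f_i(\chi)\ge 0$ and $\hat f_i(\chi)\le\hat f_i(0)$: this reduces everything to a positivity computation and avoids having to control any $\ell^1$-type Fourier norm of a Bohr set, which in general is not bounded. The condition on $x$ plays the role of the well-definedness step in the linear-algebra proof that $(U\cap V)^\perp\subseteq U^\perp+V^\perp$.
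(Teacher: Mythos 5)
Your proof is correct, and it reaches the statement by a genuinely different mechanism than the paper's. Both are Bogolyubov-type positivity arguments, but the paper convolves the indicators of slightly shrunk Bohr sets, $f=B(\Gamma_1;\sigma_1)\ast B(\Gamma_2;\sigma_2)$, takes the fourth convolution power so that the (real) Fourier coefficients occur to an even power, restricts to the large spectrum, and then invokes Proposition~\ref{bohrsizeLargeFC} --- which forces a careful choice of $\sigma_1,\sigma_2$ making the two Bohr sets weakly regular --- to place that large spectrum inside $\langle\Gamma_1\rangle_R\cap\langle\Gamma_2\rangle_R$. You instead replace the indicators by Fej\'er-type majorants $f_i(a)=\prod_{\gamma\in\Gamma_i}g_i(\gamma(a))$ whose one-dimensional factors have nonnegative Fourier coefficients; then every $\hat f_i(\chi)$ is nonnegative and is explicitly a sum over integer combinations $\sum_\gamma c_\gamma\gamma=\chi$, so the positivity and the frequency localization that the paper extracts from the large-spectrum proposition come for free, a single convolution $f_1\ast f_2$ suffices, and the contribution of frequencies outside $\langle\Gamma_i\rangle_R$ is controlled by the elementary tail bound $\sum_{|c|>R}\hat g_i(c)\le(\rho_i R)^{-1}$. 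As for what each approach buys: the paper's route recycles Proposition~\ref{bohrsizeLargeFC}, which it needs elsewhere anyway, so its marginal cost there is negligible; your route is self-contained, avoids the weak-regularity selection of radii altogether, produces an explicit admissible $R$ of the required shape, and actually gives the stronger quantitative conclusion $f_1\ast f_2(x)\ge\hat f_1(0)\hat f_2(0)\bigl(1-|\Gamma_1|/(\rho_1R\hat f_1(0))-|\Gamma_2|/(\rho_2R\hat f_2(0))\bigr)>0$, i.e.\ many representations $x=b_1+b_2$ with $b_i\in B(\Gamma_i;\rho_i)$, which the theorem as stated does not record. Two minor points: the inequality $\hat f_i(\chi)\le\hat f_i(0)$ is most cleanly justified by $f_i\ge 0$ (it is not immediate from the expansion alone, since there is no natural injection between the two sets of coefficient tuples), and your preliminary reduction to $\rho_1,\rho_2<1/2$ is indeed needed for the support claim $\on{supp}g_i\subseteq[-\rho_i,\rho_i]$ on $\mathbb{T}$ and is handled correctly.
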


\begin{proof}The proof is based on the Bogolyubov argument. Let $\sigma_1 \in [\rho_1/8, \rho_1/4]$ and $\sigma_2 \in [\rho_2/8, \rho_2/4]$, which we shall specify later. We need some flexibility in the choice of radii in order to satisfy the weak-regularity hypothesis for the resulting Bohr sets. Write $B_1 = B(\Gamma_1, \sigma_1)$ and $B_2 = B(\Gamma_2, \sigma_2)$. Let $f = B_1 \ast B_2$. Hence, $f(x) \in [0,1]$ for all $x$ and $\delta = \ex_x f(x)$ satisfies $\delta \geq \Big(\frac{\rho_1}{8}\Big)^{|\Gamma_1|}\Big(\frac{\rho_2}{8}\Big)^{|\Gamma_2|}$. Observe that 
\[\on{supp} f \ast f \ast f \ast f \subseteq  B(\Gamma_1, \rho_1) + B(\Gamma_2, \rho_2).\]
Using the inverse Fourier transform, we obtain
\[f \ast f \ast f \ast f(x) = \sum_{\gamma \in \hat{G}} \hat{f}(\gamma)^4 \on{e}(\gamma(x)).\]
Let $\Gamma$ be the set of all characters $\gamma \in \hat{G}$ such that $|\hat{f}(\gamma)| \geq \delta^2/10$. Then
\[\Big|f \ast f \ast f \ast f(x) - \sum_{\gamma \in \Gamma} \hat{f}(\gamma)^4 \on{e}(\gamma(x))\Big| \leq \sum_{\gamma \in \hat{G} \setminus \Gamma} \Big|\hat{f}(\gamma)^4 \on{e}(\gamma(x))\Big| \leq \max_{\gamma \notin \Gamma} |\hat{f}(\gamma)|^2 \cdot \sum_{\gamma \in \hat{G}} |\hat{f}(\gamma)|^2 \leq \frac{\delta^4}{100} \exx_{x \in G} |f(x)|^2 \leq \frac{\delta^4}{100}.\]

Note that $\hat{f}(\gamma) \in \mathbb{R}$ for all $\gamma \in \hat{G}$ since $f$ is a symmetric function. Furthermore, we have $\on{Re} \on{e}(\gamma(x)) \geq 0$ for $\gamma \in \Gamma$ and $x \in B(\Gamma; 1/4)$, and $\hat{f}(0)^4 = \delta^4$. Therefore, $x \in B(\Gamma; 1/4)$ gives $f \ast f \ast f \ast f(x) \not= 0$, which further implies that $x \in B(\Gamma_1, \rho_1) + B(\Gamma_2, \rho_2)$. \\

\indent Finally, since $\hat{f} = \hat{B}_1 \hat{B}_2$, we have that every $\gamma \in \Gamma$ satisfies $|\hat{B}_1(\gamma)|, |\hat{B}_2(\gamma)| \geq \delta^2/ 10$. We now pick $\sigma_1$ and $\sigma_2$. Start with $\sigma_1 = \rho_1/4$ and move towards $\rho_1/8$ in steps of $\rho_1 \delta^2 / 200$ each time we have
\[\Big|B(\Gamma_1, \sigma_1) \setminus B(\Gamma_1; \sigma_1 - \rho_1 \delta^2/200)\Big| \geq \frac{\delta^2}{20}|G|.\]
At the end of the procedure we obtain $\sigma_1 \geq \rho_1 / 8$ and the required weak regularity of $B_1$. The same arguments applies to $\sigma_2$. Proposition~\ref{bohrsizeLargeFC} implies that $\gamma \in \langle \Gamma_1 \rangle_R \cap \langle \Gamma_2 \rangle_R$ for some $R \leq O((|\Gamma_1| \rho^{-1}_1 + |\Gamma_2|\rho^{-1}_2) \delta^{-4}) \leq (2\rho_1^{-1})^{O(|\Gamma_1|)} + (2\rho_2^{-1})^{O(|\Gamma_2|)}$, completing the proof.\end{proof}

\noindent\textbf{Bohr sets inside coset progressions.} A fundamental fact about Bohr sets is that they contain dense coset progressions of bounded rank. We now prove the opposite statement, namely that dense bounded rank coset progressions contain Bohr sets of relatively large radius and relatively small frequency set.

\begin{proposition}\label{cosettobohrset}Let $C$ be a symmetric coset progression of rank $r$ and density $\alpha$ inside $G$. Then there is a positive integer $R \leq (r \log (\alpha^{-1}))^{O(1)}$ and characters $\chi_1, \dots, \chi_R \in \hat{G}$ such that $B(\chi_1, \dots, \chi_R; 1/(4R)) \subseteq C$.\end{proposition}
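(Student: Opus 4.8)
The plan is to run a Bogolyubov-type argument inside the coset progression, using the structure of $C$ as a projection of a box in a higher-dimensional lattice. First I would write $C$ in canonical form $C = [-N_1,N_1]\cdot v_1 + \dots + [-N_r,N_r]\cdot v_r + H$, and observe that $C$ is the image under a homomorphism $\pi\colon \mathbb{Z}^r \oplus H \to G$ of the box $B = [-N_1,N_1] \times \dots \times [-N_r,N_r] \times H$, where $\pi(n_1,\dots,n_r,h) = n_1 v_1 + \dots + n_r v_r + h$. Since $C$ is symmetric, $B$ is symmetric. The key idea is that $\tfrac14 B + \tfrac14 B - \tfrac14 B - \tfrac14 B \subseteq B$ roughly (up to a rescaling of the box by a factor of $4$), so the convolution $f = \id_{B'} \ast \id_{B'}$ with $B'$ a constant fraction of $B$ has the property that $f\ast f\ast f\ast f$ is supported inside $B$, hence its projection pushes forward to something supported in $C$. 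I then apply standard Fourier analysis on the ambient group $\mathbb{Z}/L_1\mathbb{Z} \oplus \dots$ (after replacing $\mathbb{Z}^r$ by a large enough cyclic group so that the box embeds without wraparound, which is harmless since we only care about $C$ as a subset of $G$) to locate the large spectrum of $f$: by Lemma~\ref{basicbohrsizel}-type density bounds, $\delta = \mathbb{E} f \geq \prod (N_i)^{-O(1)}$ relative quantities translate into $\delta \geq \alpha^{O(1)}$, and the large spectrum $\Gamma$ of $f$ has size at most $O(\delta^{-4})$ by Parseval.

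Next I would push this forward through $\pi$. Since $\widehat{f} = |\widehat{\id_{B'}}|^2$ is real and nonnegative, and $\widehat{\id_{B'}}$ at a character of a box factors as a product of one-dimensional Fejér-type kernels, the characters in the large spectrum $\Gamma$ are of a controlled form: in each $\mathbb{Z}/L_i\mathbb{Z}$-coordinate the corresponding frequency $\xi_i$ must satisfy $|\xi_i/L_i|_{\mathbb{T}} \leq O(N_i^{-1}\delta^{-O(1)})$ (else the Fejér kernel is too small), and on the $H$-part the character must be trivial on a dense subgroup — more precisely, the relevant frequencies $\chi$ on $G$ pulled back are such that $|\chi(v_i)|_{\mathbb{T}}$ is small and $\chi$ is small on $H$. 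This is exactly what is needed: the condition $x \in B(\Gamma';1/4)$ for a suitable pullback frequency set $\Gamma' \subseteq \hat G$ of size $|\Gamma| \leq O(\delta^{-4}) \leq (r\log\alpha^{-1})^{O(1)}$ will force $f\ast f\ast f\ast f$ (pushed to $G$) to be nonzero at $x$ (the $\gamma=0$ term dominates, using $\mathrm{Re}\,\mathrm{e}(\gamma(x)) \geq 0$ on $\Gamma$ together with the quantitative Fourier decay estimate as in the proof of Theorem~\ref{bohrSum}), hence $x \in C$. Choosing the radius $1/(4R)$ with $R = |\Gamma'|$ and relabeling gives the stated conclusion.

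The main obstacle I anticipate is the bookkeeping around the non-group domain: $C$ is not a subgroup, so I cannot directly do Fourier analysis "on $C$", and I need to set up the ambient lattice/cyclic-group picture carefully so that (i) the box does not wrap around, (ii) the pushforward of the $4$-fold convolution really lands in $C$ and not just in $C+C-C-C$, and (iii) the large-spectrum frequencies on the ambient group descend to honest characters of $G$ via the adjoint of $\pi$. Point (ii) requires using a box of side $\tfrac14 N_i$ (so that the $4$-fold convolution has support of side $\leq N_i$), which only costs a factor of $4^r$ in the density $\delta$ and hence a polynomial-in-$r$ factor in $R$ after taking logarithms. A secondary technical point, handled exactly as in the proofs of Propositions~\ref{bohrsizeformula} and~\ref{bohrsizeLargeFC}, is that I should restrict attention to weakly-regular radii for the intermediate Bohr/box sets so that the truncation of the Fourier expansion is valid; since I have a free parameter $\sigma_i \in [N_i/8, N_i/4]$ in each coordinate I can pigeonhole to a good scale as in Theorem~\ref{bohrSum}. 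Everything else is the routine Bogolyubov computation.
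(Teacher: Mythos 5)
There is a genuine gap, and it is at the quantitative heart of the statement. Your plan is a bounded-order Bogolyubov argument: a $4$-fold (or $8$-fold) convolution of a constant-fraction sub-box, with the large spectrum controlled by Parseval. But the density $\delta$ you work with is at best of order $4^{-r}\alpha$ (on $G$), or of order $c^{-r}$ in the auxiliary product group, so the Parseval bound $|\Gamma| \leq O(\delta^{-4})$ is polynomial in $\alpha^{-1}$ and exponential in $r$. The asserted inequality $O(\delta^{-4}) \leq (r\log\alpha^{-1})^{O(1)}$ is simply false, and the remark that the $4^{r}$ loss ``only costs a polynomial-in-$r$ factor in $R$ after taking logarithms'' is a non sequitur: $R$ \emph{is} the size of the frequency set, not its logarithm, and the radius $1/(4R)$ is tied to that size. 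Even exploiting the product structure of the box's Fourier transform does not help, since the spectrum still contains at least a constant number of frequencies per coordinate, hence $\Omega(c^{r})$ in total. A secondary but real problem is the pullback direction: the large spectrum you compute lives in the dual of the auxiliary group $\mathbb{Z}/L_1\mathbb{Z}\oplus\cdots\oplus H$, and those characters need not factor through your projection $\pi$ to give elements of $\hat{G}$; characters pull back from $G$, they do not push forward, so to get honest frequencies in $\hat G$ one should do the Fourier analysis on $G$ itself --- at which point the bounded-order bound is even more clearly too weak.

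The paper's proof avoids both issues with two ingredients your proposal lacks. First, instead of a $4$-fold convolution it convolves a $d$-fold shrunk copy $Q$ of $C$ (with side lengths $\lfloor N_i/d\rfloor$) with itself $d$ times, which is legitimate precisely because $dQ \subseteq C$; this allows the spectral threshold to be taken at the relative level $\tfrac12\delta^{1/(d-2)}$, i.e.\ very close to $1$ when $d$ is large, while the tail is still negligible. Second, it applies Chang's theorem to that spectrum, producing a set $\Psi$ of size $R \leq O(\delta^{-2/(d-2)}\log\delta^{-1})$ whose $\{-1,0,1\}$-span contains the spectrum; choosing an even $d \leq (r\log\alpha^{-1})^{O(1)}$ makes $R$ polylogarithmic, and the fact that every spectral frequency is a short $\{-1,0,1\}$-combination of $\Psi$ is exactly what justifies the radius $1/(4R)$ in $B(\Psi;1/(4R)) \subseteq B(\Gamma;1/4) \subseteq C$. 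Without the high-order convolution and Chang (or some substitute of Sanders-type strength), a Bogolyubov computation of the kind you describe cannot reach the bound $R \leq (r\log(\alpha^{-1}))^{O(1)}$.
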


\begin{proof}Let $C = [-N_1, N_1] \cdot x_1 + \dots + [-N_r, N_r] \cdot x_r + H$ be the canonical form of $C$. Let $d$ be a positive integer to be chosen later. Ignore those $N_i$ that satisfy $N_i \leq d^3$ by setting them to 0 instead. This allows us to assume $N_i \geq d^3$ without loss of generality, at the cost of decreasing the density of $C$ to $\alpha d^{-3r}$. Consider another symmetric coset progression of rank $r$ given by shrinking $C$ by a factor of roughly $d$, namely $Q = [-N'_1, N'_1] \cdot x_1 + \dots + [-N'_r, N'_r] \cdot x_r$, where $N'_i = \lfloor \frac{N_i}{d}\rfloor$. Since $N_i \geq d^3$ for all $i \in [r]$, we have $d Q \subseteq C \subseteq (d+1)Q$.\\

Using the inverse Fourier transform, we have 
\[\underbrace{Q \ast Q \ast \dots \ast Q}_{d}(x) = \sum_{\gamma \in \hat{G}} \hat{Q}(\gamma)^d \on{e}(\gamma(x)).\]

Let $\delta$ be the density of $Q$. Thus $\delta \geq d^{-4r} \alpha$. Let $\Gamma$ be the set of $\gamma \in \hat{G}$ such that $|\hat{Q}(\gamma)| \geq \xi$, for some $\xi$ to be chosen. Then

\[\Big|\underbrace{Q \ast Q \ast \dots \ast Q}_{d}(x) - \Big(\delta^d  + \sum_{\gamma \in \Gamma \setminus \{0\}} \hat{Q}(\gamma)^d \on{e}(\gamma(x))\Big)\Big| \leq \xi^{d-2} \delta.\]

Note that $Q$ is symmetric and thus $\hat{Q}(\gamma)^d \in \mathbb{R}_{\geq 0}$ when $d$ is an even integer. Hence, provided we take $\xi = \frac{1}{2} \delta^{1 + \frac{1}{d-2}}$, $B(\Gamma, 1/4) \subseteq P$. By Chang's theorem~\cite{ChangFreiman} (see also Lemma 4.36 in~\cite{TaoVuBook}), there exists a set $\Psi \subseteq \hat{G}$ of size
\[R \leq O(\delta^{-2/(d-2)} \log (\delta^{-1}))\]
such that every element of $\Gamma$ is a $\{-1,0,1\}$-linear combination of elements in $\Psi$. In particular, 
\[B(\Psi; 1/(4R)) \subseteq B(\Gamma, 1/4) \subseteq P.\]
Recall that $\delta \geq \alpha d^{-4r}$. Thus,
\[R \leq O(\delta^{-2/(d-2)} \log (\delta^{-1})) \leq O\Big(\alpha^{-2/(d-2)} d^{8r/(d-2)} (\log \alpha^{-1} + 4r \log d)\Big).\]
We may pick an even integer $d \leq (r\log \alpha^{-1})^{O(1)}$ to finish the proof.\end{proof}

We may use this proposition to get a conclusion of Theorem~\ref{bogRuzsa1} which involves Bohr sets rather than coset progressions.

\begin{corollary}\label{bogRuzsa2}Suppose that $A \subset G$ is a set of density $\delta$. Then $2A - 2A$ contains a Bohr set of codimension at most $\log^{O(1)}\delta^{-1}$ and radius at least $\Big(\log^{O(1)}\delta^{-1}\Big)^{-1}$.\end{corollary}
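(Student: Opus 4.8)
The plan is to obtain the statement by directly composing Sanders's Bogolyubov--Ruzsa lemma (Theorem~\ref{bogRuzsa1}) with Proposition~\ref{cosettobohrset}. First I would apply Theorem~\ref{bogRuzsa1} with $B = A$. Since $A + A \subseteq G$ and $|G| = \delta^{-1}|A|$, the doubling hypothesis $|A + A| \leq K \min\{|A|, |A|\}$ holds with $K = \delta^{-1}$. This produces a symmetric proper coset progression $C \subseteq A - A + A - A = 2A - 2A$ of rank at most $\log^{O(1)} \delta^{-1}$ and size $|C| \geq \exp\big(-\log^{O(1)} \delta^{-1}\big) |A + A|$.

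The second step is to convert this size bound into a density bound for $C$ inside $G$. Since $|A + A| \geq |A| = \delta |G|$, we get $|C| \geq \exp\big(-\log^{O(1)} \delta^{-1}\big)\, \delta\, |G|$; writing $\delta = \exp(-\log \delta^{-1})$ and absorbing this single factor into the exponent, $C$ has density $\alpha \geq \exp\big(-\log^{O(1)} \delta^{-1}\big)$ in $G$, and in particular $\log \alpha^{-1} \leq \log^{O(1)} \delta^{-1}$.

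Finally, I would apply Proposition~\ref{cosettobohrset} to the symmetric coset progression $C$, which has rank $r \leq \log^{O(1)} \delta^{-1}$ and density $\alpha$. This yields a positive integer $R \leq (r \log \alpha^{-1})^{O(1)} \leq \log^{O(1)} \delta^{-1}$ and characters $\chi_1, \dots, \chi_R \in \hat{G}$ with $B(\chi_1, \dots, \chi_R; 1/(4R)) \subseteq C \subseteq 2A - 2A$. This is the required Bohr set: its codimension is $R \leq \log^{O(1)} \delta^{-1}$ and its radius is $1/(4R) \geq \big(\log^{O(1)} \delta^{-1}\big)^{-1}$.

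There is no real obstacle here: the corollary is a direct composition of two results established earlier in the section. The only care needed is the quantitative bookkeeping — checking that the loss of a factor $\delta$ when passing from $|A + A|$ to $|G|$ is harmless inside a bound of the form $\exp\big(-\log^{O(1)} \delta^{-1}\big)$, and that the polynomial-in-$(r, \log \alpha^{-1})$ bound coming from Proposition~\ref{cosettobohrset} stays of the form $\log^{O(1)} \delta^{-1}$ after substituting $r, \log \alpha^{-1} \leq \log^{O(1)} \delta^{-1}$.
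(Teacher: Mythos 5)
Your proposal is correct and is exactly the argument the paper intends: the corollary is stated right after Proposition~\ref{cosettobohrset} as the composition of Sanders's Bogolyubov--Ruzsa lemma (Theorem~\ref{bogRuzsa1}, applied with $B=A$ and $K=\delta^{-1}$) with that proposition, and your quantitative bookkeeping (absorbing the extra factor $\delta$ and substituting $r,\log\alpha^{-1}\leq\log^{O(1)}\delta^{-1}$) is the right way to fill in the details the paper leaves implicit.
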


\section{Quantitative fundamental theorem of lattices}\label{secLattice}

Recall that the fundamental theorem of lattices (Lemma 3.4 in~\cite{TaoVuBook}) states that lattices of rank $k$ (meaning the maximal linearly independent subset is of size $k$) are precisely of the form $\mathbb{Z} \cdot v_1 + \dots + \mathbb{Z} \cdot v_k$ for some independent $v_1, \dots, v_k$. Our goal is to prove a quantitative variant of this result. We need a preliminary lemma that says that nested sequences of lattices inside a fixed box terminate quickly. 

\begin{lemma}\label{nestedLattices}Let $k$ and $K$ be positive integers. Suppose that $\Lambda_1 \subseteq \Lambda_2 \subseteq \dots \subseteq \Lambda_r \subseteq \mathbb{Z}^k$ are lattices such that for each $i \in [r-1]$ the set-difference $\Lambda_{i + 1} \setminus \Lambda_i$ contains an element of the box $[-K, K]^k$. Then $r \leq O(k^2 (\log k + \log K))$.\end{lemma}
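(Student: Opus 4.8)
The plan is to bound the length of the chain by tracking a single nonnegative integer invariant attached to each lattice that strictly decreases (in a controlled, multiplicative sense) at each step, together with a crude a priori bound on that invariant coming from the fact that every lattice in the chain is squeezed between $\Lambda_1$ and a lattice of full rank $k$ inside $\mathbb{Z}^k$. The natural invariant is the index, or rather — since the $\Lambda_i$ need not have full rank — the covolume of the rational subspace $W = \mathrm{span}_{\mathbb{Q}}(\Lambda_r)$ quotiented appropriately; concretely, if we let $d$ be the common rank of all the $\Lambda_i$ (note the ranks are forced to be equal: if $\Lambda_{i+1}\setminus\Lambda_i\neq\emptyset$ but $\mathrm{rank}\,\Lambda_{i+1}>\mathrm{rank}\,\Lambda_i$ then the chain would grow rank at most $k$ times, which is already $O(k)$, so WLOG all ranks equal $d\le k$), then each inclusion $\Lambda_i\subseteq\Lambda_{i+1}$ of finite index (which it is, being an inclusion of rank-$d$ lattices inside the same $d$-dimensional space) satisfies $[\Lambda_{i+1}:\Lambda_i]\ge 2$, so $\mathrm{covol}(\Lambda_i)\ge 2\,\mathrm{covol}(\Lambda_{i+1})$.

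First I would reduce to the full-rank case by intersecting everything with the rational span $W$ of $\Lambda_r$ and choosing a basis of the lattice $\mathbb{Z}^k\cap W$; this identifies $W$ with $\mathbb{Q}^d$ and $\mathbb{Z}^k\cap W$ with $\mathbb{Z}^d$, and each $\Lambda_i$ becomes a full-rank sublattice of $\mathbb{Z}^d$. The coordinate change is by an integer matrix, so the box $[-K,K]^k$ maps into a box $[-K',K']^d$ with $K'\le (\text{poly in }k,K)$; more carefully one wants $K'\le k^{O(1)}K^{O(1)}$, and this is the one slightly fiddly point — one must control the entries of the transition matrix, e.g. via the fact that $\mathbb{Z}^k\cap W$ has a basis of vectors of norm bounded in terms of $k$ and the defining data, or simply absorb the loss since we only need a $\log$ of $K'$ at the end. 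Next, the a priori upper bound: the largest lattice $\Lambda_r$ contains, for each $i$, a vector $w_i\in\Lambda_i\setminus\Lambda_{i-1}$ lying in $[-K,K]^d$; since $\Lambda_{r-1}$ already contains $\Lambda_1$, and more usefully, since $\Lambda_r$ is generated over $\mathbb{Z}$ by $\Lambda_1$ together with the vectors $w_2,\dots,w_r$ — each of which lies in $[-K,K]^d$ — the index $[\Lambda_r:\Lambda_1]$ divides $\mathrm{covol}(\Lambda_1)/\mathrm{covol}(\Lambda_r)$, and $\mathrm{covol}(\Lambda_r)\ge 1$ while $\mathrm{covol}(\Lambda_1)\le$ the covolume of any sublattice of $\Lambda_1$ of full rank generated by $d$ vectors in a box, which by Hadamard's inequality is at most $(\sqrt{d}K)^d$. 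Hence $\mathrm{covol}(\Lambda_1)\le (\sqrt d K)^d$.

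Combining: we have $r-1$ strict inclusions, each doubling the index at least, so $2^{\,r-1}\le [\Lambda_r:\Lambda_1]=\mathrm{covol}(\Lambda_1)/\mathrm{covol}(\Lambda_r)\le \mathrm{covol}(\Lambda_1)\le (\sqrt d K)^d$, giving $r-1\le d\log_2(\sqrt d K)\le k(\tfrac12\log_2 k+\log_2 K)$, which is $O(k(\log k+\log K))$ — in fact even better than the claimed $O(k^2(\log k+\log K))$, and the extra factor of $k$ in the statement comfortably absorbs the loss $K\to K'$ incurred in the reduction to $\mathbb{Z}^d$. The hard part will be making the reduction to the full-rank case genuinely clean — i.e. producing an honest bound on the coordinates of the box after restricting to $W$ and re-coordinatizing $\mathbb{Z}^k\cap W$ by $\mathbb{Z}^d$ — but since the final bound only involves $\log K$ and we are given a whole spare factor of $k$, any polynomial blow-up $K\mapsto k^{O(1)}K^{O(1)}$ is harmless, so the argument goes through with room to spare.
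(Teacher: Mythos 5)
Your overall strategy is the same as the paper's (split into constant-rank blocks, reduce to full rank, then let the covolume/index double at each strict inclusion), but as written there is a genuine gap at the a priori bound $\mathrm{covol}(\Lambda_1)\leq(\sqrt{d}K)^d$. The hypothesis only gives box elements $w_{i+1}\in\Lambda_{i+1}\setminus\Lambda_i$, so every box vector you are guaranteed lies \emph{outside} $\Lambda_1$; nothing forces $\Lambda_1$ to contain $d$ independent vectors of the box, or indeed any nonzero box vector at all (take $k=d=1$, $K=1$, $\Lambda_1=7\mathbb{Z}\subset\mathbb{Z}$: then $\mathrm{covol}(\Lambda_1)=7>1$). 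Likewise your assertion that $\Lambda_r$ is generated by $\Lambda_1$ together with $w_2,\dots,w_r$ is false in general (only containment holds), though you do not actually use it. Without an upper bound on $\mathrm{covol}(\Lambda_1)$ the inequality $2^{r-1}\leq\mathrm{covol}(\Lambda_1)/\mathrm{covol}(\Lambda_r)$ yields nothing. The paper repairs exactly this point by a preliminary reduction you omitted: replace each $\Lambda_i$ by the sublattice generated by the box witnesses $w_2,\dots,w_i$. The new chain is still strictly increasing with the same box witnesses, and now every lattice in it -- in particular the bottom lattice of each constant-rank block -- is generated by at most $d$ box vectors, so the Hadamard bound on its covolume is legitimate and your doubling argument goes through.

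Two further points. First, the reduction to full rank: you re-coordinatize $\mathbb{Z}^k\cap W$ by a basis and claim a blow-up $K\mapsto K'\leq k^{O(1)}K^{O(1)}$, which you acknowledge is the fiddly step but do not prove; with standard reduced bases the coordinate blow-up is of the form $2^{O(d)}$ or worse, which would add a term of order $k^3$ to the final bound and thus overshoot the stated $O(k^2(\log k+\log K))$. The paper avoids this entirely: within a constant-rank block all lattices share a rational span $W$ of dimension $d$, and one simply projects onto a set of $d$ coordinates on which $W$ injects; box vectors remain box vectors in $[-K,K]^d$, strict inclusions and witnesses are preserved, and the images are sublattices of $\mathbb{Z}^d$ so their covolumes are at least $1$. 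Second, a bookkeeping remark: your claimed improvement to $O(k(\log k+\log K))$ only bounds a single constant-rank block; since the rank can increase up to $k$ times, the per-block bounds must be summed, giving the $O(k^2(\log k+\log K))$ of the lemma -- harmless for the statement, but your ``even better'' claim is not justified by the argument.
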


This lemma will be used later in the paper as well, so we opted to state it outside the proof of the main result on lattices.

\begin{proof}Without loss of generality we may assume that the given lattices are generated by elements in the box $[-K, K]^k$. Note that the ranks of lattices $\Lambda_1, \dots$ are non-decreasing and at most $k$, so it suffices to show that $m \leq O(k (\log k + \log K))$ when $\Lambda_1, \dots, \Lambda_m$ all have the same rank. By passing to a suitable subset of coordinates, we may in fact assume that the ranks are $k$. The fact that $\Lambda_1$ is generated by $k$ independent vectors with coordinates bounded by $K$ in absolute value implies the covolume bound $\det \Lambda_1 \leq k! K^k$. We have $\det \Lambda_{i+1} \leq \frac{1}{2} \det \Lambda_i$, so $m \leq O(k(\log k + \log K))$.\end{proof}

The following theorem is a quantitative version of the fundamental theorem of lattices. Instead of the full group $\mathbb{Z}^m$, we consider a truncated version, namely the set $\langle a_1, \dots, a_k \rangle_R$. We claim that every subset $B \subseteq \langle a_1, \dots, a_k \rangle_R$ is covered by bounded linear combinations of a bounded number of elements $b_1, \dots, b_\ell$ in $B$. In comparison, the fundamental theorem of lattices tells us that $B$ is contained in $\langle b_1, \dots, b_\ell \rangle$ for some $b_1, \dots, b_\ell \in \langle B \rangle$, where $\ell$ is exactly determined (it is the size of a maximal independent set). We need more flexibility in the choice of $\ell$ in the quantitative version as the example $B = \{2,3\} \subseteq \mathbb{Z}$ shows. The main difference is that in our case we need elements from the original set $B$ rather than its span $\langle B \rangle$.

\begin{theorem}\label{quantLattice}Let $G$ be a finite abelian group, let $a_1, \dots, a_k \in G$ and let $R$ be a positive integer. Suppose that $B \subset \langle a_1, \dots, a_k \rangle_R$ is a non-empty set. Then, there exist positive integers $\ell \leq O(k^2 (\log k + \log R))$, $S \leq O((2Rk)^{k+ 3})$ and elements $b_1, \dots, b_\ell \in B$ such that $B \subseteq \langle b_1, \dots, b_\ell\rangle_{S}$.\end{theorem}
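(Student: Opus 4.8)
The plan is to transfer the statement to the lattice $\mathbb{Z}^k$ via the coordinate map, run a greedy construction controlled by Lemma~\ref{nestedLattices}, and then carry out a finite amount of lattice bookkeeping to extract the bound on $S$. Concretely, fix the homomorphism $\phi\colon\mathbb{Z}^k\to G$ with $\phi(e_i)=a_i$ for the standard basis $e_1,\dots,e_k$. Since $B\subseteq\langle a_1,\dots,a_k\rangle_R$, every $b\in B$ admits a lift $v_b\in[-R,R]^k$ with $\phi(v_b)=b$; fix one such lift for each $b$, so $\|v_b\|_\infty\le R$. The key point is that $\phi$ is a homomorphism, so any identity $v_b=\sum_i\mu_i v_{b_i}$ in $\mathbb{Z}^k$ pushes forward to $b=\sum_i\mu_i b_i$ in $G$ with the same coefficients. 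Hence it suffices to produce $b_1,\dots,b_\ell\in B$ with $\ell$ as claimed such that every $v_b$ lies in $\langle v_{b_1},\dots,v_{b_\ell}\rangle_S$ inside $\mathbb{Z}^k$, with $S$ as claimed; the group $G$ and $\ker\phi$ then play no further role.

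Next I would run the obvious greedy process: put $L_0=\{0\}$, and having chosen $b_1,\dots,b_{j-1}$ with $L_{j-1}=\mathbb{Z}v_{b_1}+\dots+\mathbb{Z}v_{b_{j-1}}$, pick (if possible) some $b_j\in B$ with $v_{b_j}\notin L_{j-1}$ and set $L_j=L_{j-1}+\mathbb{Z}v_{b_j}$, stopping when no such $b_j$ exists. Each step produces $L_{j-1}\subsetneq L_j$ with $v_{b_j}\in(L_j\setminus L_{j-1})\cap[-R,R]^k$, so Lemma~\ref{nestedLattices} (with $K=R$) bounds the number of steps by $\ell\le O(k^2(\log k+\log R))$, which is the claimed bound on $\ell$ (and we may always arrange $\ell\ge 1$). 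Writing $\Lambda=L_\ell$, at termination every $b\in B$ satisfies $v_b\in\Lambda=\langle v_{b_1},\dots,v_{b_\ell}\rangle$ with $\|v_b\|_\infty\le R$. Thus the entire problem reduces to the purely lattice-theoretic statement: if a lattice $\Lambda\subseteq\mathbb{Z}^k$ is generated by vectors $v_1,\dots,v_\ell\in[-R,R]^k$, then every $u\in\Lambda$ with $\|u\|_\infty\le R$ can be written $u=\sum_i\mu_i v_i$ with $|\mu_i|\le O((2Rk)^{k+3})$.

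For this last step I would proceed as follows. Let $m=\on{rank}(\Lambda)\le k$ and reorder so that $v_1,\dots,v_m$ are $\mathbb{Q}$-linearly independent; they generate a finite-index sublattice $\Lambda'$. By the Cauchy--Binet formula, $\det\Lambda\ge 1$ and $\det\Lambda'\ge 1$ (each is the square root of a sum of squares of integer $m\times m$ minors, not all zero), while Hadamard's inequality gives $\det\Lambda'\le\prod_i\|v_i\|_2\le(\sqrt k\,R)^k$; hence the index $N:=[\Lambda:\Lambda']=\det\Lambda'/\det\Lambda$ is at most $(\sqrt k\,R)^k$, and $\Lambda\subseteq\tfrac1N\Lambda'$. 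Given $u\in\Lambda$ with $\|u\|_\infty\le R$, its class in the finite group $\Lambda/\Lambda'$ (of order $N$, generated by the classes of $v_{m+1},\dots,v_\ell$) can be realized with coefficients in $\{0,\dots,N-1\}$, so $u-\sum_{i>m}\mu_i v_i\in\Lambda'$ for suitable $0\le\mu_i<N$; the remaining vector lies in $\Lambda'$, has $\ell^\infty$-norm at most $R+\ell NR$, and writing it in the basis $v_1,\dots,v_m$ and solving by Cramer's rule across a nonzero $m\times m$ minor (numerator bounded by Hadamard, denominator $\ge 1$) bounds the remaining coefficients $\mu_1,\dots,\mu_m$. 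To keep the final exponent down to $k+3$ rather than doubling it, the cleanest route is to first replace $v_1,\dots,v_m$ by a reduced basis of $\Lambda$ of $\ell^\infty$-norm $O(k^{O(1)}R)$ (using $\lambda_m(\Lambda)\le\sqrt k\,R$) and express $u$ directly in that basis, so that the $(\sqrt k\,R)^k$-type determinant factor is paid only once; combining this with $\ell\le O(k^2(\log k+\log R))$ then gives all $|\mu_i|\le O((2Rk)^{k+3})$.

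\textbf{Main obstacle.} Everything up to and including the greedy step is routine once Lemma~\ref{nestedLattices} is in hand. The real work is the final reduction: turning the bare membership $v_b\in\Lambda$ into an \emph{explicit} representation over a possibly very redundant generating set with coefficients of size only $O((2Rk)^{k+3})$. This is precisely where one must control the index of an independent (or reduced) subfamily of the generators and invoke Cramer's rule, and it is the only place where the precise shape of the stated bound is earned; a careless combination of the two determinant estimates would overshoot, so the order in which the determinant factors are incurred must be watched.
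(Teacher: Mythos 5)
Your first two steps coincide with the paper's proof: lifting each $b\in B$ to a vector in $[-R,R]^k$, running the greedy process, and invoking Lemma~\ref{nestedLattices} with $K=R$ is exactly how the paper obtains $\ell \leq O(k^2(\log k+\log R))$, and the push-forward through the coordinate homomorphism is unproblematic. The genuine gap is in the final coefficient bound, i.e.\ precisely the step you flag as the crux. Your primary route (reduce the class of $u$ in $\Lambda/\Lambda'$ with coefficients in $\{0,\dots,N-1\}$, $N\leq(\sqrt k\,R)^k$, then apply Cramer with the denominator only bounded below by $1$) gives $S$ of order $(kR)^{2k}$, as you concede. But the proposed repair --- replacing $v_1,\dots,v_m$ by a reduced basis of $\Lambda$ and expressing $u$ directly in that basis --- does not prove the theorem: the statement requires the spanning elements $b_1,\dots,b_\ell$ to lie in $B$ itself (the paper stresses before the theorem that one needs elements of $B$, not of $\langle B\rangle$), and the vectors of a reduced basis of $\Lambda$ are in general not lifts of elements of $B$. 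To return to elements of $B$ you would have to express each reduced-basis vector over $v_{b_1},\dots,v_{b_\ell}$, which is the original problem again, and composing the two representations multiplies the bounds and reinstates exactly the doubled determinant factor you were trying to avoid. So as written the bound $S\leq O((2Rk)^{k+3})$ is not established.

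The paper's Lemma~\ref{latticeAdjugateBOund} resolves this while keeping every generator inside the lift of $B$: take a maximal independent subfamily $z^1,\dots,z^m$, restrict to $m$ coordinates on which the matrix $Z$ is invertible, and use $\det Z\,\mu=\on{adj}Z\,t$ to see that $\det Z\,z^j$ is an integral combination of $z^1,\dots,z^m$ for $j>m$. One may therefore reduce the coefficients of the dependent generators modulo $|\det Z|$ (rather than modulo the index $N$), and when the remaining coefficients are recovered via $\frac{1}{\det Z}\on{adj}Z\bigl(w-\sum_{j>m}\lambda_j z^j\bigr)$, the factor $|\det Z|$ hidden in the norm of $\sum_{j>m}\lambda_j z^j$ is cancelled by the prefactor $1/\det Z$; the determinant-type factor is thus paid only once, giving $|\lambda_i|\leq m!\,K_1^{m+1}(K_2+r)$ and hence $S\leq k!\,R^{k+1}(R+\ell)\leq O((2Rk)^{k+3})$. (Your quotient-group route could also be rescued without any auxiliary basis by choosing $Z$ to be an $m\times m$ minor of maximal absolute value: Cauchy--Binet gives $N\leq\det\Lambda'\leq\binom{k}{m}^{1/2}|\det Z|$, so the factor $N$ in the remainder's norm is cancelled by the denominator up to a harmless $2^{k/2}$. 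Some explicit cancellation of this kind is indispensable; bounding the denominator by $1$ and then patching with a reduced basis does not close the argument.)
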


\begin{proof}We pass to $\mathbb{Z}^k$ by choosing an arbitrary $k$-tuple $(x_1, \dots, x_k) \in \mathbb{Z}^k$ such that $b = x_1 a_1 + \dots + x_k a_k$ and $\|x\|_{L^\infty} \leq R$ for each $b \in B$. Let $X \subseteq \mathbb{Z}^k$ be the set of the chosen $k$-tuples. We iteratively find elements $x^1, x^2, \dots \in X$ by adding arbitrary $x^{i+1} \in X \setminus \langle x^1, \dots, x^i\rangle_{\mathbb{Z}}$ to the list every time $X \not\subseteq \langle x^1, \dots, x^i\rangle_{\mathbb{Z}}$. By Lemma~\ref{nestedLattices} this procedure terminates after $m \leq O(k^2 (\log k + \log R))$ steps. It turns out that elements of $X$ are necessarily $\mathbb{Z}$-linear combinations of $x^1, x^2, \dots$ with all coefficients bounded. This is the content of the next lemma.

\begin{lemma}\label{latticeAdjugateBOund}Suppose that $w \in \langle z^1, \dots, z^r \rangle_{\mathbb{Z}}$ for some $z^1, \dots, z^r \in \mathbb{Z}^k$ with $\|z^i\|_{L^\infty} \leq K_1$ for $i \in [r]$ and $\|w\|_{L^\infty} \leq K_2$. Then there are $\lambda_1, \dots, \lambda_r \in \mathbb{Z}$ such that $|\lambda_i| \leq k! K_1^{k + 1}(K_2 + r)$ for $i \in [r]$ and $w = \sum_{i \in [r]} \lambda_i z^i$.\end{lemma}

\begin{proof}[Proof of the lemma.] Without loss of generality, $z^1, \dots, z^m$ form a maximal $\mathbb{Q}$-linearly independent subset of $z^1, \dots, z^r$. Reordering the coordinates if necessary, we may assume that $z^1, \dots, z^m$ are still independent when restricted to the first $m$ coordinates. Thus, we may assume without loss of generality that $m = k$.\\
\indent Consider the $m \times m$ matrix $Z = (z^1\,\,z^2\,\,\dots\,\,z^m)$, which is integral and invertible. For any $t \in \mathbb{Z}^m$ we have unique $\mu_1, \dots, \mu_m \in \mathbb{Q}$ such that $t = \sum_{i \in [m]} \mu_i z^i$. In the matrix notation, $\mu \in \mathbb{Q}^m$ is the unique solution of the equation $t = Z \mu$. Using the adjugate matrix we obtain
\[\det Z \,\mu = \on{adj} Z \,t.\]
In particular, $\det Z \,\mu$ is an integral vector. Using this observation for $z^{m+1}, \dots, z^r$ in place of $t$, it follows that $\det Z z^j$ is a $\mathbb{Z}$-linear combination of $z^1, \dots, z^m$ for every $j \in [m + 1, r]$. In particular, this means that we may find $\lambda_1, \dots, \lambda_r \in \mathbb{Z}$ such that $|\lambda_{m+1}|, \dots, |\lambda_r| \leq |\det Z|$ and $w = \sum_{i \in [r]} \lambda_i z^i$. It remains to bound the coefficients $\lambda_1, \dots, \lambda_m$. Using the equality
\[w - \sum_{i \in [m + 1, r]} \lambda_i z^i =  Z (\lambda_1\,\,\dots\,\,\lambda_m)^T\]
and the invertibility of $Z$ we have 
\[(\lambda_1\,\,\dots\,\,\lambda_m)^T = \frac{1}{\det Z} \on{adj} Z \Big(w - \sum_{i \in [m + 1, r]} \lambda_i z^i\Big).\]
The bounds so far and the fact that $|\det Z| \geq 1$ imply that 
\[|\lambda_1|, \dots, |\lambda_m| \leq \frac{1}{|\det Z|} m! K_1^{m}\Big(K_2 + r |\det Z| K_1\Big) \leq m! K_1^{m + 1}(K_2 + r),\]
concluding the proof.\end{proof} 

Thus, $X \subset \langle x^1, \dots, x^m\rangle_S$ for some $S \leq k!R^{k+1}(R + m)$. Finally, we return from $\mathbb{Z}^k$ to the group $G$. Let $b_i = \sum_{j \in [k]} x^i_j a_j$ for $i \in [m]$. Let $b \in B$ be an arbitrary element. Then $b = \sum_{i \in [k]} \lambda_i a_i$ for some $\lambda \in X$. Thus, there are coefficients $\mu_1, \dots, \mu_m \in [-S, S]$ such that $\lambda = \sum_{i \in [m]} \mu_i x^i$. In particular
\[b = \sum_{i \in [k]} \lambda_i a_i = \sum_{i \in [k], j \in [m]} \mu_j x^j_i a_i = \sum_{j \in [m]} \mu_j b_j,\]
proving that $B \subseteq \langle b_1, \dots, b_m\rangle_S$.\end{proof}

\vspace{\baselineskip}

\section{Quasirandomness of Bilinear Bohr Varieties}\label{secQRBohr}

\hspace{18pt}Let $C$ be a symmetric proper coset progression of rank $d$ inside $H$, let $\Gamma \subseteq \hat{G}$ be a set of characters and let $L_1, \dots, L_r \colon C \to \hat{G}$ be Freiman-linear maps on the coset progression $C$. Define the \emph{bilinear Bohr variety} 
\[W = (B(\Gamma; \rho) \times C) \cap \Big(\cup_{y \in C} B(L_1(y), \dots, L_r(y); \rho) \times \{y\}\Big).\]
\indent We view $W$ as a bipartite graph between the vertex classes $B(\Gamma; \rho)$ and $C$. In this section we study quasirandomness properties of bilinear Bohr varieties, generalizing the results in~\cite{U4paper} and~\cite{genPaper}. The main result is an algebraic regularity lemma which essentially enables us to partition $W$ into quasirandom pieces of the form $W \cap (G \times C_i)$, where $C = C_1 \cup \dots \cup C_m$ is a partition of the vertex class $C$ into further coset progressions. There is a minor caveat that we might need to decrease the radius $\rho$ slightly on each piece.\\
\indent The particular choice of $B(\Gamma; \rho) \times C$ instead of a product of two Bohr sets or two coset progressions is of minor importance, given that Bohr sets and coset progressions are essentially equivalent, but it seems to give the cleanest statement of the algebraic regularity lemma. 

\begin{theorem}\label{algreglemma}Let $G$ and $H$ be finite abelian groups. Let $C$ be a symmetric proper coset progression of rank $d$ inside the group $H$, let $\Gamma \subseteq \hat{G}$ and let $L_1, \dots, L_r \colon C \to \hat{G}$ be Freiman-linear maps. Let $\rho > 0$ and $\eta > 0$ be given. We may partition $C$ into further proper coset progressions $C_1, \dots, C_m$ of rank at most $d$, where
\[m \leq \exp\Big(d^{O(1)} r^{O(1)} |\Gamma|^{O(1)}\log^{O(1)} \eta^{-1} \log^{O(1)} \rho^{-1}\Big),\]
such that for each $i \in [m]$ there exist reals $\delta_i > 0$ and $\rho_i \in [\rho/2, \rho]$ such that the following two quasirandomness properties hold.
\begin{itemize}
\item[\textbf{(i)}] For at least a $1 - \eta$ proportion of all elements $y \in C_i$ we have 
\[\Big||B(\Gamma \cup \{L_1(y), \dots, L_r(y)\}; \rho_i)| - \delta_i |B(\Gamma; \rho_i)|\Big| \leq \eta |G|.\]
\item[\textbf{(ii)}] For at least a $1 - \eta$ proportion of all pairs $(y, y') \in C_i \times C_i$ we have
\[\Big||B(\Gamma \cup \{L_1(y), \dots, L_r(y), L_1(y'), \dots, L_r(y')\}; \rho_i)| - \delta^2_i |B(\Gamma; \rho_i)|\Big| \leq \eta |G|.\]
\end{itemize}
\end{theorem}

Properties \textbf{(i)} and \textbf{(ii)} are sufficient to guarantee that the corresponding bipartite graph is quasirandom, see Appendix~\ref{qrAppendix} for a brief discussion of quasirandom bipartite graphs.

\begin{proof} Let $C = [-N_1, N_1] \cdot v_1 + \dots + [-N_d, N_d] \cdot v_d + H_0$ be a canonical form of $C$. We iteratively find further proper coset progressions $C = Q_0 \supseteq Q_1 \supseteq Q_2 \supseteq \dots$ such that each $Q_i$ is of the form
\[Q_i = [-N^{(i)}_1, N^{(i)}_1] \cdot \ell^{(i)}_1 v_1 + \dots + [-N^{(i)}_d, N^{(i)}_d] \cdot \ell^{(i)}_d v_d + H_i\]
for suitable integers $N^{(i)}_j, \ell^{(i)}_j$ and subgroups $H_j$. The bounds
\[\frac{2N_1 + 1}{2N^{(i)}_1 + 1}, \dots, \frac{2N_d + 1}{2N^{(i)}_d + 1}, \frac{|H_0|}{|H_i|} \leq \Big(\eta^{-2} 100^{d+2} (2K + 1)^{4r + 2|\Gamma|}\Big)^{i}\]
will hold at each step $i$, where $K$ is a parameter that will be defined later, which will satisfy $K = O((|\Gamma| + r) \eta^{-3}\rho^{-1})$ and will not depend on $i$. Along the way, we shall also construct an increasing sequence of lattices $\{0\} = \Lambda_0 \subseteq \Lambda_1 \subseteq \Lambda_2 \subseteq \dots $ in $\mathbb{Z}^r$ such that for each $y \in Q_i$ and $\lambda \in \Lambda_i$ we have $\lambda_1 L_1(y) + \dots + \lambda_r L_r(y) = 0$.\\
\indent Let $M \subset \mathbb{Z}^\Gamma$ be the annihilator lattice for the set $\Gamma$, which is the lattice of those tuples $(\nu_\gamma)_{\gamma \in \Gamma}$ that satisfy $\sum_{\gamma \in \Gamma} \nu_\gamma \gamma = 0$.\\

Suppose that we have completed the $s$\tss{th} step and thus constructed the proper coset progression $Q_s$ and the lattice $\Lambda_s$. Let 
\[Q'_s = [-N^{(s)}_1/2, N^{(s)}_1/2] \cdot \ell^{(s)}_1 v_1 + \dots + [-N^{(s)}_d/2, N^{(s)}_d/2] \cdot \ell^{(s)}_d v_d + H_s\]
which is a slightly shrunk version of $Q_s$, chosen so that $Q'_s - Q'_s \subseteq Q_s$. Let us partition the initial coset progression $C$ into proper coset progressions $S_1, \dots, S_m$ by intersecting it with translates of $Q'_s$. It is possible to achieve that $|S_i| \geq 100^{-d}|Q_s|$ for all $i \in [m]$. For each $i \in [m]$, we first find a suitable $\rho_i \in [\rho/2, \rho]$. Consider $\rho - \frac{j \eta^2 \rho}{1000}$ for $j \in [500 \eta^{-2}]$ as candidates for $\rho_i$. There is such a value of $\rho_i$ such that for at least a $1 - \frac{\eta}{10}$ proportion of $y \in S_i$ we have
\begin{align}\Big|B\Big(\Gamma \cup \{L_1(y), \dots, L_r(y)\}; \rho_i + \frac{\eta^2 \rho}{1000}\Big) \setminus B\Big(\Gamma \cup \{L_1(y), \dots, L_r(y)\}; \rho_i\Big)\Big| \leq \frac{\eta}{10} |G|,\label{wreg1piece}\end{align}
for at least a $1-\frac{\eta}{10}$ proportion of the pairs $(y,y') \in S_i \times S_i$ we have 
\begin{align}&\Big|B\Big(\Gamma \cup \{L_1(y), \dots, L_r(y), L_1(y'), \dots, L_r(y')\}; \rho_i + \frac{i \eta^2 \rho}{1000}\Big)\nonumber\\
&\hspace{2cm}\setminus B\Big(\Gamma \cup \{L_1(y), \dots, L_r(y), L_1(y'), \dots, L_r(y')\}; \rho_i\Big)\Big| \leq \frac{\eta}{10} |G|\label{wreg2piece}\end{align}  
and 
\begin{equation}\Big|B\Big(\Gamma; \rho_i + \frac{\eta^2 \rho}{1000}\Big) \setminus B(\Gamma; \rho_i)\Big| \leq \frac{\eta}{10} |G|.\label{wreg3piece}\end{equation}

Let us fix some $S_i$. The next claim shows that $S_i$ gives rise to a quasirandom piece unless we obtain new vanishing linear combinations of characters given by values of maps $L_1, \dots, L_r$. Let $K = O((|\Gamma| + r) \eta^{-3}\rho^{-1})$ be the quantity and $c_i \in \mathbb{D}$ for $i \in [-K, K]$ the constants provided by Proposition~\ref{bohrsizeformula} such that whenever $\gamma_1, \dots, \gamma_\ell \in \hat{G}$ are characters for $\ell \leq 2r + |\Gamma|$ with the weak regularity property
\begin{equation}\Big||B(\gamma_1, \dots, \gamma_\ell; \rho_i + \eta^2 \rho/1000)| - |B(\gamma_1, \dots, \gamma_\ell; \rho_i)|\Big| \leq \frac{\eta}{10} |G|\label{wregconditionQR}\end{equation}
then 
\begin{equation}\label{QRbohrconclusion}\Big||B(\gamma_1, \dots, \gamma_\ell; \rho_i)| - \sum_{a_1, \dots, a_\ell \in [-K, K]} \id(a_1 \gamma_1 + \dots + a_\ell \gamma_\ell = 0) c_{a_1} \dots c_{a_\ell} |G|\Big| \leq \frac{\eta}{5}|G|.\end{equation}

\begin{claim}\label{qrsuffconds}Suppose that at least a $1 - \frac{\eta}{10}$ proportion of all elements $y \in S_i$ have the property that if the equality
\begin{equation}\sum_{\gamma \in \Gamma} \nu_\gamma \gamma + \lambda_1 L_1(y) + \dots + \lambda_r L_r(y) = 0\label{latticeclaimproperty}\end{equation}
holds for some $\nu_\gamma, \lambda_j \in [-K, K]$ then $\nu \in M$ and $\lambda \in \Lambda_s$. Suppose also that at least $1 - \frac{\eta}{10}$ proportion of all pairs $(y, y') \in S_i \times S_i$ have the property that if the equality
\[\sum_{\gamma \in \Gamma} \nu_\gamma \gamma + \lambda_1 L_1(y) + \dots + \lambda_r L_r(y) + \lambda'_1 L_1(y') + \dots + \lambda'_r L_r(y') = 0\]
holds for some $\nu_\gamma, \lambda_j, \lambda'_j \in [-K, K]$ then $\nu \in M$ and $\lambda, \lambda' \in \Lambda_s$. Then the properties \textbf{(i)} and \textbf{(ii)} hold.
\end{claim}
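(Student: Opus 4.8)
The plan is to convert every Bohr-set cardinality in \textbf{(i)} and \textbf{(ii)} into a box-truncated lattice sum via Proposition~\ref{bohrsizeformula}, packaged here as the implication \eqref{wregconditionQR}$\Rightarrow$\eqref{QRbohrconclusion}, and then to use the two hypotheses of the claim to observe that, for almost every $y\in S_i$ (resp.\ almost every pair), the relevant truncated annihilator lattice splits as a product of $M\cap[-K,K]^\Gamma$ with one (resp.\ two) copies of $\Lambda_s\cap[-K,K]^r$ --- a set independent of $y$. Accordingly set
\[A=\sum_{\nu\in M\cap[-K,K]^\Gamma}\ \prod_{\gamma\in\Gamma}c_{\nu_\gamma},\qquad \delta_i=\sum_{\lambda\in\Lambda_s\cap[-K,K]^r}\ \prod_{j\in[r]}c_{\lambda_j},\]
recalling that $\nu\in M\iff\sum_\gamma\nu_\gamma\gamma=0$ and that, by the construction, $\sum_j\lambda_jL_j$ vanishes on the current coset progression (so on $S_i$) for every $\lambda\in\Lambda_s$.

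First, \eqref{wreg3piece} is exactly \eqref{wregconditionQR} for the tuple listing $\Gamma$, so \eqref{QRbohrconclusion} gives $\big||B(\Gamma;\rho_i)|-A|G|\big|\le\frac{\eta}{5}|G|$; since $|B(\Gamma;\rho_i)|\ge(\rho/2)^{|\Gamma|}|G|$ by Lemma~\ref{basicbohrsizel}, this keeps $A$ bounded below. Call $y\in S_i$ \emph{good} if \eqref{wreg1piece} holds at $y$ and $y$ satisfies the first hypothesis of the claim; by that hypothesis together with the choice of $\rho_i$ the good $y$ form at least a $1-\frac{\eta}{5}$ fraction of $S_i$. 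For a good $y$, \eqref{wreg1piece} is \eqref{wregconditionQR} for the tuple $(\gamma_1,\dots,\gamma_{|\Gamma|},L_1(y),\dots,L_r(y))$, so by \eqref{QRbohrconclusion} the quantity $|B(\Gamma\cup\{L_1(y),\dots,L_r(y)\};\rho_i)|$ equals, up to $\frac{\eta}{5}|G|$, the value $|G|\sum\id\big(\sum_\gamma\nu_\gamma\gamma+\sum_j\lambda_jL_j(y)=0\big)\prod_\gamma c_{\nu_\gamma}\prod_j c_{\lambda_j}$, the sum running over $\nu\in[-K,K]^\Gamma$ and $\lambda\in[-K,K]^r$. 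Goodness of $y$ forces the indicator to vanish unless $\nu\in M$ and $\lambda\in\Lambda_s$, while conversely any such $(\nu,\lambda)$ makes the argument of the indicator zero; hence the sum factors as $A\delta_i$, so $\big||B(\Gamma\cup\{L_1(y),\dots,L_r(y)\};\rho_i)|-A\delta_i|G|\big|\le\frac{\eta}{5}|G|$. Combined with the estimate for $|B(\Gamma;\rho_i)|$ this gives \textbf{(i)}, namely $\big||B(\Gamma\cup\{L_1(y),\dots,L_r(y)\};\rho_i)|-\delta_i|B(\Gamma;\rho_i)|\big|\le\frac{\eta}{5}|G|+\delta_i\frac{\eta}{5}|G|\le\eta|G|$, once we know $\delta_i\le 2$.

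Property \textbf{(ii)} is proved identically with an extra block of variables: a pair $(y,y')$ is \emph{good} if \eqref{wreg2piece} holds at it and it satisfies the second hypothesis of the claim, good pairs are a $1-\frac{\eta}{5}$ fraction of $S_i\times S_i$, \eqref{wreg2piece} is \eqref{wregconditionQR} for the tuple $(\gamma_1,\dots,\gamma_{|\Gamma|},L_1(y),\dots,L_r(y),L_1(y'),\dots,L_r(y'))$, and the second hypothesis makes the vanishing combinations split as $\nu\in M$, $\lambda\in\Lambda_s$, $\lambda'\in\Lambda_s$, so the triple sum factors as $A\delta_i^2$ and \textbf{(ii)} follows as before. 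Finally $\delta_i>0$ because $A\delta_i|G|$ is within $\frac{\eta}{5}|G|$ of $|B(\Gamma\cup\{L_1(y),\dots,L_r(y)\};\rho_i)|\ge(\rho/2)^{|\Gamma|+r}|G|$, and $\delta_i\le2$ follows by dividing the bound $A\delta_i|G|\le|B(\Gamma;\rho_i)|+\frac\eta5|G|\le A|G|+\frac{2\eta}{5}|G|$ by $A\ge\frac12(\rho/2)^{|\Gamma|}$; both require $\eta$ small relative to $(\rho/2)^{|\Gamma|+r}$, which may be assumed throughout the proof of the theorem (shrinking $\eta$ only strengthens its statement and enlarges $\log^{O(1)}\eta^{-1}$ by at most $|\Gamma|^{O(1)}\log^{O(1)}\rho^{-1}$, already present in the bound on $m$).

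The heart of the matter is the factorisation of the truncated lattice sums; the rest is routine tracking of exceptional sets (goodness fails on at most $\frac{\eta}{10}+\frac{\eta}{10}$ of the relevant index set) and of accumulated $O(\eta)$ errors. The step that needs care is the identification of the truncated annihilator lattice with a product: the ``$\subseteq$'' inclusion is exactly what the hypotheses of the claim assert, whereas the ``$\supseteq$'' inclusion rests on the construction's invariant that $\sum_j\lambda_jL_j$ genuinely vanishes on all of $S_i$ for $\lambda\in\Lambda_s$ (a Freiman-linear combination of the $L_j$ vanishing on the dense sub-progression $Q_s$ vanishes on $C$). I expect this, together with controlling the $\rho$-dependence of the constants, to be the only genuine obstacle.
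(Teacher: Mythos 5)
Your argument is, in outline, the paper's own: you define $\delta_0$ (your $A$) and $\delta_i$ as truncated lattice sums of the coefficients from Proposition~\ref{bohrsizeformula}, convert the three relevant Bohr-set sizes into such sums via weak regularity (\eqref{wreg1piece}--\eqref{wreg3piece} giving \eqref{wregconditionQR} and hence \eqref{QRbohrconclusion}), and then factor the truncated sums using the two hypotheses of the claim; the error bookkeeping and your remark that one may assume $\eta$ small compared with $(\rho/8)^{|\Gamma|+r}$ are fine (the paper instead silently uses $\delta_i\leq 1$ at the corresponding step, and does not verify $\delta_i>0$, so your handling of these minor points is if anything more careful).

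The one place where you go beyond the paper is, however, the one place where what you write is wrong. For the ``$\supseteq$'' half of the factorisation you invoke ``the construction's invariant that $\sum_j\lambda_jL_j$ genuinely vanishes on all of $S_i$ for $\lambda\in\Lambda_s$'', justified by the principle that a Freiman-linear combination vanishing on the dense sub-progression $Q_s$ vanishes on $C$. That principle is false: take $r=1$, $C=[-N,N]\cdot v$ proper with $v$ of large order, $\chi\in\hat G$ of order $2$, and $L_1(kv)=k\chi$; this is Freiman-linear on $C$, vanishes on the dense sub-progression $[-\lfloor N/2\rfloor,\lfloor N/2\rfloor]\cdot 2v$, yet is nonzero on every odd multiple of $v$. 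Moreover, the invariant actually maintained in the proof of Theorem~\ref{algreglemma} is only that $\lambda\cdot L$ vanishes on $Q_s$, while $S_i$ is the intersection of $C$ with a \emph{translate} of $Q'_s$ and a typical piece does not meet $Q_s$ at all. What Freiman-linearity together with $S_i-S_i\subseteq Q'_s-Q'_s\subseteq Q_s$ does give is that, for $\lambda\in\Lambda_s$, the map $y\mapsto\lambda_1L_1(y)+\dots+\lambda_rL_r(y)$ is \emph{constant} on $S_i$, not that the constant is zero; on a general piece the truncated solution set for a good $y$ is therefore $(M\cap[-K,K]^\Gamma)\times\{\lambda\in\Lambda_s\cap[-K,K]^r:\ \theta_{i,\lambda}=0\}$, where $\theta_{i,\lambda}$ denotes that constant, so your $\delta_i$ would have to be adapted accordingly, and property \textbf{(ii)} then needs further thought, since pairs with $\theta_{i,\lambda}=-\theta_{i,\lambda'}\neq 0$ also solve the two-variable relation while both hypotheses of the claim may still hold. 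To be fair, the paper's proof states the identity ``$\dots=\delta_0\delta_i$'' with no justification and so leans on exactly the same unproved point; but since you chose to supply a justification, you should be aware that the one you gave does not work, and that what the construction honestly supplies is constancy of $\lambda\cdot L$ on each $S_i$, with vanishing guaranteed only on $Q_s$.
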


\begin{proof}Define $\delta_i$ to be the quantity
\[\sum_{\lambda \in \Lambda_s \cap [-K, K]^r} c_{\lambda_1} \dots c_{\lambda_r}\]
and define $\delta_0$ as
\[\sum_{\nu \in M \cap [-K, K]^\Gamma} \prod_{\gamma \in \Gamma}c_{\nu_\gamma}.\]
From inequalities~\eqref{wreg3piece} and~\eqref{QRbohrconclusion} we conclude that
\[\Big||B(\Gamma; \rho_i)| - \delta_0 |G|\Big| \leq \frac{\eta}{5}|G|.\]
Note that at least a $1-\eta/5$ proportion of all $y \in S_i$ obey~\eqref{wreg1piece} and the property involving~\eqref{latticeclaimproperty} described in the claim. Then the Bohr set $B(\Gamma \cup \{L_1(y), \dots, L_r(y)\}; \rho_i)$ satisfies the condition~\eqref{wregconditionQR} so the inequality~\eqref{QRbohrconclusion} gives
\begin{align*}\Big||B(\Gamma \cup \{L_1(y), \dots, L_r(y)\}; \rho_i)|\, - \sum_{\ssk{a \in [-K, K]^r\\\nu \in [-K, K]^\Gamma}} &\id\Big(\sum_{\gamma \in \Gamma} \nu_\gamma \gamma + a_1 L_1(y) + \dots + a_r L_r(y) = 0\Big)\\
&\hspace{4cm} \Big(\prod_{\gamma \in \Gamma} c_{\nu_\gamma}\Big)c_{a_1} \dots c_{a_r} |G|\Big| \leq \frac{\eta}{5}|G|,\end{align*}
while we have the equality
\[\sum_{\ssk{a \in [-K, K]^r\\\nu \in [-K, K]^\Gamma}} \id\Big(\sum_{\gamma \in \Gamma} \nu_\gamma \gamma + a_1 L_1(y) + \dots + a_r L_r(y) = 0\Big) \Big(\prod_{\gamma \in \Gamma} c_{\nu_\gamma}\Big)c_{a_1} \dots c_{a_r} = \delta_0 \delta_i.\]
Thus,
\[\Big||B(\Gamma \cup \{L_1(y), \dots, L_r(y)\}; \rho_i)| - \delta_i|B(\Gamma; \rho_i)|\Big| \leq 2\frac{\eta}{5}|G|,\]
proving the first part of the claim. The second property follows similarly.\end{proof}

Suppose that $S_i$ is not quasirandom in the sense that the properties \textbf{(i)} and \textbf{(ii)} do not hold simultaneously for the radius $\rho_i$. Then at least one of the two assumptions in Claim~\ref{qrsuffconds} fail, but in either case, we conclude that there are at least $\frac{\eta}{10}|S_i|^2$ pairs $(y, y') \in S_i \times S_i$ for which we have an equality
\[\sum_{\gamma \in \Gamma} \nu_\gamma \gamma + \lambda_1 L_1(y) + \dots + \lambda_r L_r(y) + \lambda'_1 L_1(y') + \dots + \lambda'_r L_r(y') = 0\]
where at least one of $\nu \in M, \lambda \in \Lambda_s, \lambda' \in \Lambda_s$ fails and all coefficients belong to $[-K, K]$. By averaging, we obtain such a linear combination that holds for at least $\frac{\eta}{10 (2K + 1)^{2r + |\Gamma|}}|S_i|^2$ pairs $(y,y')$ of elements in $S_i$ which we now fix.\\ 

Suppose first that $\lambda, \lambda' \in \Lambda_s$. Then $\lambda_1 L_1(y) + \dots + \lambda_r L_r(y) + \lambda'_1 L_1(y') + \dots + \lambda'_r L_r(y') = 0$ so $\sum_{\gamma \in \Gamma} \nu_\gamma \gamma = 0$, which means that $\nu \in M$ which is a contradiction. Therefore, without loss of generality, we may assume that $\lambda \notin \Lambda_s$. By the Cauchy-Schwarz inequality we may find at least $\frac{\eta^2}{100 (2K + 1)^{4r + 2|\Gamma|}}|S_i|^3$ triples $(y_1, y_2, y')$ such that 
\[\sum_{\gamma \in \Gamma} \nu_\gamma \gamma + \lambda_1 L_1(y_1) + \dots + \lambda_r L_r(y_1) + \lambda'_1 L_1(y') + \dots + \lambda'_r L_r(y') = 0\]
and 
\[\sum_{\gamma \in \Gamma} \nu_\gamma \gamma + \lambda_1 L_1(y_2) + \dots + \lambda_r L_r(y_2) + \lambda'_1 L_1(y') + \dots + \lambda'_r L_r(y') = 0.\]

Subtracting these equalities from one another and fixing a suitable $y'$, using the facts that $y_1, y_2, y_1 - y_2 \in C$ and the maps $L_i$ are Freiman-linear we obtain
\[\lambda_1 L_1(y_1 - y_2) + \dots + \lambda_r L_r(y_1 - y_2) = 0\]
for at least $\frac{\eta^2}{100 (2K + 1)^{4r + 2|\Gamma|}}|S_i|^2$ pairs $y_1, y_2 \in S_i$. By the way we defined $S_i$, we have $S_i - S_i \subseteq Q_s$ so we see that $\lambda_1 L_1(z) + \dots + \lambda_r L_r(z) = 0$ holds for at least
\[\frac{\eta^2}{100 (2K + 1)^{4r + 2|\Gamma|}}|S_i| \geq \frac{\eta^2}{100^{d+1} (2K + 1)^{4r + 2|\Gamma|}}|Q_s|\]
of $z \in Q_s$. But the map $z \mapsto \lambda_1 L_1(z) + \dots + \lambda_r L_r(z)$ is a Freiman-linear map on $Q_s$ so the subset consisting of $z$ such that $\lambda_1 L_1(z) + \dots + \lambda_r L_r(z) = 0$ is a Freiman-subgroup $F$ of $Q_s$. Apply Lemma~\ref{progsbgp} to find a further proper symmetric coset progression $Q_{s+1} \subseteq F$ with
\[\frac{2N^{(i)}_1 + 1}{2N^{(i + 1)}_1 + 1}, \dots, \frac{2N^{(i)}_d + 1}{2N^{(i+1)}_d + 1}, \frac{|H_i|}{|H_{i + 1}|} \leq \eta^{-2} 100^{d+2} (2K + 1)^{4r + 2|\Gamma|}\]
and let $\Lambda_{s+1} = \Lambda_s + \langle \lambda \rangle_{\mathbb{Z}}$.\\ 
\indent Since the set $(\Lambda_{s+1} \setminus \Lambda_s) \cap [-K, K]^r$ is non-empty for every $s$ (as it contains the $r$-tuple $\lambda$ chosen above), Lemma~\ref{nestedLattices} bounds the number of steps in the proof by $O(r^2(\log r + \log K))$.\end{proof}

\vspace{\baselineskip}

\section{Bilinear Bogolyubov argument}\label{secBilBA}

In this section we prove our main result which is stated slightly differently compared to the introduction. This time we use coset progressions in one variable and Bohr sets in the second.  

\begin{theorem}\label{bogruzsabilinear2}Let $G$ and $H$ be finite abelian groups and let $A \subseteq G \times H$ be a set of density $\delta$. Then there exist a proper symmetric coset progression $C \subseteq H$ of rank at most $\log^{O(1)} \delta^{-1}$ and size $|C| \geq \exp\Big(-\log^{O(1)} \delta^{-1}\Big) |H|$, a set $\Gamma \subseteq \hat{G}$ of size at most $\log^{O(1)} \delta^{-1}$, Freiman-linear maps $L_1, \dots, L_r \colon C \to \hat{G}$ for some positive integer $r \leq \log^{O(1)} \delta^{-1}$ and a positive quantity $\rho \geq \exp\Big(-\log^{O(1)} \delta^{-1}\Big)$ such that the bilinear Bohr variety
\[\Big\{(x,y) \in B(\Gamma; \rho) \times C \colon x \in B(L_1(y), \dots, L_r(y); \rho) \Big\}\]
is contained inside $\dhor\dver \dver \dhor \dver \dhor \dhor A$.\end{theorem}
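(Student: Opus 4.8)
The plan is to run the three-step strategy described in the introduction, instantiating every linear-algebraic input of the vector space proofs of~\cite{HosseiniLovett} and~\cite{bogPaper} with the finite abelian group analogue built in Sections~\ref{secCP}--\ref{secLattice}, and to complete the argument via the algebraic regularity lemma for bilinear Bohr varieties (Theorem~\ref{algreglemma}) together with the quasirandom bipartite graph theory recalled in Appendix~\ref{qrAppendix}. Of the seven directional difference operations, the first two (the $\dhor\dhor$ applied directly to $A$) are spent on Step~1, the next two ($\dver$ then $\dhor$) on Step~2, and the last three ($\dver$, $\dver$, $\dhor$) on Step~3.

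\noindent\textbf{Step 1: Bogolyubov in the rows.} Writing $A_y=\{x\in G:(x,y)\in A\}$, averaging yields a set $Y_0\subseteq H$ of density at least $\delta/2$ with $|A_y|\geq(\delta/2)|G|$ for $y\in Y_0$; for such $y$ the $y$-row of $\dhor\dhor A$ is exactly $2A_y-2A_y$, which by Corollary~\ref{bogRuzsa2} contains a Bohr set of codimension and inverse radius $\log^{O(1)}\delta^{-1}$. Pigeonholing over the boundedly many codimensions and over a discretisation of the radius gives a dense $Y_1\subseteq Y_0$, an integer $k\leq\log^{O(1)}\delta^{-1}$ and a common width $\rho_0\geq(\log^{O(1)}\delta^{-1})^{-1}$ so that for every $y\in Y_1$ the $y$-row of $\dhor\dhor A$ contains $B(\Gamma_y;\rho_0)$, with $\Gamma_y=\{\gamma_{y,1},\dots,\gamma_{y,k}\}$ (padding with the zero character if necessary). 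This step is essentially the same as in the vector space case.

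\noindent\textbf{Step 2: straightening the frequency sets.} Treating $U_y:=\Gamma_y\subseteq\hat G$ as a bounded set attached to each $y$, I would note that one $\dver$ followed by one $\dhor$ produces, in the row indexed by a difference of two elements of $Y_1$, a set cut out by the union of the corresponding frequency sets, and use the resulting abundance of additive coincidences among the $U_y$ to invoke the linear maps covering lemma (Lemma~\ref{linearcoverlemma}, iterated via Corollary~\ref{linearCoverCor}). The approximate Freiman homomorphism theorem (Theorem~\ref{approxFreimanHom}) makes the covering maps Freiman-linear on proper coset progressions, and the quantitative fundamental theorem of lattices (Theorem~\ref{quantLattice}) keeps bounded the integer coefficients expressing the $\gamma_{y,i}$ through the covering maps, so that the associated Bohr sets stay comparable up to a controlled shrinking of the radius. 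Pigeonholing on the boundedly many covering maps actually used, intersecting their finitely many coset-progression domains by Proposition~\ref{cprogintersection}, and shrinking via Lemma~\ref{progsbgp}, I would obtain a proper symmetric coset progression $C_0\subseteq H$, Freiman-linear maps $L_1,\dots,L_r\colon C_0\to\hat G$ with $r\leq\log^{O(1)}\delta^{-1}$, a width $\rho_1\geq\exp(-\log^{O(1)}\delta^{-1})$ and a set $Y'\subseteq C_0$ of density at least $\exp(-\log^{O(1)}\delta^{-1})$ such that for $y\in Y'$ the $y$-row of $\dhor\dver\dhor\dhor A$ contains $B(L_1(y),\dots,L_r(y);\rho_1)$. (The bookkeeping of which difference operation acts on which coordinate follows~\cite{HosseiniLovett}.)

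\noindent\textbf{Step 3: quasirandomness.} First I would apply Corollary~\ref{robustBogRuzsaCP} in the $y$-variable to pass to a proper symmetric coset progression $C\subseteq 2Y'-2Y'$, of controlled rank and size, each of whose elements has many representations $(u_1-u_2)-(u_3-u_4)$ with $u_i\in Y'$, moving freely between coset progressions and Bohr sets by Proposition~\ref{cosettobohrset}. Applying the algebraic regularity lemma (Theorem~\ref{algreglemma}) with empty $\Gamma$, radius $\rho_1$ and $\eta$ a small power of the density, I would partition $C$ into boundedly many proper coset progressions, pass to one of them carrying a dense portion of $Y'$ on which properties \textbf{(i)} and \textbf{(ii)} hold, and set $\Gamma$ to be a bounded frequency set (extracted via Theorem~\ref{quantLattice}) generating the stable part $\bigcap_{y}\langle L_1(y),\dots,L_r(y)\rangle$ of the row frequency lattices on that piece; Theorem~\ref{bohrSum}, the analogue of $(U\cap V)^\perp=U^\perp+V^\perp$, is what lets me compare intersections of the $y$-dependent Bohr sets and transfer quasirandomness from sizes of Bohr sets to the associated bipartite graph. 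Quasirandomness of that graph (Appendix~\ref{qrAppendix}), together with the many representations of each $y\in C$, then supplies, for every point of the bilinear Bohr variety
\[\Big\{(x,y)\in B(\Gamma;\rho)\times C:x\in B(L_1(y),\dots,L_r(y);\rho)\Big\}\]
with $\rho$ taken small enough (a negative exponential in $\log^{O(1)}\delta^{-1}$, so as to absorb the coefficient bounds coming from Theorems~\ref{quantLattice} and~\ref{bohrSum}), the witnesses needed to place it in $\dhor\dver\dver\dhor\dver\dhor\dhor A$. I expect Step~3 to be the main obstacle: it is precisely where the proof must leave the vector space template, since the rank-of-linear-combinations computations of~\cite{HosseiniLovett} are not available here, and essentially all of the work of Sections~\ref{secBohr}--\ref{secQRBohr} — the size and large-spectrum descriptions of weakly regular Bohr sets, Theorem~\ref{bohrSum}, the quantitative lattice theorem, and the algebraic regularity lemma — exists to make this final argument succeed with the stated bounds.
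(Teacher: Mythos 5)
Your plan follows the paper's architecture closely (your Step~1 is the paper's Step~1, your Step~2 is its Steps~2--3 via Lemma~\ref{linearcoverlemma}/Corollary~\ref{linearCoverCor}, Theorem~\ref{approxFreimanHom}, Theorem~\ref{bohrSum}, Theorem~\ref{quantLattice} and the Hosseini--Lovett averaging, and your Step~3 is its Steps~4--5 via Corollary~\ref{robustBogRuzsaCP}, Theorem~\ref{algreglemma} and Appendix~\ref{qrAppendix}), but there is a genuine gap in how you handle the fixed frequency set $\Gamma$. Your Step~2 asserts that for $y \in Y'$ the row of $\dhor\dver\dhor\dhor A$ contains $B(L_1(y),\dots,L_r(y);\rho_1)$ with no fixed frequencies, and this is stronger than what the argument you describe can deliver: after the averaging one fixes $z,w \in Y^1$, and the row at $y$ is only shown to contain a Bohr set whose frequency set includes the frozen characters $L_i(z), L_i(w)$ alongside the $y$-dependent ones $L_i(y+z), L_i(y+w)$. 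Those frozen characters are precisely the $\Gamma$ of the theorem, and they must be carried through the whole endgame: Theorem~\ref{algreglemma} has to be applied with this nonempty $\Gamma$ (its properties \textbf{(i)}--\textbf{(ii)} are relative to $B(\Gamma;\rho_i)$), and the quasirandom bipartite graph at the end lives on the vertex class $B(\Gamma;\rho')$, because that is the only ambient set in which the rows are known to be dense. Applying the regularity lemma ``with empty $\Gamma$'' regularizes a variety that you have not shown to sit inside the iterated difference set, and your later reintroduction of $\Gamma$ as a generator of ``the stable part $\bigcap_y \langle L_1(y),\dots,L_r(y)\rangle$'' is not where $\Gamma$ comes from and is not supported by anything established earlier, so it does not repair the unproved Step~2 claim.

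A second, related problem is the order of operations in your Step~3. You first pass to $C \subseteq 2Y'-2Y'$ by Corollary~\ref{robustBogRuzsaCP} and then regularize $C$, proposing to pass to a piece ``carrying a dense portion of $Y'$''; but $Y'$ need not meet $2Y'-2Y'$ at all (e.g. a coset avoiding $0$), so the pieces of your $C$ may contain none of $Y'$. The counting needs quasirandomness over the progression where the representing quadruples live: regularize the progression containing $Y'$, select a piece $C'$ in which $Y'$ is dense, and only then apply Corollary~\ref{robustBogRuzsaCP} inside $C'$ to get the row-index progression $D$; quasirandomness over $C'$ then shows that for each $y \in D$ almost every $x$ in the row Bohr set admits a triple $(y_1,y_2,y_3)$ of neighbours in $Y'$ with $y_1+y_2-y_3-y \in Y'$ (this accounts for the two $\dver$'s), and the final $\dhor$ together with Lemma~\ref{almostfullBohr} converts ``all but a small fraction of the Bohr set'' into containment of the Bohr set of half the radius. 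Note also that this computation evaluates the maps at $y_1+y_2-y_3-y$, so they must be Freiman-linear on $4C$, not merely Freiman homomorphisms on $C$; Theorem~\ref{approxFreimanHom} alone does not give this, which is exactly what the paper's linearization digression (symmetrizing $C$ and replacing $L_i(x)$ by $L_i(x+t)-L_i(t)$) is for, and your sketch does not address it.
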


Throughout the proof we use the following slice notation for subsets $A$ of $G \times H$. For $x \in G$, we write 
\[A_{x\bcdot} = \{y \in H \colon (x,y) \in A\}\]
which stands for the set of elements of $H$ lying in the column indexed by the element $x$, and analogously we write for a given $y \in H$
\[A_{\bcdot y} = \{x \in G \colon (x,y) \in A\}\]
for the set of elements of $G$ in the row indexed by $y$.

\begin{proof} The proof consists of 5 steps, where we take appropriate directional difference sets in each one (sometimes we take a double difference set which explains the discrepancy between the number of steps and the number of operator applications in the theorem). As we complete the steps of the proof we obtain an increasingly algebraically structured set in the iterated directional difference set.\\

\noindent\textbf{Step 1.} By averaging, there is a set $Y^1 \subseteq H$ of density $\delta/2$ such that $A_{\bcdot y}$ has size $|A_{\bcdot y}| \geq \frac{\delta}{2} |G|$ for each $y \in Y^1$. Perform a double horizontal convolution which results in the set $A^1 = \dhor \dhor A$. Applying Corollary~\ref{bogRuzsa2} for each $y \in Y^1$ we obtain a Bohr set $B(\Gamma_y, \rho)$ inside $A^1_{\bcdot y}$ of codimension at most $\log^{O(1)} \delta^{-1}$ and radius $\rho \geq \Big(\log^{O(1)}\delta^{-1}\Big)^{-1} \geq \exp\Big(-\log^{O(1)} \delta^{-1}\Big)$.\footnote{We may take $\rho$ to be independent of $y$ as Corollary~\ref{bogRuzsa2} gives a lower bound in terms of $\delta$ only and decreasing $\rho$ does not cause any harm.} Thus, the work in the first step of the proof results in the inclusion 
\[\bigcup_{y \in Y^1} B(\Gamma_y, \rho) \times \{y\}  \subseteq A^1.\]

\noindent\textbf{Step 2.} Let us now perform a single vertical convolution. We obtain $A^2 = \dver A^1$ which satisfies
\[A^2_{\bcdot y} = \bigcup_{\ssk{z \in Y^1\text{ s.t.}\\y + z \in Y^1}} \,\Big(B(\Gamma_{y + z}, \rho) \cap B(\Gamma_z, \rho)\Big) \times \{y\}\]
for all $y \in H$.\\

\noindent\textbf{Step 3.} In the third step, we make a horizontal convolution, thus obtaining $A^3 = \dhor A^2$. We see that
\[A^3_{\bcdot y} = \bigcup_{\ssk{z, w \in Y^1\text{ s.t.}\\y + z, y + w \in Y^1}} \,\Big(B(\Gamma_{y + z}, \rho) \cap B(\Gamma_z, \rho)\Big) - \Big(B(\Gamma_{y + w}, \rho) \cap B(\Gamma_w, \rho)\Big)\]
for all $y \in H$.\\ 

Bohr sets are symmetric, thus 
\[\Big(B(\Gamma_{y + z}, \rho) \cap B(\Gamma_z, \rho)\Big) - \Big(B(\Gamma_{y + w}, \rho) \cap B(\Gamma_w, \rho)\Big) = B(\Gamma_{y + z} \cup \Gamma_z, \rho) +  B(\Gamma_{y + w} \cup \Gamma_w, \rho).\]

If we were in a vector space rather than an arbitrary abelian group, the sum of Bohr sets above would become a sum of `orthogonal complements' $\langle \Gamma_{y + z} \cup \Gamma_z \rangle^\perp + \langle\Gamma_{y + w} \cup \Gamma_w \rangle^\perp$ which would be equal to $(\langle\Gamma_{y + z} \cup \Gamma_z\rangle \cap \langle \Gamma_{y + w} \cup \Gamma_w\rangle)^\perp$. We therefore use Theorem~\ref{bohrSum} which generalizes this fact to arbitrary abelian groups and provides us with a positive integer $R \leq \exp\Big(\log^{O(1)} \delta^{-1}\Big)$ such that\footnote{Again, $R$ can be taken to be independent of $y,z,w$ as increasing $R$ is allowed and the bound Theorem~\ref{bohrSum} is uniform.}
\[B(\langle \Gamma_{y + z} \cup \Gamma_z \rangle_R \cap \langle \Gamma_{y + w} \cup \Gamma_w \rangle_R; 1/4) \subseteq B(\Gamma_{y + z} \cup \Gamma_z; \rho) +  B(\Gamma_{y + w} \cup \Gamma_w, \rho).\]

Define $U_y = \langle \Gamma_{y} \rangle_R$ for each $y \in Y^1$. In conclusion, for each $y \in H$ we have
\[A^3_{\bcdot y} \supseteq \bigcup_{\ssk{z, w \in Y^1\text{ s.t.}\\y + z, y + w \in Y^1}} B\Big((U_{y+z} + U_z) \cap (U_{y+w} + U_w); 1/4\Big).\]

Applying Corollary~\ref{linearCoverCor} (with choices $G = H$, $H = \hat{G}$, $A = Y^1$, where the objects on the left sides of these three equalities are the ones in the statement of the corollary\footnote{There is a technical condition that $|G| \geq 2000 K^4 \alpha^{-4}$. If the condition fails, that means that $|H| \leq \exp\Big(O(\delta^{-O(1)})\Big)$ in which case Theorem~\ref{bogruzsabilinear2} reduces to the one-dimensional variant, i.e. Theorem~\ref{bogRuzsa1}.}), we obtain a positive integer $m \leq \exp(\log^{O(1)} \delta^{-1})$, proper coset progressions $C_1, \dots, C_m$ of rank at most $\log^{O(1)} \delta^{-1}$ and Freiman homomorphisms $L_1 \colon C_1 \to \hat{G}, \dots, L_m \colon C_m \to \hat{G}$ such that, writing $U'_a = \{L_i(a) \colon i \in [m], a \in C_i\} \cap U_a$ for each $a \in Y^1$, we have $y + z, z, y + w, w \in Y^1$ and 
\[(U_{y + z} - U_z) \cap (U_{y + w} - U_w) \subseteq (U'_{y + z} - U'_z) + (U'_{y + w} - U'_w)\]
for at least $\Omega(\delta^4|H|^3)$ of $y,z,w \in H$. Thus, we get a collection $T$ of triples $(y,z,w) \in H^3$ of size at least $\Omega(\delta^4|H|^3)$ such that $y + z, z, y + w, w \in Y^1$ and
\[A^3_{\bcdot y} \supseteq B\Big((U'_{y + z} - U'_z) + (U'_{y + w} - U'_w); 1/4\Big).\]

By Theorem~\ref{quantLattice} for each $y \in Y^1$, we may find elements $\gamma_1, \dots, \gamma_\ell \in U'_y$ such that $U'_y \subseteq\langle \gamma_1, \dots, \gamma_\ell\rangle_S$, for some $\ell \leq \log^{O(1)} \delta^{-1}$ and $S \leq \exp\Big(\log^{O(1)} \delta^{-1}\Big)$. In particular, there is a set of indices $I_y = \{i_1, \dots, i_\ell\} \subseteq [m]$ such that $\gamma_j = L_{i_j}(y)$ and $y \in C_{i_j}$. Let $\ell_0$ and $S_0$ be the maximal values attained by $\ell$ and $S$ as $y$ ranges over $Y^1$ which still satisfy $\ell_0 \leq \log^{O(1)} \delta^{-1}$ and $S_0 \leq \exp\Big(\log^{O(1)} \delta^{-1}\Big)$.\\
\indent We have now reached the point in the argument where we need an idea of Hosseini and Lovett~\cite{HosseiniLovett} in order to prevent the loss of efficiency in the argument. Observe that bounds we are after have two forms: one is $\log^{O(1)} \delta^{-1}$, which we want for quantities such as the rank of coset progressions and number of Freiman homomorphisms, and the other is $\exp\Big(\log^{O(1)} \delta^{-1}\Big)$ which is typically the inverse of the density of relevant sets in $G \times H$. Corollary~\ref{linearCoverCor} has the problem that it provides us with the exponential number of Freiman homomorphisms. Hosseini and Lovett use the fact that only $\log^{O(1)} \delta^{-1}$ of these maps are needed to cover any of the sets $U_y$, and instead average over these small subsets of Freiman homomorphisms. This turns the exponential bound into the inverse of the density of the relevant set rather than being a bound on the number of maps, as desired.\\
\indent Let us now return to the argument. By averaging, we may find four sets $J_1, J_2, J_3, J_4 \subseteq [m]$ such that for at least $\Omega\Big(\frac{1}{\binom{m}{\ell_0}^4}\delta^4|H|^3\Big)$ of triples $(y,z,w) \in T$ we have $I_{y+z} = J_1, I_z = J_2, I_{y + w} = J_3, I_w = J_4$. Furthermore, by averaging we may find $z,w \in Y^1$ and a set $Y^2$ of size at least $\Omega\Big(\frac{1}{\binom{m}{\ell_0}^4}\delta^4|H|\Big)$ whose elements $y$ satisfy $(y,z,w) \in T$ and $I_{y+z} = J_1, I_z = J_2, I_{y + w} = J_3, I_w = J_4$. We claim that we may find the following structure inside $A^3$.

\begin{claim}For each $y \in Y^2$ we have
\[A^3_{\bcdot y} \supseteq B\Big(\{L_i(y + z) \colon i \in J_1\} \cup \{L_i(z) \colon i \in J_2\} \cup \{L_i(y + w) \colon i \in J_3\} \cup \{L_i(w) \colon i \in J_4\}; 1/(16 \ell_0 S_0)\Big)\]
and all points in the arguments of maps $L_i$ defining the displayed Bohr set lie in the relevant coset progressions $C_i$.\label{a3y2claim}
\end{claim}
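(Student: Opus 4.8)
The plan is to fix a single element $y \in Y^2$ and then just unwind the objects constructed in \textbf{Step 3}. By the choice of $Y^2$ we have $(y,z,w)\in T$, so $y+z,\,z,\,y+w,\,w$ all belong to $Y^1$, the inclusion
\[A^3_{\bcdot y}\supseteq B\Big((U'_{y+z}-U'_z)+(U'_{y+w}-U'_w);\,1/4\Big)\]
holds, and moreover $I_{y+z}=J_1$, $I_z=J_2$, $I_{y+w}=J_3$, $I_w=J_4$. Thus the whole task reduces to one inclusion of Bohr sets: writing
\[\Gamma^\ast=\{L_i(y+z):i\in J_1\}\cup\{L_i(z):i\in J_2\}\cup\{L_i(y+w):i\in J_3\}\cup\{L_i(w):i\in J_4\},\]
it suffices to prove $B(\Gamma^\ast;1/(16\ell_0 S_0))\subseteq B\big((U'_{y+z}-U'_z)+(U'_{y+w}-U'_w);1/4\big)$.

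First I would recall the consequence of Theorem~\ref{quantLattice} recorded just above the claim: for every $u\in Y^1$ the index set $I_u$ has size at most $\ell_0$, one has $u\in C_i$ for each $i\in I_u$, and every element of $U'_u$ is an integer combination $\sum_{i\in I_u}\mu_i L_i(u)$ with $|\mu_i|\le S_0$. Applying this with $u$ ranging over $y+z,\,z,\,y+w,\,w$ and using the identifications of the $I_u$'s with $J_1,\dots,J_4$ immediately gives that $y+z\in C_i$ for $i\in J_1$, $z\in C_i$ for $i\in J_2$, $y+w\in C_i$ for $i\in J_3$ and $w\in C_i$ for $i\in J_4$: this is exactly the ``all points lie in the relevant $C_i$'' part of the claim.

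Next I would note that, by the description of the $U'_u$ above, any character in $(U'_{y+z}-U'_z)+(U'_{y+w}-U'_w)$ is of the form $a-b+c-d$ with $a\in U'_{y+z}$, $b\in U'_z$, $c\in U'_{y+w}$, $d\in U'_w$, and hence is a sum of at most $|J_1|+|J_2|+|J_3|+|J_4|\le 4\ell_0$ terms, each of the form $\mu L_i(\cdot)$ with $|\mu|\le S_0$ and $L_i(\cdot)\in\Gamma^\ast$. Consequently, for $x\in B(\Gamma^\ast;1/(16\ell_0 S_0))$ the triangle inequality for $|\cdot|_{\mathbb T}$ gives $\tn{(a-b+c-d)(x)}\le 4\ell_0 S_0\cdot\max_{\psi\in\Gamma^\ast}\tn{\psi(x)}\le 4\ell_0 S_0\cdot\tfrac{1}{16\ell_0 S_0}=\tfrac14$, so $x$ lies in $B\big((U'_{y+z}-U'_z)+(U'_{y+w}-U'_w);1/4\big)\subseteq A^3_{\bcdot y}$, completing the argument.

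I do not anticipate a genuine obstacle: the content is bookkeeping resting on the elementary fact that enlarging a frequency set while shrinking the radius by a factor proportional to (number of frequencies)$\times$(coefficient bound) can only shrink a Bohr set. The two small points worth stating carefully are to bound each such linear combination as a sum of at most $4\ell_0$ terms with coefficients in $[-S_0,S_0]$ (rather than fussing over whether distinct indices $i$ could produce the same character $L_i(\cdot)$ inside $\Gamma^\ast$), and to remember that although $Y^2$ need not be contained in $Y^1$, the four points $y+z,\,z,\,y+w,\,w$ are, which is precisely what licenses the use of the \textbf{Step 3} data for them.
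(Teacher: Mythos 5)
Your proposal is correct and follows essentially the same route as the paper: unwind the definition of $T$ and $Y^2$ to get the inclusion $A^3_{\bcdot y} \supseteq B\big((U'_{y+z}-U'_z)+(U'_{y+w}-U'_w);1/4\big)$ together with $I_{y+z}=J_1$, etc., then use $U'_u \subseteq \langle L_i(u) : i \in I_u\rangle_{S_0}$ with $|I_u|\le \ell_0$ and the triangle inequality for $\tn{\cdot}$ to bound each character of $(U'_{y+z}-U'_z)+(U'_{y+w}-U'_w)$ at any $x$ in the smaller Bohr set by $1/4$. The only cosmetic difference is that the paper first bounds each single $\chi \in U'_u$ by $1/16$ and then sums four such bounds, while you sum the at most $4\ell_0$ individual terms directly; the membership statements $y+z \in C_i$ for $i\in J_1$ (and analogues) are obtained exactly as you say.
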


\begin{proof}Let $y \in Y^2$ and suppose that $x$ is an element of the Bohr set
\[B\Big(\{L_i(y + z) \colon i \in J_1\} \cup \{L_i(z) \colon i \in J_2\} \cup \{L_i(y + w) \colon i \in J_3\} \cup \{L_i(w) \colon i \in J_4\}; 1/(16 \ell_0 S_0)\Big).\]
By our assumptions, we have $I_{y+z} = J_1$ which implies $U'_{y + z} \subseteq \langle L_{i}(y + z) \colon i \in J_1\rangle_{S_0}$ and $y + z \in C_{i}$ for $i \in J_1$. Thus, for each $\chi \in U'_{y + z}$, we have $\chi = \sum_{i \in J_1} \lambda_i L_{i}(y + z)$ for integers $\lambda_i$ such that $|\lambda_i| \leq S_0$ for $i \in J_1$ so
\[|\chi(x)|_{\mathbb{T}} \leq \sum_{i \in J_1} |\lambda_i| \Big|L_{i}(y + z)(x)\Big|_{\mathbb{T}} \leq S_0  |J_1| \frac{1}{16 \ell_0 S_0} \leq \frac{1}{16}.\]
Thus, if $\theta \in (U'_{y + z} - U'_z) + (U'_{y + w} - U'_w)$ then we have $|\theta(x)|_{\mathbb{T}} \leq 1/4$, so 
\[x \in B\Big((U'_{y + z} - U'_z) + (U'_{y + w} - U'_w); 1/4\Big) \subseteq A^3_{\bcdot y}.\qedhere\]
\end{proof}

Let $\Gamma = \{L_i(z) \colon i \in J_2\} \cup \{L_i(w) \colon i \in J_4\} \subseteq \hat{G}$. For each $y \in Y^2$ we have that $y \in C_i - z$ for $i \in J_1$ and $y \in C_i - w$ for $i \in J_3$. Thus, $Y^2 \subseteq \Big(\cap_{i \in J_1} C_i - z\Big) \cap \Big(\cap_{i \in J_3} C_i - w\Big)$. Applying Proposition~\ref{cprogintersection} we obtain a proper coset progression $C \subseteq \Big(\cap_{i \in J_1} C_i - z\Big) \cap \Big(\cap_{i \in J_3} C_i - w\Big)$ of rank $d \leq \log^{O(1)} \delta^{-1}$ such that $|C \cap Y^2| \geq \exp\Big(-\log^{O(1)} \delta^{-1}\Big)|H|$. Furthermore, we may define Freiman homomorphisms $L'_i \colon C \to \hat{G}$ for $i \in J_1$ by $L'_i(y) = L_i(y + z)$ and $L''_i \colon C \to \hat{G}$ for $i \in J_3$ by $L''_i(y) = L_i(y + w)$. Write $Y^3 = Y^2 \cap C$.\\
\indent Misusing the notation by writing $L_1, \dots, L_r$ instead of $L'_i$ for $i \in J_1$ and $L''_i$ for $i \in J_3$, we get Freiman homomorphisms $L_i \colon C \to \hat{G}$ for $i \in [r]$, where $r \leq \log^{O(1)} \delta^{-1}$, a subset $\Gamma \subseteq \hat{G}$ of size $|\Gamma| \leq \log^{O(1)} \delta^{-1}$, a subset $Y^3 \subseteq C$ of size $|Y^3| \geq \exp\Big(-\log^{O(1)} \delta^{-1}\Big)|H|$ and a positive quantity $\rho \geq \exp\Big(-\log^{O(1)} \delta^{-1}\Big)$ such that for each $y \in Y^3$ we have
\[A^3_{\bcdot y} \supseteq B(\Gamma \cup \{L_1(y), \dots, L_r(y)\}; \rho)\]
using Claim~\ref{a3y2claim}.\\

\noindent\textbf{Linearization.} For technical reasons, we need to assume that the coset progression $C$ is symmetric and that maps $L_i$ are defined on $2C - 2C$ instead of just on $C$ and that are Freiman-linear rather than merely Freiman homomorphisms. We now make a slight digression from the main argument in order to achieve this.\\

Let $C = a + [0, N_1] \cdot v_1 + \dots + [0, N_d] \cdot v_d + H_0$ be a canonical form of $C$ and let $\delta_3 = |Y^3|/|H|$. Let $D_j = \Big\lfloor \frac{\delta_3 N_j}{100d}\Big\rfloor$ for $j \in [d]$, when $N_j \geq 200 d {\delta_3}^{-1}$ (so that $D_j \geq 2$), and $D_j = 0$ otherwise. Using the canonical form, sub-coset progression 
\[a + [4D_1, N_1-4D_1] \cdot v_1 + \dots + [4D_d, N_d-4D_d] \cdot v_d + H_0\]
of $C$ can be partitioned into further proper coset progressions\footnote{The old coset progressions denoted by $C_i$ no longer play a role in the proof, and we need the new coset progressions $C_1, \dots, C_M$ only temporarily, so we opted for this misuse of notation.} $C_1, \dots, C_M$ for $M \leq (200 {\delta_3}^{-1} d + 1)^d$ such that every $C_i$ has canonical form
\[C_i = a + [s_1, t_1] \cdot v_1 + \dots + [s_d, t_d] \cdot v_d + H_0\]
for suitable integers $s_1, t_1, \dots, s_d, t_d$ that satisfy
\begin{equation}\label{cosetprogpartreq}4D_j \leq s_j \leq t_j \leq N_j - 4D_j,\,\,D_j/2 \leq t_j - s_j \leq D_j\,\,\text{ and }\,\,t_j - s_j\,\,\text{ is even (possibly zero if $D_j = 0$).}\end{equation}
We then have $|C \setminus (\cup_{j \in [M]} C_j)| \leq \delta_3/2 |C|$ so by averaging, there is $j \in [M]$ such that $|C_j \cap Y^3| \geq \delta_3/(2M) |H|$. Since $t_\ell - s_\ell$ is even for all $\ell$, we may find an element $t \in C_j$ such that $\tilde{C} = C_j - t$ is a symmetric proper coset progression. Due to requirements~\eqref{cosetprogpartreq} we also know that $4\tilde{C} + t \subseteq C$. Define maps $\tilde{L}_1, \dots, \tilde{L}_r \colon C - t \to \hat{G}$ by $\tilde{L}_i(x) = L_i(x + t) - L_i(t)$, which are Freiman-linear. Finally, set
\[\tilde{A} = \Big(G \times ((Y^3 - t) \cap \tilde{C})\Big) \cap \Big(\bigcup_{y \in \tilde{C}} B(\Gamma \cup \{L_1(t), \dots, L_r(t)\} \cup \{\tilde{L}_1(y), \dots, \tilde{L}_r(y)\}; \rho/2)\Big).\]

We prove the following claim.

\begin{claim}We have $\tilde{A} + (0, t) \subset A^3$.\end{claim}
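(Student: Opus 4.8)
The plan is to verify the inclusion pointwise, so fix $(x, y) \in \tilde{A}$; the goal is to show $(x, y) + (0, t) = (x, y + t) \in A^3$. By the definition of $\tilde{A}$ we have $y \in (Y^3 - t) \cap \tilde{C}$, hence $y + t \in Y^3$, and since $t \in C_j$ and $y \in \tilde{C} = C_j - t$ we also get $y + t \in C_j \subseteq C$; similarly $t \in C$, so all the values $L_i(t)$ and $L_i(y+t)$ are defined. Moreover $x$ lies in the Bohr set $B\big(\Gamma \cup \{L_1(t), \dots, L_r(t)\} \cup \{\tilde{L}_1(y), \dots, \tilde{L}_r(y)\}; \rho/2\big)$. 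Now recall that in Step 3 we established $A^3_{\bcdot (y+t)} \supseteq B\big(\Gamma \cup \{L_1(y+t), \dots, L_r(y+t)\}; \rho\big)$ for every element of $Y^3$, and in particular for $y + t$, so it is enough to show that $x$ belongs to this last Bohr set.

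For the constraints coming from $\Gamma$ this is immediate: for each $\gamma \in \Gamma$ we have $\tn{\gamma(x)} \leq \rho/2 \leq \rho$. For the constraints coming from the maps $L_i$, the point is to compare $L_i(y+t)$ with $\tilde{L}_i(y)$ and $L_i(t)$. By definition $\tilde{L}_i(y) = L_i(y+t) - L_i(t)$, an identity in $\hat{G}$ which is legitimate precisely because $y + t, t \in C$; evaluating at $x$ and using that characters are homomorphisms into $\mathbb{T}$ gives $L_i(y+t)(x) = \tilde{L}_i(y)(x) + L_i(t)(x)$. The triangle inequality for $\tn{\cdot}$ on $\mathbb{T}$ then yields
\[\tn{L_i(y+t)(x)} \leq \tn{\tilde{L}_i(y)(x)} + \tn{L_i(t)(x)} \leq \tfrac{\rho}{2} + \tfrac{\rho}{2} = \rho,\]
where both terms on the right are bounded by $\rho/2$ because $x$ lies in the Bohr set defining $\tilde{A}$. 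Hence $x \in B\big(\Gamma \cup \{L_1(y+t), \dots, L_r(y+t)\}; \rho\big) \subseteq A^3_{\bcdot (y+t)}$, which is exactly what we needed.

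I do not expect a genuine obstacle here; the only thing requiring attention is the bookkeeping that guarantees $t \in C$ and $y + t \in C$, so that the Freiman-homomorphism values $L_i(t)$, $L_i(y+t)$ and the defining identity $\tilde{L}_i(y) = L_i(y+t) - L_i(t)$ all make sense — and this is ensured by $t \in C_j \subseteq C$ together with $y + t \in C_j \subseteq C$, which follow from the way $t$ and the partition $C_1, \dots, C_M$ were chosen. Everything else is the metric inequality above and the passage $\rho/2 \le \rho$.
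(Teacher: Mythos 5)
Your proof is correct and follows essentially the same route as the paper: unpack the membership $(x,y) \in \tilde{A}$, note $y + t \in Y^3$ and $x \in B(\Gamma;\rho)$, and use the defining identity $\tilde{L}_i(y) = L_i(y+t) - L_i(t)$ together with the triangle inequality for $\tn{\cdot}$ to get $\tn{L_i(y+t)(x)} \leq \rho$, placing $x$ in $B(\Gamma \cup \{L_1(y+t),\dots,L_r(y+t)\};\rho) \subseteq A^3_{\bcdot\, y+t}$. The extra bookkeeping you do about $t, y+t$ lying in $C$ is fine and implicit in the paper's setup.
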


\begin{proof}Suppose that $(x, y) \in \tilde{A}$. Then $y + t \in Y^3$ and $x \in B(\Gamma; \rho)$. Furthermore, for $\ell \in [r]$, we have
\[\tn{L_i(y + t)(x)} \leq \tn{L_i(t)(x)} + \tn{L_i(y + t)(x) - L_i(t)(x)} = \tn{L_i(t)(x)} + \tn{\tilde{L}_i(y)(x)} \leq \rho,\]
showing that $x \in B(\Gamma \cup \{L_1(y + t), \dots, L_r(y + t)\}; \rho) \subseteq A^3_{\bcdot y + t}$.\end{proof}

In particular, $\dver \dver \tilde{A} \subseteq \dver \dver A^3$, so we may focus our attention on $\tilde{A}$ instead of $A^3$. We misuse the notation again, writing $A^3$ instead of $\tilde{A}$, $C$ instead of $\tilde{C}$, $Y^3$ instead of $(Y^3 - t) \cap \tilde{C}$, $L_i$ instead of $\tilde{L}_i$, $\Gamma$ instead of $\Gamma \cup \{L_1(t), \dots, L_r(t)\}$, etc. Thus, we may assume that $Y^3 \subseteq C$ for a symmetric proper coset progression $C$ of rank $d \leq \log^{O(1)} \delta^{-1}$ with Freiman-linear maps $L_1, \dots, L_r \colon 4C \to \hat{H}$ for $r \leq \log^{O(1)} \delta^{-1}$ and that
\begin{equation}A^3_{\bcdot y} \supseteq B(\Gamma \cup \{L_1(y), \dots, L_r(y)\}; \rho)\label{a3inclneqn}\end{equation}
for all $y \in Y^3$. We also have bounds $|Y^3| \geq \exp\Big(-\log^{O(1)} \delta^{-1}\Big)|H|$, $|\Gamma| \leq \log^{O(1)} \delta^{-1}$ and $\rho \geq \exp\Big(-\log^{O(1)} \delta^{-1}\Big)$.\\

\noindent\textbf{Step 4.} Let $\alpha$ be the density of $Y^3$ inside $C$ and recall that $d$ is the rank of $C$. We have $\alpha \geq \exp\big(-\log^{O(1)} \delta^{-1}\big)$. Let $\eta > 0$ be a parameter to be chosen later. Apply Theorem~\ref{algreglemma} to the bilinear Bohr variety (note the somewhat smaller radius)
\[W = G \times C \cap \Big(\bigcup_{y \in C} B(\Gamma \cup \{L_1(y), \dots, L_r(y)\}; \rho / 4) \times \{y\}\Big)\]
to obtain a partition of $C = C_1 \cup \dots \cup C_m$ into further proper coset progressions of rank at most $d$ such that the intersection with each $G \times C_i$ is quasirandom with parameter $\eta$ in the sense of Theorem~\ref{algreglemma}, where $m \leq \exp\big(\log^{O(1)} (\delta^{-1} \eta^{-1})\big)$. By averaging, we may find a piece $C' = C_i$ such that $|C'| \geq \frac{\alpha}{2m}|C|$ and $|C' \cap Y^3| \geq \frac{\alpha}{2} |C'|$. By Corollary~\ref{robustBogRuzsaCP} applied to the set $Y' = Y^3 \cap C'$ inside the progression $C'$, there is a proper symmetric coset progression $D \subseteq H$ of rank at most $\log^{O(1)}(\alpha^{-1} 2^{d + 1}) \leq \log^{O(1)} \delta^{-1}$ and size $|D| = \beta |C'|$ for some $\beta \geq \exp\big(-\log^{O(1)} \delta^{-1}\big)$ such that for each $y \in D$ there are at least $\frac{\alpha}{2^{d + 6}}|Y'|^3$ quadruples $(y_1, y_2, y_3, y_4)$ of elements in $Y'$ such that $y_1 + y_2 - y_3 - y_4 = y$. For $y \in H$, write $Q_y$ for the set of triples $(y_1, y_2, y_3)$ of elements in $Y'$ such that $y_1 + y_2 - y_3 - y \in Y'$. Thus, whenever $y \in D$ we have $|Q_y| \geq \exp\big(-\log^{O(1)} \delta^{-1}\big)|C'|^3$.\\
\indent By the conclusion of Theorem~\ref{algreglemma} we have that the bilinear Bohr variety $(G \times C') \cap W'$ where
\[W' = \Big(\bigcup_{y \in C} B(\Gamma \cup \{L_1(y), \dots, L_r(y)\}; \rho') \times \{y\}\Big)\]
is quasirandom in the sense of Theorem~\ref{algreglemma} with parameter $\eta$ and density $\delta'$, for some $\delta' \in [0,1]$ and radius $\rho' \in [\rho/8, \rho/4]$.\\
\indent Write $A^4 = \dver\dver A^3$.\\

\begin{claim}\label{wprimetoa4claim}Suppose that $(x,y) \in W'$. Assume also that $Q_y \cap \Big(W'_{x \bcdot}\Big)^3 \not= \emptyset$. Then $(x,y) \in A^4$.\end{claim}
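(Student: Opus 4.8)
The plan is to peel off the definitions and reduce the claim to exhibiting four points of $A^3$ in the column indexed by $x$ whose alternating sum of heights is $y$. Unpacking the hypotheses: $(x,y) \in W'$ means $y \in C$ and $x \in B(\Gamma \cup \{L_1(y), \dots, L_r(y)\}; \rho')$, so $\tn{\gamma(x)} \le \rho'$ for every $\gamma \in \Gamma$ and $\tn{L_i(y)(x)} \le \rho'$ for every $i \in [r]$; and since $Q_y \cap (W'_{x\bcdot})^3 \neq \emptyset$ we may fix a triple $(y_1, y_2, y_3)$ with $y_1, y_2, y_3 \in Y'$, with $y_1, y_2, y_3 \in W'_{x\bcdot}$, and with $y_4 := y_1 + y_2 - y_3 - y \in Y'$. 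Membership in $W'_{x\bcdot}$ gives $\tn{L_i(y_j)(x)} \le \rho'$ for $j \in \{1,2,3\}$ and all $i \in [r]$.

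First I would observe that $(x, y_j) \in A^3$ for $j \in \{1,2,3\}$: since $y_j \in Y' \subseteq Y^3$, the inclusion~\eqref{a3inclneqn} gives $A^3_{\bcdot y_j} \supseteq B(\Gamma \cup \{L_1(y_j), \dots, L_r(y_j)\}; \rho)$, and $\rho' \le \rho$ together with the bounds above places $x$ in this Bohr set.

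The only real content is the membership $(x, y_4) \in A^3$, and here I would invoke the Freiman-linearity of the $L_i$ on $4C$ arranged by the linearization step. Since $C = -C$ is symmetric, the points $y_1$, $y_2$, $y_3$, $y$ lie in $C$, the partial sums $y_1 + y_2$ and $y_1 + y_2 - y_3$ lie in $4C$, and $y_4 \in C \subseteq 4C$; so three successive applications of $L_i(a-b) = L_i(a) - L_i(b)$ give $L_i(y_4) = L_i(y_1) + L_i(y_2) - L_i(y_3) - L_i(y)$ for each $i \in [r]$. The triangle inequality for $\tn{\cdot}$ then yields $\tn{L_i(y_4)(x)} \le \tn{L_i(y_1)(x)} + \tn{L_i(y_2)(x)} + \tn{L_i(y_3)(x)} + \tn{L_i(y)(x)} \le 4\rho' \le \rho$, using $\rho' \le \rho/4$, while $\tn{\gamma(x)} \le \rho' \le \rho$ for $\gamma \in \Gamma$. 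Since $y_4 \in Y^3$, another appeal to~\eqref{a3inclneqn} gives $(x, y_4) \in A^3$.

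Finally I would assemble the conclusion: writing $y = (y_1 - y_3) - (y_4 - y_2)$, the four memberships just established give $(x, y_1 - y_3) \in \dver A^3$ and $(x, y_4 - y_2) \in \dver A^3$, whence $(x, y) \in \dver\dver A^3 = A^4$. The proof is essentially bookkeeping; the one spot that needs attention is checking the domain condition so that Freiman-linearity applies — which is exactly why the preceding linearization step replaced $C$ by a symmetric progression and extended the $L_i$ to $4C$ — together with tracking the radius loss from $\rho'$ down to the threshold $\rho$ via $\rho' \le \rho/4$.
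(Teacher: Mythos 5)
Your proposal is correct and follows essentially the same route as the paper's own proof: identify $(x,y_j)\in A^3$ for $j=1,2,3$ via~\eqref{a3inclneqn}, use the Freiman-linearity of the $L_i$ on $4C$ together with the triangle inequality for $\tn{\cdot}$ and $\rho'\le\rho/4$ to get $(x,y_4)\in A^3$ for $y_4=y_1+y_2-y_3-y$, and then take two vertical differences to land $(x,y)$ in $A^4$. The only difference is that you spell out the final bookkeeping step $y=(y_1-y_3)-(y_4-y_2)$, which the paper leaves implicit.
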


\begin{proof}[Proof of the claim] Let $(y_1, y_2, y_3) \in Q_y \cap \Big(W_{x \bcdot}\Big)^3$. Write $y_4 = y_1 + y_2 - y_3 - y$ which we know to belong to $Y'$. Note that $y = y_1 + y_2 - y_3 - y_4 \in 4C$. We also know that $x \in B(\Gamma; \rho)$. Let $i \in [r]$. Then
\begin{align*}|L_i(y_4)(x)|_{\mathbb{T}} = &|L_i(y_1 + y_2 - y_3 - y)(x)|_{\mathbb{T}}\\
= &|L_i(y_1 + y_2 - y_3)(x) - L_i(y)(x)|_{\mathbb{T}}\\
= &|L_i(y_1 + y_2)(x) - L_i(y_3)(x) - L_i(y)(x)|_{\mathbb{T}}\\
= &|L_i(y_1)(x) + L_i(y_2)(x) - L_i(y_3)(x) - L_i(y)(x)|_{\mathbb{T}}\\
 \leq &|L_i(y_1)(x)|_{\mathbb{T}} + |L_i(y_2)(x)|_{\mathbb{T}} + |L_i(y_3)(x)|_{\mathbb{T}} + |L_i(y)(x)|_{\mathbb{T}} \leq \rho,\end{align*}
where we used the fact that the Freiman-linear map $L_i$ is defined on $4C$ so that all the points appearing as arguments of $L_i$ above belong to its domain. Thus $|L_i(y_4)(x)|_{\mathbb{T}} \leq \rho$, so $(x, y_4) \in A^3$ by~\eqref{a3inclneqn}. Since we already know that $(x,y_1), (x,y_2), (x,y_3) \in A^3$, we are done.\end{proof}

Recall that $\rho' \geq \rho / 8$ and thus $|B(\Gamma; \rho')| \geq (\rho/8)^{|\Gamma|}|G|$ by Lemma~\ref{basicbohrsizel}. Let us also recall what the conclusion of Theorem~\ref{algreglemma} implies about the structure of the bilinear Bohr variety $(G \times C') \cap W'$, namely the following.
\begin{itemize}
\item[\textbf{(i)}] For at least a $1 - \eta$ proportion of all elements $y \in C'$ we have 
\[\Big||B(\Gamma \cup \{L_1(y), \dots, L_r(y)\}; \rho')| - \delta' |B(\Gamma; \rho')|\Big| \leq \eta (8 \rho^{-1})^{|\Gamma|} |B(\Gamma; \rho')|.\]
\item[\textbf{(ii)}] For at least a $1 - \eta$ proportion of all pairs $(y, y') \in C' \times C'$ we have
\[\Big||B(\Gamma \cup \{L_1(y), \dots, L_r(y), L_1(y'), \dots, L_r(y')\}; \rho')| - {\delta'}^2 |B(\Gamma; \rho')|\Big| \leq \eta (8 \rho^{-1})^{|\Gamma|} |B(\Gamma; \rho')|.\]
\end{itemize}
Now we use quasirandomness theory as described in Appendix~\ref{qrAppendix}. First of all, using Lemma~\ref{appendonesided} we deduce that the bipartite graph on vertex classes $B(\Gamma; \rho')$ and $C'$ whose edges are given by pairs in $W'$ is $\eta'$-quasirandom for $\eta' = 3\sqrt[8]{\eta (8 \rho^{-1})^{|\Gamma|} + \eta}$ with density $\delta_{\text{qr}}$ that obeys $|\delta_{\text{qr}} - \delta'| \leq \eta(8 \rho^{-1})^{|\Gamma|} + \eta$. Provided $\eta \leq \frac{1}{100} (\rho /8)^{2|\Gamma| + r}$, using Lemma~\ref{basicbohrsizel} for the Bohr set $|B(\Gamma \cup \{L_1(y), \dots, L_r(y)\}; \rho')|$ this time, we see that $\delta_{\text{qr}} \geq \frac{1}{4}(\rho/8)^{|\Gamma| + r}$.\\

There is a positive quantity $\xi \geq \exp\Big(-\log^{O(1)} \delta^{-1}\Big)$ such that $|Q_y| \geq \xi |C'|^3$ holds for all $y \in D$. Take any $y \in D$.  Apply Lemma~\ref{genappendknhoods} to $Q_y$ to conclude that 
\[\Big||(W'_{x \bcdot})^3 \cap Q_y| - \delta_{\text{qr}}^3 |Q_y|\Big| \leq \frac{1}{2}\delta_{\text{qr}}^3 \xi |C'|^3 \leq \frac{1}{2}\delta_{\text{qr}}^3 |Q_y|\]
holds for all but at most $50 \delta_{\text{qr}}^{-6} \xi^{-2} \eta'$ proportion of elements $x \in B(\Gamma; \rho')$. Lemma~\ref{basicbohrsizel} tells us that $|W'_{\bcdot y}| \geq (\rho/8)^{|\Gamma| + r} |G| \geq (\rho/8)^{|\Gamma| + r} |B(\Gamma; \rho')|$. Therefore, $(W'_{x \bcdot})^3 \cap Q_y \not= \emptyset$ for all but at most $\eta'' |W'_{\bcdot y}|$ elements $x \in W'_{\bcdot y}$, where
\[\eta'' = 50 \delta_{\text{qr}}^{-6} \xi^{-2} (8/\rho)^{|\Gamma| + r} \eta'.\]
Combining this fact with Claim~\ref{wprimetoa4claim}, we conclude that for each $y \in D$ the row $A^4_{\bcdot y}$ has size
\[|A^4_{\bcdot y}| \,\geq (1 - \eta'')|B(\Gamma \cup \{L_1(y), \dots, L_r(y)\}; \rho')|.\]
\vspace{\baselineskip}
\textbf{Step 5.} Let $A^5 = \dhor A^4$. To finish the proof, we apply Lemma~\ref{almostfullBohr} for each $y \in D$. In order to make the set $A^4_{\bcdot y}$ sufficiently dense in the Bohr set above, we may pick $\eta \geq \exp\Big(-\log^{O(1)} \delta^{-1}\Big)$ (where the new implicit constants hidden in the $O(1)$ notation are sufficiently large in terms of the previous ones) to guarantee $\eta'' \leq 4^{-|\Gamma| - r - 1}$. We conclude that
\[\bigcup_{y \in D} \Big(B(\Gamma \cup \{L_1(y), \dots, L_r(y)\}; \rho'/2) \times \{y\}\Big) \subseteq A^5,\]
completing the proof of the theorem.\end{proof}

%%% AUTHOR: optional appendix here
\appendix %% you may comment this out if no Appendix
\section*{Appendix}

\section{Robust Bogolyubov-Ruzsa lemma}\label{robbogruzapp}

In this appendix we provide a quick proof of the robust version of the Bogolyubov-Ruzsa lemma, which was stated as Theorem~\ref{robustBogRuzsa}. We recall its statement here.

\begin{theorem}[Theorem~\ref{robustBogRuzsa}]\label{robustAppendix}Let $G$ be a finite abelian group. Let $A \subset G$ be a set such that $|A + A| \leq K |A|$. Then there exists a symmetric proper coset progression $C$ of rank at most $O(\log^{O(1)} (2K))$ and size $|C| \geq \exp(-O(\log^{O(1)} (2K)))|A|$ such that for each $x \in C$ there are at least $\frac{1}{64K}|A|^3$ quadruples $(a_1, a_2, a_3, a_4) \in A^4$ such that $x = a_1 + a_2 - a_3 - a_4$.\end{theorem}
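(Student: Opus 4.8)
The plan is to rephrase the quantity being counted as a fourfold convolution and then quote the robust Bogolyubov--Ruzsa lemma of Schoen and Sisask~\cite{SchSisRob} almost verbatim. Write $r \colon G \to \mathbb{Z}_{\geq 0}$ for the representation function $r(x) = \#\{(a_1,a_2,a_3,a_4) \in A^4 \colon a_1 + a_2 - a_3 - a_4 = x\}$; the assertion of the theorem is precisely that $r(x) \geq \tfrac{1}{64K}|A|^3$ for every $x$ in a symmetric proper coset progression of rank at most $\log^{O(1)}(2K)$ and size at least $\exp(-\log^{O(1)}(2K))|A|$. Since convolution is commutative, $r = \mathbbm{1}_A \ast \mathbbm{1}_A \ast \mathbbm{1}_{-A} \ast \mathbbm{1}_{-A} = h \ast h$, where $h = \mathbbm{1}_A \ast \mathbbm{1}_{-A}$ is the symmetric representation function of $A - A$. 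Hence $r(0) = \|h\|_2^2$ is the additive energy $E(A)$, so by the Cauchy--Schwarz inequality $r(0) \geq |A|^4/|A+A| \geq |A|^3/K$, while $r(x) = \langle h, \tau_x h\rangle \leq \|h\|_2^2 = r(0)$ for all $x$, where $\tau_x$ denotes translation by $x$. So it suffices to produce a symmetric proper coset progression of the stated dimensions on which $r$ is bounded below by a fixed positive multiple of $r(0)$.

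This is exactly what the Schoen--Sisask machinery yields. From the doubling hypothesis $|A+A| \leq K|A|$, their iteration---built on the Croot--Sisask almost-periodicity method together with Sanders's quantitative Bogolyubov--Ruzsa lemma (Theorem~\ref{bogRuzsa1})---outputs a symmetric proper coset progression $C$ of rank at most $\log^{O(1)}(2K)$ and size $|C| \geq \exp\big(-\log^{O(1)}(2K)\big)|A|$, together with an absolute constant $c_0 > 0$, such that $r(x) \geq c_0\, r(0)$ for every $x \in C$ (equivalently, $C$ lies in the $c_0$-popular part of $2A - 2A$). Their result is sometimes stated only for the support of the convolution, or with the normalised measure $\mu_A = |A|^{-1}\mathbbm{1}_A$ in place of $\mathbbm{1}_A$, or with a slightly different but still absolute-constant threshold; in each case the passage to the form above is routine and loses only constant factors in the exponents. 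Combined with the bound $r(0) \geq |A|^3/K$ from the first paragraph, this gives $r(x) \geq c_0 |A|^3/K$ for all $x \in C$.

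It remains to reconcile constants, which is where I would take a little care. The factor $\tfrac{1}{64}$ appearing in the statement is deliberately generous: it comfortably absorbs the absolute constant $c_0$ (one can in fact run the iteration so that $c_0$ is arbitrarily close to $1$, but the crude bound $c_0 \geq \tfrac{1}{64}$ is all that is needed here), as well as any slack between $r(0) = E(A)$ and the Cauchy--Schwarz estimate $|A|^3/K$. Thus every $x \in C$ has at least $\tfrac{1}{64K}|A|^3$ quadruples $(a_1,a_2,a_3,a_4) \in A^4$ with $x = a_1 + a_2 - a_3 - a_4$, which is the theorem. The only genuine input is~\cite{SchSisRob}; everything else---the convolution identity, the energy estimate, and the verification that the coset progression supplied by their argument is symmetric, proper, and of the claimed rank and size---is bookkeeping, so I expect the main effort to lie in matching the precise formulation of their theorem rather than in any new argument.
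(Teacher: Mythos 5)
There is a genuine gap, and it sits exactly where your proposal leans hardest on the citation. Your reduction to ``$r(x)\geq c_0\,r(0)$ on a large symmetric proper coset progression, $c_0$ an absolute constant'' is not a routine rephrasing of what Schoen--Sisask prove, and it does not follow from the almost-periodicity result that is actually available (Theorem~\ref{schoenSisaskResult}, i.e.\ their Theorem 5.1). That theorem gives an \emph{additive} error: for $t$ in $2T-2T$ the threefold convolution moves by at most $\varepsilon|A|^2$ in counting terms, hence the fourfold convolution $r$ moves by at most $\varepsilon|A|^3$. To conclude $r(t)\geq \tfrac12 r(0)$ you must compare this error with $r(0)=E(A)$, which can be as small as $|A|^3/K$; so you are forced to take $\varepsilon\asymp 1/K$, and then the size bound $|T|\geq \exp(-O(\varepsilon^{-2}\log(2K)\log(2\eta^{-1})))|A|$ degenerates to $\exp(-O(K^2\log (2K)))|A|$, which is exponentially worse than the $\exp(-\log^{O(1)}(2K))|A|$ you need. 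The parenthetical claim that one can even arrange $c_0$ close to $1$ is unsupported for the same reason. In short, the statement you quote as ``exactly what the Schoen--Sisask machinery yields'' is the whole difficulty, not bookkeeping.

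The paper's proof repairs precisely this point with a popularity argument that your proposal omits. One first passes to the set $S$ of popular sums, those $s\in A+A$ with at least $\tfrac{1}{2K}|A|$ representations $s=a+b$, which has $|S|\geq |A|/2$; one then takes a random half $A'\subseteq A$ (needed both to satisfy the hypothesis $|A'|\leq |S|$ of Theorem~\ref{schoenSisaskResult} and to keep $\id_{A'}\ast\id_{A'}\ast\id_{-S}(0)\geq \tfrac{1}{16}|G|^{-2}|A|^2$). Because this convolution is, at $0$, a \emph{constant} proportion of its trivial maximum, an absolute $\varepsilon=\tfrac{1}{32}$ suffices in the almost-periodicity theorem, so $|T|$ loses only $\exp(-\log^{O(1)}(2K))$; Sanders's Bogolyubov--Ruzsa lemma (Theorem~\ref{bogRuzsa1}) applied to $T$ (using $|T+T|\leq |A'+A'|\leq K|A|$) then extracts the symmetric proper coset progression $C\subseteq 2T-2T$ --- a step you also need but do not mention, since Theorem~\ref{schoenSisaskResult} outputs only a set $T$, not a progression. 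Finally, the factor $\tfrac{1}{64K}$ is not a generous absorption of constants: it is exactly the product of the lower bound $\tfrac{1}{32}|A|^2$ on the number of pairs $(a_1,a_2)\in A'^2$ with $a_1+a_2-x\in S$ and the $\tfrac{1}{2K}|A|$ representations of each popular sum, which is the mechanism converting the structured lower bound on a convolution with $S$ back into quadruples from $A^4$. Your energy estimate $E(A)\geq |A|^4/|A+A|$ plays no role in the actual argument.
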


We deduce this theorem from the following result of Schoen and Sisask (see Theorem 5.1 in~\cite{SchSisRob} which is applied with the choice $M = S = A$ in their notation to yield the formulation below).

\begin{theorem}\label{schoenSisaskResult}Let $G$ be a finite abelian group and let $\varepsilon > 0$. Suppose that $A, L \subseteq G$ are two sets and that $|A + A| \leq K |A|$. Assume also that $\eta = |A|/|L| \leq 1$. Then there exists a set $T \subseteq A$ of size
\[|T| \geq \exp(-O(\varepsilon^{-2} \log (2K) \log (2\eta^{-1}))) |A|\]
such that
\[\Big|\id_A \ast \id_A \ast \id_L(x + t) - \id_A \ast \id_A \ast \id_L(x)\Big| \leq \varepsilon |G|^{-2} |A|^2\]
holds for all $x \in G$ and $t \in 2T - 2T$.\end{theorem}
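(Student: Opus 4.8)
\noindent\textbf{Proof proposal for Theorem~\ref{robustAppendix}.} The plan is to proceed in two moves. In the first move I will use Theorem~\ref{schoenSisaskResult} to produce a set $T \subseteq A$ with $|T| \geq \exp(-\log^{O(1)}(2K))|A|$ such that \emph{every} $x \in 2T - 2T$ already has at least $|A|^3/(64K)$ representations $a_1 + a_2 - a_3 - a_4$ with $a_i \in A$. In the second move, since $T\subseteq A$ gives $|T + T| \leq |A + A| \leq K|A|$ while $|T| \geq \exp(-\log^{O(1)}(2K))|A|$, the set $T$ has doubling constant $|T+T|/|T| \leq \exp(\log^{O(1)}(2K))$, so Theorem~\ref{bogRuzsa1} applied with $A = B = T$ yields a symmetric proper coset progression $C \subseteq T - T + T - T = 2T - 2T$ of rank $\log^{O(1)}(\exp(\log^{O(1)}(2K))) = \log^{O(1)}(2K)$ and size $|C| \geq \exp(-\log^{O(1)}(2K))|T + T| \geq \exp(-\log^{O(1)}(2K))|A|$; every element of $C$ inherits the representation count because $C \subseteq 2T - 2T$. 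Thus everything reduces to the first move. Write $r(w) = \#\{(a, a') \in A \times A : a + a' = w\}$ and $F(x) = \#\{(a_1,a_2,a_3,a_4) \in A^4 : a_1 + a_2 - a_3 - a_4 = x\} = \sum_w r(w)\, r(w+x)$, so that $F(0) = E(A)$ is the additive energy of $A$. The subtle point is that Theorem~\ref{schoenSisaskResult} must be applied with a carefully chosen set $L$: applying it with $L = -A$ and then comparing $F$ with $F(0) = E(A)$ would force the parameter $\varepsilon$ to be of order $1/K$ (to beat the $\varepsilon |A|^3$ error incurred when summing over $A$), producing only $|T| \geq \exp(-K^{O(1)})|A|$. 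I avoid this with a dichotomy on the size of $E(A)$.

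\emph{Case 1: $E(A) \geq \tfrac{1}{16}|A|^3$.} Apply Theorem~\ref{schoenSisaskResult} with $L = -A$ (so $\eta = 1$) and $\varepsilon = \tfrac{1}{32}$, both absolute constants, obtaining $T \subseteq A$ with $|T| \geq \exp(-O(\log(2K)))|A|$. Since $\id_A \ast \id_A \ast \id_{-A}(x) = |G|^{-2}g(x)$ where $g(x) = \#\{(a_1,a_2,a_3) \in A^3 : a_1 + a_2 - a_3 = x\}$, the conclusion of that theorem reads $|g(s + t) - g(s)| \leq \tfrac{1}{32}|A|^2$ for all $s \in G$ and all $t \in 2T - 2T$. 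As $F(x) = \sum_{a \in A} g(x + a)$ and $F(0) = \sum_{a\in A} g(a)$, summing the estimate $|g(a+x) - g(a)| \leq \tfrac{1}{32}|A|^2$ (valid for every $a$ since $x \in 2T - 2T$) over $a \in A$ yields $|F(x) - F(0)| \leq \tfrac{1}{32}|A|^3$; hence $F(x) \geq E(A) - \tfrac{1}{32}|A|^3 \geq \tfrac{1}{32}|A|^3 \geq \tfrac{1}{64K}|A|^3$ for every $x \in 2T - 2T$.

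\emph{Case 2: $E(A) < \tfrac{1}{16}|A|^3$.} Let $P = \{w \in A + A : r(w) \geq |A|^2/(4|A+A|)\}$ be the set of popular sums. Then $\sum_{w \notin P} r(w) \leq \tfrac{1}{4}|A|^2$, so $\sum_{w \in P} r(w) \geq \tfrac{3}{4}|A|^2$, and the Cauchy--Schwarz inequality together with $\sum_{w \in P} r(w)^2 \leq E(A)$ gives $|P| \geq \big(\tfrac{3}{4}|A|^2\big)^2 / E(A) > 9|A|$; in particular $\eta := |A|/|P|$ lies in $[1/K,\, 1/9]$, so Theorem~\ref{schoenSisaskResult} applies with $L = -P$ and $\varepsilon = \tfrac{1}{2}$, producing $T \subseteq A$ with $|T| \geq \exp(-O(\log^2(2K)))|A|$. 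Now $\id_A \ast \id_A \ast \id_{-P}(x) = |G|^{-2}h_P(x)$ with $h_P(x) = \sum_{w \in P} r(x + w)$, so $|h_P(s + t) - h_P(s)| \leq \tfrac{1}{2}|A|^2$ for all $s\in G$, $t \in 2T - 2T$; evaluating at $s=0$ with translate $x$ gives $h_P(x) \geq h_P(0) - \tfrac{1}{2}|A|^2 \geq \tfrac{1}{4}|A|^2$ for $x \in 2T - 2T$. Finally, using $r(w) \geq |A|^2/(4|A+A|)$ on $P$,
\[
F(x) \;\geq\; \sum_{w \in P} r(w)\, r(x + w) \;\geq\; \frac{|A|^2}{4|A+A|}\, h_P(x) \;\geq\; \frac{|A|^4}{16|A+A|} \;\geq\; \frac{|A|^3}{16K} \;\geq\; \frac{|A|^3}{64K}
\]
for every $x \in 2T - 2T$.

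The main obstacle — and the genuinely new ingredient beyond a routine combination of almost periodicity with Sanders's lemma — is precisely this choice of $L$. Comparing $F$ against $F(0) = E(A)$ is only affordable when $E(A)$ is already of order $|A|^3$, i.e.\ in Case 1; in Case 2 one must instead make the convolution $\id_A \ast \id_A \ast \id_{-P}$ almost periodic, and the low-energy hypothesis is exactly what guarantees $|P| \geq |A|$, which is in turn exactly the constraint $\eta \leq 1$ demanded by Theorem~\ref{schoenSisaskResult}. Everything else — the normalizations of the convolutions, the trivial bound $|T + T| \leq |A + A|$ used in place of any Plünnecke-type estimate, and the elementary manipulation $\log^{O(1)}(\mathrm{doubling}) = \log^{O(1)}(2K)$ — is bookkeeping.
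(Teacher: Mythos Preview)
Your proof is correct. The overall architecture matches the paper's: use Theorem~\ref{schoenSisaskResult} to produce $T \subseteq A$ with $|T| \geq \exp(-\log^{O(1)}(2K))|A|$ such that every element of $2T-2T$ has $\Omega(K^{-1}|A|^3)$ representations, then feed $T$ into Theorem~\ref{bogRuzsa1}. The divergence is in how you force the hypothesis $\eta = |A|/|L| \leq 1$ of Theorem~\ref{schoenSisaskResult}.

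The paper handles this uniformly in a single case: it fixes the popular-sum set $S = \{s : r(s) \geq |A|/(2K)\}$, shows $|S| \geq |A|/2$, and then \emph{shrinks the $A$-side} by passing to a random subset $A' \subseteq A$ of size $\lfloor |A|/2 \rfloor$ so that $|A'|/|S| \leq 1$; an averaging argument guarantees $\id_{A'} \ast \id_{A'} \ast \id_{-S}(0) \geq \tfrac{1}{16}|G|^{-2}|A|^2$ for some choice of $A'$, and the popularity threshold on $S$ converts the resulting lower bound on $\id_{A'} \ast \id_{A'} \ast \id_{-S}(t)$ directly into $\geq \tfrac{1}{64K}|A|^3$ quadruple representations. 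You instead \emph{enlarge the $L$-side} via a dichotomy on $E(A)$: when $E(A) \geq \tfrac{1}{16}|A|^3$ the naive choice $L=-A$ already works because the reference value $F(0)=E(A)$ dominates the error accumulated by summing over $A$; when $E(A)$ is small, Cauchy--Schwarz forces the popular-sum set (with threshold $|A|^2/(4|A+A|)$) to have size exceeding $9|A|$, so $\eta \leq 1$ holds with $L=-P$. The paper's route is a bit slicker in that it avoids a case split and an appeal to energy; your route avoids the random-subset averaging step. Both arrive at the same quantitative conclusion.
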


\begin{proof}[Proof of Theorem~\ref{robustAppendix}] Let $S$ be the set of all elements $s \in A + A$ such that $s = a + b$ for at least $\frac{1}{2K} |A|$ pairs $(a,b) \in A^2$. Since $|A + A| \leq K|A|$, the number of pairs $(a,b) \in A^2$ such that $a + b \notin S$ is at most $\frac{1}{2}|A|^2$. Thus, $|\{(a,b) \in A^2 \colon a + b \in S\}| \geq \frac{1}{2}|A|^2$, so by averaging, there is an element $a \in A$ such that $|a + A \cap S| \geq \frac{1}{2}|A|$, implying in particular that $|S| \geq |A| / 2$.\\ 
\indent Let $A' \subseteq A$ be a subset of size $\lfloor|A|/2\rfloor$ chosen randomly and uniformly among all subsets of that size. Then 
\begin{align*}\exx \id_{A'} * \id_{A'} * \id_{-S}(0) =& |G|^{-2}\exx |\{(a,b) \in A' \times A' \colon a + b \in S\}|\\
 = &|G|^{-2} \sum_{\ssk{(a,b) \in A \times A\\a + b \in S}} \mathbb{P}(a, b \in A')\\
\geq & \frac{1}{16} |G|^{-2} |A|^2,\end{align*}
provided $|A|$ is larger than some sufficiently large absolute constant.\footnote{We need this condition on the size of $A$ so that that the probability of a pair of elements $a,b \in A$ ending up in $A'$ is at least $1/8$. On the other hand, the theorem holds trivially for $A$ of bounded size as we can take $C = \{0\}$ since there are always at least $\frac{|A|^3}{K}$ solutions to $0 = a_1 + a_2 - a_3 - a_4$ with elements $a_1, \dots, a_4 \in A$.}\\
\indent Thus, there is a choice of $A'$ satisfying this bound. Fix such a subset $A' \subseteq A$. Let $\eta = |A'|/|S|$. Note also that $\frac{1}{2K} \leq \eta \leq 1$.\\
\indent Using Theorem~\ref{schoenSisaskResult} with sets $A'$ and $-S$ and parameter $\varepsilon = \frac{1}{32}$, we see that
\[\id_{A'} \ast \id_{A'} \ast \id_{-S}(t) \geq \frac{1}{32}|G|^{-2}|A|^2\]
holds for all $t \in 2T - 2T$, where $T \subseteq A'$ is a set of size at least $\exp\Big(-O(\log^{O(1)} (2K)))\Big)|A|$. Note that $|T + T| \leq |A' + A'| \leq K |A| \leq \exp(O(\log^{O(1)} (2K)))|T|$. Apply Theorem~\ref{bogRuzsa1} to find a symmetric proper coset progression $C \subset 2T - 2T$ of rank at most $O(\log^{O(1)} (2K))$ and size at least $\exp(-O(\log^{O(1)} (2K))) |A|$. For each $c \in C$ we have at least $\frac{1}{64K}|A|^3$ of quadruples $(a_1, a_2, a_3, a_4) \in A^4$ such that $c = a_1 + a_2 - a_3 - a_4$.\end{proof} 

\section{Quasirandomness of bipartite graphs}\label{qrAppendix}

Throughout this appendix, $G$ denotes a bipartite graph on vertex classes $X$ and $Y$. We also view $G$ simultaneously as a $\{0,1\}$-valued function on the product $X \times Y$. Let $\|\cdot\|_{\square} = \|\cdot\|_{\square(X, Y)}$ stand for the box norm, which is defined by 
\[\|f\|_{\square(X, Y)} = \Big(\exx_{x_0, x_1 \in X} \exx_{y_0, y_1 \in Y} f(x_0, y_0)\,\overline{f(x_1, y_0)}\,\overline{f(x_0, y_1)}\,f(x_1, y_1)\Big)^{1/4}.\]

An important property of the box norm is that it provides bounds on the correlation of a given function $f$ with functions depending on a single variable only.

\begin{lemma}\label{basicgcs}Let $f \colon X \times Y \to \mathbb{C}$, $u \colon X \to \mathbb{C}$ and $v \colon Y \to \mathbb{C}$ be functions. Then
\[\Big|\exx_{x \in X, y \in Y} f(x,y)u(x)v(y) \Big| \leq \|f\|_\square \|u\|_{L^2} \|v\|_{L^2}.\]
\end{lemma}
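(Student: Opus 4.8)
The plan is the standard double application of the Cauchy--Schwarz inequality, first in the $X$-variable and then in a pair of $Y$-variables, tracking complex conjugates so that the final expansion matches the definition of $\|f\|_\square$ verbatim.

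First I would write $\exx_{x,y} f(x,y)u(x)v(y) = \exx_x u(x)\big(\exx_y f(x,y)v(y)\big)$ and apply Cauchy--Schwarz in $x$ to obtain
\[\Big|\exx_{x,y} f(x,y)u(x)v(y)\Big|^2 \leq \|u\|_{L^2}^2 \cdot \exx_x \Big|\exx_y f(x,y)v(y)\Big|^2.\]
Next I would expand the square over an independent pair $y_0,y_1 \in Y$ and swap the order of averaging:
\[\exx_x \Big|\exx_y f(x,y)v(y)\Big|^2 = \exx_{y_0,y_1} v(y_0)\overline{v(y_1)}\, \exx_x f(x,y_0)\overline{f(x,y_1)}.\]
Applying Cauchy--Schwarz in the pair $(y_0,y_1)$ bounds this by
\[\Big(\exx_{y_0,y_1}|v(y_0)|^2|v(y_1)|^2\Big)^{1/2}\Big(\exx_{y_0,y_1}\Big|\exx_x f(x,y_0)\overline{f(x,y_1)}\Big|^2\Big)^{1/2} = \|v\|_{L^2}^2 \cdot \|f\|_\square^2,\]
where the last equality comes from expanding the inner square over an independent pair $x_0,x_1 \in X$, giving exactly
\[\exx_{x_0,x_1 \in X}\exx_{y_0,y_1 \in Y} f(x_0,y_0)\,\overline{f(x_1,y_0)}\,\overline{f(x_0,y_1)}\,f(x_1,y_1) = \|f\|_\square^4.\]
Combining the three displayed inequalities yields $\big|\exx_{x,y} f(x,y)u(x)v(y)\big|^2 \leq \|u\|_{L^2}^2\,\|v\|_{L^2}^2\,\|f\|_\square^2$, and taking square roots finishes the proof.

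There is no real obstacle here: the only points requiring a little care are keeping the conjugations consistent when expanding the two squares (so that the four-factor average is precisely the one in the definition of $\|\cdot\|_{\square(X,Y)}$, with conjugates on the "off-diagonal" terms $f(x_1,y_0)$ and $f(x_0,y_1)$), and noting that $\exx_{y_0,y_1}|v(y_0)|^2|v(y_1)|^2 = \big(\exx_y |v(y)|^2\big)^2 = \|v\|_{L^2}^4$ by independence. One could alternatively phrase the whole argument symmetrically (Cauchy--Schwarz in $x$, then in $y$), but the version above is the most direct.
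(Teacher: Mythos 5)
Your proof is correct: the two applications of Cauchy--Schwarz (first in $x$, then in the pair $(y_0,y_1)$) are carried out with the conjugations tracked properly, and the final four-fold average $\exx_{x_0,x_1}\exx_{y_0,y_1} f(x_0,y_0)\,\overline{f(x_1,y_0)}\,\overline{f(x_0,y_1)}\,f(x_1,y_1)$ is exactly $\|f\|_\square^4$ as defined in the paper. The paper states this lemma without proof, as it is the standard Gowers--Cauchy--Schwarz argument for the box norm, which is precisely what you have written, so there is nothing to reconcile.
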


Given a bipartite graph $G$ on vertex classes $X$ and $Y$, its density $\delta$ is given by $\ex_{x \in X, y \in Y} G(x,y)$. We say that $G$ is $\varepsilon$-\emph{quasirandom} if $\|G - \delta\|_\square \leq \varepsilon$. Quasirandom graphs behave like randomly chosen graphs of the given density. An instance of this phenomenon is given by the following lemma, which says that the intersection of neighbourhoods of $k$ vertices in one vertex class typically has density about $\delta^k$ in the other vertex class.

\begin{lemma}\label{appendknhoods}Let $k$ be a positive integer and let $G$ be an $\varepsilon$-quasirandom bipartite graph of density $\delta$ on vertex classes $X$ and $Y$. Pick $x_1, \dots, x_k \in X$ uniformly and independently at random. Let $\eta > 0$ be a positive real. Then 
\[\mathbb{P}\Big(\Big||N_{x_1} \cap \dots \cap N_{x_k}| - \delta^k |Y|\Big| \geq \eta |Y|\Big) \leq 4k \eta^{-2} \varepsilon.\]
\end{lemma}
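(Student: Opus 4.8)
The plan is to reduce the claim to a second-moment estimate and finish with Chebyshev's inequality. Identify $G$ with its $\{0,1\}$-valued indicator and set $f = G - \delta$, so $\|f\|_\square \le \varepsilon$. Writing $F(x_1,\dots,x_k) = |Y|^{-1}|N_{x_1}\cap\dots\cap N_{x_k}| = \mathbb{E}_{y\in Y}\prod_{i=1}^k G(x_i,y)$, the event in question is $\{|F-\delta^k|\ge\eta\}$; since $F,\delta^k\in[0,1]$ this is empty unless $\eta\le 1$, which I assume from now on. It suffices to prove
\[\mathbb{E}_{x_1,\dots,x_k}\big[(F-\delta^k)^2\big] \le k^2\varepsilon^2.\]
Indeed, Chebyshev's inequality then yields $\mathbb{P}(|F-\delta^k|\ge\eta) \le k^2\varepsilon^2\eta^{-2}$, which is at most $4k\varepsilon\eta^{-2}$ whenever $k\varepsilon\le 4$, while if $k\varepsilon>4$ then $4k\varepsilon\eta^{-2} > 16\eta^{-2} \ge 16 > 1$ and the asserted bound is vacuous.

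For the second moment I would use the telescoping identity
\[\prod_{i=1}^k G(x_i,y) - \delta^k \;=\; \sum_{j=1}^k \delta^{j-1}\, f(x_j,y)\prod_{i>j} G(x_i,y),\]
which upon averaging over $y\in Y$ gives $F-\delta^k = \sum_{j=1}^k \delta^{j-1}S_j$ with $S_j = S_j(x_j,\dots,x_k) = \mathbb{E}_{y} f(x_j,y)\prod_{i>j}G(x_i,y)$. By the Cauchy--Schwarz inequality $\big(\sum_{j=1}^k a_j\big)^2 \le k\sum_{j=1}^k a_j^2$ applied for each fixed choice of the $x_i$, together with $\delta^{2(j-1)}\le 1$, we get $(F-\delta^k)^2 \le k\sum_{j=1}^k S_j^2$, so it remains to show $\mathbb{E}_{x_1,\dots,x_k}[S_j^2]\le\varepsilon^2$ for every $j$.

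Expanding $S_j^2$ over two independent copies $y_0,y_1\in Y$ and using that $x_1,\dots,x_k$ are independent to factor the expectation over the index $i$, one obtains
\[\mathbb{E}[S_j^2] = \mathbb{E}_{y_0,y_1}\Big(\mathbb{E}_x f(x,y_0)f(x,y_1)\Big)\prod_{i>j}\Big(\mathbb{E}_x G(x,y_0)G(x,y_1)\Big) = \mathbb{E}_{y_0,y_1}\, h(y_0,y_1)\, p(y_0,y_1)^{\,k-j},\]
where $h(y_0,y_1)=\mathbb{E}_x f(x,y_0)f(x,y_1)$ and $p(y_0,y_1)=\mathbb{E}_x G(x,y_0)G(x,y_1)\in[0,1]$. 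The key point is the standard identity $\|h\|_{L^2(Y\times Y)}^2 = \mathbb{E}_{y_0,y_1}h(y_0,y_1)^2 = \|f\|_\square^4$ (valid because $f$ is real-valued), so $\|h\|_{L^2}\le\varepsilon^2$; combined with $\|p^{k-j}\|_{L^2}\le\|p^{k-j}\|_\infty\le 1$ and Cauchy--Schwarz this gives $\mathbb{E}[S_j^2]\le\varepsilon^2$. Summing over $j$ yields $\mathbb{E}[(F-\delta^k)^2]\le k^2\varepsilon^2$, as required.

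The argument is essentially routine second-moment bookkeeping; the only genuinely content-bearing steps are the telescoping identity and the box-norm computation $\|h\|_{L^2}=\|f\|_\square^2$. The mild discrepancy between the constant $k^2\varepsilon^2$ produced by the power-mean step and the stated $4k\varepsilon$ is not an obstacle, as it is absorbed into the regime $k\varepsilon>4$ where the claimed bound is already trivial.
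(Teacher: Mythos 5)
Your proof is correct. It is the same broad strategy as the paper's (a second-moment estimate controlled by the box norm, followed by Chebyshev), but the execution differs in a genuine way: the paper does not prove Lemma~\ref{appendknhoods} directly at all, but obtains it as the special case $m=1$, $M=Y$ of Lemma~\ref{genappendknhoods}, whose proof expands $\mathbb{E}_{x_1,\dots,x_k}\big||N_{x_1}\cap\dots\cap N_{x_k}|-\delta^k|Y|\big|^2$ and then replaces the factors of $G$ by $\delta$ one at a time \emph{after} squaring, each replacement costing $\|f\|_\square$ via the correlation bound of Lemma~\ref{basicgcs}; this yields a variance bound of $4k\varepsilon|Y|^2$ and hence exactly the stated constant. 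You instead telescope \emph{before} squaring, use Cauchy--Schwarz over the $k$ telescoping terms, and then exploit the independence of the $x_i$ to factor $\mathbb{E}[S_j^2]$ as $\mathbb{E}_{y_0,y_1}h(y_0,y_1)p(y_0,y_1)^{k-j}$, finishing with the exact identity $\mathbb{E}_{y_0,y_1}h^2=\|f\|_\square^4$. This gives the variance bound $k^2\varepsilon^2$, which is stronger than the paper's $4k\varepsilon$ in the relevant regime $k\varepsilon\le 4$ (quadratic rather than linear in $\varepsilon$), and your case analysis ($\eta\le 1$, and triviality of the claim when $k\varepsilon>4$) correctly recovers the stated form of the bound. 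The trade-off is generality: your factorization over the $y$-variables uses the fact that the event concerns a plain intersection of neighbourhoods, whereas the paper's replacement argument is set up to handle an arbitrary set $M\subseteq Y^m$ of $m$-tuples, which is the version (Lemma~\ref{genappendknhoods}) actually invoked in Step 4 of the main proof; your argument as written would need the extra bookkeeping of that replacement scheme to cover that case.
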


In our paper we need a more general version of this lemma, which we now state and prove. Lemma~\ref{appendknhoods} follows as a corollary.

\begin{lemma}\label{genappendknhoods}Let $k$ and $m$ be positive integers and let $G$ be an $\varepsilon$-quasirandom bipartite graph of density $\delta$ on vertex classes $X$ and $Y$. Let $M \subseteq Y^m$ be a set of $m$-tuples in $Y$. Pick $x_1, \dots, x_k \in X$ uniformly and independently at random. Let $\eta > 0$ be a positive real. Then 
\[\mathbb{P}\Big(\Big||N^m_{x_1} \cap \dots \cap N^m_{x_k} \cap M| - \delta^{mk} |M|\Big| \geq \eta |Y|^m\Big) \leq  4km \eta^{-2}\varepsilon.\]\end{lemma}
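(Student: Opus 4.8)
The plan is to prove the lemma by a second-moment computation for the random variable
\[Z \;=\; |N^m_{x_1} \cap \dots \cap N^m_{x_k} \cap M| \;=\; \sum_{\mathbf{y} \in M}\ \prod_{i=1}^k \prod_{j=1}^m G(x_i, y_j), \qquad \mathbf{y} = (y_1,\dots,y_m),\]
and then to conclude by Markov's inequality applied to the nonnegative quantity $(Z - \delta^{mk}|M|)^2$. By independence of $x_1,\dots,x_k$ we have $\mathbb{E} Z = \sum_{\mathbf{y}\in M} D(\mathbf{y})^k$ and $\mathbb{E} Z^2 = \sum_{\mathbf{y},\mathbf{y}'\in M} P(\mathbf{y},\mathbf{y}')^k$, where $D(\mathbf{y}) = \mathbb{E}_x\prod_j G(x,y_j) \in [0,1]$ and $P(\mathbf{y},\mathbf{y}') = \mathbb{E}_x\prod_j G(x,y_j)G(x,y'_j) \in [0,1]$. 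Since $|a^k-b^k| \le k|a-b|$ whenever $a,b\in[0,1]$, the whole problem reduces to showing that $D(\mathbf{y})$ is, on average over $\mathbf{y}\in Y^m$, within $O(m\varepsilon)$ of $\delta^m$, and likewise that $P(\mathbf{y},\mathbf{y}')$ is within $O(m\varepsilon)$ of $\delta^{2m}$ on average over $\mathbf{y},\mathbf{y}'\in Y^m$.

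Both of these follow from a single estimate: for $z_1,\dots,z_n$ independent and uniform in $Y$ (writing $\mathbf{z}=(z_1,\dots,z_n)$),
\[\mathbb{E}_{\mathbf{z}}\Big|\,\mathbb{E}_{x}\prod_{l=1}^n G(x,z_l) - \delta^n\Big| \;\le\; n\,\|G-\delta\|_\square \;\le\; n\varepsilon\]
(one takes $n=m$ for $D$, and $n=2m$ for $P$ after relabelling $y_1,\dots,y_m,y'_1,\dots,y'_m$ as $z_1,\dots,z_{2m}$). I would prove this by writing $g = G-\delta$, telescoping as $\prod_l G(x,z_l)-\delta^n = \sum_{l=1}^n \delta^{l-1}\,g(x,z_l)\prod_{l'>l}G(x,z_{l'})$, and bounding the $\mathbf{z}$-average of each of the $n$ summands by $\varepsilon$. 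For a fixed $l$, set $F_l = \mathbb{E}_x g(x,z_l)\prod_{l'>l}G(x,z_{l'})$; then Cauchy--Schwarz, followed by expanding the square and averaging over the (independent) $z$'s, gives
\[\big(\mathbb{E}_{\mathbf{z}}|F_l|\big)^2 \le \mathbb{E}_{\mathbf{z}}F_l^2 = \mathbb{E}_{x,x'}\,q(x,x')\,p(x,x')^{n-l} \le \mathbb{E}_{x,x'}|q(x,x')| \le \big(\mathbb{E}_{x,x'}q(x,x')^2\big)^{1/2} = \|g\|_\square^2,\]
where $q(x,x') = \mathbb{E}_z g(x,z)g(x',z)$, $p(x,x') = \mathbb{E}_z G(x,z)G(x',z) \in [0,1]$, and the last equality is the defining identity for the box norm; hence $\mathbb{E}_{\mathbf{z}}|F_l| \le \|g\|_\square \le \varepsilon$, and summing over $l$ (using $\delta^{l-1}\le 1$) yields the displayed estimate.

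Putting the pieces together gives $|\mathbb{E} Z - \delta^{mk}|M|| \le km\varepsilon\,|Y|^m$ and $|\mathbb{E} Z^2 - \delta^{2mk}|M|^2| \le 2km\varepsilon\,|Y|^{2m}$; then, writing $\mathbb{E}(Z-\delta^{mk}|M|)^2 = (\mathbb{E} Z^2 - \delta^{2mk}|M|^2) - 2\delta^{mk}|M|(\mathbb{E} Z - \delta^{mk}|M|)$ and bounding $\delta^{mk}\le 1$, $|M|\le |Y|^m$, one obtains $\mathbb{E}(Z-\delta^{mk}|M|)^2 \le 4km\varepsilon\,|Y|^{2m}$, so Markov's inequality gives precisely $\mathbb{P}(|Z-\delta^{mk}|M||\ge\eta|Y|^m)\le 4km\eta^{-2}\varepsilon$, and Lemma~\ref{appendknhoods} is the case $m=1$, $M=Y$. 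I expect the only genuine pitfall to be the variance bookkeeping: one must exploit that $D$ and $P$ are averages over \emph{independent} coordinates, so that telescoping costs only a factor $n$ rather than $2^n$, and one must average over the auxiliary frequency $z$ \emph{before} invoking Cauchy--Schwarz, since the pointwise bound coming directly from Lemma~\ref{basicgcs} is far too weak here.
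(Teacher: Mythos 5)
Your proof is correct and delivers exactly the stated bound $4km\eta^{-2}\varepsilon$; every step checks out, including the telescoping identity, the computation $\mathbb{E}_{\mathbf{z}}F_l^2=\mathbb{E}_{x,x'}q(x,x')p(x,x')^{n-l}$, the identity $\mathbb{E}_{x,x'}q(x,x')^2=\|g\|_\square^4$, and the final variance bookkeeping. It is the same overall strategy as the paper -- a second-moment estimate in which the deviation is controlled by telescoping the product of edge-indicators one factor at a time against the box norm, followed by Markov -- but the organization differs in two respects. The paper expands $\mathbb{E}\big||N^m_{x_1}\cap\dots\cap N^m_{x_k}\cap M|-\delta^{mk}|M|\big|^2$ directly as an average over all of $x_1,\dots,x_k,y_1,\dots,y_m,z_1,\dots,z_m$ (with the indicator of $M$ carried along) and replaces each of the $2km$, respectively $km$, occurrences of $G$ by $\delta$ via Lemma~\ref{basicgcs}, applied with the remaining variables frozen outside an absolute value; you instead exploit the independence of $x_1,\dots,x_k$ to write $\mathbb{E}Z=\sum_{\mathbf{y}\in M}D(\mathbf{y})^k$ and $\mathbb{E}Z^2=\sum_{\mathbf{y},\mathbf{y}'\in M}P(\mathbf{y},\mathbf{y}')^k$, dispose of the exponent $k$ by the scalar inequality $|a^k-b^k|\le k|a-b|$ on $[0,1]$, and then only telescope over the $m$ (resp.\ $2m$) coordinates of the $Y$-tuple, bounding each term by a Cauchy--Schwarz argument taken directly from the definition of $\|\cdot\|_\square$ rather than through Lemma~\ref{basicgcs}. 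What your route buys is a cleaner separation of the roles of $k$ and $m$ and the pleasant feature that the set $M$ enters only through $|M|\le|Y|^m$ (it is absent from the analytic core), at the cost of proving a small standalone estimate on $\mathbb{E}_{\mathbf{z}}\big|\mathbb{E}_x\prod_l G(x,z_l)-\delta^n\big|$; the paper's route keeps everything in one expansion and leans on its already-stated Lemma~\ref{basicgcs}, so neither argument is stronger, and the constants agree.
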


\begin{proof}Let $f(x,y) = G(x,y) - \delta$ and $\mu = |M|/|Y|^m$. We have
\begin{align}\frac{1}{|Y|^{2m}} \exx_{x_1, \dots, x_k \in X} &\Big||N^m_{x_1} \cap \dots \cap N^m_{x_k} \cap M| - \delta^{mk} |M|\Big|^2\nonumber\\
=  &\exx_{x_1, \dots, x_k \in X} \Big|\exx_{y_1, \dots, y_m \in Y} M(y_1, \dots, y_k) \Big(\prod_{i \in [k], j \in [m]}G(x_i, y_j) - \delta^{km}\Big)\Big|^2\nonumber\\
= & \exx_{\ssk{x_1, \dots, x_k \in X\\y_1, \dots, y_m \in Y\\z_1, \dots, z_m \in Y}} M(y_1, \dots, y_k) M(z_1, \dots, z_k) \prod_{i \in [k], j \in [m]}G(x_i, y_j)G(x_i, z_j)\nonumber\\
&\hspace{2cm} - 2\delta^{km}\mu \exx_{\ssk{x_1, \dots, x_k \in X\\y_1, \dots, y_m \in Y}} M(y_1, \dots, y_k) \prod_{i \in [k], j \in [m]}G(x_i, y_j) + \delta^{2km} \mu^2.\label{squareboundnhood}\end{align}

We approximate the term
\[\exx_{\ssk{x_1, \dots, x_k \in X\\y_1, \dots, y_m \in Y\\z_1, \dots, z_m \in Y}} M(y_1, \dots, y_k) M(z_1, \dots, z_k) \prod_{i \in [k], j \in [m]}G(x_i, y_j)G(x_i, z_j)\]
by $\delta^{2km}\mu^2$ using Lemma~\ref{basicgcs}. Let us write $M(y)$ and $M(z)$ for $M(y_1, \dots, y_k)$ and $M(z_1, \dots, z_k)$ respectively. Using triangle inequality, we get
\begin{align*}&\bigg|\exx_{\ssk{x_1, \dots, x_k \in X\\y_1, \dots, y_m \in Y\\z_1, \dots, z_m \in Y}} M(y)M(z)\Big(G(x_1, y_1) - \delta\Big) G(x_2, y_1) \cdots G(x_k, y_1) G(x_1, y_2) \cdots G(x_k, y_m)\Big(\prod_{i \in [k], j \in [m]}G(x_i, z_j)\Big)\bigg|\\
&\hspace{2cm}= \bigg|\exx_{\ssk{x_1, \dots, x_k \in X\\y_1, \dots, y_m \in Y\\z_1, \dots, z_m \in Y}} M(y)M(z) f(x_1, y_1) G(x_2, y_1) \cdots G(x_k, y_1) G(x_1, y_2) \cdots G(x_k, y_m)\Big(\prod_{i \in [k], j \in [m]}G(x_i, z_j)\Big)\bigg|\\
&\hspace{2cm}\leq \exx_{\ssk{x_2, \dots, x_k \in X\\y_2, \dots, y_m \in Y\\z_1, \dots, z_m \in Y}} \bigg| \exx_{\ssk{x_1 \in X\\y_1 \in Y}} f(x_1, y_1) \Big(M(y) G(x_2, y_1) \cdots G(x_k, y_1)\Big)\\
&\hspace{8cm} \Big(G(x_1, y_2)G(x_1, y_3)  \cdots G(x_1, y_m)G(x_1, z_1) \cdots G(x_1, z_m)\Big)\bigg|.\end{align*}

Applying Lemma~\ref{basicgcs} for every choice of $x_2, \dots, x_k, y_2, \dots, y_m, z_1, \dots, z_m$ above, we may bound the last line by $\|f\|_{\square}$. Thus
\begin{align*}&\bigg|\exx_{\ssk{x_1, \dots, x_k \in X\\y_1, \dots, y_m \in Y\\z_1, \dots, z_m \in Y}}M(y)M(z) \prod_{i \in [k], j \in [m]}G(x_i, y_j)G(x_i, z_j) \\
&\hspace{2cm}- \delta \exx_{\ssk{x_1, \dots, x_k \in X\\y_1, \dots, y_m \in Y\\z_1, \dots, z_m \in Y}} M(y)M(z) \prod_{\ssk{i \in [k], j \in [m]\\(i,j) \not= (1,1)}}G(x_i, y_j)\prod_{i \in [k], j \in [m]}G(x_i, z_j)\bigg| \leq \|f\|_\square.\end{align*}
Repeating the argument above $2km - 1$ times in order to eventually replace every occurrence of $G$ by $\delta$, we obtain
\[\bigg|\exx_{\ssk{x_1, \dots, x_k \in X\\y_1, \dots, y_m \in Y\\z_1, \dots, z_m \in Y}} M(y)M(z)\prod_{i \in [k], j \in [m]}G(x_i, y_j)G(x_i, z_j) - \delta^{2k}\mu^2\bigg| \leq 2km\|f\|_\square.\]

The term $\ex_{\ssk{x_1, \dots, x_k \in X\\y_1, \dots, y_m \in Y}} M(y) \prod_{i \in [k], j \in [m]}G(x_i, y_j)$ in expression~\eqref{squareboundnhood} can be similarly approximated by $\delta^{km}\mu$ 
\[\bigg|\exx_{\ssk{x_1, \dots, x_k \in X\\y_1, \dots, y_m \in Y}}\prod_{i \in [k], j \in [m]}G(x_i, y_j) - \delta^{km}\mu\bigg| \leq km \|f\|_{\square}.\]

Going back to~\eqref{squareboundnhood} and using the assumption $\|f\|_\square \leq \varepsilon$ we obtain
\begin{align*}\frac{1}{|Y|^{2m}} \exx_{x_1, \dots, x_k \in X} &\Big||N^m_{x_1} \cap \dots \cap N^m_{x_k} \cap M| - \delta^{mk} |M|\Big|^2 \leq 4km \varepsilon,\end{align*}
completing the proof.\end{proof}

As it is well-known, properties like the one described in Lemma~\ref{appendknhoods} are equivalent to the graph being quasirandom. An example of such a reverse implication is given by the next lemma.

\begin{lemma}\label{appendonesided}Let $\delta,\varepsilon \in [0,1]$. Suppose that $G$ is a bipartite graph with vertex classes $X$ and $Y$ such that
\begin{equation}\label{qrCond1}\exx_{x \in X} \Big||N_x| - \delta |Y|\Big| \leq \varepsilon |Y|\end{equation}
and 
\begin{equation}\exx_{x, x' \in X} \Big||N_x \cap N_{x'}| - \delta^2 |Y|\Big| \leq \varepsilon |Y|.\label{qrCond2}\end{equation}
Then the density $\delta'$ of $G$ satisfies $|\delta - \delta'| \leq \varepsilon$ and the graph $G$ is $3 \sqrt[8]{\varepsilon}$-quasirandom.\end{lemma}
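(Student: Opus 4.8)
The plan is to deduce both conclusions from the two hypotheses \eqref{qrCond1} and \eqref{qrCond2} by expanding the box norm $\|G - \delta\|_\square^4$ and controlling each resulting term. First I would handle the density claim: since $\delta' = \ex_{x \in X} \big(|N_x|/|Y|\big)$, the triangle inequality applied to \eqref{qrCond1} gives $|\delta - \delta'| \le \ex_{x \in X} \big||N_x|/|Y| - \delta\big| \le \varepsilon$ immediately.

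\begin{proof}
For the quasirandomness claim, write $d(x) = |N_x|/|Y|$ and $e(x,x') = |N_x \cap N_{x'}|/|Y|$, so that hypotheses~\eqref{qrCond1} and~\eqref{qrCond2} read $\ex_x |d(x) - \delta| \le \varepsilon$ and $\ex_{x,x'} |e(x,x') - \delta^2| \le \varepsilon$. Expanding the box norm along the $Y$-variables first,
\[\|G - \delta\|_\square^4 = \exx_{x, x' \in X} \exx_{y, y' \in Y} \big(G(x,y) - \delta\big)\big(G(x',y) - \delta\big)\big(G(x,y') - \delta\big)\big(G(x',y') - \delta\big) = \exx_{x, x' \in X} \Big(\exx_{y \in Y}\big(G(x,y) - \delta\big)\big(G(x',y) - \delta\big)\Big)^2.\]
Now the inner expectation over $y$ equals $e(x,x') - \delta d(x) - \delta d(x') + \delta^2$. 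Using $e(x,x') - \delta^2 = \big(e(x,x') - \delta^2\big)$ and $d(x) - \delta$, $d(x') - \delta$, we may rewrite this as $\big(e(x,x') - \delta^2\big) - \delta\big(d(x) - \delta\big) - \delta\big(d(x') - \delta\big)$. Since $0 \le \delta \le 1$ and each quantity inside is bounded by $1$ in absolute value, the square of this is at most $3$ times the sum of the three squares, hence
\[\|G - \delta\|_\square^4 \le 3 \exx_{x,x'}\Big(\big(e(x,x') - \delta^2\big)^2 + \delta^2\big(d(x) - \delta\big)^2 + \delta^2\big(d(x') - \delta\big)^2\Big) \le 3\exx_{x,x'}\big|e(x,x') - \delta^2\big| + 6\exx_x\big|d(x) - \delta\big| \le 9\varepsilon,\]
where we bounded $t^2 \le |t|$ for $|t| \le 1$ and $\delta^2 \le 1$, then applied the two hypotheses. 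Therefore $\|G - \delta\|_\square \le (9\varepsilon)^{1/4} \le \sqrt{3}\,\varepsilon^{1/4}$, which is certainly at most $3\varepsilon^{1/8}$ when $\varepsilon \le 1$ (if $\varepsilon \ge 1$ the statement is vacuous since $\|G-\delta\|_\square \le 1$). Combining with the density bound $|\delta - \delta'| \le \varepsilon$ proved above, we see that $\|G - \delta'\|_\square \le \|G - \delta\|_\square + |\delta - \delta'| \le \sqrt{3}\,\varepsilon^{1/4} + \varepsilon \le 3\sqrt[8]{\varepsilon}$, so $G$ is $3\sqrt[8]{\varepsilon}$-quasirandom.
\end{proof}

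The only place that requires any care is the passage from the $L^1$-type hypotheses to an $L^2$-type bound on the box norm; this works because all the relevant quantities ($d(x)$, $e(x,x')$, $\delta$, and their differences) lie in $[-1,1]$, so squares are dominated by absolute values, and one can afford the crude constant $3$ from the elementary inequality $(a+b+c)^2 \le 3(a^2+b^2+c^2)$. I expect no genuine obstacle here; the main thing to get right is the bookkeeping of which box-norm expansion to use (expanding over $Y$ first turns the fourth power into a perfect square in the $x,x'$ average, which is exactly what makes the two hypotheses directly applicable) and tracking that the final numerical constant indeed fits under $3\sqrt[8]{\varepsilon}$.
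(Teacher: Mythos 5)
Your proof is correct and follows essentially the same route as the paper: both reduce to the identity $\|G-\delta\|_\square^4 = \ex_{x,x'}\big(\ex_y (G(x,y)-\delta)(G(x',y)-\delta)\big)^2$, identify the inner expectation with $(e(x,x')-\delta^2)-\delta(d(x)-\delta)-\delta(d(x')-\delta)$, and handle the shift from $\delta$ to the true density $\delta'$ by the triangle inequality at the end. The only divergence is the final estimation: the paper converts the $L^1$ hypotheses into pointwise bounds off a $3\sqrt{\varepsilon}$-fraction of pairs via Markov's inequality (getting $12\sqrt{\varepsilon}$), whereas you use $(a+b+c)^2\le 3(a^2+b^2+c^2)$ together with $t^2\le |t|$ for $|t|\le 1$, which is a routine variant that in fact gives the slightly sharper intermediate bound $9\varepsilon$ and still fits under $3\sqrt[8]{\varepsilon}$.
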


\begin{proof}Let $f(x,y) = G(x,y) - \delta$. We begin the proof by showing that $\delta$ and $\delta'$ differ by a small amount. This follows easily from~\eqref{qrCond1}, namely
\[|\delta' - \delta| |X||Y| = \Big|\sum_{x \in X} \Big(|N_x| - \delta |Y|\Big)  \Big| \leq \sum_{x \in X} \Big||N_x| - \delta |Y|\Big| \leq \varepsilon |X||Y|.\]

By averaging, we conclude from condition~\eqref{qrCond1} that for at least $1 - \sqrt{\varepsilon}$ proportion of all $x \in X$
\[\Big||N_x| - \delta |Y|\Big| \leq  \sqrt{\varepsilon} |Y|\]
holds. Similarly, condition~\eqref{qrCond2} implies that for at least $1 - \sqrt{\varepsilon}$ proportion of all pairs $(x,x') \in X^2$ 
\[\Big||N_x \cap N_{x'}| - \delta^2 |Y|\Big| \leq \sqrt{\varepsilon} |Y|\]
holds. Thus, for at least $(1 - 3\sqrt{\varepsilon})|X|^2$ of pairs $(x,x') \in X^2$ we have 
\[\Big||N_x \cap N_{x'}| - \delta |N_x| - \delta|N_{x'}| + \delta^2 |Y|\Big| \leq \Big||N_x \cap N_{x'}| - \delta^2 |Y|\Big| + \Big|\delta|N_x| - \delta^2|Y|\Big| + \Big|\delta|N_{x'}| - \delta^2|Y|\Big| \leq 3\sqrt{\varepsilon} |Y|.\]
The term on the left hand side equals $|Y|$ times $\Big|\ex_{y \in Y} f(x,y) f(x', y)\Big|$, the latter being bounded from above by 1. Thus,
\[\|f\|_\square^4 = \exx_{x,x' \in X, y, y' \in Y} f(x,y) f(x', y) f(x,y') f(x', y') = \exx_{x, x' \in X} \Big|\exx_{y \in Y} f(x,y) f(x', y)\Big|^2 \leq 9\varepsilon + 3\sqrt{\varepsilon} \leq 12\sqrt{\varepsilon}.\]
Recalling that $|\delta - \delta'| \leq \varepsilon$, we obtain $\|G - \delta'\|_{\square} \leq 3 \sqrt[8]{\varepsilon}$.\end{proof}

We may view the condition in Lemma~\ref{appendonesided} as a type of one-sided quasirandomess. Thus, combining Lemmas~\ref{appendknhoods} and~\ref{appendonesided} we conclude that if $G$ satisfies the one-sided quasirandomness condition for the vertex class $X$, then it is in fact quasirandom, and thus is also satisfies the one-sided quasirandomness condition for the vertex class $Y$, with a somewhat smaller quasirandomness parameter.

%%% AUTHOR: optional acknowledgments here
\section*{Acknowledgments} %%  you may comment this out if no Ackno
I thank the anonymous referee for valuable feedback and useful comments.

%%% AUTHOR:
%%% Bibliography goes here. Note that the arXiv cannot process bibtex
%%% or biber bibliographies.  Example of acceptable bibliograpy format:
\bibliographystyle{amsplain}

\begin{thebibliography}{99}

\bibitem{BalogSzemeredi} A. Balog and E. Szemer\'edi, \emph{A statistical theorem of set addition}, Combinatorica \textbf{14} (1994), 263--268. 

\bibitem{BienLe} P.-Y. Bienvenu and T.H. L\^{e}, \emph{A bilinear Bogolyubov theorem}, European Journal of Combinatorics, \textbf{77} (2019), 102--113.

\bibitem{BienLe2} P.-Y. Bienvenu and T.H. L\^{e}, \emph{Linear and quadratic uniformity of the M\"obius function over $\mathbb{F}_{q}[t]$}, Mathematika \textbf{65} (2019), 505--529.

\bibitem{ChangFreiman} M.-C. Chang, \emph{A polynomial bound in Freiman's theorem}, Duke Mathematical Journal \textbf{113} (2002), 399--419.

\bibitem{Tim4ap} W.T. Gowers, \emph{A new proof of Szemer\'edi's theorem for arithmetic progressions of length four}, Geometric and Functional Analysis \textbf{8} (1998), 529--551.

\bibitem{Marton1} W. T. Gowers, B. Green, F. Manners and T. Tao, \textit{On a conjecture of Marton}, arXiv preprint (2023), arXiv:2311.05762.

\bibitem{Marton2} W. T. Gowers, B. Green, F. Manners and T. Tao, \textit{Marton's Conjecture in abelian groups with bounded torsion}, arXiv preprint (2024), arXiv:2404.02244.

\bibitem{U4paper} W.T. Gowers and L. Mili\'cevi\'c, \emph{A quantitative inverse theorem for the $U^4$ norm over finite fields}, arXiv preprint (2017), arXiv:1712.00241.

\bibitem{bogPaper} W.T. Gowers and L. Mili\'cevi\'c, \emph{A bilinear version of Bogolyubov's theorem}, Proceedings of the American Mathematical Society \textbf{148} (2020), 4695--4704.

\bibitem{genPaper} W.T. Gowers and L. Mili\'cevi\'c, \emph{An inverse theorem for Freiman multi-homomorphisms}, arXiv preprint (2020), arXiv:2002.11667.

\bibitem{greenRuzsaFreiman} B. Green and I.Z. Ruzsa, \emph{Freiman's theorem in an arbitrary abelian group}, Journal of the London Mathematical Society \textbf{75} (2007), 163--175.

\bibitem{StrongU3} B. Green and T. Tao, \emph{An inverse theorem for the Gowers $U^3(G)$-norm}, Proc. Edinb. Math. Soc. \textbf{51} (2008), 73--153.

\bibitem{HosseiniLovett} K. Hosseini and S. Lovett, \emph{A bilinear Bogolyubov-Ruzsa lemma with polylogarithmic bounds}, Discrete Analysis, paper no. 10 (2019), 1--14.

\bibitem{Petridis} G. Petridis, \emph{New Proofs of Pl\"{u}nnecke-type Estimates for Product Sets in Groups}, Combinatorica \textbf{32} (2012), 721--733. 

\bibitem{RuzsaBook} I.Z. Ruzsa, \textbf{Sumsets and Structure}, Combinatorial Number Theory and Additive Group Theory, 87--210 (Springer, 2009).

\bibitem{Sanders} T. Sanders, \emph{On the Bogolyubov-Ruzsa lemma}, Analysis \& PDE \textbf{5} (2012), no. 3, 627--655.

\bibitem{SchSisRob} T. Schoen and O. Sisask, \emph{Roth's theorem for four variables and additive structures in sums of sparse sets}, Forum of Mathematics, Sigma \textbf{4} (2016), e5, 28pp.

\bibitem{TaoVuBook} T. Tao and V. Vu, \textbf{Additive combinatorics}, Cambridge Studies in Advanced Mathematics \textbf{105}, Cambridge University Press, Cambridge, UK, 2006.

\bibitem{Tidor} J. Tidor, \emph{Quantitative bounds for the $U^4$-inverse theorem over low characteristic finite fields}, Discrete Analysis, paper no. 14 (2022), 1--17.

\end{thebibliography}

%% AUTHOR: You can generate such a bibliography from a .bib file by 
%% running pdflatex/bibtex/pdflatex/pdflatex and then pasting the .bbl file
%% between \begin{thebibliography} and \end{bibliography}

%%% AUTHOR: Include a short description of each author following the
%%% structure below. Use the same short tags used previously.  
%%% Use \imageat{} and \imagedot{} instead of "@" and "." in
%%% email addresses-this replaces the symbols with graphics to avoid 
%%% e-mail address harvesting from the .pdf file
\begin{dajauthors}
\begin{authorinfo}[luka]
  Luka Mili\'cevi\'c\\
  Mathematical Institute of the Serbian Academy of Sciences and Arts\\
  Belgrade, Serbia\\
  luka\imagedot{}milicevic\imageat{}turing\imagedot{}mi\imagedot{}sanu\imagedot{}ac\imagedot{}rs\\
  \url{https://www.mi.sanu.ac.rs/~luka.milicevic/}
\end{authorinfo}
\end{dajauthors}

\end{document}